\documentclass[11pt, reqno, final]{amsart}
\usepackage[usenames, dvipsnames]{xcolor}
%\pdfoutput=1

\usepackage{setspace}
\usepackage[english]{babel}
\usepackage[latin1]{inputenc}
\usepackage[mathscr]{euscript}
\usepackage{booktabs}
\usepackage{enumitem}
\usepackage[all]{xy}

\usepackage[refpage,noprefix]{nomencl}
\usepackage[pdfborder={0 0 0}, colorlinks = true, linkcolor=blue, citecolor=OliveGreen]{hyperref}
\usepackage{url}
\usepackage{geometry}
\geometry{
  includeheadfoot,
  margin=2.54cm
}
\usepackage{amsmath,amsthm,amssymb, bm}

%common symbols
\newcommand{\R}{\mathbb{R}} %real numbers
\newcommand{\Q}{\mathbb{Q}} %rational numbers
\newcommand{\uhp}{\mathbb{H}} % upper half plane
 %orthogonal upper half plane
\newcommand{\B}{\mathbb{B}} %most of the time a quaternion algebra
\newcommand{\C}{\mathbb{C}} % complex numbers
 %projective space
\newcommand{\Z}{\mathbb{Z}} %integers
\newcommand{\F}{\mathbb{F}} %for finite fields
 % the natural numbers
\newcommand{\N}{\textup{N}} %here for Norm

 %the cone over K^+

\newcommand{\Zhat}{\hat{\mathbb{Z}}}
\newcommand{\adeles}{\mathbb{A}}

\newcommand*{\domain}{\mathbb{D}}

\newcommand*{\reg}{\mathrm{reg}}

\newcommand{\OD}{\calO_{\kb}}

\newcommand{\ODhatt}{\hat{\calO}_{\kb}^{\times}}

\newcommand{\sage}{\texttt{sage}}

\newcommand{\bs}{\backslash}

\newcommand{\calC}{\mathcal{C}}

\newcommand{\calE}{\mathcal{E}}
\newcommand{\calF}{\mathcal{F}}

\newcommand{\calH}{\mathcal{H}}

\newcommand{\calN}{\mathcal{N}}
\newcommand{\calO}{\mathcal{O}}
\newcommand{\calP}{\mathcal{P}}

\newcommand{\calX}{\mathcal{X}}
\newcommand{\calY}{\mathcal{Y}}
\newcommand{\calZ}{\mathcal{Z}}

\renewcommand{\Re}{\mathrm{Re}}
\renewcommand{\Im}{\mathrm{Im}}

\newcommand{\FJ}{\mathscr{F}}
\newcommand{\GJ}{\mathscr{G}}

\newcommand{\different}[1]{\partial_#1} %the different ideal
\newcommand{\diffinv}[1]{\partial_#1^{-1}} %the inverse of the different ideal

\newcommand{\frakA}{\mathfrak{A}}
\newcommand{\fraka}{\mathfrak{a}}
\newcommand{\frakb}{\mathfrak{b}}
\newcommand{\frakc}{\mathfrak{c}}

\newcommand{\frake}{\mathfrak{e}}

\newcommand{\frakf}{\mathfrak{f}}

\newcommand{\frakn}{\mathfrak{n}}
\newcommand{\frakp}{\mathfrak{p}}
\newcommand{\frakP}{\mathfrak{P}}

 %legendre symbol
 %here e(x) for exp(2*Pi*i*x)

%2x2 matrix with the entries a,b,c,d
\newcommand{\abcd}{\begin{pmatrix}a & b \\ c & d\end{pmatrix}}

\newcommand{\smallabcd}{\left(\begin{smallmatrix}a & b \\ c & d\end{smallmatrix}\right)}

\newcommand{\Tmatrix}{\begin{pmatrix}1 & 1 \\ 0 & 1\end{pmatrix}}
\newcommand{\Smatrix}{\begin{pmatrix}0 & -1 \\ 1 & 0\end{pmatrix}}
\newcommand{\smallTmatrix}{\left(\begin{smallmatrix}1 & 1 \\ 0 & 1\end{smallmatrix}\right)}
\newcommand{\smallSmatrix}{\left(\begin{smallmatrix}0 & -1 \\ 1 & 0\end{smallmatrix}\right)}

\newcommand{\twomat}[4]{\begin{pmatrix}
                        #1 & #2 \\ #3 & #4 
                       \end{pmatrix}}

%restricted product and sum
\newcommand{\rprod}{\sideset{}{'}\prod}

\newcommand{\inverse}{^{-1}}

\newcommand{\abs}[1]{\left\vert#1\right\vert}

%and ``operators''
\DeclareMathOperator{\End}{End}

\DeclareMathOperator{\Aut}{Aut}

\DeclareMathOperator{\SL}{SL}
\DeclareMathOperator{\GL}{GL}

\DeclareMathOperator{\Mp}{Mp}
\DeclareMathOperator{\Cl}{Cl}
\newcommand{\kb}{{\mathsf{k}}}
\newcommand{\Hkb}{\mathsf{H}_{\mathsf{k}}}
\newcommand{\Clk}{\Cl_{\kb}}

\newcommand{\Clks}{\Cl_{\kb}^{2}}

\DeclareMathOperator{\tr}{tr}

\DeclareMathOperator{\Gal}{Gal}
\DeclareMathOperator{\divisor}{div}

\DeclareMathOperator{\SO}{SO}

\DeclareMathOperator{\GSpin}{GSpin}

\DeclareMathOperator{\diag}{diag}

\DeclareMathOperator{\ord}{ord}

\DeclareMathOperator{\vol}{vol}

\DeclareMathOperator{\supp}{supp}
\DeclareMathOperator{\sgn}{sgn}

\DeclareMathOperator*{\CT}{CT}

\DeclareMathOperator{\res}{res}

\DeclareMathOperator{\del}{\partial}
\DeclareMathOperator{\delbar}{\overline{\partial}}

\newcommand{\CPK}{C_{P,K}}

%Enumeration

% Use (i),(ii) for enumerate

\newtheorem{theorem}{Theorem}[section]
\newtheorem{proposition}[theorem]{Proposition}
\newtheorem{lemma}[theorem]{Lemma}
\newtheorem{corollary}[theorem]{Corollary}

\theoremstyle{definition}
\newtheorem{definition}[theorem]{Definition}

\newtheorem{remark}[theorem]{Remark}
\newtheorem{assumption}[theorem]{Assumption}
\newtheorem{conjecture}[theorem]{Conjecture}
\newtheorem*{warning}{Warning}

\newcommand{\QD}[1][]{Q_\Delta\ifthenelse{\equal{#1}{}}{}{\left(#1\right)}}

\newcommand{\G}{\mathbb{G}}

\newcommand{\pre}[1]{{\widetilde{#1}}}
\newcommand{\prep}[1]{\widetilde{#1}^{+}}

\newcommand{\kronecker}[2]{\left(\frac{#1}{#2}\right)}

\newcommand{\zmatrix}{\begin{pmatrix}  z & -z^{2} \\ 1 & -z \end{pmatrix}}

\DeclareMathOperator{\Diff}{Diff}

\DeclareMathOperator{\Spec}{Spec}
\DeclareMathOperator{\CH}{CH}

\DeclareMathOperator{\Pic}{Pic}

\newcommand{\deghat}{\widehat{\deg}}

\usepackage[style=alphabetic, backend=bibtex8, doi=false, url=true, firstinits=true, eprint=false, maxnames=6, sorting=nyt]{biblatex}
\addbibresource{diss.bib}

%\hypersetup
%{
%    pdfauthor={Stephan Jakob Ehlen},
%    pdftitle={CM values of regularized theta lifts and harmonic weak Maass forms of weight one},
%    pdfkeywords={Borcherds products, arithmetic geometry, regularized theta lifts, CM values, Kudla program, harmonic weak Maass %forms, weight one modular forms},
%    final=true
%}

\numberwithin{equation}{section}

\makenomenclature
\makeindex

\begin{document}
\title[CM values of regularized theta lifts]{CM values of regularized theta lifts and harmonic weak Maa\ss{} forms of weight one}
 
\author{Stephan Ehlen}
\email{stephan.ehlen@math.uni-koeln.de}
\address{University of Cologne, Weyertal 86-90, D-50931 Cologne, Germany}
\subjclass[2010]{11G18, 11F27, 11F30, 11F37}

\begin{abstract}
  We study special values of regularized theta lifts at
  complex multiplication (CM) points. In particular, we show that CM
  values of Borcherds products can be expressed in terms of finitely
  many Fourier coefficients of certain harmonic weak Maa\ss{} forms of
  weight one. As it turns out, these coefficients are logarithms of
  algebraic integers whose prime ideal factorization is determined by
  special cycles on an arithmetic curve. Our results imply a
  conjecture of Duke and Li \cite{DukeLiMock} and give a new proof 
  of the modularity of a certain arithmetic generating series of weight one 
  studied by Kudla, Rapoport and Yang \cite{kry-tiny}.
\end{abstract}

\thanks{This work was partly supported by DFG grant BR-2163/2-1 and a CRM-ISM postdoctoral fellowship from the CRM and McGill University in Montreal, Quebec, Canada.}

\maketitle

\setcounter{tocdepth}{1}
\tableofcontents

\nomenclature[C]{$\C$}{The field of complex numbers\nomnorefpage}
\nomenclature[Fq]{$\F_{q}$}{A finite field with $q$ elements\nomnorefpage}
\nomenclature[Fqb]{$\overline{\F}_{q}$}{An algebraic closure of a finite field with $q$  elements\nomnorefpage}
\nomenclature[Z]{$\Z$}{The ring of integers\nomnorefpage}
\nomenclature[Z0]{$\Z_{>0}$}{The set of positive integers\nomnorefpage}
\nomenclature[Zp]{$\Z_p$}{The $p$-adic integers\nomnorefpage}
\nomenclature[Qp]{$\Q_p$}{The field of $p$-adic numbers\nomnorefpage}
\nomenclature[Q]{$\Q$}{The field of rational numbers\nomnorefpage}
\nomenclature[e]{$e(x)$}{$=e^{2\pi i x}$\nomnorefpage}
\nomenclature[q]{$q$}{Usually $q=e^{2\pi i \tau}$\nomnorefpage}
\nomenclature[t]{$\tau$}{$\tau = u+iv \in \uhp$\nomnorefpage}
\nomenclature[k]{$k$}{Usually a half-integer or integer (a weight), often $k=1-n/2$\nomnorefpage}
\nomenclature[kb]{$\kb$}{A field, usually an imaginary quadratic field\nomnorefpage}
\nomenclature[kD]{$\kb_D$}{$=\Q(\sqrt{D})$\nomnorefpage}
\nomenclature[()]{$(a,b)$}{$=\gcd(a,b)$, the greatest common divisor of $a$ and $b$\nomnorefpage}
\nomenclature[Im]{$\Im(z)$}{The imaginary part of $z$\nomnorefpage}
\nomenclature[Re]{$\Re(z)$}{The real part of $z$\nomnorefpage}
\nomenclature[H]{$\uhp$}{The complex upper half-plane, $\uhp = \{ z \in \C\ \mid\ \Im(z)>0\}$\nomnorefpage}
\nomenclature[abc]{$\fraka, \frakb, \frakc$}{Fractional (most of the time integral) ideals\nomnorefpage}

\begingroup
\section{Introduction}
Special values of automorphic forms often encode interesting arithmetic information.
The type of automorphic forms we consider in this article
are obtained as regularized theta lifts of harmonic weak Maa\ss{} forms.
We will study their values at CM points on orthogonal Shimura varieties.
This includes the CM values of Borcherds products and as a special case (generalizations of) 
the singular moduli considered by Gross and Zagier in their seminal paper \cite{grosszagier-singularmoduli}. 
The average value over a CM cycle was previously studied by Bruinier-Yang \cite{bryfaltings} and Schofer \cite{schofer}
using a seesaw dual pair and the Siegel-Weil formula.
We use the same seesaw identity without applying the Siegel-Weil formula
to relate the values at individual points to coefficients of harmonic weak Maa\ss{} forms of weight one.
We then show that the coefficients of an appropriate normalization of these weak Maa\ss{} forms are logarithms of algebraic numbers whose
prime factorizations are determined by certain special cycles on the stack of elliptic curves with complex multiplication.
We use this relation to prove a conjecture by Duke and Li \cite{DukeLiMock}.

\subsection{Setup and notation}
\label{sec:setup-notation}
Let $L$ be an even lattice with quadratic form $Q$ of type $(2,n)$
and write $V = L \otimes_\Z \Q$ for the corresponding rational quadratic space.
We write $(\cdot,\cdot)$ for the associated bilinear form, such that $(x,x)=2Q(x)$.
Associated with such a lattice is a finite quadratic module given by the discriminant group $A_{L} = L'/L$,
where $L'$ is the dual of $L$ with respect to $(\cdot,\cdot)$, and the reduction of $Q$ modulo $\Z$.
For simplicity, we assume in the introduction that $n$ is even.
The Weil representation \cite{Weil, shintani-weil, boautgra} $\rho_{L}$ of $\SL_2(\Z)$
associated with $L$ acts on the group ring $S_{L} = \C[A_{L}]$.
We write $\phi_{\mu}$ with $\mu \in A_L$ for the standard basis elements of $S_L$.
Moreover, we write $\langle \cdot, \cdot \rangle$ for the $\C$-bilinear pairing such that $\langle \phi_{\mu},\phi_{\mu} \rangle = 1$
and $\langle \phi_{\mu},\phi_{\lambda} \rangle = 0$ for $\lambda \neq \mu$.

We denote the space of $S_L$-valued \emph{weakly holomorphic modular forms}
of weight $k$ transforming with representation $\rho_{L}$ by $M_{k,L}^{!}$
and the spaces of holomorphic modular forms and cusp forms by $M_{k,L}$ and $S_{k,L}$, respectively.
We denote the hermitian symmetric domain associated with the orthogonal group $\SO_{V}(\R)$
by $\domain$ and realize $\domain$ as the Grassmannian of oriented $2$-dimensional positive definite subspaces of $V(\R) = V \otimes_{\Q} \R$.

A function $f: \uhp \rightarrow S_{L}$ is called a \emph{harmonic weak Maa\ss{} form} of weight $k$
and representation $\rho_{L}$ if it transforms like a modular form of weight $k$ with respect to $\rho_L$,
is harmonic with respect to the weight $k$ Laplace operator and grows at most exponentially at
the cusp $\infty$. We write $\calH_{k,L}$ for the space of such functions.
An element $f \in \calH_{k,L}$ admits a unique decomposition $f = f^{+}+f^{-}$
into a \emph{holomorphic part} $f^{+}$ and a \emph{non-holomorphic part} $f^{-}$.
We refer to Section \ref{chap:prelims} for more details.

The antilinear differential operator $\xi_{k}$ defined by
\begin{equation}
  \xi_k(f)(\tau) := 2i v^{k} \overline{\frac{\partial}{\partial\overline{\tau}} f(\tau)}
\end{equation}
plays an important role in the theory of harmonic weak Maa\ss{} forms.
It was shown by Bruinier and Funke \cite{brfugeom} that
$\xi_k:\mathcal{H}_{k,L} \longrightarrow M^{!}_{2-k,L^{-}}$
is surjective \cite[Theorem 3.7]{brfugeom} with kernel $M_{k,L}^{!}$.
Here, the lattice $L^{-}$ is given by the lattice $L$ together with the quadratic form $-Q$.
The associated Weil representation can be identified with the dual of
$\rho_{L}$. We denote by $H_{k,L} \subset \calH_{k,L}$ the subspace of those harmonic weak Maa\ss{} forms
that map to a cusp form under $\xi_k$.

Let $H = \GSpin_{V}$ be the general spin group, 
which is a central extension of the special orthogonal group $\SO_{V}$ and
let $K \subset H(\adeles_{f})$ be a compact open subgroup such that $K$ stabilizes $L$ and acts trivially on $A_{L}$; here, $\adeles_{f}$ are finite adeles of $\Q$.
There is a \emph{Shimura variety} $X_K$ with complex points
\[
  X_{K}(\C) = H(\Q) \bs (\domain \times H(\adeles_{f})/K).
\]
For an appropriate choice of $K$, the Siegel theta function $\Theta_{L}(\tau,z,h)$ 
attached to $L$ defines a non-holomorphic function on $X_K$ (here, $\tau \in \uhp$, $z \in \domain$, and $h \in H(\adeles_f)$).
In $\tau \in \uhp$, it transforms like a modular form of weight $(2-n)/2$ with representation $\rho_L$, but is also non-holomorphic.

The \emph{regularized theta lift} of $f \in H_{k,L}$ with $k = (2-n)/2$ is defined as
\begin{equation}
  \label{eq:intro-int1}
  \Phi_{L}(z,h,f) = \int_{\SL_2(\Z) \bs \uhp}^{\reg} \langle f(\tau), \overline{\Theta_{L}(\tau,z,h)} \rangle v^{k} \frac{dudv}{v^{2}};
\end{equation}
where $\tau = u+iv \in \uhp$ and the integral is regularized using Borcherds' regularization (following Harvey-Moore) \cite{boautgra}.
We obtain a $\Gamma_{L}$-invariant function $\Phi_{L}(z,h,f)$ which is real analytic outside a divisor $Z(f)$ 
given by codimension one sub-Grassmannians of $\domain$ depending only on the Fourier coefficients of $f^+$ with negative index, its \emph{principal part}.

\subsection{CM values}
\label{sec:cm-values}
We will now describe the CM points we are considering. 
Let $U \subset V(\Q)$ be a rational 2-dimensional positive definite subspace
of $V$. Then $U$ defines two rational points $z_{U}^{\pm}$ in $\domain$,
given by $U(\R)$ together with the two possible orientations.
Let $T = \GSpin_{U}$ and $K_{T} = K \cap T(\adeles_f)$.
We obtain a CM cycle in $X_K$ by considering the Shimura variety with complex points
\[
 Z(U)(\C) = T(\Q) \bs (\{z_{U}^{\pm}\} \times T(\adeles_f) / K_{T}) \hookrightarrow X_K(\C).
\]
For simplicity, we assume in the introduction that $L = P \oplus N$, where
$P = L \cap U$ is positive definite and $2$-dimensional and 
$N = L \cap U^{\perp}$ is a negative definite $n$-dimensional lattice in $V$.
Under this assumption, we have $S_L \cong S_P \otimes_\C S_N$ and 
$\Theta_{L}(\tau,z_{U}^{\pm},h) = \Theta_{P}(\tau,h) \otimes_\C \Theta_{N}(\tau)$ for $h \in T(\adeles_f)$.

Our first result is an analytic formula for the CM value $\Phi_L(z,h,f)$ for any $(z,h) \in Z(U)$.
\begin{theorem}
  \label{thm:cmval-intro}
  Let $f \in H_{1-n/2,L}$ and let $\pre{\Theta}_P(\tau,h) \in \calH_{1,P^{-}}$
  be a harmonic weak Maa\ss{} form of weight $1$
  with the property that $\xi_1(\pre{\Theta}_P(\tau,h)) = \Theta_{P}(\tau,h)$.
  Then for any $(z,h) \in Z(U)$ the value of $\Phi_L(z,h,f)$ is given by
  \begin{align} \label{eq:CMintro}
    \Phi_L(z,h,f) &= \CT \left( \langle f^+(\tau), \Theta_{N^{-}}(\tau) \otimes \prep{\Theta}_{P}(\tau,h) \rangle \right) \\
    &\quad - \int_{\SL_2(\Z) \bs \uhp}^{\reg} \langle \overline{\xi_{1-n/2}(f)}(\tau), \Theta_{N^{-}}(\tau) \otimes \pre{\Theta}_{P}(\tau,h) \rangle\, v^{1+n/2}\,\frac{dudv}{v^{2}}. \notag
  \end{align}
\end{theorem}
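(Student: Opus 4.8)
The plan is to apply the "unfolding against the theta kernel" strategy that is standard in this circle of ideas, but to exploit the seesaw factorization $\Theta_L = \Theta_P \otimes \Theta_N$ together with the fact that $\pre{\Theta}_P$ is a Maa{\ss} form preimage of $\Theta_P$ under $\xi_1$. The starting point is the definition \eqref{eq:intro-int1} of $\Phi_L(z,h,f)$ evaluated at a CM point $(z,h) \in Z(U)$, where the splitting $L = P \oplus N$ gives
\begin{equation*}
  \overline{\Theta_L(\tau,z_U^{\pm},h)} = \overline{\Theta_P(\tau,h)} \otimes \overline{\Theta_N(\tau)}.
\end{equation*}
Using that $\Theta_N$ is attached to the negative definite lattice $N$, I would rewrite $\overline{\Theta_N(\tau)}$ in terms of $\Theta_{N^-}(\tau)$, so that the integrand becomes a pairing of $f$ against $\overline{\Theta_P} \otimes \Theta_{N^-}$. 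The key analytic input is the identity $\xi_1(\pre{\Theta}_P(\tau,h)) = \Theta_P(\tau,h)$, which by definition of $\xi_1$ means $\overline{\Theta_P(\tau,h)}\, v = \text{(constant)} \cdot \overline{\partial_{\bar\tau} \pre{\Theta}_P}$ up to the appropriate power of $v$; this lets me replace the antiholomorphic kernel $\overline{\Theta_P}$ by a $\bar\partial$-derivative of $\pre{\Theta}_P$.

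The main body of the argument is then an integration by parts (Stokes' theorem on the truncated fundamental domain $\fundd_T = \{\tau : v \le T\}$). After substituting the $\xi_1$-relation, the integrand takes the shape $\langle f, (\bar\partial \text{ of } \pre{\Theta}_P) \otimes \Theta_{N^-}\rangle$ times the correct power of $v$, and I would recognize this as $\bar\partial$ of a total expression plus a term where the derivative falls on $f$ instead of on $\pre{\Theta}_P$. The derivative on $f$ produces, via the definition of $\xi_{1-n/2}$, precisely the factor $\overline{\xi_{1-n/2}(f)}$, which gives the second (integral) term on the right-hand side of \eqref{eq:CMintro}. The total-derivative term is converted by Stokes into a boundary integral over the line $v = T$, and the decomposition $f = f^+ + f^-$ together with the Fourier expansions of $\Theta_{N^-}$ and $\pre{\Theta}_P$ lets me compute the $T \to \infty$ limit of this boundary term; the surviving contribution is exactly the constant term $\CT(\langle f^+, \Theta_{N^-} \otimes \prep{\Theta}_P\rangle)$ in the $q$-expansion, which is the first term of \eqref{eq:CMintro}.

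The hard part will be the careful bookkeeping of the regularization and the boundary term as $T \to \infty$: because $f$ grows exponentially at $\infty$ and $\pre{\Theta}_P$ itself has a nonholomorphic part with its own growth, one must check that the regularized integral and the Stokes boundary term each converge (or that their divergent pieces cancel), and that only the holomorphic part $f^+$ paired against the holomorphic part $\prep{\Theta}_P$ survives in the constant term. Concretely, I would split $\pre{\Theta}_P = \prep{\Theta}_P + \pre{\Theta}_P^-$ and $f = f^+ + f^-$, expand everything into Fourier series in $u$, and verify that the non-holomorphic cross terms either integrate to zero against $\int_0^1 du$ (by orthogonality of the characters $e(mu)$) or are absorbed into the regularization. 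I expect the delicate point to be justifying that Borcherds' regularization of \eqref{eq:intro-int1} is compatible with the integration by parts — i.e. that one may interchange the regularizing limit with Stokes' theorem — which is the technical heart of the computation and where the assumption $f \in H_{1-n/2,L}$ (so that $\xi_{1-n/2}(f)$ is a cusp form, hence rapidly decaying) is essential to control the second integral on the right-hand side.
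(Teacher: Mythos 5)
Your proposal is correct and follows essentially the same route as the paper's proof of Theorem \ref{thm:value-phiz}: replace $\overline{\Theta_P(\tau,h)}$ by a $\bar\partial$-derivative of $\pre{\Theta}_P(\tau,h)$ (Lemma \ref{lem:preimdifform}), integrate by parts over $\calF_T$, identify the $T\to\infty$ limit of the boundary term as $\CT\left(\langle f^+,\Theta_{N^-}\otimes\prep{\Theta}_{P}\rangle\right)$, and recognize the term where $\bar\partial$ falls on $f$ as the regularized integral against $\overline{\xi_{1-n/2}(f)}$. The one place the paper deviates from your plan is your final point: it does not control the second integral by the decay of the cusp form $\xi_{1-n/2}(f)$ (that alone would not suffice, since $\pre{\Theta}_P$ may grow exponentially because of its principal part); instead it first invokes Lemma \ref{lem:philim} to write the regularized lift as a truncated limit with an $A_0\log T$ subtraction --- so Stokes is applied to honest truncated integrals and no interchange of limits is needed --- uses the exponential decay of $f^-$ (which is where cuspidality of $\xi_{1-n/2}(f)$ enters) to kill the boundary cross terms, and then deduces the existence of the limit defining the second integral a posteriori from the existence of $\Phi_L(z,h,f)$ and the vanishing of those boundary pieces.
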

Here, we write $\CT( \cdot )$ for the constant term in the Fourier expansion. It is a finite sum of products of coefficients of
$f^{+}$ and $\Theta_{N^{-}}(\tau) \otimes \prep{\Theta}_{P}(\tau,h)$.
Note that if $f$ is weakly holomorphic, then the regularized integral above vanishes, as $\xi_{1-n/2}(f) = 0$ in that case.
Moreover, in this case the left-hand side of \eqref{eq:CMintro} is the logarithm of a \emph{Borcherds product} \cite{boautgra},
a meromorphic modular form for $\Gamma_L$ with divisor $Z(f)$.
In particular, we obtain a formula for CM values of Borcherds products which involves only a finite number
of coefficients of $\prep{\Theta}_{P}(\tau,h)$ weighted by representation numbers of the lattice $N$
and the coefficients of $f^{+}$. Note that $\pre{\Theta}_{P}(\tau,h)$
is not uniquely determined. It can be modified by adding \emph{any} weakly holomorphic modular form
and the theorem is still valid.

If we consider the weighted average value of $\Phi_L(z,h,f)$ over the CM-cycle $Z(U)$,
we obtain a slight generalization of the results of Schofer \cite{schofer} (who considered weakly holomorphic $f$) and Bruinier-Yang \cite{bryfaltings}.
Their formula involves the function $\calE_P(\tau)$, a special value of the derivative of an \emph{incoherent Eisenstein series}.
The coefficients of $\calE_P(\tau)$ have been calculated in many cases, 
also by Schofer \cite{schofer}, Bruinier and Yang \cite{bryfaltings} and Kudla and Yang \cite{KudlaYangEisenstein}. 
The Eisenstein series $\calE_P(\tau)$ is a harmonic weak Maa\ss{} form and has the property that 
$\xi_{1}(\calE_P(\tau)) = E_P(\tau)$ is the genus Eisenstein series attached to $P$.
Using these explicit formulas, 
it was shown in \cite{kry-tiny} and \cite{bryfaltings}, that $\calE_P(\tau)$ is in fact the generating
series of the arithmetic degrees of certain special cycles on an arithmetic moduli stack.

\subsection{Harmonic weak Maa\ss{} forms and arithmetic geometry}
\label{sec:arithm-geom-harm}
Motivated by these results and to make Theorem \ref{thm:cmval-intro} more explicit, we study the coefficients of the holomorphic parts
of appropriate choices for $\pre{\Theta}_{P}(\tau,h)$ and their arithmetic meaning.
We will show that they are also related to the special cycles
studied by Kudla, Rapoport and Yang \cite{kryderivfaltings} and give a new proof 
of the modularity of the degree generating series.
We let $\kb$ be an imaginary quadratic number field of discriminant $D < 0$. 
We assume in the introduction that $D=-l$ for a prime $l \equiv 3 \bmod{4}$ with $l>3$
(so that the class number of $\kb$ is odd and there is only one genus). We will
remove this restriction in the body of the paper and work with odd fundamental discriminants.
We write $\Clk$ for the class group of $\kb$ and $h_\kb$ denotes the class number of $\kb$.
Let $P = \calO_\kb$ be the ring of integers in $\kb$, which is a lattice of type $(2,0)$
together with the quadratic form $Q(x) = \N(x) = xx'$, where $x \mapsto x'$ denotes
the non-trivial Galois automorphism of $\kb$.
The dual lattice $P' = \different{\kb}^{-1}$ is given by the inverse different in $\kb$
and the discriminant group $P'/P$ is cyclic of order $\abs{D}$.
If we let $K = \hat\calO_\kb^\times = \calO_\kb^\times \otimes_Z \hat \Z$
and denote $H = \GSpin_V$ for $V = \kb$, then $H(\adeles_f)$
is given by the finite ideles $\adeles^\times_{\kb,f}$ over $\kb$,
$\domain = \{ z^\pm \}$ consists of two points (given by $\kb \otimes_\Q \R$ with the two possible choices of orientation), and
$X_K(\C)$ is given by two copies of the class group (see also Lemma \ref{lem:ZUCl})
\[
  X_K(\C) = H(\Q) \bs (\{ z^\pm \} \times H(\adeles_f) / K) \cong \{ z^\pm \} \times \Clk.
\]
Moreover, if $\fraka$ is the ideal determined by the idele $h$, we can identify the theta function $\Theta_P(\tau,h) = \Theta_P(\tau,z^\pm, h)$ with
\[
  \Theta_{\fraka^2}(\tau) = \sum_{\mu \in \different{\kb}^{-1}\fraka^2/\fraka^2}\sum_{x \in \fraka^2 + \mu} e\left(\frac{\N(x)}{\N(\fraka^2)}\tau\right) \phi_\mu,
\]
which only depends on the class $[\fraka]^2 \in \Clk$ of $\fraka^2$.
Note that since the class number $\Clk$ is odd, every class $[\frakb] \in \Clk$
is of the form $[\frakb] = [\fraka]^2$ for some $\fraka$ (i.e. there is only one genus).

Let $C_{D}$ be the (Deligne-Mumford) moduli stack of elliptic curves with complex multiplication
by $\calO_\kb$. The coarse moduli scheme of $C_{D}$
is isomorphic to $\Spec \calO_{\Hkb}$, where $\Hkb$ is the Hilbert class field of $\kb$.
Kudla, Rapoport and Yang define cycles $\calZ(m)$ on this arithmetic curve
given by elliptic curves with certain ``special endomorphisms''. These endomorphisms
only occur in positive characteristic and the cycles $\calZ(m)$ are always supported
in the fiber above a unique prime $p$, which is non-split in $\kb$.
They showed that the degree generating series
\begin{equation}
  \label{eq:eisholdeg}
  \sum_{m \in \frac{1}{\abs{D}}\Z_{>0}} \widehat\deg\, \calZ(m) e(m\tau) (\phi_{m}+\phi_{-m}) + 2 \Lambda'( \chi_{D}, 0) \phi_0,
\end{equation}
is the holomorphic part of $-h_\kb \calE_P(\tau)$.
Here, we simply wrote $\phi_m$ for $\phi_\mu$ if $m \in \Z$ is an integer with 
${Q(\mu) + m \in \Z}$ and $\Lambda'(\chi_{D}, s)$ denotes the derivative of the completed Dirichlet $L$-function $\Lambda(\chi_{D},s) = \abs{D}^{\frac{s}{2}} \pi^{-\frac{s+1}{2}} \Gamma\left(\frac{s+1}{2}\right) L(\chi_D,s)$ for the character $\chi_{D} = \kronecker{D}{\cdot}$. 
If $m$ is not represented by $-Q$ modulo $\Z$, then the corresponding coefficient vanishes.
The modularity of the generating series \eqref{eq:eisholdeg} is proven in \cite{kry-tiny} by
explicitly computing the Fourier coefficients of $\calE_P(\tau)$ and the arithmetic degrees separately.

As no ``explicit'' construction of the forms $\pre{\Theta}_{P}(\tau,h)$ is known, we have to resort to a different method
to show a relation to the cycles $\calZ(m)$.
To state this second result in more detail, we write the pushforward (which has to be appropriately normalized) 
of the cycle $\calZ(m)$ to $\Spec \calO_{\Hkb}$ as an Arakelov divisor with vanishing archimedean contribution as
\[
  \calZ(m) = \sum_{\frakP \subset \calO_{\Hkb}} \calZ(m)_{\frakP} \frakP,
\]
where the sum runs over all nonzero prime ideals of $\calO_{\Hkb}$.
The arithmetic degree above is obtained as
\[
  \widehat\deg\, \calZ(m) = \sum_{\frakP \subset \calO_{\Hkb}} \calZ(m)_{\frakP} \log\N_{\Hkb/\Q}(\frakP).
\]

We obtain the following result which follows from Theorem \ref{thm:pre-pvals} in Section \ref{sec:arith-pullback}.
\begin{theorem}
  \label{thm:intro-pre-pvals}
  For every $[\fraka] \in \Clk$ there is a harmonic weak Maa\ss{} form
  $\pre{\Theta}_{\fraka^2}(\tau) \in \calH_{1,P^{-}}$ with holomorphic part
  \[
  \prep{\Theta}_{\fraka^2}(\tau) = \sum_{\substack{m \gg -\infty \\ m \in \frac{1}{\abs{D}}\Z}} c^{+}(\fraka^2,m) e(m \tau)  (\phi_{m} + \phi_{-m})
  \]
  satisfying the following properties.
  \begin{enumerate}
  \item We have $\xi_1(\pre{\Theta}_{\fraka^2}(\tau)) = \Theta_\fraka(\tau)$ for every $\fraka$ and the harmonic weak Maa\ss{} form
    \begin{equation}
      \label{eq:intro-eis-pre}
      \pre{E}_P(\tau) := \frac{1}{h_\kb}\sum_{[\fraka] \in \Clk} \pre{\Theta}_{\fraka^{2}}(\tau)
    \end{equation}
    satisfies $\xi_1(\pre{E}_{P}(\tau)) = E_P(\tau)$, where $E_P(\tau)$ is the Eisenstein series attached to $P$ and the Fourier coefficients of negative index of $\pre{E}_{P}^+(\tau)$ vanish.
  \item For all $m \in \frac{1}{\abs{D}}\Z$ with $\chi_{D}(\abs{D}m)=1$, we have $c^{+}(\fraka^2,m) = 0$.
  \item If $\chi_{D}(\abs{D}m) \neq 1$, then
    \[
      c^{+}(\fraka,m) = - \frac{2}{r} \log\abs{\alpha(\fraka^2, m)},
    \]
   where $\alpha(\fraka,m) \in \calO_{\Hkb}$ and $r \in \Z_{>0}$ depends only on $D$.
  \item 
    Furthermore, we have for $m > 0$ that
    \begin{equation*}
      \label{eq:shimrepintro}
      \ord_{\mathfrak{P}}(\alpha(\fraka^2,m)) = 2 r \calZ(m)_{\mathfrak{P}^{\sigma}}
    \end{equation*}
    for all prime ideals $\frakP \subset \calO_{\Hkb}$, where $\sigma = \sigma(\fraka^{-1}) \in \Gal(\Hkb/\kb)$
    corresponds to the image of the ideal class of $\fraka^{-1}$ under the Artin map of class field theory.
  \item If $m < 0$ with $\chi_{D}(-Dm) \neq 1$, then $\alpha(\fraka,m) \in \calO_{\Hkb}^{\times}$.
  \end{enumerate}
\end{theorem}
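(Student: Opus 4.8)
The plan is to construct each $\pre{\Theta}_{\fraka^2}$ as a preimage under $\xi_1$ of the weight-one theta series $\Theta_{\fraka^2}$ and then to read off the arithmetic of its holomorphic Fourier coefficients by combining the CM-value formula of Theorem~\ref{thm:cmval-intro} with the computation of $\calE_P$ in~\cite{kry-tiny} and a Galois-descent argument. First I would invoke the surjectivity of $\xi_1\colon\calH_{1,P^-}\to M^!_{1,P}$ from~\cite{brfugeom} to produce, for each ideal class, a form $\pre{\Theta}_{\fraka^2}$ with $\xi_1(\pre{\Theta}_{\fraka^2})=\Theta_{\fraka^2}$; this is part~(i). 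Because $D=-l$ is prime the class number $h_\kb$ is odd, so squaring is a bijection on $\Clk$ and the genus (Siegel--Weil) identity reads $\tfrac{1}{h_\kb}\sum_{[\fraka]}\Theta_{\fraka^2}=E_P$. Averaging the preimages then gives $\pre{E}_P$ as in~\eqref{eq:intro-eis-pre} with $\xi_1(\pre{E}_P)=E_P$. The normalization that the singular (negative-index) coefficients of $\pre{E}_P^+$ vanish is arranged by subtracting from the average a suitable weakly holomorphic form; this is possible precisely because $E_P$ is holomorphic, so the Bruinier--Funke obstruction to removing the principal part of the average (its pairing with weight-one cusp forms) vanishes.

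The heart of the matter is the algebraicity statements (ii)--(iii). I would feed weakly holomorphic inputs $f\in M^!_{1-n/2,L}$ into~\eqref{eq:CMintro}; since $\xi_{1-n/2}(f)=0$ the regularized integral vanishes, so for each $(z,h)\in Z(U)$ the value $\Phi_L(z,h,f)$ equals the finite sum $\CT(\langle f^+,\Theta_{N^-}\otimes\prep{\Theta}_P(\cdot,h)\rangle)$, which is a $\Q$-linear combination of the coefficients $c^+(\fraka^2,m)$ whose multipliers come from the (integral) Fourier coefficients of $f^+$ and the representation numbers of $N$. On the other hand this value equals $-2\log|\Psi_f(z,h)|$ for the Borcherds product $\Psi_f$ of~\cite{boautgra}, and by Shimura reciprocity together with the theory of CM values of Borcherds products (as in~\cite{schofer}) the number $\Psi_f(z,h)$ is, up to a root of unity, an element of $\Hkb^\times$ whose ideal factorization is supported above the unique non-split prime $p$. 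Varying $f$ and the auxiliary lattice $N$ to isolate a single coefficient (inducting on $m$) and clearing the bounded denominators produces the integer $r$ depending only on $D$ and yields $c^+(\fraka^2,m)=-\tfrac{2}{r}\log|\alpha(\fraka^2,m)|$ with $\alpha(\fraka^2,m)\in\calO_{\Hkb}$, which is~(iii). The split indices of~(ii) are exactly those for which the divisor of $\Psi_f$ meets no CM point and the cycle $\calZ(m)$ is empty, forcing the coefficient to vanish; the negative non-split indices of~(v) contribute only at the archimedean place, so there $\alpha(\fraka^2,m)$ is a unit.

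To pin down the prime factorization~(iv) I would compare the two normalized preimages $\pre{E}_P$ and $\calE_P$ of $E_P$: they differ by an element of $\ker\xi_1=M^!_{1,P}$, and the normalization in part~(i) forces the holomorphic part of $\pre{E}_P$ to coincide with that of $\calE_P$ up to a holomorphic weight-one form that is killed by the full class-group average. Hence the index-$m$ coefficient of $\pre{E}_P^+$ equals that of $\calE_P^+$, which by~\cite{kry-tiny} and~\eqref{eq:eisholdeg} is $-\tfrac{1}{h_\kb}\widehat{\deg}\,\calZ(m)=-\tfrac{1}{h_\kb}\sum_{\frakP}\calZ(m)_{\frakP}\log\N_{\Hkb/\Q}(\frakP)$. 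Matching this with $\tfrac{1}{h_\kb}\sum_{[\fraka]}c^+(\fraka^2,m)$ identifies $\prod_{[\fraka]}\alpha(\fraka^2,m)^{r}$ with an algebraic integer whose factorization is dictated by $\calZ(m)$, and the descent to individual classes is governed by the action of $\Gal(\Hkb/\kb)\cong\Clk$ on the CM points of $Z(U)$, which permutes the forms $\pre{\Theta}_{\fraka^2}$ and carries $\alpha(\fraka^2,m)$ to its conjugates. Tracking this action through Shimura reciprocity gives $\ord_{\frakP}(\alpha(\fraka^2,m))=2r\,\calZ(m)_{\frakP^{\sigma}}$ with $\sigma=\sigma(\fraka^{-1})$, which is~(iv).

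\textbf{The main obstacle} is precisely this final descent. Since no explicit formula for the weight-one forms $\pre{\Theta}_{\fraka^2}$ is available, one cannot compute the individual coefficients directly and must instead transport the purely rational, class-number-averaged information carried by $\calE_P$ and the cycles $\calZ(m)$ to the individual Galois components on $\Spec\calO_{\Hkb}$. Making the Galois equivariance of the CM values of Borcherds products precise enough to match the Artin-map shift $\frakP\mapsto\frakP^{\sigma}$ is the delicate point, and it is where Theorem~\ref{thm:cmval-intro}, Shimura reciprocity, and the Kudla--Rapoport--Yang arithmetic must be combined with care.
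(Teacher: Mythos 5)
Your part (i) is essentially the paper's own argument: the Bruinier--Funke pairing $\{g,\pre{E}_P\}=(g,E_P)$ vanishes for every cusp form $g$ since Eisenstein series are orthogonal to cusp forms, so a preimage of $E_P$ with vanishing principal part exists (Proposition \ref{SumG=E} and the discussion preceding it), and the spirit of your attack on (ii), (iii), (v) --- evaluate the CM-value formula on weakly holomorphic inputs and compare with $-2\log\abs{\Psi_f}$ --- does match the paper's seesaw strategy (Section \ref{sec:seesaw-identity}). However, the arithmetic input you invoke there is too weak. Schofer's and Bruinier--Yang's results \cite{schofer,bryfaltings} compute the \emph{average} of the lift over the CM cycle via the incoherent Eisenstein series; they say nothing about an \emph{individual} CM value $\Psi_f(z,h)$ beyond what reciprocity gives (that a power of it lies in $\Hkb$). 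The assertion that its ideal factorization is supported above the nonsplit prime, and the formula for $\ord_{\frakP}$, require the arithmetic intersection theory of \cite{ehlen-intersection} on an integral model of $X_0(A\abs{D})$: the Borcherds product is a rational function whose divisor on the integral model is the flat closure of $Z(f)$ (Theorem \ref{thm:bopintegral}), its intersection with the CM cycle is proper (Proposition \ref{prop:properint}), and Proposition \ref{prop:pullback-ord} then converts $\ord_{\frakP}$ of the CM value into the cycle multiplicities $\calZ(m,\fraka,\beta)_{\frakP}$. Moreover, ``varying $f$ to isolate a single coefficient'' presumes a weakly holomorphic form with principal part $q^{-m}(\phi_{\beta}+\phi_{-\beta})$ exists; when $S_{1,P^-}\neq 0$ it generally does not. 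The paper works with corrected forms $f_{m,\beta}$ (Proposition \ref{prop:fbm}), and the coefficients at the obstructed indices $(n_j,\beta_j)$ cannot be computed by any Borcherds product at all: they are \emph{chosen}, by adding cusp forms to the preimage, so that the valuation formula holds there (Lemma \ref{lem:pos-index-1}). Finally, the uniformity of $r$ in $m$ is not a matter of clearing bounded denominators: it rests on the integrality of Weyl vectors and the finiteness of the torsion of the Jacobian of $X_0(N)$ over $\Q$ (Mordell--Weil), via an induction multiplying by powers of $j$ (Proposition \ref{prop:independence}).

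The most serious gap is your derivation of (iv), which runs in the wrong logical direction and cannot be repaired. Comparing $\pre{E}_P$ with $\calE_P$ and invoking \cite{kry-tiny} yields, for each $m$, a single real identity $\sum_{[\fraka]}\log\abs{\alpha(\fraka^2,m)} = c\,\deghat\,\calZ(m) = c\sum_{\frakP}\calZ(m)_{\frakP}\log\N_{\Hkb/\Q}(\frakP)$; a norm-type equality of this kind determines neither the ideal generated by any individual $\alpha(\fraka^2,m)$ nor even that of their product. Galois equivariance $\alpha(\fraka^2,m)=\alpha(1,m)^{\sigma}$ does not rescue this: inserting it and summing over the class group merely reproduces the degree identity, since $\sum_{\sigma}\calZ(m)_{\frakP^{\sigma}}\log\N_{\Hkb/\Q}(\frakP)$ is again the degree, so no information about a single $\calZ(m)_{\frakP}$ can be extracted. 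The paper's logic is precisely the reverse: it proves the individual valuation formula (iv) at $h=1$ directly from Proposition \ref{prop:pullback-ord}, extends it to all $h$ by Shimura reciprocity (Lemmas \ref{lem:idelesact} and \ref{lem:Psirat-idelesact}) together with the Galois action on the cycles, and only then \emph{deduces} the degree identity and the modularity of the Kudla--Rapoport--Yang generating series (Theorem \ref{thm:arithgen}) as a corollary --- which is exactly why the paper can claim a new proof of that modularity. The explicit formulas for $\calE_P$ are never an ingredient of Theorem \ref{thm:pre-pvals}; they enter only afterwards, in Corollary \ref{cor:calE=preE}, to identify $\pre{E}_P$ with $\calE_P$.
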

Note that the theorem implies that we can write
$c^{+}(\fraka,m) = \log(\beta(\fraka,m))$, where $\beta(\fraka,m)$ is contained in a finite
field extension of $\Hkb$.

The idea of the proof of Theorem \ref{thm:intro-pre-pvals} (or Theorem \ref{thm:pre-pvals} in the body of the paper) 
is to use certain seesaw dual reductive pairs \cite{Kudla-seesaw}.
The following simple identity is central for our argument:
\begin{equation}
  \label{eq:seesawintro1}
  \langle f(\tau) \otimes_\C g(\tau), \overline{\Theta_P(\tau, h) \otimes_\C \Theta_N(\tau)} \rangle = \Phi_P(z_U^\pm,h,f) \cdot \langle g(\tau), \overline{\Theta_N(\tau)} \rangle,
\end{equation}
for $f$ valued in $S_P$ and $g$ valued in $S_N$ and where $h \in T(\adeles_f)$.
We use this together with Theorem \ref{thm:cmval-intro} to relate the coefficients of $\prep{\Theta}_{P}(\tau,h)$ (occurring via Theorem \ref{thm:cmval-intro} in the first factor on the right-hand side of \eqref{eq:seesawintro1}) to Borcherds products of weight zero on modular curves (the left-hand side of \eqref{eq:seesawintro1} in a special case).
These Borcherds products are modular functions with special divisors. The second factor on the right-hand side can be shown to be constant for a special choice of $g$
giving the desired relation.
We then apply the main result of \cite{ehlen-intersection} which relates the CM values of Borcherds products to the special cycles.

Duke and Li \cite{DukeLiMock} independently obtained related results
on the coefficients of such harmonic weak Maa\ss{} forms 
in the case of a prime discriminant using different methods.
Based on numerical evidence, they formulated a conjecture on the prime factorization of
the algebraic numbers $\alpha(\fraka,m)$.
To state the conjectured formula, which we prove in this paper, we need a bit more notation.
For $m \in \Q_{>0}$, we define a set of rational primes by
\begin{equation*}
  \label{eq:Diff-intro}
  \Diff(m) = \{ p < \infty\ \mid\ (-m, D)_p = -1 \},
\end{equation*}
where $(\cdot, \cdot)_p$ denotes the $p$-adic Hilbert symbol \cite{serrearith}.
Moreover, let
\begin{equation*}
  \nu_{p}(m) =
  \begin{cases}
    \frac{1}{2}(\ord_{p}(m)+1), & \text{if $p$ is inert in $\kb$},\\
    \ord_{p}(m \abs{D}), & \text{if $p$ is ramified in $\kb$},
  \end{cases}
\end{equation*}
and, finally $o(m) = 1$ if $\ord_{l}(m\abs{D})>0$ and $o(m) = 0$, otherwise.
The following formula has been conjectured in \cite[p. 46]{DukeLiMock} (stated slightly differently). We fix the embedding of $\kb$ into $\C$, such that $\Im(\sqrt{D})>0$ and identify $\Hkb = \kb(j)$, where $j$ is equal to the $j$-invariant $j\left( \tfrac{1+\sqrt{D}}{2} \right) = j(\calO_\kb)$.
\begin{theorem}\label{thm:dukeliconj}
Assume that $l>3$ and let $m \in \frac{1}{\abs{D}}\Z_{>0}$.
Then $\alpha(\fraka, m) \in \calO_{\Hkb}^\times$ unless $\abs{\Diff(m)} = 1$.
Thus, assume that $\Diff(m) = \{p\}$ and let $\frakP_{0} \mid p$ be the unique prime above $p$
fixed by complex conjugation. If $\frakP = \frakP_{0}^{\sigma(\frakb)}$ for any fractional ideal $\frakb$ of $\kb$, then
\[  
   \ord_{\frakP}(\alpha(\fraka^2, m)) = 2^{o(m)} \cdot r \cdot \sum_{n \geq 1} \rho\left( \frac{m\abs{D}}{p^{n}}, [\fraka]^2[\frakb]^{-2}\right) 
                                                              = 2^{o(m)} r \cdot \nu_{p}(m) \rho(m\abs{D}/p, [\fraka]^{2}[\frakb]^{-2}),
\]
where $\rho(m, \calC)$ is the number of integral ideals of $\calO_\kb$ of norm $m$ in the class $\calC \in \Clk$.
\end{theorem}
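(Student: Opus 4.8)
The plan is to reduce the statement to an explicit evaluation of the local multiplicities of the Kudla-Rapoport-Yang cycle $\calZ(m)$, using the arithmetic identity already supplied by Theorem \ref{thm:intro-pre-pvals}. Since $m \in \frac{1}{\abs{D}}\Z_{>0}$, part (iv) of that theorem gives $\ord_{\frakP}(\alpha(\fraka^2,m)) = 2r\,\calZ(m)_{\frakP^{\sigma}}$ with $\sigma = \sigma(\fraka^{-1})$, so everything is governed by the cycle $\calZ(m)$. First I would dispose of the vanishing assertion: the cycles $\calZ(m)$ are supported in the fiber over the unique finite place at which the relevant (incoherent) quadratic space fails to represent $-m$, and this place is exactly the prime $p$ singled out by $(-m,D)_p = -1$. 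Hence $\calZ(m) = 0$ unless $\Diff(m)$ is a single prime, which by part (iv) forces $\alpha(\fraka,m)$ to be a unit whenever $\abs{\Diff(m)} \neq 1$. The support prime $p$ is non-split in $\kb$ because $(-m,D)_p = -1$ can only occur when $D$ is a nonsquare locally.

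For the main identity I work in the fiber over $p$, where $\Diff(m) = \{p\}$. A geometric point of $\calZ(m)$ over $\overline{\F}_p$ is a pair $(E,\beta)$, where $E$ is an elliptic curve with complex multiplication by $\calO_\kb$ and $\beta$ is a special endomorphism (one anticommuting with the $\calO_\kb$-action) of the prescribed norm. Since $p$ is non-split the reduction $E$ is supersingular, its endomorphism algebra is the definite quaternion algebra $B_{p,\infty}$ ramified precisely at $p$ and $\infty$, the embedding $\kb \hookrightarrow B_{p,\infty}$ exists, and the special endomorphisms are exactly the elements of the orthogonal complement of $\kb$. The norm form on this rank-two space is a rescaling of $\N_{\kb/\Q}$, so counting special endomorphisms of norm $m\abs{D}$ becomes counting integral ideals of $\calO_\kb$ of the associated norm, which is the source of the representation numbers $\rho$.

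Next I would track the distribution of the cycle over the primes $\frakP \mid p$ of $\calO_{\Hkb}$ via the action of $\Gal(\Hkb/\kb) \cong \Clk$. The CM elliptic curves form a torsor under $\Clk$ compatibly with the Galois action on $\Spec \calO_{\Hkb}$, and under the dictionary between optimal embeddings and ideals the component attached to $[\fraka]$ together with the prime $\frakP = \frakP_0^{\sigma(\frakb)}$ selects precisely the integral ideals of norm $m\abs{D}/p^{n}$ lying in the class $[\fraka]^2[\frakb]^{-2}$; combining with $\sigma = \sigma(\fraka^{-1})$ in part (iv) produces exactly this class and pins down $\frakP_0$ as the prime fixed by complex conjugation. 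Finally, the local intersection multiplicity at each such point -- the length of the corresponding deformation ring, computed by the Gross theory of quasi-canonical liftings together with the Gross-Keating local analysis -- contributes the depth weighting that, summed over $n \geq 1$, yields the factor $\nu_p(m)$, with the extra $2^{o(m)}$ appearing in the ramified case $l \mid m\abs{D}$; the collapse of $\sum_{n\geq 1}\rho(m\abs{D}/p^{n},\,\cdot\,)$ to $\nu_p(m)\,\rho(m\abs{D}/p,\,\cdot\,)$ is a representation-number identity for non-split $p$, giving the second equality.

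I expect the main obstacle to be precisely this last step: reconciling the geometric deformation-length weightings of $\calZ(m)$ with the clean arithmetic representation-number formula, confirming the correct local multiplicity $\nu_p(m)$ and the factor $2^{o(m)}$, and verifying that the class-group bookkeeping delivers exactly $[\fraka]^2[\frakb]^{-2}$. The inert and ramified cases must be treated separately, and it is there that the detailed local computation -- rather than the formal machinery already assembled -- is unavoidable.
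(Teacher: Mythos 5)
Your proposal is correct and takes essentially the same route as the paper: part \emph{(iv)} of Theorem \ref{thm:intro-pre-pvals} reduces the claim to the cycle multiplicities $\calZ(m)_{\frakP}$, the explicit formula of Proposition \ref{prop:Zprime-intro} (quoted from \cite{ehlen-intersection}) gives $2^{o(m)-1}\nu_{p}(m)\rho(m\abs{D}/p,\cdot)$ together with the class-group bookkeeping, and Lemma \ref{lem:duklivsours} supplies the collapse of $\sum_{n\geq 1}\rho\left(m\abs{D}/p^{n},\cdot\right)$ to $\nu_{p}(m)\rho(m\abs{D}/p,\cdot)$. The only difference is that where you propose to re-derive the cycle multiplicities from the supersingular quaternion-algebra description and Gross's theory of quasi-canonical liftings, the paper simply cites that formula from \cite{ehlen-intersection} (which builds on Kudla--Rapoport--Yang), so your extra local computation reproduces a quoted input rather than introducing a genuinely different argument.
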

Theorem \ref{thm:dukeliconj} follows from the new relation \emph{(iv)} of the coefficients to the special cycles, 
together with Proposition \ref{prop:Zprime-intro} below.
We note that Duke and Li also conjectured that $r \mid 24 h_\kb h_{\Hkb}$, where $h_{\Hkb}$ is the class number of $\Hkb$, which does not
follow from our proof (we do get a bound on $r$ but it might be larger than what they conjecture).
See Section \ref{sec:conjecture-duke-li} for details on the scalar-valued case as in \cite{DukeLiMock} and Corollary \ref{cor:scalar-pval} for a generalization to composite fundamental discriminants.

As a corollary of our results (see Theorem \ref{thm:arithgen}), we are also able to show that \eqref{eq:eisholdeg} 
is (up to a constant) the holomorphic part of a harmonic weak Maa\ss{} form $\pre{E}_{P}(\tau)$ 
without using any explicit formulas and therefore give a completely new proof of the modularity of the (degree) generating series of these cycles.
This follows essentially from \emph{(i)} together with \emph{(iv)} in Theorem \ref{thm:intro-pre-pvals} because taking the sum as in \emph{(i)}
in fact computes the degree of the cycles by \emph{(iv)}. See also Section \ref{sec:consequences}.
In Section \ref{sec:br-mod}, we also consider a holomorphic analogue of the generating series \eqref{eq:eisholdeg},
which can be seen as the generating series of the cycles $\calZ(m)$ equipped with an automorphic Green function,
and show that is modular, as well.

In \cite{ehlen-intersection}, we gave formulas for the multiplicities of
the cycles $\calZ(m)$ which provide a prime ideal decomposition of the algebraic numbers
$\alpha(\fraka,m)$ in Theorem \ref{thm:intro-pre-pvals}. In the case of a prime discriminant
the formulas are particularly simple and explicit.

\begin{proposition}[Proposition 2 in \cite{ehlen-intersection}]
  \label{prop:Zprime-intro}\ 
  \begin{enumerate}
  \item We have $\calZ(m)_{\frakP} = 0$ unless $\abs{\Diff(m)} = 1$.
  \item Assume that $\Diff(m) = \{p\}$. 
   Let $\frakP_0 \mid p$ be the unique prime ideal that is fixed by complex conjugation, $\overline\frakP_{0} = \frakP_{0}$.
    For $\frakP = \frakP_{0}^{\sigma}$, where $\sigma = \sigma(\fraka) \in \Gal(\Hkb/\kb)$
    corresponds to the ideal class of $\fraka$ under the Artin map, we have
    \[
      \calZ(m)_{\frakP} = 2^{o(m)-1} \nu_{p}(m) \rho(m\abs{D}/p, [\fraka]^{-2}).
    \]
  \end{enumerate}
\end{proposition}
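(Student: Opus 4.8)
The plan is to reduce the computation of $\calZ(m)_{\frakP}$ to a local problem in the supersingular fibers of $C_D$ and to separate it into two independent pieces: a horizontal count of the CM points that carry a special endomorphism of the correct norm, and a vertical length measuring how far such a pair deforms. First I would recall that a point of $\calZ(m)$ is a pair $(E,x)$ consisting of a CM elliptic curve $E$ together with a special endomorphism $x \in \End(E)$ satisfying $x'=-x$, whose norm is pinned down by $m$. Since such trace-zero endomorphisms exist only when $\End(E)$ is non-commutative, that is, when $E$ is supersingular, the cycle is automatically supported in the fibers above primes $p$ that are non-split in $\kb$; over such a prime $\End(E)$ is a maximal order in the definite quaternion algebra $B_{p,\infty}$ ramified at exactly $p$ and $\infty$, and the special endomorphisms form the trace-zero part orthogonal to the image of $\kb$.

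For part (i) I would run a local-global argument on this quaternion algebra. The norm form on the $\Q$-vector space of special endomorphisms represents $m$ locally at a place $v$ precisely when a single Hilbert symbol is trivial there, and the places of obstruction are exactly the $p$ with $(-m,D)_p=-1$, i.e. the set $\Diff(m)$. Because $-m<0$ and $D<0$ force $(-m,D)_\infty=-1$, the product formula makes $\abs{\Diff(m)}$ odd. If $\abs{\Diff(m)}\ge 3$ there is no non-split prime at which a global special endomorphism can be assembled from the CM datum, so $\calZ(m)$ is empty and all its multiplicities vanish, which is (i). If $\abs{\Diff(m)}=1$, say $\Diff(m)=\{p\}$, the unique obstruction lies at $p$, forcing the support into the fiber over $p$ and singling out $B_{p,\infty}$.

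For part (ii) I would evaluate the two pieces at this $p$. For the horizontal count I would use Deuring's correspondence to match the supersingular CM points with optimal embeddings $\calO_\kb \hookrightarrow \calO_B$ into the maximal orders of $B_{p,\infty}$, and then read off ``a special endomorphism of norm $m$'' as a vector of the correct length in the orthogonal complement of $\kb$. Because this complement is a rank-one $\calO_\kb$-module, counting such vectors becomes counting integral ideals of $\calO_\kb$ of norm $m\abs{D}/p$, i.e. the representation number $\rho(m\abs{D}/p,\cdot)$. The main theorem of complex multiplication, equivalently the Artin reciprocity map, then distributes these points over the primes $\frakP=\frakP_0^{\sigma(\fraka)}$ and pins the relevant class down as $[\fraka]^{-2}$. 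For the vertical length I would invoke Gross's theory of canonical and quasi-canonical liftings: the length of the deformation ring of the pair $(E,x)$ over the Witt vectors equals $\nu_p(m)=\tfrac12(\ord_p(m)+1)$ in the inert case, with the parallel count in the ramified case, reproducing the factor $\nu_p(m)$. The last factor $2^{o(m)-1}$ records the generic automorphism group $\calO_\kb^\times=\{\pm1\}$ of the stack together with the extra contribution when the ramified prime $l$ divides $m\abs{D}$.

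The step I expect to be the main obstacle is the vertical length computation and the exact constant. Pinning down the deformation length --- separating the inert and ramified cases and matching Gross's quasi-canonical lifting lengths to $\nu_p(m)$ --- is delicate, and tracking how the stacky automorphism factor interacts with the ramified prime to land exactly on $2^{o(m)-1}$ takes care. By contrast, the horizontal count and part (i) should follow fairly mechanically from the quaternionic dictionary once the setup is fixed.
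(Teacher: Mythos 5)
Your attempt cannot be checked against a proof in this paper, because the paper does not contain one: Proposition \ref{prop:Zprime-intro} is imported wholesale as Proposition 2 of \cite{ehlen-intersection}, and is never reproved here. Worse for your division of labor, in the present paper's own setup (Section \ref{sec:spec-endom}) the local multiplicities are \emph{defined} by $\calZ(m,\fraka,\mu)_\frakP = \frac{\nu_p(m)}{w_\kb}\,\abs{L(E_\frakP,\iota_\frakP,m,\fraka,\mu)}$, so the ``vertical'' factor $\nu_p(m)$ and the stacky factor $1/w_\kb$ --- exactly the two items you flag as the main obstacle --- carry no content in this formulation; they are Kudla--Rapoport--Yang's deformation-theoretic multiplicities (resting on Gross's quasi-canonical liftings, established in \cite{kry-tiny}) converted into a definition. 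What the cited proposition actually proves is the remaining, ``horizontal'' part: the evaluation of $\abs{L(E_\frakP,\iota_\frakP,m,\fraka,\mu)}$ and its distribution over the primes $\frakP_0^{\sigma(\fraka)}$. On that part your sketch does follow the strategy of the cited source: part (i) is precisely the observation that a supersingular triple $(E,\iota,x)$ generates the quaternion algebra $(D,-m)$, which by the product formula (using $(-m,D)_\infty=-1$) can be ramified at exactly one finite place, so nonemptiness forces $\abs{\Diff(m)}=1$; part (ii) identifies the special-endomorphism lattice as a rank-one $\calO_\kb$-module whose vectors of the prescribed norm are counted by $\rho(m\abs{D}/p,\cdot)$, with $1/w_\kb$ and the ramified-prime count combining into $2^{o(m)-1}$. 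The one step you assert rather than argue is the determination of the ideal class $[\fraka]^{-2}$: saying ``the Artin map pins it down'' is the conclusion, not the proof. One must track how $\Gal(\Hkb/\kb)$ permutes the supersingular reductions $E_{\frakP_0^{\sigma}}$ and, crucially, how the conjugation-twisted $\calO_\kb$-action on $L(E,\iota)$ ($\iota(\alpha)x = x\iota(\bar\alpha)$) causes ideal classes to act through their \emph{squares}; this is where the exponent $-2$ originates, and any reconstruction that omits it has skipped the only genuinely delicate point of the horizontal count.
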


In Section \ref{sec:compr-example}, we walk through the proof of Theorem \ref{thm:pre-pvals}
in a concrete example for $D=-23$ and give in fact a finite formula for the coefficients
of the holomorphic part in this case. It should be possible to generalize the results obtained in
the example and we will come back to this in a sequel to this paper.

Some remarks regarding the relation of our work to
\cite{MarynaInnerProd} and \cite{DukeLiMock} are in order.
We make use of the same seesaw identity as in \cite{MarynaInnerProd}, where it has been used to obtain formulas for regularized Petersson inner products of weakly holomorphic modular forms
of weight one and the theta function $\Theta_{P}(\tau,1)$ for the lattice $P = \calO_\kb$ as above
in the case of a prime discriminant. We use the same seesaw identity (essentially \eqref{eq:seesawintro1})
but focus on the coefficients of harmonic Maa\ss{} forms and applications in arithmetic geometry, which do not appear in \cite{MarynaInnerProd} and requires some extra work.
Duke and Li \cite{DukeLiMock} make use of the Rankin-Selberg method and the construction of (higher level)
automorphic Green functions as in \cite{grosszagier-singularmoduli} and \cite{grosszagier},
whereas we entirely rely on the (regularized) theta lift machinery. 
In particular, Theorem 1.2 of \cite{DukeLiMock} is a special case of Theorem \ref{thm:cmval-intro} (or rather Theorem \ref{thm:value-phiz} in the body of the paper) in signature $(2,1)$.
We refer the reader to \cite[Section 8]{cmvals-preprint1-ehlen} for this particular case.

\subsection{Acknowledgements}
This article contains the (improved) main results of my thesis \cite{ehlen-diss}.
I am greateful to my advisor Jan Hendrik Bruinier for introducing me to the subject, for his constant support,
for helpful discussions and his comments on earlier versions of this article and my thesis.
I thank Claudia Alfes, Eric Hofmann, Yingkun Li, Fredrik Str\"{o}mberg, Maryna Viazovska and Tonghai Yang for their helpful comments.
Moreover, I would like to thank the anonymous referees for reading the paper carefully, for pointing out errors and
providing many useful comments and suggestions that helped to improve the exposition
and the results of this paper.

\endgroup

\section{Preliminaries}
\label{chap:prelims}
We introduce some notation for the rest of the article.
For a place $\frakp$ of a field $\kb$ we let $\kb_{\frakp}$ denote the completion
of $\kb$ with respect to the valuation $v_{\frakp}$ corresponding to $\frakp$.
For non-archimedean $\frakp$, we denote by $\calO_{\frakp} \subset \kb_{\frakp}$ the corresponding valuation ring.
We consider the adeles $\adeles_{\kb} = \rprod_{\frakp} \kb_{\frakp}$ over
$\kb$ and the finite adeles are denoted by $\adeles_{\kb,f} = \rprod_{\frakp \nmid \infty} \kb_{\frakp}$.
\nomenclature[Ad]{$\adeles$, $\adeles_\kb$}{The adeles over $\Q$ and $\kb$, respectively}

Moreover, we denote by $\adeles_{\kb}^{\times}$ and $\adeles_{\kb,f}^{\times}$ the groups of (finite) ideles over $\kb$.
If $\kb=\Q$, we denote by $\adeles$ the adeles over $\Q$ and by $\adeles_{f}$ the finite adeles.
\nomenclature[Adf]{$\adeles_f$, $\adeles_{\kb,f}$}{The finite adeles over $\Q$ and $\kb$, respectively}
\nomenclature[Adsi]{$\adeles^\times$, $\adeles_{\kb}^\times$}{The ideles over $\Q$ and $\kb$}
\nomenclature[Adsfi]{$\adeles_{f}^\times$, $\adeles_{\kb, f}^\times$}{The finite ideles over $\Q$, $\kb$}

Throughout, we let $n$ be a non-negative integer and let $V$ be a quadratic space over $\Q$ of type $(2, n)$ with a non-degenerate quadratic form $Q$. 
\nomenclature[V]{$V$}{A rational quadratic space of signature $(2,n)$}
\nomenclature[n]{$n$}{a non-negative integer, s.t. $V$ has signature $(2,n)$, sometimes also used as index in sums}

\subsection{Shimura varieties attached to $\GSpin_V$}
\label{sec:shimura-varieties}
\label{sec:symmdom-shimura}
As in the introduction we abbreviate $H: = \GSpin_V$.
\nomenclature[H]{$H$}{Frequently, $H = \GSpin_V$, the general spin group}
One has the following exact sequence of algebraic groups:
\begin{equation*}
  \label{eq:Gspinseq}
  1 \longrightarrow \G_m \longrightarrow \GSpin_{V} \longrightarrow \SO_V \longrightarrow 1.
\end{equation*}
Here, $\G_{m}$ denotes the multiplicative group.

\begin{remark}
  Our setup is basically the same as in \cite{kudla-integrals, schofer, bryfaltings}.
  However, we warn the reader that we are working with a quadratic space of type $(2,n)$,
  whereas these references mostly use type $(n,2)$ quadratic spaces.
\end{remark}
Let $\bm{K} \subset \SO_V(\R)$ be a maximal compact subgroup of $\SO_V(\R)$.
Since $V$ is a quadratic space over $\Q$ of type $(2,n)$, the quotient $\SO_V(\R)/\bm{K}$ is a symmetric space with a complex structure. There are several ways to realize $\SO_V(\R)/\bm{K}$.

Consider the \emph{Grassmannian} $\domain$ of oriented two-dimensional positive definite subspaces of $V(\R) = V \otimes_{\Q} \R$. That is, we let
\begin{equation*}
  \domain: = \{z^{\pm} \, \mid \, z \subset V(\R), \dim z=2, \ Q|_z > 0 \}.
\end{equation*}
\nomenclature[D1]{$\domain$}{The symmetric domain attached to $\SO_V(\R)$, realized as Grassmannian}
Here, for each 2-dimensional positive definite subspace $z \subset V(\R)$,
we write $z^{+}$ and $z^{-}$ for $z$ together with one of the two possible choices of orientation.
The group $H(\R)$ acts naturally on $\domain$ and this action is transitive by Witt's theorem.
The Grassmannian has two connected components and each of them is
isomorphic to the symmetric space $\SO_V(\R)/\bm{K}$.

Let $K \subset H(\adeles_f)$ be a compact open subgroup. We write
$X_K$ for the associated \emph{Shimura variety} with complex points
\begin{equation*}
  X_K(\C) = H(\Q) \backslash ( \domain \times H(\adeles_f)/K).
\end{equation*}
\nomenclature[XK]{$X_K$}{A Shimura variety}

\subsection{Special divisors}
\label{sec:heegner-divisors}
There is a natural family of divisors on Shimura varieties of orthogonal type that will play an important
role. We also refer to and \cite{kudla-cycles-O}, \cite{kudla-integrals}, and \cite{bryfaltings},.
Let $L \subset V(\Q)$ be an even lattice and let $K \subset H(\adeles_f)$
be an open compact subgroup such that $KL \subset L$ and $K$ acts trivially
on $L'/L$. We will make these assumptions throughout this section.
In this situation, we consider the group
\[
\Gamma_{K} = H(\Q) \cap K,
\]
which is an arithmetic subgroup of $H(\Q)$.

Let $x \in V(\Q)$ be a vector of negative norm and
denote the orthogonal complement ${x^{\perp} \subset V(\Q)}$
by $V_{x}$. We let $H_{x}$ be the stabilizer of $x$ in $H$.
Then $H_{x} \cong \GSpin(V_{x})$ and the Grassmannian
\[
  \domain_{x} = \{ z \in \domain\ \mid\ z \perp x \} \subset \domain
\]
defines an analytic set of codimension one in $\domain$.

Let $h \in H(\adeles_{f})$ and consider
\begin{equation}
  \label{eq:Zxh}
  H_{x}(\Q) \bs \domain_{x} \times H_{x}(\adeles_{f}) / (H_{x}(\adeles_{f})\cap hKh^{-1}) \longrightarrow X_{K}
\end{equation}
given by
\[
(z,h_{1}) \mapsto (z,h_{1}h).
\]
The image of this map defines a divisor $Z(x,h)$ on $X_{K}$ which is rational over $\Q$ \cite{kudla-cycles-O}.
For $m \in \Q_{<0}$ consider the quadric $\Omega_{m} \subset V$ given by
\[
\Omega_m = \{ x\in V\ \mid\ Q(x) = m\}.
\]
By Witt's theorem, if $\Omega_{m}(\Q) \neq \emptyset$, the orthogonal group acts transitively and thus
for every $x_0 \in \Omega_{m}(\Q)$ we have $\Omega_{m}(\Q) = H(\Q)x_{0}$
and $\Omega_{m}(\adeles_{f}) = H(\adeles_{f})x_{0}$.
Here,
\[
\Omega_{m}(\adeles_f) = \left( \prod_{p \nmid \infty} \Omega_{m}(\Q_{p}) \right) \cap V(\adeles_f)
\]
and $\Omega_{m}(\Q_{p}) = \{ x \in V(\Q_{p})\, \mid\, Q(x) = m \}$.
Moreover, for any compact open subgroup $K \subset H(\adeles_f)$,
we have $\Omega_m(\adeles_f) = K \Omega_{m}(\Q)$ (see Lemma 5.1 of \cite{kudla-cycles-O}).

We let $S(V(\adeles))$ be the space of Schwartz(-Bruhat) functions on $V(\adeles)$.
That is, the space $S(V(\R))$ is the usual space of Schwartz (rapidly decreasing)
functions on $V(\R)$ and $S(V(\Q_p))$ is the space of locally constant functions
$V(\Q_{p}) \rightarrow \C$ with compact support and we let
\[
  S(V(\adeles_{f})) = \bigotimes_{p < \infty} S(V(\Q_{p}))
\]
and $S(V(\adeles)) = S(V(\adeles_f)) \otimes S(V(\R))$.

Let $L$ be an even lattice and $\mu \in L'/L \cong \hat{L}'/\hat{L}$, where
$\hat{L} = L \otimes_\Z \Zhat$ with $\Zhat = \prod_{p<\infty} \Z_p$.
\nomenclature[Zh]{$\Zhat$}{$=\prod_{p<\infty} \Z_p$}
\nomenclature[Lh]{$\hat{L}$}{$=L\otimes_{\Z} \Zhat$}
We let $\phi_\mu \in S(V(\adeles_f))$ be the characteristic function of $\mu$.
\nomenclature[phimu]{$\phi_{\mu}$}{The characteristic function of $\mu + L$}
We consider the finite dimensional subspace
\begin{equation*}
  S_L = \bigoplus_{\mu \in L'/L} \C\phi_\mu \subset S(V(\adeles_f)).
\end{equation*}
\nomenclature[SL]{$S_L$}{The span of the characteristic functions $\phi_{\mu}$ in $S(V(\adeles_{f}))$}

\begin{definition}
  \label{def:Zm}
  For a Schwartz function $\varphi \in S_{L}$ write
  \[
  \supp(\varphi) \cap \Omega_m(\adeles_f) = \bigsqcup_{j} K \xi_{j}^{-1} x_{0},
  \]
  where $\xi_{j} \in H(\adeles_{f})$. We define the special divisor
  \[
  Z(m,\varphi) := \sum_{j} \varphi(\xi_j^{-1}x_0)Z(x_0,\xi_{j}).
  \]
  For $\mu \in L'/L$, briefly write $Z(m,\mu) := Z(m,\phi_{\mu})$.
\end{definition}
\nomenclature[Zm2]{$Z(m,\mu)$}{A special divisor}

In this context, we also let
\[
  L_{m} := \Omega_{m} \cap L' \quad \text{and} \quad L_{m,\mu} := L_{m} \cap (L + \mu).
\]
\nomenclature[Lmu]{$L_{m,\mu}$}{Norm $m$ elements in $L + \mu$}

\subsection{The Weil representation}
\label{sec:weil-representation}
Let $L \subset V$ be an even lattice and let $\mu \in L'/L$.

We write $\langle \phi, \chi \rangle$ for the standard bilinear pairing between $S(V(\adeles))$ and its dual $S(V(\adeles))^{\vee}$.
In particular
\[
  \langle a \phi_{\mu}, b \phi_{\nu} \rangle = ab\, \delta_{\mu,\nu}
\]
for $a,b \in \C$ and $\mu,\nu \in L'/L$, where we identify $S_{L}$ with its dual.
\nomenclature[()]{$\langle \cdot,\cdot\rangle$}{The $\C$-bilinear pairing between $S(V(\adeles))$ and its dual}

\begin{remark}
  We note that the space $S_{L}$ can be identified with the group ring
  $\C[L'/L]$ of the finite abelian group $L'/L$ via
  $\phi_{\mu} \mapsto \frake_{\mu}$, if $\{\frake_{\mu}\, \mid\, \mu \in L'/L\}$
  is the standard basis for $\C[L'/L]$.
  In the latter space the corresponding scalar product is conjugate-linear in the second argument.
\end{remark}

We write $\widetilde{\Gamma} := \Mp_2(\Z)$ for the two-fold metaplectic cover
of $\SL_2(\Z)$. The elements of $\widetilde{\Gamma}$ are pairs $(A,\phi)$,
where $A = \smallabcd \in \SL_2(\Z)$ and $\phi: \uhp \to \C$ is a holomorphic function
with $\phi^2(\tau) = c\tau + d$.

There is a representation $\rho_{L}: \widetilde{\Gamma} \rightarrow \Aut S_L$,
usually called the Weil representation associated with $L$.
This representation can be described explicitly as follows.

The group
$\widetilde{\Gamma}$ is generated by
\begin{equation}
  S = \left( \Smatrix, \sqrt \tau \right), \quad T = \left( \Tmatrix, 1 \right).
\end{equation}
\nomenclature[rhoL]{$\rho_L$}{The Weil representation associated with $L$}
\nomenclature[S]{$S$}{The matrix $\smallSmatrix \in \SL_2(\Z)$ or the element $(\smallSmatrix, \sqrt{\tau}) \in \Mp_2(\Z)$}
\nomenclature[T0]{$T$}{The matrix $\smallTmatrix \in \SL_2(\Z)$ or the element $(\smallTmatrix, 1) \in \Mp_2(\Z)$}
For these generators, we have
\begin{equation}
  \begin{aligned}
    \rho_L(T)\phi_\mu &= e(Q(\mu))\phi_\mu, \\
    \rho_L(S)\phi_\mu &= \frac{e(-\sgn(V)/8)}{\sqrt{|L^\prime/L|}}
    \sum_{\nu \in L^\prime/L}e(-(\mu,\nu))\phi_\nu.
  \end{aligned}
\end{equation}
Here, $\sgn(V)$ denotes the signature of $V$, which is equal to $2-n$ in our case.

\subsection{Harmonic weak Maa\ss{} forms}
\label{sec:harmonic-weak-maass}
The main reference for this section is the fundamental
article by Bruinier and Funke \cite{brfugeom}.

Let $(V,Q)$ be a rational quadratic space
and let $L \subset V$ be an even lattice. Moreover, let $k \in \frac{1}{2}\Z$.
For $(\gamma,\phi) \in \tilde\Gamma$, we define the \emph{Petersson slash operator} on functions
$f: \uhp \rightarrow S_L$ by
\begin{equation*}
  \left( f \mid_{k,L} (\gamma,\phi)\right) (\tau) = \phi(\tau)^{-2k} \rho_L((\gamma,\phi))^{-1} f(\gamma \tau).
\end{equation*}
\nomenclature[-kL]{$\mid_{k,L}$}{The Petersson slash operator on vector valued functions}
\begin{definition}
  A twice continuously differentiable function $f:\uhp \to S_L$
  is called a {\em harmonic weak Maa\ss{} form} (of weight $k$ with
  respect to $\tilde\Gamma$ and $\rho_L$) if it satisfies:
  \begin{enumerate}
  \item[(i)]
    $f \mid_{k,L} \gamma = f$ for all $\gamma \in \tilde\Gamma$,
  \item[(ii)]
    there is a $C>0$ such that $f(\tau)=O(e^{C v})$ as $v\to \infty$
    (uniformly in $u$, where $\tau=u+iv$),
  \item[(iii)]
    $\Delta_k f = 0$, where
    \begin{align*}
      \Delta_k := -v^2\left( \frac{\partial^2}{\partial u^2}+
        \frac{\partial^2}{\partial v^2}\right) + ikv\left(
        \frac{\partial}{\partial u}+i \frac{\partial}{\partial v}\right)
    \end{align*}
    is the hyperbolic Laplace operator in weight $k$.
  \end{enumerate}
\end{definition}
We denote the space of harmonic weak Maa\ss{} forms of weight $k$ with
respect to $\rho_L$ by $\calH_{k,L}$ and
write $M^!_{k,L}$ for the subspace of weakly holomorphic modular forms.
Moreover, we write $S_{k,L}$ and $M_{k,L}$ for the subspaces of cusp forms
and holomorphic modular forms.
As usual, elements of all three spaces $S_{k,L} \subset M_{k,L} \subset M^{!}_{k,L}$ are assumed to
be holomorphic on the upper half-plane;
weakly holomorphic modular forms are allowed to have a pole at the cusp,
holomorphic modular forms are required to be holomorphic at the cusp and
cusp forms are holomorphic modular forms that vanish at the cusp.
\nomenclature[Hkl]{$\calH_{k,L}$}{The space of harmonic weak Maa\ss{} forms of weight $k$ and representation $\rho_L$}
\nomenclature[Mkl]{$M_{k,L}$}{The space of holomorphic modular forms of weight $k$ and representation $\rho_L$}
\nomenclature[Mkla]{$M^{"!}_{k,L}$}{The space of weakly holomorphic modular forms of weight $k$ and representation $\rho_L$}
\nomenclature[Skl]{$S_{k,L}$}{The space of cusp forms of weight $k$ and representation $\rho_L$}

We write the Fourier expansion of $f \in \calH_{k,L}$ as
\begin{equation}
  f(\tau) = \sum_{\mu \in L'/L} \sum_{n \in \Q} c_{f}(n, \mu, v) q^n\phi_\mu.
\end{equation}

Since $f$ is harmonic with respect to the weight $k$ Laplace operator,
the coefficients $c_f(n,\mu,v)$ satisfy $\Delta_k c_f(n, \mu, v)\exp(2\pi n (u+iv)) = 0$.
Computing a basis for the space of solutions to this differential equation
gives rise to a unique decomposition 
of the Fourier expansion of $f$ into a \emph{holomorphic part} $f^+$ and
a \emph{non-holomorphic part} $f^-$. 
If $n \neq 0$, we write accordingly $c_f(n,\mu,v) = c_f^+(n,\mu) + c_f^-(n,\mu)W_k(2\pi n v)$,
where $W_k(a) = \int_{-2a}^\infty e^{-t}t^{k-2}\, dt$.
\nomenclature[f+]{$f^{+}$}{The holomorphic part of $f$}
\nomenclature[f-]{$f^{-}$}{The non-holomorphic part of $f$}

In weight one, which is of particular interest for us,
the expansion of the non-holomorphic part is of the form
\begin{equation}
  f^-(\tau) = \sum_{\mu\in L'/L} \left(
    c_{f}^-(0,\mu) \log(v)
    + \sum_{\substack{n\in \Q\\ n \neq 0}} c_{f}^-(n,\mu) W_1(2\pi nv) q^n
  \right) \phi_\mu, \label{f-k1}
\end{equation}
where $W_1(a) = \Gamma(0,-2a)$ for $a<0$.

We recall a few more facts that can all be found in \cite[Section 3]{brfugeom}.
We denote by $L^-$ the lattice given by the $\Z$-module $L$ together with the quadratic form $-Q$.
\nomenclature[L-]{$L^{-}$}{The quadratic module given by $L$ together with the quadratic form $-Q$}
There is an antilinear differential operator
$\xi := \xi_k: \mathcal{H}_{k,L} \to M^{!}_{2-k,L^-}$, defined by
\begin{equation}
  \label{defxi} f(\tau)\mapsto \xi(f)(\tau)
  :=v^{k-2} \overline{L_k f(\tau)} = R_{-k} v^k\overline{ f(\tau)}.
\end{equation}
\nomenclature[xi]{$\xi$}{Usually, $\xi=\xi_{k}$, a differential operator}

Here $L_k$ and $R_k$ are the Maa\ss{} lowering and raising operators,
\begin{equation*}\label{def:RkLk}
  L_k = -2iv^2\frac{\partial}{\partial\overline{\tau}} \quad \text{and} \quad 
  R_k = 2i \frac{\partial}{\partial\tau} + k v^{-1}.
\end{equation*}
We let $H_{k,L} \subset \calH_{k,L}$ be the subspace of forms with cuspidal ``shadow'',
  \[
  H_{k,L} := \{ f \in \calH_{k,L}\ \mid\ \xi(f) \in S_{k,L^{-}} \}.
  \]
Alternatively, we could define this to be the space
of $f \in \calH_{k,L}$, such that there is a Fourier polynomial
\[
P_{f}(\tau) = \sum_{\mu \in L'/L} \sum_{m<0} c_{f}^+(m,\mu) e(m\tau), \text{ with } f - P_{f}(\tau) = O(1)
\]
as $\Im(\tau) \rightarrow \infty$.
The Fourier polynomial $P_{f}(\tau)$ is also called the \emph{principal part} of $f$.
Note that this space was denoted by $H^{+}_{k,L}$ in \cite{brfugeom}.
\nomenclature[HkL]{$H_{k,L}$}{Harmonic weak Maa\ss{} forms $f \in \calH_{k,L}$ such that $\xi_k(f) \in S_{2-k,L^-}$}
\nomenclature[Pf]{$P_{f}(\tau)$}{The principal part of $f$ (not including the constant term)}

The kernel of $\xi_k$ is equal to $M^!_{k,L}$ and
by \cite[Corollary~3.8]{brfugeom}, the sequences
\begin{gather}
  \label{ex-sequ}
  \xymatrix@1{ 0 \ar[r] & M_{k,L}^! \ar[r] & \calH_{k,L} \ar[r]^-{\xi_{k}} & M^!_{2-k,L^-} \ar[r] & 0} \\
  \xymatrix@1{ 0 \ar[r] & M_{k,L}^! \ar[r] & H_{k,L} \ar[r]^-{\xi_{k}} & S_{2-k,L^-} \ar[r] & 0} \label{ex-sequS}
\end{gather}
are exact.

For $f \in S_{k,L}$ and $g \in M_{k,L}$, we define the \emph{Petersson inner product}
of $f$ and $g$ as
\[
  (f,g) = \int_{\SL_2(\Z) \bs \uhp} \langle f(\tau), \overline{g(\tau)} \rangle v^{k}  d\mu(\tau).
\]
\nomenclature{$(f,g)$}{The Petersson inner product of $f$ and $g$ (vector valued modular forms)}

We denote by $\del$ and $\delbar$ the usual Dolbeault operators, such that we have
$d = \del + \delbar$ for the exterior derivative on differential forms on $\uhp$.
\begin{lemma}
  \label{lem:xidifform}
  In terms of differential forms, we have
  \[
    \bar\del(fd\tau) = -v^{2-k} \overline{\xi_k(f)} d\mu(\tau) = -L_{k}f d\mu(\tau).
  \]
\end{lemma}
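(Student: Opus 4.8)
The plan is to verify both equalities by a direct computation with differential forms, treating $f$ componentwise so that its $S_L$-valued nature plays no role. First I would write $\tau = u + iv$, so that $d\tau = du + i\,dv$ and $d\bar\tau = du - i\,dv$, and recall that the Dolbeault operator acts on the function $f$ by $\bar\del f = \tfrac{\partial f}{\partial\bar\tau}\,d\bar\tau$. Applying $\bar\del$ to the $(1,0)$-form $f\,d\tau$ therefore gives
\[
  \bar\del(f\,d\tau) = \frac{\partial f}{\partial\bar\tau}\, d\bar\tau \wedge d\tau.
\]

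The key elementary step is the wedge-product identity $d\bar\tau\wedge d\tau = (du - i\,dv)\wedge(du + i\,dv) = 2i\,du\wedge dv$, where I use $du\wedge du = dv\wedge dv = 0$ and $dv\wedge du = -\,du\wedge dv$. Combined with the definition of the lowering operator $L_k = -2iv^2\tfrac{\partial}{\partial\bar\tau}$ and the invariant measure $d\mu(\tau) = v^{-2}\,du\wedge dv$, this yields
\[
  \bar\del(f\,d\tau) = 2i\,\frac{\partial f}{\partial\bar\tau}\,du\wedge dv = -\bigl(-2iv^2\tfrac{\partial f}{\partial\bar\tau}\bigr) v^{-2}\,du\wedge dv = -L_k f\, d\mu(\tau),
\]
which is the rightmost equality.

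For the middle equality I would simply unwind the definition $\xi_k(f) = v^{k-2}\overline{L_k f}$. Since $v$ is real, conjugating gives $\overline{\xi_k(f)} = v^{k-2} L_k f$, and hence $v^{2-k}\overline{\xi_k(f)} = L_k f$; multiplying by $-\,d\mu(\tau)$ then identifies the first two expressions. There is essentially no genuine obstacle here beyond careful bookkeeping of signs, factors of $i$, and complex conjugates; the only point that requires a moment's attention is the orientation convention implicit in writing $d\mu(\tau) = v^{-2}\,du\wedge dv$, which must be kept consistent with the one used to evaluate $d\bar\tau\wedge d\tau$.
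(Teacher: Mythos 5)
Your computation is correct and complete: the wedge identity $d\bar\tau\wedge d\tau = 2i\,du\wedge dv$, the definition $L_k = -2iv^2\partial_{\bar\tau}$, and the relation $\overline{\xi_k(f)} = v^{k-2}L_k f$ are exactly what is needed, and your sign and conjugation bookkeeping checks out against the paper's definitions \eqref{defxi} and $d\mu(\tau)=v^{-2}\,du\,dv$. The paper states this lemma without proof (it is a standard fact underlying \cite[Section 3]{brfugeom}), and your direct verification is precisely the intended argument.
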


Using the Petersson inner product and the operator $\xi_k$,
we obtain a bilinear pairing between $g \in M_{2-k,L^{-}}$
and $f \in H_{k,L}$ via
\begin{equation}
  \label{eq:pairing}
  \{g, f\} := (g, \xi_k(f) )_{2-k} = \int_{\SL_{2}(\Z) \bs \uhp} \langle g, \overline{\xi_k(f)} \rangle v^{2-k} d\mu(\tau)
  = \int_{\SL_{2}(\Z) \bs \uhp} \langle g, L_kf \rangle d\mu(\tau).
\end{equation}

Using Lemma \ref{lem:xidifform},
the following result is essentially an application of Stokes' theorem
(see \cite[Proposition 3.5]{brfugeom}).
\begin{lemma}
  \label{lem:pairing}
  Let $f \in H_{k,L}$ and $g \in M_{2-k,L^{-}}$. Then
  \[
    \{ g, f \} = \sum_{\mu \in L'/L} \sum_{n \leq 0} c^{+}(n,\mu)b(-n,\mu),
  \]
  which implies that the pairing only depends on the principal part of $f$ (and on $g$).
  The exact sequence \eqref{ex-sequ} implies that the pairing between $S_{2-k,L^{-}}$ and
  $H_{k,L}/M^{!}_{k,L}$ is non-degenerate.
\end{lemma}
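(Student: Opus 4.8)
The plan is to realize $\{g,f\}$ as the integral of an exact form and to evaluate it by Stokes' theorem on a truncated fundamental domain, in the spirit of \cite[Proposition 3.5]{brfugeom}. Starting from the last expression in \eqref{eq:pairing}, namely $\{g,f\} = \int_{\SL_2(\Z)\bs\uhp} \langle g, L_k f\rangle\, d\mu(\tau)$, I would use Lemma \ref{lem:xidifform} to rewrite the integrand as a $\delbar$-exact form. Since $g$ is holomorphic, $\delbar\big(\langle g, f\rangle\, d\tau\big) = \langle g, \delbar(f\, d\tau)\rangle = -\langle g, L_k f\rangle\, d\mu(\tau)$, and because $\del$ annihilates any form proportional to $d\tau$ this equals $d\big(\langle g, f\rangle\, d\tau\big)$. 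A short automorphy check shows that the $1$-form $\omega := \langle g(\tau), f(\tau)\rangle\, d\tau$ is $\Gt$-invariant: the weight $2-k$ of $g$, the weight $k$ of $f$, and the factor $d(\gamma\tau) = \phi(\tau)^{-4}\,d\tau$ combine to give an automorphy factor $\phi^{2(2-k)+2k-4}=1$, while the duality of $\rho_{L^-}$ and $\rho_L$ makes the pairing $\langle g,f\rangle$ itself invariant.

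Next I would integrate $d\omega = -\langle g, L_k f\rangle\, d\mu$ over the standard fundamental domain truncated at height $T$, say $\fundd_T$, and apply Stokes' theorem to reduce to the boundary integral $\int_{\partial\fundd_T}\omega$. By the $\Gt$-invariance of $\omega$ the two vertical sides cancel (they are identified by $T\colon\tau\mapsto\tau+1$), as do the two halves of the lower circular arc (identified by $S$), leaving only the top horizontal segment. Hence, up to the explicit sign coming from the orientation, $\{g,f\} = \lim_{T\to\infty}\int_{-1/2}^{1/2}\langle g, f\rangle(u+iT)\, du$, and this integral extracts the constant Fourier mode in $u$. Writing $g=\sum_{\mu}\sum_{m\ge0}b(m,\mu)q^m\phi_\mu$ and $f=f^++f^-$, orthogonality of the additive characters pairs the mode $m$ of $g$ with the mode $-m$ of $f$; the exponential factors $e^{-2\pi mv}$ and $e^{+2\pi mv}$ cancel, producing $\sum_{\mu}\sum_{m\ge0} b(m,\mu)\,c_f(-m,\mu,T)$.

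The delicate point, and the step I expect to be the main obstacle, is the passage to the limit $T\to\infty$ together with the control of the non-holomorphic part $f^-$. For $m>0$ the coefficient $c_f(-m,\mu,v)=c_f^+(-m,\mu)+c_f^-(-m,\mu)W_k(-2\pi m v)$ tends to $c_f^+(-m,\mu)$, because $W_k(-2\pi m v)$ is an upper incomplete Gamma value and decays exponentially as $v\to\infty$. The genuinely dangerous term is the constant mode $m=0$, where $f^-$ contributes a factor growing like $v^{1-k}$ (or $\log v$ when $k=1$). Here I would use that $f\in H_{k,L}$ has a \emph{cuspidal shadow}: applying $\xi_k$ directly to the $n=0$ term of $f^-$ shows that $c_f^-(0,\mu)$ equals, up to a nonzero constant, the $0$-th Fourier coefficient of $\xi_k(f)\in S_{2-k,L^-}$, which vanishes. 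Thus $c_f^-(0,\mu)=0$, the potential divergence disappears, and the limit equals $\sum_{\mu}\sum_{m\ge0} b(m,\mu)\,c_f^+(-m,\mu)$. Reindexing by $n=-m\le0$ gives the asserted formula $\{g,f\}=\sum_{\mu}\sum_{n\le0}c^+(n,\mu)\,b(-n,\mu)$, which manifestly depends only on the principal part (and constant term) of $f$ and on $g$.

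Finally, for the non-degeneracy statement I would argue formally from the exact sequences. By definition $\{g,f\}=(g,\xi_k(f))_{2-k}$, and the exact sequence \eqref{ex-sequS} identifies $\xi_k$ with an isomorphism $H_{k,L}/M_{k,L}^!\xrightarrow{\ \sim\ }S_{2-k,L^-}$. Under this identification the pairing $\{\cdot,\cdot\}$ becomes the Petersson inner product on $S_{2-k,L^-}$, which is positive definite and hence non-degenerate: for a nonzero class $[f]$ its image $\xi_k(f)$ is a nonzero cusp form with $\{\xi_k(f),f\}=(\xi_k(f),\xi_k(f))>0$, and conversely every nonzero $g\in S_{2-k,L^-}$ arises as some $\xi_k(f)$ by surjectivity and pairs nontrivially with that $f$. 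This yields the non-degeneracy of the induced pairing between $S_{2-k,L^-}$ and $H_{k,L}/M_{k,L}^!$.
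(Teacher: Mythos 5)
Your proof is correct and takes essentially the same route as the paper, which establishes this lemma by combining Lemma \ref{lem:xidifform} with Stokes' theorem on a truncated fundamental domain, deferring the details to \cite[Proposition 3.5]{brfugeom} --- precisely the argument you carry out, including the key point that cuspidality of $\xi_k(f)$ forces $c_f^-(0,\mu)=0$ and so removes the only divergent term. Your non-degeneracy argument, identifying the pairing with the (positive definite) Petersson inner product via the isomorphism $H_{k,L}/M^!_{k,L}\cong S_{2-k,L^-}$ from \eqref{ex-sequS}, is likewise the intended one.
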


\subsection{Regularized theta lifts}
\label{sec:theta}
We let $\Theta_{L}(\tau,z,h)$ be the Siegel theta function associated with $L$
as in \cite{bryfaltings}, where $\tau \in \uhp$, $z \in \domain$ and $h \in H(\adeles_{f})$, where $H=\GSpin_V$.
If $V$ is positive definite and $\domain = \{z^\pm\}$ consists
of only two points, we frequently drop $z$ from the notation and just write $\Theta_L(\tau,h)$.
The following theorem can be found (in a ``more classical'' language) in \cite{boautgra}. 
A reference that uses our (adelic) setup is Kudla's seminal paper \cite{kudla-integrals}.
\begin{theorem}
  If $K \subset H(\adeles_f)$ is an open compact subgroup preserving $L$ and acting trivially on $L'/L$,
  then the Siegel theta function $\Theta_L(\tau,z,h_f)$ defines a function on the Shimura variety $X_K$ (in $(z,h_f)$).
  Moreover, as a function in $\tau \in \uhp$, it is a non-holomorphic vector-valued modular form
  of weight $(2-n)/2$, that is, for
  $\gamma = \left(\smallabcd,\phi(\tau)\right) \in \widetilde{\Gamma}$,
  we have
  \[
  \Theta_L(\gamma\tau, z, h) = \phi(\tau)^{2-n}\rho_L(\gamma) \Theta_L(\tau,z,h).
  \]
\end{theorem}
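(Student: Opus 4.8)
The plan is to establish the two assertions separately: that $\Theta_L(\tau,z,h)$ descends to a function of $(z,h)$ on $X_K(\C)$, and that in the variable $\tau$ it satisfies the stated weight $(2-n)/2$ transformation law. Both are classical consequences of the theory of the Weil representation and Poisson summation, carried out in the adelic framework of \cite{kudla-integrals} with the archimedean input going back to \cite{boautgra}; I will describe the structure of the argument rather than reproduce the local computations. Throughout, write $\Theta_L = \sum_{\mu \in L'/L} \theta_\mu\,\phi_\mu$, where, for fixed $(z,h)$, the component $\theta_\mu(\tau,z,h)$ is a sum over the rational vectors $x \in V(\Q)$ with $h^{-1}x \in \mu + \hat L$ of the archimedean Gaussian
\[
  g_\tau(x) = e\bigl(\tau\, Q(x_z) + \bar\tau\, Q(x_{z^\perp})\bigr)
\]
attached to the orthogonal splitting $V(\R) = z \oplus z^\perp$ (with $Q > 0$ on $z$ and $Q < 0$ on $z^\perp$), normalized by a suitable power of $v = \Im(\tau)$.

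For the transformation in $\tau$, since $\widetilde{\Gamma} = \Mp_2(\Z)$ is generated by $S = (\smallSmatrix,\sqrt\tau)$ and $T = (\smallTmatrix,1)$, it suffices to verify the identity for these two elements and to match against the explicit formulas for $\rho_L(S)$ and $\rho_L(T)$ recorded above. The case of $T$ is immediate: replacing $\tau$ by $\tau+1$ multiplies each summand by $e(Q(x_z) + Q(x_{z^\perp})) = e(Q(x))$; since $h$ acts on $V$ by an isometry and $h^{-1}x \in \mu + \hat L$ with $L$ even, one has $Q(x) \equiv Q(\mu) \bmod \Z$, hence $e(Q(x)) = e(Q(\mu))$, and as the automorphy factor for $T$ is $\phi(\tau)^{2-n} = 1$ this reproduces $\rho_L(T)\phi_\mu = e(Q(\mu))\phi_\mu$. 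The case of $S$ is the heart of the matter: one applies Poisson summation on $V(\R)$ to $g_\tau$. The positive definite plane $z$ contributes a Fourier factor $\propto \tau^{1}$ and the negative definite complement $z^\perp$ a factor $\propto \bar\tau^{n/2}$; combined with the normalizing power of $v$, the antiholomorphic dependence cancels and one is left with the purely holomorphic automorphy factor $\tau^{(2-n)/2} = \phi(\tau)^{2-n}$. Simultaneously, Poisson duality converts the lattice sum into its dual over $L'/L$, introducing the characters $e(-(\mu,\nu))$ and the normalization $|L'/L|^{-1/2}$, while the eighth root of unity $e(-\sgn(V)/8)$ emerges from the branches of the Gaussian Fourier transform. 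This is precisely $\rho_L(S)$.

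For the descent to $X_K(\C)$, I would argue directly from the hypotheses on $K$. Right $K$-invariance: for $k \in K$ the substitution $h \mapsto hk$ replaces the condition $h^{-1}x \in \mu + \hat L$ by $k^{-1}h^{-1}x \in \mu + \hat L$, i.e.\ $h^{-1}x \in k\mu + \hat L$; since $K$ stabilizes $\hat L$ and acts trivially on $L'/L$ one has $k\mu + \hat L = \mu + \hat L$, so $\theta_\mu$ is unchanged. Left $H(\Q)$-invariance: for $\gamma \in H(\Q)$ the substitution $x \mapsto \gamma x$ is a bijection of $V(\Q)$ compatible with the simultaneous action $(z,h) \mapsto (\gamma z, \gamma h)$, and because $\gamma$ acts as an isometry of $V$ the projections satisfy $Q((\gamma x)_{\gamma z}) = Q(x_z)$ and $Q((\gamma x)_{(\gamma z)^\perp}) = Q(x_{z^\perp})$, while the finite-place condition is preserved; hence each $\theta_\mu$ is invariant. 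Together these show that $\Theta_L$ is well defined on $H(\Q)\bs(\domain \times H(\adeles_f)/K) = X_K(\C)$.

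The step I expect to be the main obstacle is the $S$-transformation, that is, the Fourier transform over $V(\R) \cong \R^{2,n}$ of the indefinite Gaussian $g_\tau$: one must correctly track the combined factor $\tau^{b^+/2}\bar\tau^{b^-/2}$ with $(b^+,b^-) = (2,n)$, check that the $v$-normalization absorbs the antiholomorphic part, and pin down the metaplectic phase $e(-\sgn(V)/8)$ coming from the signature. This computation is due to Weil and Shintani and is carried out in \cite{boautgra}; everything else is bookkeeping, so I would invoke it rather than repeat it.
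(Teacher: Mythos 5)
Your proposal is correct and matches the approach the paper itself relies on: the paper gives no proof of this theorem, citing instead \cite{boautgra} and \cite{kudla-integrals}, and your sketch is precisely the standard argument carried out in those references --- verification on the generators $S$ and $T$ with Poisson summation on $V(\R)$ handling the $S$-case, plus descent to $X_K(\C)$ via right $K$-invariance and left $H(\Q)$-invariance. Since you defer the same analytic core (the Fourier transform of the indefinite Gaussian and its metaplectic phase) to the literature, your write-up is in effect an expanded version of the paper's citation, and the details you do supply (the congruence $Q(x)\equiv Q(\mu)\bmod \Z$ for $T$, the cancellation of the antiholomorphic factors against the $v$-normalization so that only $\tau^{(2-n)/2}$ survives, and the two invariance computations for the descent) are all sound.
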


Let $\calF := \{\tau \in \uhp; \abs{\tau} \geq 1,\, -1/2 \leq \Re(\tau) \leq 1/2 \}$
the standard fundamental domain for the action of $\SL_2(\Z)$ on $\uhp$ 
and let $\calF_{T}:=\{\tau \in \calF;\ \Im(\tau) \leq T\}$.
\nomenclature[F2]{$\calF_{T}$}{A truncated fundamental domain}
\nomenclature[F1]{$\calF$}{The standard fundamental domain for $\SL_2(\Z)$}
For $f \in H_{1-n/2,L}$, we consider the regularized theta integral
\[
  \Phi_L(z,h,f) = \int_{\Gamma \backslash \uhp}^{\reg}
   \langle f(\tau), \overline{\Theta_L(\tau,z,h)} \rangle v^k d\mu(\tau) := \CT_{s=0} \left[
  \lim_{T\rightarrow \infty}
   \int_{\calF_T} \langle f(\tau), \overline{\Theta_L(\tau,z,h)} \rangle v^{k-s} d\mu(\tau) \right].
\]
Here, $\CT\limits_{s=0}$ denotes the constant term in the Laurent expansion at $s=0$
of the meromorphic continuation of the function enclosed in $\left[ \cdot \right]$, which is initially defined for $\Re(s)$ large enough.
\nomenclature[CT]{$\CT_{s=0}[A(s)]$}{The constant term in the Laurent series expansion of $A(s)$ at $s=0$}

Associated with $f$ is the divisor
\begin{equation}
  \label{eq:Zf}
  Z(f) = \sum_{\mu \in L'/L} \sum_{m<0} c^+(m,\mu) Z(m,\mu).
\end{equation}
\nomenclature[Zf]{$Z(f)$}{A special divisor attached to $f$}
\begin{theorem}[\cite{boautgra}, Theorem 13.3, cf. Theorem 1.3 in \cite{kudla-integrals}]
  \label{thm:borcherds}
Let $f \in M_{(2-n)/2,L}^{!}$ with $c(m,\mu) \in \Z$ for all $m<0$
and $c(m,\mu) \in \Q$ for all $m \in \Q$.
There is a function $\Psi_L(z,h,f)$ on $\domain\times H(\adeles_f)$, such that:
\begin{enumerate}
\item $\Psi_L(z,h,f)$ is a meromorphic modular form for $H(\Q)$ of
  weight $c_f(0,0)/2$ and level $K$, with some unitary multiplier system of
  finite order,
\item the divisor of $\Psi_L(z,h,f)^2$ on $X_{K}$ is given by $Z(f)$.
\item and we have
      $\Phi_L(z,h,f) =  -2 \log\lVert \Psi_L(z,h,f) \rVert^{2} - c_{f}(0,0)(\log(2\pi) +\Gamma'(1))$,\\
      where $\lVert \Psi_L(z,h,f) \rVert^{2} = \abs{\Psi_L(z,h,f)}^2 \abs{y}^{c_{f}(0,0)}$.
\end{enumerate}
\end{theorem}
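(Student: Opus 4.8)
The plan is to evaluate the regularized theta integral $\Phi_L(z,h,f)$ explicitly, to read off its singularities along the divisor $Z(f)$, and to exhibit it (away from $Z(f)$) as $-2\log\lVert\Psi_L\rVert^2$ for a meromorphic modular form $\Psi_L$ reconstructed from this evaluation. Note that $\xi_{(2-n)/2}(f)=0$ since $f\in M^!_{(2-n)/2,L}$, so there is no non-holomorphic shadow to contend with and the integrand is genuinely a product of a holomorphic $f$ against the theta kernel.

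First I would fix a connected component of $\domain$ together with a primitive isotropic vector in $L$, realizing that component as a tube domain and splitting off a hyperbolic plane from $L$. Writing $\Theta_L$ in the resulting coordinates and unfolding the regularized integral against the Fourier expansion $f=\sum_{\mu}\sum_m c(m,\mu)q^m\phi_\mu$ by the Rankin--Selberg method, the regularization $\CT_{s=0}$ isolates the contribution of the finitely many negative-index coefficients together with that of $c_f(0,0)$. The output is an explicit formula for $\Phi_L$ as a sum of a linear ``Weyl vector'' term, a locally real-analytic main term, and, for each $x\in L'$ with $Q(x)<0$, a logarithmic term singular exactly along $\domain_x$. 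Collecting these orbit-by-orbit as in Definition~\ref{def:Zm} shows that near a generic point of $Z(m,\mu)$ one has $\Phi_L=-2\,c^{+}(m,\mu)\log\lvert q_x\rvert+(\text{real-analytic})$, where $q_x$ is a local equation for $\domain_x$, and the regularized contribution of $c_f(0,0)$ produces the additive constant $-c_f(0,0)(\log(2\pi)+\Gamma'(1))$.

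Next I would package the singular and smooth parts into a single current equation. Because $\Theta_L$ satisfies a Laplace relation in $z$ coupled to the weight-$k$ Laplacian $\Delta_{(2-n)/2}$ in $\tau$, and $\Delta_{(2-n)/2}f=0$, integrating by parts in the regularized integral (in the spirit of Lemma~\ref{lem:xidifform}) yields $\del\delbar\Phi_L=c_f(0,0)\,\omega-[Z(f)]$ as currents, where $\omega$ is the invariant $(1,1)$-form representing $c_1$ of the weight-one modular line bundle and $[Z(f)]$ is the integration current, the coefficient $c^{+}(m,\mu)$ coming from the local expansion above via Poincar\'e--Lelong. Equivalently, $\Phi_L+2c_f(0,0)\log\lvert y\rvert$ is pluriharmonic off $\supp Z(f)$ with the prescribed logarithmic singularities, hence locally equal to $-2\log\lvert\Psi_L\rvert^2$ for a meromorphic $\Psi_L$; the factor $\lvert y\rvert^{c_f(0,0)}$ absorbs the Weyl-vector term so that $\lVert\Psi_L\rVert^2=\lvert\Psi_L\rvert^2\lvert y\rvert^{c_f(0,0)}$ is the Petersson norm of a form of weight $c_f(0,0)/2$. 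Single-valuedness of the globally defined function $\Phi_L$ forces the local pieces to glue, up to constants of modulus one, into a global meromorphic modular form with $\divisor(\Psi_L^2)=Z(f)$, giving (ii) and (iii).

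Finally, the transformation law in (i) follows from $\Gamma_L$-invariance of $\Phi_L$: the reconstructed $\Psi_L$ transforms under $H(\Q)$ by the weight-$c_f(0,0)/2$ automorphy factor times a unitary multiplier, and rationality of the $c(m,\mu)$ places the Weyl vector in a lattice, so a fixed power of $\Psi_L$ is single-valued and the multiplier has finite order. The main obstacle is the analytic core of the second step: carrying out the unfolding and the $\CT_{s=0}$ regularization carefully enough to pin down both the precise local singular expansions and the global $\del\delbar$-equation (including the exact constant), while controlling the non-convergent pieces of the integral and the behavior across the walls between Weyl chambers so that the locally defined $\Psi_L$ really does continue to a single meromorphic form on all of $X_K$ rather than on one component alone. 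This global bookkeeping, together with the explicit evaluation, is the delicate heart of Borcherds' argument.
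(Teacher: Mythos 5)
The paper does not actually prove this statement: Theorem \ref{thm:borcherds} is imported wholesale from Borcherds (Theorem 13.3 of \cite{boautgra}, in the adelic form of Theorem 1.3 of \cite{kudla-integrals}), and immediately after stating it the author explicitly declines to discuss the general case, referring the reader back to Borcherds. So there is no in-paper argument to compare yours against; your proposal has to be judged as a reconstruction of the literature proof. Read that way, your outline does follow the correct overall strategy --- tube-domain realization and unfolding to get the singular expansion of $\Phi_L$ (Borcherds), then the $dd^c$-equation, Poincar\'e--Lelong, and gluing of local logarithms to reconstruct $\Psi_L$ (the route later systematized by Bruinier, cf.\ the theorem from \cite{brhabil} quoted in Section \ref{sec:theta}) --- and you even identify the additive constant $-c_f(0,0)(\log(2\pi)+\Gamma'(1))$ correctly.

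However, as a proof the proposal has genuine gaps beyond the analytic details you acknowledge deferring. The most serious is item (i): finiteness of the order of the multiplier system does not follow from ``rationality of the $c(m,\mu)$ places the Weyl vector in a lattice.'' The Weyl vector only controls the restriction of the character to unipotent (Eichler) elements, via the leading exponent of the product expansion; the multiplier system is a unitary character of the full arithmetic group $\Gamma_K = H(\Q)\cap K$, and its finiteness is a known nontrivial point --- Borcherds himself posed it as a problem, and it is settled in higher rank by the finiteness of abelianizations of the relevant arithmetic groups (a deep input, not a consequence of rationality), with low-rank cases needing separate arguments. Saying ``a fixed power of $\Psi_L$ is single-valued'' assumes exactly what must be shown. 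Second, you conflate two different objects: the factor $\abs{y}^{c_f(0,0)}$ does not ``absorb the Weyl-vector term''; it is the Petersson-metric factor that makes $\lVert \Psi_L \rVert$ invariant of weight $c_f(0,0)/2$, whereas the Weyl vector enters as the leading exponent $e((\rho_W,z))$ of $\Psi_L$ itself and \emph{changes across Weyl chamber walls} --- this wall-crossing consistency is precisely the bookkeeping your sketch postpones, so it cannot simultaneously be treated as already absorbed. Finally, the statement you are proving is adelic: $\Psi_L$ is a function on $\domain\times H(\adeles_f)$ whose square has divisor $Z(f)$ on $X_K$ in the sense of Definition \ref{def:Zm}; your construction produces $\Psi_L$ on a single connected component of $\domain$, and passing to the stated form requires strong approximation for $H$ and a matching of the classically constructed divisor with the adelically defined one. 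None of these defects indicates a wrong approach, but each is a place where the proposed argument, as written, would not go through without substantial additional work.
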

The modular form $\Psi_L(z,h,f)$ admits an expansion as an infinite product, giving it the name
\emph{Borcherds product}.
We will only use the product expansion in a special case in Section \ref{sec:coeff-holom-part}.
Therefore, we omit the general case and refer to Theorem 13.3 of \cite{boautgra}.

Bruinier extended the space of input functions of the theta lift to include harmonic weak Maa\ss{} forms and this extension will play an important role for us.
Recall that a function $G(z)$ on $\domain$ has a logarithmic singularity along a divisor $D$,
if every point in $\domain$ has a small neighborhood $U$, such that
for any meromorphic function $g$ locally defining $D$, we have that $G - \log|g|$ extends to a smooth function on $U$.
\begin{theorem}[\cite{brhabil}, Theorem 2.12, Theorem 4.7]\ \\
  Let $f \in H_{1-n/2,L}$. Then the following holds.
\begin{enumerate}
\item The function $\Phi_L(z,h,f)$ is smooth on $X_K \backslash Z(f)$ and has a logarithmic
  singularity along $-2 Z(f)$.
\item The differential $dd^c \Phi_L(z,h,f)$ extends to a smooth $(1,1)$ form on $X_K$.
As a current on $X_K$, we have 
\[
  dd^c  \left[ \Phi_L(z,h,f) \right] + \delta_{Z(f)} = \left[dd^c  \Phi_L(z,h,f)\right].
\] 
\end{enumerate}
\end{theorem}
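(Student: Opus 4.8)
The plan is to follow the approach of Borcherds \cite{boautgra} and Bruinier \cite{brhabil}, reducing both assertions to a local analysis near the divisor $Z(f)$ and then invoking the Poincar\'e--Lelong formula for the current identity. First I would establish smoothness on $X_K \bs Z(f)$. Off the locus where the theta kernel develops a singularity, one may differentiate the regularized integral $\int_{\SL_2(\Z)\bs\uhp}^{\reg}\langle f(\tau),\overline{\Theta_L(\tau,z,h)}\rangle v^k\,d\mu(\tau)$ under the integral sign in the variables $(z,h)$: the kernel $\Theta_L$ is real-analytic in $(z,h)$, and the Borcherds regularization $\CT_{s=0}\lim_{T\to\infty}\int_{\calF_T}$ only affects the behaviour as $\tau\to i\infty$, not the dependence on $(z,h)$. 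Hence $\Phi_L(z,h,f)$ is real-analytic, and in particular smooth, away from the singular locus, which one identifies with $Z(f)$.

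The heart of the matter is the local singularity analysis near a point $z_0\in Z(f)$. I would fix such a point, note that it lies on $\domain_x$ for only finitely many negative-norm vectors $x\in L+\mu$ with $Q(x)=m<0$, and split $\Theta_L$ into the contribution of these finitely many vectors (the singular part) and the remainder (the regular part). The regular part is handled as above and is smooth near $z_0$. For the singular part one reduces to a rank-one model and computes the regularized integral of a single-vector contribution explicitly, as in Borcherds \cite{boautgra} and \cite{brhabil}; it behaves like a constant multiple of the logarithm of the distance from $z$ to $\domain_x$. A key observation is that only the principal part $P_f$ enters: the coefficients $c^+(m,\mu)$ with $m<0$ pair with the growing terms of $\Theta_L$ to produce the divergence responsible for the singularity, whereas the coefficients of non-negative index and the entire non-holomorphic part $f^-$ contribute only smooth terms. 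Summing the local contributions with weights $c^+(m,\mu)$ produces a logarithmic singularity along $-2Z(f)$, in accordance with the weakly holomorphic case $\Phi_L=-2\log\lVert\Psi_L\rVert^2+\mathrm{const}$ of Theorem \ref{thm:borcherds}.

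For the second statement I would compute $dd^c\Phi_L$ off the divisor by using the differential equation satisfied by the Siegel theta kernel in the $z$-variable, which expresses the invariant Laplacian in $z$ in terms of the weight-$k$ Laplacian $\Delta_k$ acting in $\tau$. Trading the $z$-derivatives for $\tau$-derivatives on $\Theta_L$ and integrating by parts against $f$, the harmonicity $\Delta_k f=0$ collapses the main term, and the regularization controls the resulting cusp contributions; what remains is a convergent theta integral representing a smooth $(1,1)$-form on all of $X_K$. This gives the first assertion of (ii). The current equation is then the Poincar\'e--Lelong lemma applied componentwise: a function equal to $-2\log|g|$ plus a smooth function near the simple components of a divisor $D$, and whose $dd^c$ extends smoothly, satisfies $dd^c[\Phi_L]+\delta_{D}=[dd^c\Phi_L]$; taking $D=Z(f)$ finishes the proof.

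The step I expect to be the main obstacle is the explicit local computation of the singular contribution of a single negative-norm vector. Controlling the regularized integral uniformly as $z\to\domain_x$, and pinning down both its logarithmic nature and the precise constant yielding $-2Z(f)$, requires a delicate rank-one analysis; moreover, in higher signature the possible coincidence of several sub-Grassmannians $\domain_x$ at $z_0$ complicates the bookkeeping and must be organized via the Weyl chamber structure underlying the theta integral.
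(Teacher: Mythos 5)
This statement is not proved in the paper at all: it is quoted verbatim as a known result from Bruinier's habilitation \cite{brhabil} (Theorems 2.12 and 4.7), so there is no in-paper argument to compare against. Your sketch reconstructs precisely the strategy of that original proof --- splitting the theta kernel near a point of $Z(f)$ into the finitely many singular negative-norm vectors plus a smooth remainder, identifying the logarithmic singularity with weight $-2$ coming only from the principal part of $f$, showing smoothness of $dd^c\Phi_L$ via the differential equation of the Siegel theta kernel together with $\Delta_k f=0$, and concluding the current identity from the Poincar\'e--Lelong formula --- and the obstacles you flag (uniform control of the rank-one local computation, coincident sub-Grassmannians) are exactly the technical points handled there, so your outline is correct and in line with the cited source.
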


\subsection{Operations on vector-valued modular forms}
Let $A_{k,L}$ be the space of $S_L$-valued functions that are invariant under the weight $k$ slash operator.
Let $M \subset L$ be a sublattice of finite index. Then if $f \in A_{k,L}$, it can be naturally viewed as an element of $A_{k, M}$. Indeed, we have the
inclusions $M \subset L \subset L'\subset M'$ and therefore
\[
L/M \subset L'/M\subset M'/M.
\]
We have the natural map $L'/M \to L'/L$, $\mu\mapsto \bar \mu$.
\begin{lemma}
  \label{sublattice} There are  two natural maps
  $$
  \res_{L/M}: A_{k,L} \rightarrow  A_{k,M},
  \quad f\mapsto f_M
  $$
  and
  $$  \tr_{L/M}: A_{k,M}\rightarrow  A_{k,L},
  \quad g \mapsto g^L
  $$
  such that for any $f \in A_{k,L}$ and $g\in A_{k,M}$
  $$
  \langle f, \bar g^L\rangle =\langle f_M, \bar g \rangle.
  $$
  They are given as follows. For  $\mu\in M'/M$ and $f \in
  A_{k,L}$,
  \[
  (f_M)_\mu = \begin{cases} f_{\bar\mu},&\text{if $\mu\in L'/M$,}\\
    0,&\text{if $\mu\notin L'/M$.}
  \end{cases}
  \]
  For any $\bar\mu \in L'/L$, and $g \in A_{k,M}$, let $\mu$ be a
  fixed preimage of $\bar\mu$ in $L'/M$. Then
  $$
  (g^L)_{\bar\mu} =\sum_{\alpha \in L/M} g_{\alpha +\mu}.
  $$
\end{lemma}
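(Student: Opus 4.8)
The plan is to verify everything componentwise and to reduce the two modularity assertions to the generators $S,T$ of $\widetilde\Gamma = \Mp_2(\Z)$. Concretely, the formula for $f_M$ is $f_M = p\circ f$ for the $\C$-linear map $p\colon S_L \to S_M$ sending $\phi_\lambda$ (with $\lambda\in L'/L$) to the sum of $\phi_\mu$ over all preimages $\mu\in L'/M$ of $\lambda$; likewise $g^L = q\circ g$ for the linear map $q\colon S_M\to S_L$ sending $\phi_\sigma$ to $\phi_{\bar\sigma}$ if $\sigma\in L'/M$ and to $0$ otherwise. With this reformulation, ``$f_M\in A_{k,M}$'' and ``$g^L\in A_{k,L}$'' become the statements that $p$ and $q$ intertwine the Weil representations, i.e. $p\circ\rho_L(\gamma)=\rho_M(\gamma)\circ p$ and $q\circ\rho_M(\gamma)=\rho_L(\gamma)\circ q$ for all $\gamma\in\widetilde\Gamma$, and the adjunction $\langle f,\bar g^L\rangle = \langle f_M,\bar g\rangle$ becomes the purely combinatorial identity $\langle v, q(w)\rangle_L = \langle p(v), w\rangle_M$ for $v\in S_L$, $w\in S_M$ (subscripts distinguishing the $S_L$- and $S_M$-pairings). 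Since $\widetilde\Gamma=\langle S,T\rangle$, it suffices to check the intertwining on these two generators. Throughout I use the chain $M\subset L\subset L'\subset M'$, that $M$ is again even, and the index identity $|M'/M| = [L:M]^2\,|L'/L|$, hence $\sqrt{|M'/M|} = [L:M]\sqrt{|L'/L|}$.

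The $T$-relation and the well-definedness of $q$ are the easy part. For a lift $\tilde\mu\in L'$ of $\lambda\in L'/L$ and any $\alpha\in L$ one has $Q(\tilde\mu+\alpha) = Q(\tilde\mu)+(\tilde\mu,\alpha)+Q(\alpha)\equiv Q(\tilde\mu)\pmod{\Z}$, because $(\tilde\mu,\alpha)\in\Z$ and $Q(\alpha)\in\Z$ by evenness of $L$. This shows at once that $e(Q(\mu))$ is constant along the fibres of $L'/M\to L'/L$, giving the $T$-equivariance of $p$ and $q$ through the diagonal formula for $\rho(T)$, and, after reindexing, that $(g^L)_{\bar\mu}=\sum_{\alpha\in L/M}g_{\alpha+\mu}$ is independent of the chosen preimage $\mu$.

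The $S$-relation is the crux. Writing out $\rho_M(S)\circ p$ and regrouping, for a fixed output index $\nu\in M'/M$ the contribution runs over a fibre $\{\mu_0+\alpha : \alpha\in L/M\}$ and factors as $e(-(\mu_0,\nu))\sum_{\alpha\in L/M}e(-(\alpha,\nu))$. By orthogonality of the characters of the finite group $L/M$, this vanishes unless $\alpha\mapsto e(-(\alpha,\nu))$ is trivial, i.e. unless $(\alpha,\nu)\in\Z$ for all $\alpha\in L$, which holds exactly when $\nu\in L'/M$, in which case the sum equals $[L:M]$. This is precisely the mechanism that kills the components outside $L'/M$, and together with $\sqrt{|M'/M|}=[L:M]\sqrt{|L'/L|}$ it converts the $\rho_M(S)$-normalization into the $\rho_L(S)$-normalization. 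Matching $e(-(\mu_0,\nu))$ with $e(-(\lambda,\bar\nu))$ --- legitimate since both are computed from the same lifts in $L'\subset M'$ and differ only by integers --- then identifies the result with $p\circ\rho_L(S)$. The analogous computation for $q$ uses the same orthogonality-and-index mechanism, the character sum now vanishing in the case $\sigma\notin L'/M$; the identical signature factor $e(-\sgn(V)/8)$ for $L$ and $M$ (as $\sgn(V)$ depends only on $V$) causes no trouble.

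Finally, the adjunction is a direct componentwise calculation that does not use modularity: expanding $\langle f_M,\bar g\rangle = \sum_{\sigma\in L'/M} f_{\bar\sigma}\,\overline{g_\sigma}$ and grouping the sum over $\sigma\in L'/M$ into fibres over $L'/L$ gives $\sum_{\lambda\in L'/L} f_\lambda \sum_{\alpha\in L/M}\overline{g_{\mu+\alpha}} = \langle f,\overline{g^L}\rangle$, where I use that $q$ has integer (hence real) matrix entries to commute it with complex conjugation. I expect the main obstacle to be purely bookkeeping in the $S$-relation: correctly splitting the double sums over $M'/M$ along the exact sequence $0\to L/M\to L'/M\to L'/L\to 0$ and keeping the normalizing constants $\sqrt{|L'/L|}$ and $\sqrt{|M'/M|}$ aligned with the character-sum cardinality $[L:M]$, rather than any conceptual difficulty.
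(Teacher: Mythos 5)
Your proof is correct. It is worth noting, though, that the paper does not actually prove this lemma: its entire ``proof'' is a citation of Scheithauer (\cite[Proposition 6.9]{schehabil}) for the map $\res_{L/M}$, together with the remark that the trace map $\tr_{L/M}$ follows by ``a similar calculation.'' So what you have written is a genuine, self-contained verification of what the paper outsources, and your mechanism --- reducing the two equivariance claims to the generators $S,T$ of $\Mp_2(\Z)$, handling $T$ via $Q(\tilde\mu+\alpha)\equiv Q(\tilde\mu)\pmod{\Z}$ (evenness of $L$ plus $(\tilde\mu,\alpha)\in\Z$), handling $S$ via orthogonality of characters of $L/M$ combined with the index identity $\sqrt{|M'/M|}=[L:M]\sqrt{|L'/L|}$, and checking the adjunction $\langle f,\bar g^L\rangle=\langle f_M,\bar g\rangle$ by regrouping the sum over $L'/M$ into fibres of $L'/M\to L'/L$ --- is exactly the standard argument carried out in the cited reference, here done simultaneously for both maps. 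All the delicate points are handled correctly: the character sum is what kills the components of $\rho_M(S)p(\phi_\lambda)$ outside $L'/M$ (and, symmetrically, kills $q\rho_M(S)\phi_\sigma$ when $\sigma\notin L'/M$, matching $q(\phi_\sigma)=0$), the normalizing constants match because $|M'/M|=[L:M]^2|L'/L|$, and the pairings $e(-(\mu_0,\nu))$ and $e(-(\lambda,\bar\nu))$ agree since both are computed from common lifts in $L'$. One cosmetic remark: the independence of $(g^L)_{\bar\mu}$ from the chosen preimage $\mu$ is pure reindexing over $L/M$ and does not need the evenness observation you attach it to; only the $T$-equivariance itself needs that observation.
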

\nomenclature[fM]{$f_{M}$}{The image of $f$ under the restriction map $\res$}
\nomenclature[fL]{$f^{L}$}{The image of $f$ under the trace map $\tr$}
\begin{proof}
  See \cite[Proposition 6.9]{schehabil} for the map $\res_{L/M}$.
  The proof for $\tr_{L/M}$ is a similar calculation.
\end{proof}

\subsection{Jacobi forms and vector valued modular forms}
\label{sec:jacobi-forms}
In this section we recall the notion of a (weakly holomorphic) Jacobi form.
We will use Jacobi forms of scalar index
which have been intensively studied by Eichler and Zagier \cite{ez}
in Section \ref{sec:coeff-holom-part}.
\begin{definition}
  Let $k,m \in \Z$ and $\varphi: \uhp \times \C \rightarrow \C$ be a holomorphic function.
  Then $\varphi$ is called a \emph{holomorphic Jacobi form} of weight $k$ and index $m$
  if
  \begin{enumerate}
  \item $\varphi(\gamma\tau, \frac{z}{c\tau+d}) = (c\tau+d)^{k} e(mcz^2/(c\tau+d)) \varphi(\tau,z)$,
    for all $\gamma = \smallabcd \in \SL_2(\Z)$,
  \item $\varphi(\tau,z+r\tau+s) = e(-m(r^2\tau+2rz)) \varphi(\tau,z)$ for all $r,s \in \Z$, and
  \item $\varphi(\tau,z)$ is holomorphic at the cusp $\infty$.
  \end{enumerate}
\end{definition}
Note that such a $\varphi$ has a Fourier expansion of the form
\[
  \varphi(\tau,z) = \sum_{n,r \in \Z} c(n,r) q^n\zeta^{r},
\]
where $q=e^{2\pi i \tau}$ and $\zeta = e^{2\pi i z}$.
The last condition in the definition means that
$c(n,r)=0$ if the discriminant $4nm-r^{2}$ is negative.

A \emph{weakly holomorphic Jacobi form} satisfies all the preceding conditions
except that we only require it to be meromorphic at $\infty$.
This means in terms of the Fourier expansion that there are only finitely many non-vanishing
Fourier coefficients with negative discriminant.
We denote the space of holomorphic Jacobi forms of weight $k$ and index $m$ by $J_{k,m}$
and by $J_{k,m}^{!}$ the space of weakly holomorphic Jacobi forms of weight $k$ and index $m$.

Using the definitions, it is easy to check that
\begin{enumerate}
\item If $f \in J_{k_1,m_1}^{!}$ and $g \in J_{k_2,m_2}^{!}$, then $fg \in J^{!}_{k_1+k_2, m_1+m_2}$.
\item If $f \in M_{k_1}^{!}(\SL_2(\Z))$ and $\varphi \in J_{k_2,m}^{!}$, then $f\varphi \in J_{k_{1}+k_2,m}^{!}$.
\item If $\varphi \in J_{k,m}^{!}$, then $\varphi(\tau, 0) \in M_k^{!}$.
\end{enumerate}

For $r \in \Z/2m\Z$ we write
\[
  \theta_{r}(\tau,z) = \sum_{\substack{n \in \Z \\ n \equiv r \bmod{2m}}} q^{\frac{n^{2}}{4m}}\zeta^{n},
\]
for the corresponding theta function.
\begin{proposition}
  \label{prop:jac-theta-exp}
  Let $\varphi \in J_{k,m}^{!}$ be a weakly holomorphic Jacobi form.
  Then $\varphi(\tau,z)$ has a theta expansion of the form
  \[
  \varphi(\tau,z) = \sum_{r \bmod{2m}} \varphi_{r}(\tau) \theta_{r}(\tau,z).
  \]
  Moreover, let $L = \Z$ be the lattice with quadratic form $Q(x) = -mx^{2}$.
  Then
  \[
  \Phi(\tau) = \sum_{r \bmod{2m}} \varphi_r(\tau)\phi_{r}
  \]
  is a vector valued modular form contained in $M_{k-1/2,L}^{!}$.
  Here, $\phi_{r}$ is the characteristic function of the coset $r + 2m\Z$.
  This correspondence establishes an isomorphism
  \[
  M_{k-1/2,L}^{!} \cong J_{k,m}^{!}.
  \]
\end{proposition}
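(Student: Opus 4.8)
The plan is to prove both the theta expansion and the asserted isomorphism by the classical device of sorting the Fourier coefficients of $\varphi$ according to its elliptic transformation law, and then transporting modularity through the known transformation behaviour of the Jacobi theta functions $\theta_r$.

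First I would exploit condition (ii). Writing $\varphi(\tau,z) = \sum_{n,r} c(n,r) q^n \zeta^r$ and applying (ii) with the substitution $z \mapsto z + \lambda\tau$ for $\lambda \in \Z$, a comparison of Fourier coefficients yields
\[
c(n,r) = c(n + r\lambda + m\lambda^2,\ r + 2m\lambda)
\]
for all $\lambda$. Since the discriminant $4nm - r^2$ is invariant under $(n,r) \mapsto (n + r\lambda + m\lambda^2,\ r + 2m\lambda)$ and the residue $r \bmod 2m$ is preserved, $c(n,r)$ depends only on $4nm - r^2$ and on $r \bmod 2m$. Grouping the Fourier series by $r \bmod 2m$ and completing the square in the exponent produces functions $\varphi_r(\tau)$, depending only on $r \bmod 2m$, with $\varphi(\tau,z) = \sum_{r \bmod 2m} \varphi_r(\tau)\theta_r(\tau,z)$; here each $\varphi_r$ is a power series in $q^{1/(4m)}$ whose exponents have the form $D/(4m)$ with $D \equiv -r^2 \bmod 4m$. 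This gives the theta expansion and shows $\varphi_r$ is holomorphic on $\uhp$ (being a regrouping of a locally uniformly convergent series).

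Next I would transport the $\SL_2(\Z)$-modularity. The vector $\vartheta = \sum_{r} \theta_r \phi_r$ is, as a function of $\tau$, a vector-valued modular form of weight $1/2$ transforming under the Weil representation $\rho_{L^-}$ attached to the positive definite rank-one lattice $L^- = (\Z, +mx^2)$, carrying the Jacobi automorphy factor $e(mcz^2/(c\tau+d))$: concretely $\theta_r(\tau+1,z) = e(r^2/4m)\theta_r(\tau,z)$ matches $\rho_{L^-}(T)$, and the $S$-transformation follows from Poisson summation. Writing $\varphi = \langle \Phi, \vartheta\rangle$ for the bilinear pairing with $\langle \phi_\mu, \phi_\nu\rangle = \delta_{\mu\nu}$, I would substitute $(\tau, z) \mapsto (\gamma\tau,\ z/(c\tau+d))$ into $\varphi = \langle \Phi, \vartheta\rangle$, combine the Jacobi transformation of $\varphi$ with the theta transformation of $\vartheta$, cancel the common factor $(c\tau+d)^{1/2} e(mcz^2/(c\tau+d))$ (the metaplectic half-integral factor handled by the cover), and move $\rho_{L^-}(\gamma)$ across the pairing via $\rho_{L^-}(\gamma)^t = \rho_L(\gamma)^{-1}$. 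Because the $\theta_r$ are supported on disjoint residue classes of the exponent of $\zeta$, hence linearly independent over functions of $\tau$, the resulting scalar identity upgrades to the vector identity $\Phi(\gamma\tau) = (c\tau+d)^{k-1/2}\rho_L(\gamma)\Phi(\tau)$, i.e. $\Phi$ is modular of weight $k-1/2$ for $\rho_L$.

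Finally I would match growth conditions and conclude. Since the exponents of $\varphi_r$ are $D/(4m)$ with $D \equiv -r^2 \bmod 4m$, the weakly holomorphic condition on $\varphi$ (finitely many coefficients of negative discriminant) is exactly meromorphy of $\Phi$ at the cusp, so $\varphi \mapsto \Phi$ sends $J_{k,m}^!$ into $M_{k-1/2,L}^!$ (and $J_{k,m}$ into $M_{k-1/2,L}$). The map is injective since $\varphi = \langle \Phi, \vartheta\rangle$ recovers $\varphi$; for surjectivity I would start from an arbitrary $\Phi \in M_{k-1/2,L}^!$, set $\varphi := \langle \Phi, \vartheta\rangle$, and run the transformation computation in reverse to verify (i)--(iii). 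I expect the main obstacle to be the careful bookkeeping in the $S$-transformation of $\vartheta$: establishing the precise Weil-representation matrix, with the correct eighth root of unity $e(-\sgn(L^-)/8)$ and Gauss sum, by Poisson summation in the half-integral metaplectic setting, and checking that the symmetry forced on the vector-valued side by $Z = S^2 \in \Mp_2(\Z)$ (which projects to $-I$) corresponds exactly to $\varphi(\tau,-z) = (-1)^k\varphi(\tau,z)$, so that the correspondence is genuinely bijective.
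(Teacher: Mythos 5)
Your proof is correct and takes essentially the same route as the paper, whose own ``proof'' is just a citation of Theorem 5.1 of Eichler--Zagier (extended to weakly holomorphic forms as in Zagier's traces paper): the theta decomposition extracted from the elliptic transformation law, followed by transporting $\SL_2(\Z)$-modularity through the weight-$1/2$ Weil-representation transformation of the theta vector, is exactly the argument behind that citation. You have simply written out in full the details the paper delegates to the references, including the $Z=S^{2}$ symmetry check needed to see that the correspondence is bijective.
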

\begin{proof}
  For holomorphic Jacobi forms, this is Theorem 5.1 of \cite{ez}.
  It is straightforward to extend this to weakly holomorphic forms.
  See also \cite[Section 8]{zagiertraces}.
\end{proof}

\begin{lemma} \label{lem:jac-theta-pairing}
  With the same notation as in Proposition \ref{prop:jac-theta-exp},
  let $\varphi \in J^{!}_{k,m}$ be a weakly holomorphic Jacobi form.
  We let $\Theta_{L^{-}}(\tau)$ be the theta function
  \[
  \Theta_{L^{-}} (\tau) = \sum_{n \in \Z} e\left( \frac{n^{2}}{4m} \tau\right) \phi_{n} =
  \sum_{r \in \Z/2m\Z} \Theta_{L^{-},r}(\tau) \phi_{r} \in M_{\frac{1}{2},L^{-}}
  \]
  associated with the lattice $L^{-}$.
  Then we have
  \[
  \langle \Phi(\tau), \Theta_{L^{-}}(\tau) \rangle = \varphi(\tau,0).
  \]
\end{lemma}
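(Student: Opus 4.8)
The plan is to prove the identity by directly unwinding the two theta expansions and exploiting the $\C$-bilinearity of $\langle\cdot,\cdot\rangle$ together with the orthogonality relations $\langle\phi_r,\phi_s\rangle = \delta_{r,s}$. Since $\Phi(\tau)$ is $S_L$-valued and $\Theta_{L^{-}}(\tau)$ is $S_{L^{-}}$-valued, and the two group rings share the standard basis indexed by $L'/L$, writing both sides in the standard basis immediately reduces the claim to a componentwise comparison:
\[
  \langle \Phi(\tau), \Theta_{L^{-}}(\tau) \rangle = \sum_{r \bmod 2m} \varphi_r(\tau)\, \Theta_{L^{-},r}(\tau).
\]
Thus it suffices to identify the $r$-th component $\Theta_{L^{-},r}(\tau)$ of the theta function with the specialization $\theta_r(\tau,0)$ of the Jacobi theta function.

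The key step is precisely this identification, which I would carry out by matching the indexing of the cosets of $L'/L$ with residues modulo $2m$. For $L = \Z$ with $Q(x) = -mx^2$ the bilinear form is $(x,y) = -2mxy$, so $L' = \frac{1}{2m}\Z$ and the map $\frac{n}{2m} \mapsto n \bmod 2m$ identifies $L'/L$ with $\Z/2m\Z$; under this identification $\phi_n$ is the characteristic function of $n + 2m\Z$, exactly as in Proposition \ref{prop:jac-theta-exp}. Since $Q_{L^{-}}(\tfrac{n}{2m}) = \tfrac{n^2}{4m}$, collecting the terms of $\Theta_{L^{-}}(\tau) = \sum_{n\in\Z} e(\tfrac{n^2}{4m}\tau)\phi_n$ according to $n \bmod 2m$ gives
\[
  \Theta_{L^{-},r}(\tau) = \sum_{\substack{n\in\Z\\ n\equiv r \bmod 2m}} q^{n^2/(4m)},
\]
which is exactly $\theta_r(\tau,0)$, obtained from the definition of $\theta_r(\tau,z)$ by setting $\zeta = 1$.

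Combining these, I would conclude
\[
  \langle \Phi(\tau), \Theta_{L^{-}}(\tau) \rangle = \sum_{r \bmod 2m} \varphi_r(\tau)\, \theta_r(\tau,0) = \varphi(\tau,0),
\]
the final equality being the theta expansion of Proposition \ref{prop:jac-theta-exp} specialized at $z=0$. There is no real obstacle here: the content of the lemma is essentially bookkeeping, and the only point requiring a modicum of care is the \emph{bilinear} (rather than hermitian) nature of the pairing $\langle\cdot,\cdot\rangle$. It is this bilinearity that lets the two holomorphic theta series pair up without any complex conjugation, which is exactly what is needed for the holomorphic specialization $\varphi(\tau,0)$ to appear on the right-hand side.
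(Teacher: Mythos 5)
Your proof is correct and follows essentially the same route as the paper's: pairing componentwise to get $\sum_r \varphi_r(\tau)\,\Theta_{L^-,r}(\tau)$, identifying $\Theta_{L^-,r}(\tau) = \theta_r(\tau,0)$, and invoking the theta expansion of Proposition \ref{prop:jac-theta-exp} at $z=0$. The paper's proof is just a terser version of this same computation, so there is nothing to add.
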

\begin{proof}
  This follows directly from Proposition \ref{prop:jac-theta-exp}
  using
  \[
  \langle \Phi(\tau), \Theta_{L^{-}}(\tau) \rangle = \sum_{r \in \Z/2m\Z} \varphi_{r}(\tau)\Theta_{L^{-},r}(\tau)
  = \sum_{r \in \Z/2m\Z} \varphi_{r}(\tau)\theta_{r}(\tau,0).
  \]
\end{proof}

From the properties stated above, it follows that the bi-graded ring
\[
  J_{ev,\ast}^{!} = \bigoplus_{k,m \in \Z} J_{2k,m}^{!}
\]
of weakly holomorphic Jacobi forms of even weight
is a module over the graded ring
\[
M_{\ast}^{!} = M_{\ast}^{!}(\SL_2(\Z)) = \bigoplus_{k \in \Z} M_k^{!}(\SL_2(\Z)).
\]
(Note that $M_{k}^{!}(\SL_2(\Z)) = \{0\}$ for $k$ odd.)
\begin{proposition}[See \cite{zagiertraces}]
  \label{Jevgens}
  The $M_{\ast}^{!}$-module $J_{ev,\ast}^{!}$
  of weakly holomorphic Jacobi forms of even weight
  is free of rank two and generated by
  \begin{align*}
    \FJ(\tau,z) = \phi_{-2,1}(\tau,z)
    &= (\zeta -2 +\zeta^{-1})+(-2 \zeta^2 + 8 \zeta -12 +8\zeta^{-1}-2\zeta^{-2})q + \ldots \in J^{!}_{-2,1}, \\
    \GJ(\tau,z) = \phi_{0,1} (\tau,z)
    &= (\zeta + 10 + \zeta^{-1})+(10 \zeta^2-64\zeta+108-64\zeta^{-1}+10\zeta^{-2})q + \ldots \in J^{!}_{0,1},
  \end{align*}
defined by Eichler and Zagier \cite{ez}.
\end{proposition}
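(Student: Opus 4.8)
The plan is to prove the sharper and more precise statement that the natural map of bigraded $M_{\ast}^{!}$-algebras
\[
 M_{\ast}^{!}[X,Y] \longrightarrow J_{ev,\ast}^{!}, \qquad X \mapsto \FJ,\ Y \mapsto \GJ,
\]
from the polynomial ring in two variables (bigraded by $\deg X = (-2,1)$ and $\deg Y = (0,1)$ for the weight/index grading) is an \emph{isomorphism} of bigraded $M_{\ast}^{!}$-algebras. This is exactly what ``free of rank two and generated by $\FJ,\GJ$'' encodes: it exhibits $\FJ,\GJ$ as two free algebra generators over $M_{\ast}^{!}$, equivalently it presents $J_{ev,\ast}^{!}$ as a free $M_{\ast}^{!}$-module with monomial basis $\{\FJ^{a}\GJ^{b}\}_{a,b\geq 0}$. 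Following Eichler--Zagier \cite{ez} and its weakly holomorphic refinement in \cite{zagiertraces}, I would establish injectivity (algebraic independence over $M_{\ast}^{!}$) and surjectivity (generation) separately.

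For injectivity I would exploit the order of vanishing along $z=0$. From the given $q$-expansions one reads off $\FJ(\tau,0)=0$ and $\GJ(\tau,0)=12$; writing $\zeta-2+\zeta^{-1}=-4\sin^{2}(\pi z)$ shows moreover that $\FJ$ vanishes to order exactly $2$ at $z=0$, while $\GJ$ does not vanish there. Hence a monomial $\FJ^{a}\GJ^{b}$ vanishes to order exactly $2a$ in $z$ at the origin, with nonzero leading Taylor coefficient. A homogeneous relation $\sum_{a+b=m} c_{a,b}\,\FJ^{a}\GJ^{b}=0$ with $c_{a,b}\in M_{\ast}^{!}$ would then force the term of smallest $a$ to cancel against nothing, since the distinct monomials have pairwise distinct vanishing orders $2a$; thus all $c_{a,b}=0$, proving linear independence of the monomials over $M_{\ast}^{!}$.

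For surjectivity I would induct on the index $m$, the base case $m=0$ being $J_{k,0}^{!}=M_{k}^{!}$. For $m\geq 1$ and $\varphi\in J_{k,m}^{!}$, the restriction $\varphi(\tau,0)$ is a weakly holomorphic modular form of weight $k$ (one of the elementary properties noted after the definition), and since $\GJ(\tau,0)=12$ is a nonzero constant, the form $\varphi-12^{-m}\varphi(\tau,0)\,\GJ^{m}$ vanishes at $z=0$. The crucial structural input is that $\FJ=\theta^{2}/\eta^{6}$, where $\theta$ is the odd Jacobi theta function, so that the zeros of $\FJ$ are supported exactly on the lattice points $z\in\Z\tau+\Z$, each of multiplicity two. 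Using the elliptic transformation law to propagate the vanishing at $z=0$ to all lattice translates, one checks that the quotient $\varphi_{1}:=\FJ^{-1}\bigl(\varphi-12^{-m}\varphi(\tau,0)\GJ^{m}\bigr)$ is holomorphic in $z$, hence a weakly holomorphic Jacobi form $\varphi_{1}\in J_{k+2,m-1}^{!}$. By the inductive hypothesis $\varphi_{1}\in M_{\ast}^{!}[\FJ,\GJ]$, and therefore so is $\varphi$. Together with injectivity this yields $J_{ev,\ast}^{!}=M_{\ast}^{!}[\FJ,\GJ]$.

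The main obstacle is the weakly holomorphic bookkeeping in the division step: one must verify that dividing by $\FJ$ produces a genuine element of $J_{k+2,m-1}^{!}$. Holomorphy in $z$ is the real content, and it is handled by the explicit zero divisor of $\FJ=\theta^{2}/\eta^{6}$ together with parity (a Jacobi form of even weight is even in $z$, so its order of vanishing at each lattice point is even and at least two). Meromorphy at the cusp is automatic here, which is precisely where the argument is simpler than the holomorphic version in \cite{ez}, where one must additionally track holomorphy at $\infty$. Alternatively, one may bypass part of this by invoking the theta correspondence of Proposition \ref{prop:jac-theta-exp} to compute that the rank over $M_{\ast}^{!}$ of the even-weight index-$m$ part of $J_{ev,\ast}^{!}$ equals $m+1$, and then using the linear independence of the $m+1$ monomials $\FJ^{a}\GJ^{b}$ with $a+b=m$ to conclude that they form a basis in each index.
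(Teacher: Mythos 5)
Your proposal is correct in substance, but note that the paper does not actually prove Proposition \ref{Jevgens}: it is quoted as known, the weak Jacobi form case being Theorem 9.3 of \cite{ez} and the weakly holomorphic refinement being taken from \cite{zagiertraces}. So the comparison is between your direct argument and the standard derivation hiding behind that citation. The standard route deduces the weakly holomorphic statement from the Eichler--Zagier structure theorem for weak Jacobi forms by a one-line trick: for $\varphi \in J^{!}_{k,m}$ the form $\Delta^{M}\varphi$ is a weak Jacobi form for $M$ large, one applies $\tilde J_{ev,\ast} = M_{\ast}(\SL_2(\Z))[\FJ,\GJ]$, and then divides by $\Delta^{M}$ again; this buys brevity but hides the geometry. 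Your proof instead re-runs the Eichler--Zagier induction directly in the weakly holomorphic category: injectivity from the pairwise distinct vanishing orders $2a$ of the monomials $\FJ^{a}\GJ^{b}$ at $z=0$ (using that $\FJ = \pm\theta^{2}/\eta^{6}$ has zero divisor exactly $2\cdot(\Z\tau+\Z)$ and that $\GJ(\tau,0)=12$), and surjectivity by subtracting $12^{-m}\varphi(\tau,0)\GJ^{m}$ and dividing by $\FJ$, with parity and the elliptic transformation law guaranteeing holomorphy of the quotient in $z$; you also correctly read ``free of rank two'' in the algebra sense, which is exactly how the paper uses the result in \eqref{eq:whjac0} and Lemma \ref{lem:FGT}. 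The one place you are too quick is the claim that meromorphy at the cusp of the quotient is ``automatic'': it does require an argument, for instance that the leading $q$-coefficient $\zeta-2+\zeta^{-1}$ of $\FJ$ is a nonzero meromorphic function of $z$, so that $\FJ$ is invertible in the ring of Laurent $q$-series with meromorphic-in-$z$ coefficients and the quotient's $q$-expansion is therefore bounded below (holomorphy in $z$ then upgrades the coefficients), or alternatively one clears the pole at $\infty$ with a power of $\Delta$ and quotes the weak case after all. With that point made explicit, your argument is a complete, self-contained proof of the cited result.
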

\nomenclature[FJ]{$\FJ$}{A weak Jacobi form}
\nomenclature[GJ]{$\GJ$}{A weak Jacobi form}

\subsection{Special endomorphisms}
\label{sec:spec-endom}
\nomenclature[D]{$D$}{Usually an odd fundamental discriminant}
Let $D$ be a negative odd fundamental discriminant and let $\kb=Q(\sqrt{D})$ be the
imaginary quadratic field of discriminant $D$. We write $\OD$ for the ring of integers in $\kb$, $h_\kb$ for the class number of $h_\kb$, and $w_\kb$ for the number of roots of unity in $\kb$. Moreover, we let $\Hkb$ be the Hilbert class field of $\kb$
and $\calO_{\Hkb} \subset \Hkb$ be its ring of integers.
By class field theory, we have an isomorphism via the Artin map
\cite[II, Example 3.3]{silverman-advanced}
\[
  (\cdot, \Hkb/\kb): \Clk \rightarrow \Gal(\Hkb/\kb).
\]
We will use the convention that we write $\sigma(\fraka) = \sigma([\fraka])$
for $([\fraka],\Hkb/\kb)$ for the image of the class of the fractional ideal $\fraka$ under this map.
\nomenclature[sigmaa]{$\sigma(\fraka)$}{The image of $\fraka$ under the Artin map $(\cdot,\Hkb/\kb)$}
\nomenclature[Hcf]{$\Hkb$}{The Hilbert class field of the imaginary quadratic field $\kb$}

For any scheme $S$ we consider pairs $(E,\iota)$, where
$E$ is an elliptic curves over $S$ with complex multiplication $\iota: \OD \hookrightarrow \End(E)$.
The coarse moduli scheme of the corresponding moduli problem is isomorphic to $\Spec \calO_{\Hkb}$ \cite{kry-tiny}.
If $(E,\iota)$ is an elliptic curve with complex multiplication over $S$, we write $\calO_{E} = \End_{S}(E)$
and consider the lattice $L(E,\iota)$ of \emph{special endomorphisms}
\[
  L(E,\iota) = \{x \in \calO_{E}\ \mid\ \iota(\alpha)x
             = x\iota(\bar\alpha) \text{ for all } \alpha \in \OD \text{ and } \tr{x} = 0 \}
\]
as in Definition 5.7 of \cite{kry-tiny}.
It is equipped with the positive definite quadratic form given by $\N(x) = \deg(x) = -x^{2}$. 
For $S = \Spec \C$ or $S = \Spec \bar\F_{p}$ for a prime $p$ that is split in $\kb$, we have that $L(E,\iota)$ is zero.

For non-split primes, $L(E,\iota)$ is a positive definite lattice of
rank $2$ in $\calO_{E}$ and $(E,\iota)$ is supersingular.
In this case $\calO_{E}$ is a maximal order in the quaternion algebra $\B_p$
over $\Q$, which is ramified exactly at $p$ and $\infty$.
The quadratic form $\N(x)$ corresponds to the reduced norm on $\calO_E$.

We fix the embedding of $\kb=\Q(\sqrt{D})$ into $\C$ such that $\sqrt{D}$ has positive imaginary part
and let $\omega = (1+\sqrt{D})/2$ so that $\OD = \Z + \Z\omega$. We let $j_D = j(\omega) = j(\OD)$.
Let $p$ be a rational prime that is not split in $\kb$ and let $\frakp$ be the unique prime ideal of $\kb$ above $p$.
If $\frakP$ is a prime of $\Hkb:=\kb(j_D)$ above $\frakp$, then the image of $j_D$ under the reduction map $\calO_{\Hkb} \to \calO_{\Hkb}/\frakP$
is the $j$-invariant of an elliptic curve $(E_\frakP, \iota_\frakP)$  
with complex multiplication by $\OD$ over $\bar{\F}_p$, unique up to isomorphism.

Fix a fractional ideal $\fraka \subset k$ and let
$\mu \in \different{\kb}^{-1}\fraka/\fraka$ and $m \in \Q_{>0}$.
The following cycles arise from a moduli problem that has been studied in \cite{kry-tiny} and
\cite{bryfaltings} and generalized in \cite{ky-pullback}.
For $m \in \Q$, let $L(E,\iota, m,\fraka,\mu)$ be the set of all $x \in L(E,\iota)\different{\kb}^{-1}\fraka$, such that
\[
   \N(x) = m \N (\fraka), \text{ and } x + \mu \in \calO_{E}\fraka.
\]

For every positive rational number $m$,  we define an Arakelov divisor
\[
  \calZ(m,\fraka,\mu) = \sum_{\frakP \subset \calO_{\Hkb}} \calZ(m,\fraka,\mu)_\frakP \frakP
\]
on $\Spec \calO_{\Hkb}$.
\nomenclature[Zmam]{$\calZ(m,\fraka,\mu)$}{A certain arithmetic divisor on $\Spec \calO_{\Hkb}$}
Here, we let $\calZ(m,\fraka,\mu)_\frakP = 0$ if the rational prime $p$ below $\frakP$
is split in $\kb$ and otherwise
\[
   \calZ(m,\fraka,\mu)_\frakP = \frac{\nu_p(m)}{w_\kb} |L(E_\frakP,\iota_\frakP, m, \fraka,\mu)|
\]
with
\begin{equation*}
  \nu_{p}(m) =
  \begin{cases}
    \frac{1}{2}(\ord_{p}(m)+1), & \text{if $p$ is inert in $\kb$},\\
    \ord_{p}(m \abs{D}), & \text{if $p$ is ramified in $\kb$}.
  \end{cases}
\end{equation*}
If $\calZ(m,\fraka,\mu)$ is non-empty, then we have $m + Q(\mu) = m + \N(\mu)/N(\fraka) \in \Z$.
\nomenclature[ZmamP]{$\calZ(m,\fraka,\mu)_\frakP$}{see $\calZ(m,\fraka,\mu)$ \nomnorefpage}
The representation numbers $|L(E_\frakP,\iota_\frakP, m,\fraka,\mu)|$ can be determined
following \cite{kry-tiny} (completely explicit in the case of a prime discriminant and up to Galois conjugation in general).
We refer to \cite{ehlen-intersection} for details.

\section{CM cycles and CM values of regularized theta lifts} \label{sec:cmcycles}
In this section we fix a rational quadratic space $(V,Q)$ of type $(2,n)$.
We fix a compact open subgroup $K$ of $H(\adeles_f)$ and consider the Shimura variety $X_{K}$ as in Section \ref{sec:shimura-varieties}.

The type of CM cycles we consider are given as follows.
Let $U \subset V$ be a $2$-dimensional, positive definite rational subspace.
This determines a two-point subset $\{z_U^\pm\}\subset \domain$
given by $U(\R)$ with the two possible choices of orientation.
Denote by $V_- = U^\perp \subset V$ the $n$-dimensional negative definite orthogonal complement
of $U$ over $\Q$. Then we have a \emph{rational} splitting
\begin{align}
  \label{split} V = U \oplus V_{-}.
\end{align}
We obtain a cycle $Z(U)_K \subset X_K$, which is called the \emph{CM cycle} in $X_K$ corresponding to $U$.
\nomenclature[U]{$U$}{A two-dimensional positive definite subspace of $V(\Q)$}
It is obtained by embedding a Shimura variety associated with $U$ into $X_K$,
which is given as follows.
Put $T = \GSpin_U$, which we view as a subgroup of $H$ acting trivially on $V_-$.
The group $K_T = K \cap T(\adeles_f)$ is a compact open subgroup of $T(\adeles_f)$.
We obtain a generically injective map
\begin{equation}
  \label{eq:ZU}
  Z(U)_K = T(\Q) \backslash (\{z_U^\pm\} \times T(\adeles_f) / K_T) \hookrightarrow X_K.
\end{equation}
\nomenclature[zU]{$z_{U}^{\pm} = z_U$}{The (two) point(s) corresponding to $U \subset V(\R)$}
\nomenclature[T1]{}{Also $T = \GSpin_U$}

Here, each point is counted with multiplicity
$\frac{2}{w_{K, T}}$, where we let  $w_{K,T}=\abs{(T(\mathbb Q) \cap K_T)}$.
\nomenclature[ZUK1]{$Z(U)_{K}=Z(U)$}{A CM cycle\nomrefeq}
\nomenclature[wKT]{$w_{K,T}$}{$=\abs{(T(\mathbb Q) \cap K_T)}$}

If the choice of $K$ is clear from the context, we will abbreviate $Z(U) = Z(U)_{K}$.
\begin{lemma}\label{lem:ZUCl}
  Suppose that $U$ is isomorphic as a rational quadratic space
  to an imaginary quadratic field $\kb$ and let $\calO_\kb \subset k$
  be its ring of integers.
  If $K_T = \hat{\calO}_{k}^{\times}$, then $Z(U)$ is isomorphic to two
  copies of the ideal class group $\Clk$ of $\kb$,
  that is, $Z(U) \cong \Clk \times \{ z_U^{\pm} \}$.
\end{lemma}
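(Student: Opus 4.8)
The plan is to identify the torus $T=\GSpin_U$ explicitly and then unwind the double coset defining $Z(U)$. First I would observe that, since $U$ is isomorphic as a rational quadratic space to $\kb$ with the norm form $Q(x)=\N(x)=xx'$, the even Clifford algebra of $U$ is canonically isomorphic to $\kb$, and hence $T=\GSpin_U \cong \Res_{\kb/\Q}\G_m$ as an algebraic group over $\Q$. Under this identification we have $T(\Q)=\kb^\times$, $T(\adeles_f)=\adeles_{\kb,f}^\times$ (the finite ideles of $\kb$), and the hypothesis $K_T=\hat{\calO}_\kb^\times$ becomes $K_T=\prod_{\frakp}\calO_{\frakp}^\times$.

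Next I would analyze the action of $T(\Q)$ on the two-element set $\{z_U^\pm\}$, which records the two orientations of the positive definite plane $U(\R)$. The real points $T(\R)\cong(\kb\otimes_\Q\R)^\times\cong\C^\times$ form a connected group, and a connected topological group acting continuously on a discrete two-point set must act trivially; geometrically, $T(\R)$ acts on $U(\R)$ through orientation-preserving maps and therefore fixes each of $z_U^\pm$. Since $T(\Q)\subset T(\R)$, the group $T(\Q)$ fixes each orientation as well. Consequently the diagonal $T(\Q)$-action on $\{z_U^\pm\}\times T(\adeles_f)/K_T$ is trivial on the first factor, and the double coset splits as
\[
  Z(U)=T(\Q)\backslash\bigl(\{z_U^\pm\}\times T(\adeles_f)/K_T\bigr)\cong \{z_U^\pm\}\times\bigl(T(\Q)\backslash T(\adeles_f)/K_T\bigr).
\]

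Finally I would invoke the standard idelic description of the ideal class group: the map sending a finite idele to the fractional ideal it generates induces an isomorphism
\[
  \kb^\times\backslash\adeles_{\kb,f}^\times/\hat{\calO}_\kb^\times \xrightarrow{\ \sim\ }\Clk .
\]
Combining this with the splitting above yields $Z(U)\cong\{z_U^\pm\}\times\Clk$, two copies of the class group, as claimed.

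The main obstacle I anticipate is the first step: pinning down the identification $\GSpin_U\cong\Res_{\kb/\Q}\G_m$ precisely enough that the induced identifications of $T(\Q)$, $T(\adeles_f)$ and $K_T$ with $\kb^\times$, $\adeles_{\kb,f}^\times$ and $\hat{\calO}_\kb^\times$ are compatible with the idelic class-group isomorphism; in particular one must check that $K_T=K\cap T(\adeles_f)$ really corresponds to the maximal compact $\hat{\calO}_\kb^\times$ under the hypothesis. The orientation argument and the class-group identification are then routine.
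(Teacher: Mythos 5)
Your proof is correct and follows essentially the same route as the paper's: identify $T(\adeles_f)$ with $\adeles_{\kb,f}^\times$ via the even Clifford algebra and then apply the idelic description of the class group (the paper cites \cite{kitqf}, \cite{ehlen-binary} and \cite[VI.\ Satz 1.3]{neukirchalgzt} for precisely these two steps). The only difference is that you spell out the orientation argument showing $T(\Q)$ fixes each of $z_U^\pm$ — a point the paper leaves implicit — which is a worthwhile addition but not a departure in method.
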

\begin{proof}
  We have $T(\adeles_f) \cong \adeles_{\kb,f}^\times$ (see, for instance \cite{kitqf} or \cite[Section 2.2]{ehlen-binary}) and then the claim follows from \cite[VI. Satz 1.3]{neukirchalgzt}.
\end{proof}

\subsection{The average value}
\label{sec:average-value}
Fix an even lattice $L \subset V$ and we abbreviate $\Phi(z,h,f) := \Phi_L(z,h,f)$.
Schofer \cite{schofer}, Bruinier and Yang \cite{bryfaltings} studied the CM value
\begin{equation}
  \label{eq:cmvaldef}
  \Phi(Z(U),f) = \frac{2}{w_{K,T}} \sum_{(z,h) \in \supp{Z(U)_{K}}} \Phi(z,h,f).
\end{equation}
We review their main results.

The splitting \eqref{split} yields two lattices, $P$ and $N$, defined by
\begin{align*}
  P = L \cap U, \quad N = L \cap V_{-}.
\end{align*}
The direct sum $P \oplus N$ is a sublattice of $L$ of finite index.
\nomenclature[P2]{}{Often obtained as $P = L \cap U$}
\nomenclature[N2]{}{Also a negative definite lattice}

For $z=z_U^\pm$ and $h\in T(\adeles_f)$,
the Siegel theta function $\Theta_{P \oplus N}(\tau,z,h)$ splits as a product
\begin{align}
  \label{splittheta}
  \Theta_{P \oplus N}(\tau,z_U^\pm,h)= \Theta_P(\tau,z_{U}^{\pm},h) \otimes_\C \Theta_N(\tau).
\end{align}
Here, $\Theta_N(\tau)=\Theta_N(\tau,1)$ is the
$S_N$-valued theta function of weight $n/2$ associated to the
negative definite lattice $N$. Note that $v^{-n/2}\overline{\Theta_N(\tau)}$
is the holomorphic theta function corresponding to the positive definite lattice
$N^{-}$. Moreover, we identified $S_{P \oplus N}$ with the tensor product
$S_P \otimes_\C S_N$.

Attached to $P$ there is a so-called incoherent Eisenstein series $\hat E_P(\tau,s)$
of weight 1 transforming with representation $\overline{\rho_{P}} = \rho_{P^{-}}$ \cite{kry-tiny,kryderivfaltings}.
Here, the term ``incoherent'' refers to the fact that it is built
from local data at each place which does not correspond to a
quadratic space over $\Q$.
\nomenclature[EPh]{$\hat E_{P}(\tau,s)$}{An incoherent Eisenstein series attached to $P$}

Its central value at $s=0$ vanishes but it is the value
of the derivative $\frac{\partial}{\partial s}\hat E_P(\tau,s)$ at $s=0$
that carries the arithmetic data which contributes to the CM values.
The function
\begin{equation}
  \label{eq:calE}
  \calE_P(\tau) = \frac{\partial}{\partial s}\hat E_P(\tau,s)\mid_{s=0}
\end{equation}
is a harmonic weak Maa\ss{} form of weight $1$ with respect to $\overline{\rho_{P}}$.
\nomenclature[EPc]{$\calE_{P}(\tau)$}{A harmonic weak Maa\ss{} form, given by $\hat E_{P}'(\tau,0)$}

If $S(q)=\sum_{n\in \Z} a_n q^n$ is a Laurent series in $q$, we write $\CT(S)=a_0$
for the constant term in the $q$-expansion.
\nomenclature[CTq]{$\CT$}{Also the constant term in a Laurent series in $q$}

\begin{theorem} \label{thmBrYCM}
  Let $f \in H_{k,L}$ with $k= 1-n/2$.
  The value of the theta lift $\Phi(z,h,f)$ at the CM cycle $Z(U)_{K}$ is given by
  \begin{align*}
    \Phi(Z(U),f)& = \deg(Z(U)) \left( \CT\left(\langle
        (f_{P \oplus N})^+(\tau),\, \Theta_{N^{-}}(\tau)\otimes \ \calE_P^{+}(\tau)\rangle\right)
      - L'(\xi_k(f), U,0)\right).
  \end{align*}
  Here, $L'(\xi_k(f),U,s)$ is the derivative with respect to $s$ of
  the $L$-function defined by the convolution integral
  \[
    L(\xi_k(f),U,s) =
    \int_{\SL_2(\Z) \bs \uhp} \langle \overline{\xi_k(f)(\tau)}
                        ,  \hat E_{P}(\tau,s) \otimes \Theta_{N^{-}}(\tau) \rangle\, v^{1+n/2}\frac{dudv}{v^{2}}.
  \]
\end{theorem}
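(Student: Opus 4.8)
The plan is to follow the seesaw and Siegel--Weil strategy of Schofer \cite{schofer} and Bruinier--Yang \cite{bryfaltings}, extending it from weakly holomorphic input to harmonic weak Maa\ss{} forms $f \in H_{k,L}$; the non-holomorphic part of $f$ is exactly what will produce the derivative $L'(\xi_k(f),U,0)$, while the holomorphic part produces the constant-term contribution. First I would unfold the definitions: by \eqref{eq:cmvaldef} the CM value is the weighted sum of $\Phi(z,h,f)$ over $\supp Z(U)_K$, and each summand is the regularized integral of $\langle f(\tau), \overline{\Theta_L(\tau,z,h)}\rangle v^k$. Passing to the sublattice $P \oplus N \subset L$ via the restriction map of Lemma \ref{sublattice} replaces $f$ by $f_{P\oplus N}$ and $\Theta_L$ by $\Theta_{P\oplus N}$, and the splitting \eqref{splittheta} factors the kernel as $\Theta_P(\tau,z,h) \otimes_\C \Theta_N(\tau)$. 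Using that $v^{-n/2}\overline{\Theta_N(\tau)}$ is the holomorphic theta series $\Theta_{N^{-}}(\tau)$ of the positive definite lattice $N^{-}$, this rewrites $\Phi(Z(U),f)$ as a single regularized integral of $f_{P\oplus N}$ paired against $\Theta_{N^{-}}(\tau)$ tensored with the weighted average over $Z(U)_K$ of $\overline{\Theta_P(\tau,z,h)}$.

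The key step is the Siegel--Weil formula. Because $U$ is positive definite of rank two and $K_T = \ODhatt$ (Lemma \ref{lem:ZUCl}), the weighted average of $\Theta_P(\tau,z,h)$ over $Z(U)_K$ is, up to the total mass $\deg(Z(U))$, the genus theta series of $P$, which equals a special value of a coherent weight-one Eisenstein series. The incoherent Eisenstein series $\hat E_P(\tau,s)$ is obtained by replacing the archimedean local datum by one of the wrong signature; its central value $\hat E_P(\tau,0)$ vanishes, so the arithmetic information sits in the derivative $\calE_P(\tau) = \partial_s \hat E_P(\tau,s)|_{s=0}$, a harmonic weak Maa\ss{} form with $\xi_1(\calE_P) = E_P$. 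Thus, after inserting the $s$-variable of $\hat E_P(\tau,s)$ into the regularized integral, the averaged kernel is governed by $\hat E_P(\tau,s)$ and its derivative rather than by its vanishing value at $s=0$.

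Finally I would evaluate the resulting regularized Rankin--Selberg integral by splitting $f_{P\oplus N} = f^+ + f^-$ and applying Stokes' theorem, in the spirit of \cite[Section 3]{brfugeom} and Lemma \ref{lem:pairing}. The holomorphic part $f^+$ contributes only the constant term in the $q$-expansion, giving $\CT(\langle (f_{P\oplus N})^+(\tau), \Theta_{N^{-}}(\tau)\otimes\calE_P^+(\tau)\rangle)$. The non-holomorphic part, encoded by the shadow $\xi_k(f)$, unfolds against $\hat E_P(\tau,s)\otimes\Theta_{N^{-}}(\tau)$ into the convolution $L(\xi_k(f),U,s)$; here the very derivative $\partial_s$ that defines $\calE_P$ from $\hat E_P$ is transferred to a derivative in $s$ of the $L$-function, so that the vanishing central value forces the contribution $-L'(\xi_k(f),U,0)$. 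Pulling out the overall mass gives the factor $\deg(Z(U))$ in front of both terms.

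The hard part will be the simultaneous bookkeeping of two regularizations --- the $\CT_{s=0}$ in the definition of $\Phi$ and the Eisenstein parameter $s$ of $\hat E_P(\tau,s)$ --- and in particular matching these two parameters so that the derivative defining $\calE_P$ produces precisely $L'(\xi_k(f),U,0)$. One must also justify interchanging the cycle averaging and the Siegel--Weil identification with Borcherds' regularization, and control the boundary terms in Stokes' theorem so that only the constant term and the $L$-value survive while the $f^-$-contribution matches the $\hat E_P(\tau,s)$-convolution. For weakly holomorphic $f$ one has $\xi_k(f)=0$ and the second term drops out, recovering the formula of Schofer; the genuinely new content is the treatment of $f^-$ via the shadow $\xi_k(f)$.
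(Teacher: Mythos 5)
Your outline is correct, but you should know that the paper does not actually prove this statement: its entire proof is the citation ``Theorem 4.7 in \cite{bryfaltings}, with a corrected sign.'' What you have written is a reconstruction of that cited argument, and it coincides with the mechanism the paper \emph{does} execute in full for the single-point refinement, Theorem \ref{thm:value-phiz}: write the lift via Lemma \ref{lem:philim} as a truncated integral minus $A_0\log T$, apply Stokes' theorem against a $\xi_1$-preimage of the weight-one kernel, identify the surviving boundary term with a constant term, and identify the bulk term with a regularized integral against $\overline{\xi_k(f)}$. The only step special to the averaged statement is your Siegel--Weil step, which the paper invokes later (Lemma \ref{lem:schoferSumInt} and Proposition \ref{SumG=E}) for the same purpose. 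Two places in your sketch should be tightened. First, Siegel--Weil produces the \emph{coherent} Eisenstein series $E_P(\tau,0)$ as the average of $\Theta_P(\tau,z,h)$ over $Z(U)_K$; the incoherent series $\hat E_P(\tau,s)$ never appears as a kernel in any integral. It enters solely because $\calE_P=\frac{\partial}{\partial s}\hat E_P(\tau,s)\mid_{s=0}$ satisfies $\xi_1(\calE_P)=E_P$ (stated after \eqref{eq:EPFourier}), which is exactly what allows $\calE_P$ to play the role of the preimage $\pre{\Theta}_P$ in the Stokes argument; your phrase about ``inserting the $s$-variable of $\hat E_P(\tau,s)$ into the regularized integral'' obscures this and should be replaced by that statement. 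Second, no Rankin--Selberg unfolding occurs: the bulk term is literally $\int\langle\overline{\xi_k(f)},\Theta_{N^-}\otimes\calE_P\rangle v^{1+n/2}d\mu(\tau)$, and it equals $L'(\xi_k(f),U,0)$ because $\hat E_P(\tau,0)=0$ lets one pull $\frac{\partial}{\partial s}\mid_{s=0}$ outside the convolution integral, the interchange being harmless since $\xi_k(f)\in S_{2-k,L^-}$ is a cusp form. Finally, the $A_0\log T$ subtraction is cancelled by the $\log v$ term in the nonholomorphic part of $\calE_P$ (cf.\ \eqref{f-k1}); this is the boundary-term control you flagged, and it is carried out in the vanishing of \eqref{int2} in the paper's proof of Theorem \ref{thm:value-phiz}.
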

\begin{proof}
  This is Theorem 4.7 in \cite{bryfaltings} with a corrected sign. \qedhere
\end{proof}
\nomenclature[rhot]{$\tilde\rho(n,\beta)$}{The coefficient of index $(n,\beta)$ of $E_P(\tau)$}

The proof involves the Siegel-Weil formula and the standard Eisenstein
series associated with $P$, which is defined as
\begin{equation}
  \label{eq:holeis}
  E_{P}(\tau,s)
  = \frac{1}{2} \sum_{\gamma \in \Gamma_{\infty} \bs \Gamma} (\Im(\tau)^s \phi_{0}) \mid_{1,P} \gamma,
\end{equation}
where $\phi_{0} \in S(V(\adeles_f))$ is the characteristic function of $\hat{P}$.
The series converges for $\Re(s)>1$ and has a meromorphic continuation
to the whole complex $s$-plane.
Note that we normalized $E_{P}(\tau,s)$, such that the constant term is equal to one.
\nomenclature[EP]{$E_{P}(\tau)$}{The holomorphic Eisenstein series in $M_{1,P}$, equal to $E_{P}(\tau,0)$}

As in \cite{bryfaltings}, we fix the Haar measure on
$\SO_{U}(\R) \cong \SO(2,\R)$ such that $\vol(\SO_{U}(\R))=1$.
This implies that we have $\vol(\SO_{U}(\Q)\bs\SO_{U}(\adeles_f))=2$.
Moreover, we use the usual Haar measure on $\adeles_{f}^{\times}$.
It satisfies $\vol(\Z_{p}^{\times}) = \vol(\hat\Z^{\times}) = 1$
and $\vol(\Q^{\times}\bs\adeles_f^{\times}) = 1/2$.

For later reference, we write the Fourier expansion of $E_P(\tau,0)$ as
\begin{equation}
  \label{eq:EPFourier}
  E_{P}(\tau) = E_P(\tau,0) = \phi_{0} + \sum_{\beta \in P'/P} \sum_{\substack{n \in \Q_{>0} \\ n \in Q(\beta) + \Z}} \tilde\rho(n,\beta) e(n\tau) \phi_{\beta}.
\end{equation}
A crucial fact is that $\calE_P(\tau)$ maps to $E_{P}(\tau)$ under the $\xi_{1}$-operator.
This has been stated by Bruinier and Yang \cite[Remark 2.4]{bryfaltings}
and follows directly from equation (2.19) in \cite{bryfaltings}.

The Fourier expansion of $\calE_P(\tau)$ can be determined using a very general result
by Kudla and Yang \cite{KudlaYangEisenstein} on the coefficients of Eisenstein series on $\SL_{2}$.
See Proposition 7.2 in \cite{KudlaYangEisenstein} and Schofer \cite{schofer}.

\subsection{The value of $\Phi(z,f)$ at an individual CM point}
\label{sec:cmvalue-single}
We are now interested in computing the value of the theta lift $\Phi(z,f)$ at a CM point (rather than averaging over the cycle).
Let $K_{P} \subset K_{T} \subset T(\adeles_f)$ be a compact open subgroup
such that $K_{P}$ preserves $P$ and acts trivially on $P'/P$. Consider the
Shimura variety
\begin{equation*}
  Z(U)_{P,K} = T(\Q) \bs (\{ z_U^{\pm} \} \times T(\adeles_f) / K_{P}).
\end{equation*}
This is isomorphic to two identical copies of the ``class group''
\begin{equation}
  \label{eq:CPK-eq}
  \CPK = T(\Q) \bs T(\adeles_f) / K_{P}
\end{equation}
and defines a cover of the CM cycle $Z(U)_K$ with $[C_{K} : C_{P,K}]$ branches.
\nomenclature[CPK]{$\CPK$}{A certain cover of the CM cycle}

Since $K_{P}$ acts trivially on $P'/P$, the value $\Theta_P(\tau,h)=\Theta_P(\tau,z_{U}^{\pm},h)$ is well defined for an element $h \in C_{P,K}$. 
As a function of $\tau$, we have $\Theta_P(\tau,h) \in M_{1,P}$.

We would like to apply Stokes' theorem to compute $\Phi(z,h,f)$ for $(z,h) \in Z(U)_{P,K}$.
We use the existence of a preimage $\pre{\Theta}_P(\tau,h)$ under $\xi_1$ of each theta function 
$\Theta_P(\tau,h)$ for $h \in \CPK$, which is guaranteed the exact sequence \eqref{ex-sequ}. However, such a preimage is only unique up
to weakly holomorphic modular forms. Later, we will make specific choices but for now we can work
with an arbirtary preimage for every $h$.

Lemma \ref{lem:xidifform} becomes the following statement in our situation.
\begin{lemma}
  \label{lem:preimdifform}
  Let $\pre{\Theta}_P(\tau,h) \in \calH_{1,P^-}$, such that $\xi_1(\pre{\Theta}_P(\tau,h)) = \Theta_P(\tau,h)$.
  We have the equality of differential forms
  \[
    \bar\partial (\pre{\Theta}_{P}(\tau,h) d\tau) = - v \overline{\Theta_P(\tau,z_{U}^{\pm},h)} d\mu(\tau).
  \]
\end{lemma}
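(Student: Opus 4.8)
The plan is to obtain this as an immediate specialization of Lemma \ref{lem:xidifform}. First I would apply that lemma to the function $f = \pre{\Theta}_P(\tau,h)$, which by assumption lies in $\calH_{1,P^-}$; thus the relevant weight is $k=1$ and the relevant lattice is $P^-$. Since $2-k = 1$ in this case, the general identity $\bar\partial(f\,d\tau) = -v^{2-k}\,\overline{\xi_k(f)}\,d\mu(\tau)$ specializes to
\[
  \bar\partial\bigl(\pre{\Theta}_P(\tau,h)\,d\tau\bigr) = -v\,\overline{\xi_1(\pre{\Theta}_P(\tau,h))}\,d\mu(\tau).
\]

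Next I would invoke the defining hypothesis $\xi_1(\pre{\Theta}_P(\tau,h)) = \Theta_P(\tau,h)$ to rewrite the shadow, which yields $\bar\partial(\pre{\Theta}_P(\tau,h)\,d\tau) = -v\,\overline{\Theta_P(\tau,h)}\,d\mu(\tau)$. Finally, recalling the convention from Section \ref{sec:theta} that for the positive definite lattice $P$, where the Grassmannian reduces to the two points $z_U^\pm$, one writes $\Theta_P(\tau,h) = \Theta_P(\tau,z_U^\pm,h)$, this is exactly the asserted equality. There is essentially no obstacle here: the statement is a direct corollary of Lemma \ref{lem:xidifform}, and the only points requiring care are that the weight-one normalization makes $v^{2-k}$ collapse to $v$, and that the hypothesis on $\pre{\Theta}_P$ is precisely what converts the abstract shadow $\xi_1(\pre{\Theta}_P)$ appearing in Lemma \ref{lem:xidifform} into the concrete theta function $\Theta_P(\tau,z_U^\pm,h)$.
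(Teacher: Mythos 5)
Your proposal is correct and matches the paper exactly: the paper introduces this lemma with the remark that Lemma \ref{lem:xidifform} ``becomes the following statement in our situation,'' i.e.\ it is precisely the specialization you carry out, taking $k=1$ and the lattice $P^-$ so that $v^{2-k}$ collapses to $v$, and then substituting $\xi_1(\pre{\Theta}_P(\tau,h)) = \Theta_P(\tau,h) = \Theta_P(\tau,z_U^{\pm},h)$.
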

\nomenclature[ThetaP1]{$\pre{\Theta}_{P}(\tau,h)$}{A preimage of $\Theta_{P}(\tau,h)$ under the $\xi_1$ operator}

Note that we can always assume that $L$ splits as $L=P \oplus N$
without loss of generality because we can replace $f$ by $f_{P\oplus N}$, yielding
\begin{equation*}
  \langle f, \Theta_L \rangle =\langle f_{P\oplus N}, \Theta_P \otimes \Theta_N \rangle,
\end{equation*}
since $\Theta_{P\oplus N} = \Theta_P \otimes_\C \Theta_N$, and $\Theta_L = (\Theta_{P \oplus N})^L$ by \cite{bryfaltings},
identifying $S_P \otimes_\C S_N$ with $S_L$.

We express the theta integral in a way that
is convenient for the following calculations.
\begin{lemma}
  \label{lem:philim}
  We have
  \begin{equation*}
    \Phi(z_U^\pm,h,f) = \lim_{T \to \infty}
    \left( \int_{\calF_T}
      \langle f_{P \oplus N}(\tau), \Theta_{N^{-}}(\tau) \otimes \overline{\Theta_P(\tau,z_U^\pm,h)} \rangle v d\mu(\tau)
      - A_0 \log(T) \right),
  \end{equation*}
  where
  \begin{equation*}
    A_{0} = \CT\left( \langle f_{P \oplus N}^+(\tau),\, \Theta_{N^{-}}(\tau) \otimes \phi_{0+P} \rangle \right).
  \end{equation*}
We remark that this expression makes sense even if $(z_U^\pm,h)$ is contained in the support of $Z(f)$
and thus provides a (discontinuous) extension of $\Phi(z,h,f)$ to $Z(U)$ in case that $Z(f) \cap Z(U) \neq \emptyset$.
\end{lemma}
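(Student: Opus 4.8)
The plan is to reduce both sides of the asserted identity to an elementary comparison of two regularizations of the same $\SL_2(\Z)$-invariant function at the cusp $\infty$. Using the splitting $\langle f,\overline{\Theta_L}\rangle = \langle f_{P\oplus N},\,\overline{\Theta_P(\tau,h)}\otimes\overline{\Theta_N(\tau)}\rangle$ recalled above, together with the identity $\overline{\Theta_N(\tau)} = v^{n/2}\Theta_{N^{-}}(\tau)$, the integrand of $\Phi(z_U^\pm,h,f)$ becomes
\[
\langle f(\tau),\overline{\Theta_L(\tau,z_U^\pm,h)}\rangle\, v^{k} = g(\tau),\qquad g(\tau) := \langle f_{P\oplus N}(\tau),\,\Theta_{N^{-}}(\tau)\otimes\overline{\Theta_P(\tau,h)}\rangle\, v,
\]
since $v^{k}\cdot v^{n/2} = v$ for $k = 1-n/2$. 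Here $g$ is $\SL_2(\Z)$-invariant, so $g\,d\mu(\tau)$ is exactly the invariant integrand of the regularized lift. The statement to prove is then precisely that the Borcherds regularization $\CT_{s=0}\bigl[\lim_{T\to\infty}\int_{\calF_{T}} g(\tau)\,v^{-s}\,d\mu(\tau)\bigr]$ agrees with the truncated integral $\lim_{T\to\infty}\bigl(\int_{\calF_{T}} g\,d\mu(\tau) - A_0\log T\bigr)$.

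The heart of the argument is the asymptotic analysis of the constant Fourier coefficient $a_0(v) := \int_{-1/2}^{1/2} g(u+iv)\,du$ as $v\to\infty$, and this is the step I expect to be the main obstacle. First I would record the cuspidal expansions of the three factors: since $N^{-}$ and $P$ are positive definite, only the zero vector has norm zero, so $\Theta_{N^{-}}(\tau) = \phi_{0+N} + O(e^{-cv})$ and $\overline{\Theta_P(\tau,h)} = \phi_{0+P} + O(e^{-cv})$ for some $c>0$; and because $f\in H_{1-n/2,L}$ has a \emph{cuspidal} shadow, its non-holomorphic part carries no $v^{1-k}$ constant-term contribution and decays exponentially, so $f_{P\oplus N} = f_{P\oplus N}^{+} + O(e^{-cv})$ with $f_{P\oplus N}^{+}$ holomorphic. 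Matching the $u$-frequencies in the product, the constant-in-$u$ part of $\langle f_{P\oplus N},\Theta_{N^{-}}\otimes\overline{\Theta_P}\rangle$ receives its only non-decaying contribution from the holomorphic-times-holomorphic pairing of $f_{P\oplus N}^{+}$ with $\Theta_{N^{-}}\otimes\phi_{0+P}$ at total $q$-exponent zero (the principal part of $f^{+}$ pairs with the matching positive coefficients of $\Theta_{N^{-}}$ in a $v$-independent way); every remaining pairing, whether against a nonconstant term of $\overline{\Theta_P}$ or against the exponentially small $f^{-}$, is $O(e^{-cv})$. This yields
\[
a_0(v) = v\bigl(A_0 + R(v)\bigr),\qquad A_0 = \CT\bigl(\langle f_{P\oplus N}^{+}(\tau),\,\Theta_{N^{-}}(\tau)\otimes\phi_{0+P}\rangle\bigr),\qquad R(v) = O(e^{-cv}).
\]
The cuspidality of the shadow of $f$ is exactly what rules out a spurious $v^{1-k}$ term in $a_0(v)$, which is the delicate point.

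With this asymptotic in hand, the remainder is routine. Splitting $\calF_{T} = \calF_{1}\cup\{\,|u|\le 1/2,\ 1<v\le T\,\}$ and integrating over $u$ on the strip to extract $a_0(v)$, one gets for $\Re(s)$ large
\[
\int_{\calF_{T}} g\,v^{-s}\,d\mu(\tau) = \int_{\calF_{1}} g\,v^{-s}\,d\mu(\tau) + A_0\,\frac{1-T^{-s}}{s} + \int_{1}^{T} R(v)\,v^{-s-1}\,dv.
\]
Letting $T\to\infty$ (valid for $\Re(s)>0$) and continuing in $s$, the meromorphic continuation has a simple pole $A_0/s$ at $s=0$ whose constant term vanishes, while the compact and exponentially convergent integrals are holomorphic at $s=0$, so
\[
\CT_{s=0}\Bigl[\lim_{T\to\infty}\int_{\calF_{T}} g\,v^{-s}\,d\mu(\tau)\Bigr] = \int_{\calF_{1}} g\,d\mu(\tau) + \int_{1}^{\infty} R(v)\,v^{-1}\,dv.
\]
On the other hand $\int_{1}^{T} a_0(v)\,v^{-2}\,dv = A_0\log T + \int_{1}^{T} R(v)\,v^{-1}\,dv$, whence $\int_{\calF_{T}} g\,d\mu(\tau) - A_0\log T$ tends to the same limit as $T\to\infty$. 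Comparing the two displays proves the identity for $(z_U^\pm,h)\notin Z(f)$, where $\Phi$ is given by Borcherds' regularization.

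Finally, for the closing remark I would observe that the right-hand side is a finite quantity for every $(z_U^\pm,h)\in Z(U)$: for fixed $(z,h)$ the function $g(\tau)$ is smooth in $\tau$, so $\int_{\calF_{1}} g\,d\mu(\tau)$ is finite, and the $A_0\log T$ counterterm absorbs the logarithmic growth from the cusp exactly as above. This furnishes the asserted extension of $\Phi(z,h,f)$ to all of $Z(U)$, which is in general discontinuous across $Z(f)$.
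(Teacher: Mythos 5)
Your proof is correct, and it is essentially the argument the paper relies on: the paper's own ``proof'' of this lemma is a citation to Lemma 4.5 of Bruinier--Yang (cf.\ Proposition 2.5 of Kudla's \emph{Integrals of Borcherds forms}), and what you have written out --- splitting off the constant Fourier coefficient, using cuspidality of the shadow of $f$ and positive definiteness of $P$ to get $a_0(v)=v\left(A_0+O(e^{-cv})\right)$, then comparing the $\CT_{s=0}$ regularization with the truncated integral minus $A_0\log T$ --- is precisely that standard comparison of regularizations. The same asymptotics also justify the closing remark, since the non-decaying part of the constant term equals $vA_0$ whether or not $(z_U^{\pm},h)$ lies on $Z(f)$, so no gaps.
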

\begin{proof}
  This is Lemma 4.5 of \cite{bryfaltings}.
  Note that a similar statement in the case of signature $(2,0)$
  can be found in Lemma 2.19 of \cite{schofer}. The proof along the
  lines of the proof Proposition 2.5 of \cite{kudla-integrals} is straightforward.
\end{proof}
Using the same techniques as Bruinier and Yang \cite{bryfaltings}, we obtain the following
theorem which is central to all of our applications.
\begin{theorem}\label{thm:value-phiz}
  Let $f \in H_{k,L}$ with $k=1-n/2$.
  Then the value of $\Phi(z,h,f)$ for any $(z,h) \in Z(U)_{P,K}$ is given by
  \begin{align*}
    \Phi(z,h,f) &= \CT \left( \langle (f_{P \oplus N})^+(\tau), \Theta_{N^{-}}(\tau) \otimes \prep{\Theta}_{P}(\tau,h) \rangle \right) \\
    &\quad - \int_{\SL_2(\Z) \bs \uhp}^{\reg} \langle \overline{\xi_k(f_{P \oplus N})}(\tau), \Theta_{N^{-}}(\tau) \otimes \pre{\Theta}_{P}(\tau,h) \rangle v^{1+n/2} d\mu(\tau).
  \end{align*}
  Here, the integral is regularized by taking the limit
  \[
    \lim_{T \rightarrow \infty} \int_{\calF_{T}} \langle  \overline{\xi_k(f_{P \oplus N})}(\tau),\Theta_{N^{-}}(\tau) \otimes \pre{\Theta}_{P}(\tau,h) \rangle v^{1+n/2} d\mu(\tau).
  \]
\end{theorem}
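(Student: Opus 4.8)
The plan is to evaluate the integral appearing in Lemma \ref{lem:philim} by an application of Stokes' theorem, following the method of Bruinier and Yang \cite{bryfaltings}. First I would use the reduction already noted before the statement: since $\langle f, \Theta_L\rangle = \langle f_{P\oplus N}, \Theta_P\otimes\Theta_N\rangle$, passing from $f$ to $f_{P\oplus N}$ changes neither side of the asserted identity, so I may assume $f$ is valued in $S_P\otimes_\C S_N$ and work with $\Theta_P\otimes\Theta_N$ throughout. The central object is then the $1$-form
\[
\eta := \langle f(\tau), \Theta_{N^{-}}(\tau)\otimes\pre{\Theta}_P(\tau,h)\rangle\,d\tau .
\]
Because $f$ has weight $k=1-n/2$ and representation $\rho_L$, while $\Theta_{N^{-}}\otimes\pre{\Theta}_P$ has weight $n/2+1$ and representation $\rho_{L^{-}}$ dual to $\rho_L$, the scalar pairing transforms with weight $2$ and is $\SL_2(\Z)$-invariant; hence $\eta$ descends to a (non-closed) $1$-form on $\SL_2(\Z)\bs\uhp$.

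Applying Stokes' theorem on the truncated domain $\calF_T$ gives $\int_{\calF_T}d\eta = \int_{\partial\calF_T}\eta$. On the boundary, the two vertical edges cancel by $T$-periodicity of $\eta$ and the two halves of the arc $\abs{\tau}=1$ cancel by $S$-invariance, so only the top segment at height $T$ survives; integrating $d\tau$ there picks out the zeroth $u$-Fourier coefficient, giving $\int_{\partial\calF_T}\eta = -a_0(T)$, where $a_0(T)$ denotes the constant term of $\langle f, \Theta_{N^{-}}\otimes\pre{\Theta}_P\rangle$ at height $T$. Next I would compute $d\eta = \tfrac{\partial}{\partial\bar\tau}\langle f, \Theta_{N^{-}}\otimes\pre{\Theta}_P\rangle\,d\bar\tau\wedge d\tau$ by the product rule, using that $\Theta_{N^{-}}$ is holomorphic. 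The factor $f$ contributes $\tfrac{\partial f}{\partial\bar\tau}=\overline{\xi_k(f)}/(-2iv^k)$ by the definition of $\xi_k$, and the factor $\pre{\Theta}_P$ contributes $\tfrac{\partial}{\partial\bar\tau}\pre{\Theta}_P=\overline{\Theta_P(\tau,z_U^\pm,h)}/(-2iv)$ by $\xi_1(\pre{\Theta}_P)=\Theta_P$ together with Lemma \ref{lem:preimdifform}. Using $d\bar\tau\wedge d\tau = 2iv^2\,d\mu(\tau)$, this yields
\[
d\eta = -\langle\overline{\xi_k(f)}, \Theta_{N^{-}}\otimes\pre{\Theta}_P\rangle\,v^{1+n/2}\,d\mu(\tau) - \langle f, \Theta_{N^{-}}\otimes\overline{\Theta_P(\tau,z_U^\pm,h)}\rangle\,v\,d\mu(\tau),
\]
whose second term is exactly minus the integrand of Lemma \ref{lem:philim}.

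Combining the two displays and solving for that integral gives
\[
\int_{\calF_T}\langle f, \Theta_{N^{-}}\otimes\overline{\Theta_P}\rangle\,v\,d\mu = a_0(T) - \int_{\calF_T}\langle\overline{\xi_k(f)}, \Theta_{N^{-}}\otimes\pre{\Theta}_P\rangle\,v^{1+n/2}\,d\mu .
\]
I would then substitute this into Lemma \ref{lem:philim} and let $T\to\infty$. The $\xi_k(f)$-integral tends to the regularized integral by its very definition in the statement, so what remains is to identify $\lim_{T\to\infty}\bigl(a_0(T)-A_0\log T\bigr)$ with $\CT\bigl(\langle f^+, \Theta_{N^{-}}\otimes\prep{\Theta}_P\rangle\bigr)$.

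This last constant-term bookkeeping is the main obstacle. Here I would invoke the weight-one expansion \eqref{f-k1}: the relation $\xi_1(\pre{\Theta}_P)=\Theta_P$, whose constant term is the characteristic vector $\phi_{0+P}$, forces the $\log v$-coefficient of $\pre{\Theta}_P^-$ to equal $\phi_{0+P}$. Consequently the only unbounded contribution to $a_0(T)$ comes from pairing this $\log v\cdot\phi_{0+P}$ against the holomorphic constant terms of $f$ and $\Theta_{N^{-}}$, and its coefficient is precisely $A_0=\CT\bigl(\langle f^+, \Theta_{N^{-}}\otimes\phi_{0+P}\rangle\bigr)$, which cancels the subtracted $A_0\log T$. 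The contributions involving $f^-$ vanish in the limit because $f\in H_{k,L}$ has cuspidal shadow by the exact sequence \eqref{ex-sequ}, so the zero-frequency non-holomorphic term of $f$ is absent; what survives is the constant term of the holomorphic product $\langle f^+, \Theta_{N^{-}}\otimes\prep{\Theta}_P\rangle$, a finite sum as asserted. This establishes the formula.
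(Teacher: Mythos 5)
Your proposal is correct and takes essentially the same route as the paper's own proof: both rest on Lemma \ref{lem:philim}, then Stokes' theorem on the truncated domain $\calF_T$ combined with Lemma \ref{lem:preimdifform} (your direct computation of $d\eta$ is just a rearrangement of the paper's integration by parts), and the same splitting of the constant term at height $T$ into the finite holomorphic constant term, the $A_0\log T$ piece coming from the $\log v\cdot \phi_{0+P}$ coefficient of $\pre{\Theta}_{P}^{-}$, and decaying remainders, with the existence of the regularized $\xi_k(f)$-integral deduced at the end. The only substantive difference is one of detail: where you simply assert that the contributions of $f^-$ and of the $W_1$-terms of $\pre{\Theta}_{P}^{-}$ vanish as $T\to\infty$, the paper justifies this with explicit growth estimates on the Whittaker functions and the Fourier coefficients of $f$ and $\Theta_{N^{-}}\otimes\pre{\Theta}_{P}$ --- a routine but necessary verification, since cuspidality of the shadow alone only guarantees that every term of $f^-$ decays, not that the infinite sums pass to the limit.
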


\begin{remark}
  Note that for $f \in M^{!}_{k,L}$ the second summand does not occur since $\xi_k(f)=0$
  in that case. Moreover, we should remark that the regularized integral
  can also be written as
  \begin{align*}
    &\int_{\SL_2(\Z) \bs \uhp}^{\reg} \langle L_{1-n/2}(f_{P \oplus N}),\Theta_{N^{-}}(\tau) \otimes \pre{\Theta}_{P}(\tau,h) \rangle d\mu(\tau) \\
    &= \int_{\SL_2(\Z) \bs \uhp}^{\reg} \langle \overline{\xi_k(f_{P \oplus N})}, \Theta_{N^{-}}(\tau) \otimes \pre{\Theta}_{P}(\tau,h) \rangle v^{1+n/2} d\mu(\tau).
  \end{align*}
\end{remark}

\begin{proof}[Proof of Theorem \ref{thm:value-phiz}]
  Assume again that $L = P \oplus N$.
  According to Lemma \ref{lem:philim}, we write
  \begin{equation} \label{eq:philim}
    \Phi(z,h,f) = \lim_{T \to \infty} \left( I_T(z,h,f) - A_0 \log(T) \right),
  \end{equation}
  where
  \begin{align*}
    I_T(z,h,f) &= \int_{\calF_T}
      \langle f(\tau), \Theta_{N^-}(\tau) \otimes \overline{\Theta_P(\tau,z_U^\pm,h)} \rangle v d\mu(\tau)\\
    &= -\int_{\calF_T}
      \langle f(\tau), \Theta_{N^-}(\tau) \otimes \delbar \pre{\Theta}_{P}(\tau,h) \rangle d\tau \\
    &= - \int_{\calF_T}
         d \left( \langle f(\tau), \Theta_{N^-}(\tau) \otimes \pre{\Theta}_{P}(\tau,h) \rangle d\tau \right) \\
    &\quad
      + \int_{\calF_T} \langle \delbar f(\tau), \Theta_{N^-}(\tau) \otimes \pre{\Theta}_{P}(\tau,h)\rangle  d\tau .
  \end{align*}
  Here, we have used Lemma \ref{lem:preimdifform}.

  For the first integral, we apply Stokes' theorem and obtain
  \begin{align*}
    \int_{\calF_T}
    d \left( \langle f(\tau), \Theta_{N^-}(\tau) \otimes \pre{\Theta}_{P}(\tau,h) \rangle \right) d\tau
    &= \int_{\del \calF_T}
    \langle f(\tau), \Theta_{N^-}(\tau) \otimes \pre{\Theta}_{P}(\tau,h) d\tau \rangle \\
    &= -\int_{iT}^{iT+1} \langle f(\tau), \Theta_{N^-}(\tau) \otimes \pre{\Theta}_{P}(\tau,h) d\tau \rangle,
  \end{align*}
  since the integrand is an $\SL_2(\Z)$-invariant differential form and thus the integral over the
  equivalent pieces of $\del\calF_T$ cancel.
  We split this into three pieces, insert this splitting into \eqref{eq:philim} and regroup to obtain
  \begin{align}
    \Phi(z,h,f) &= \lim_{T \to \infty}
    \int_{iT}^{iT+1}  \langle f^+(\tau), \Theta_{N^-}(\tau) \otimes\pre{\Theta}_{P}^{+}(\tau,h) \rangle d\tau \label{int1}\\
    &\quad + \lim_{T \to \infty}
    \left( \int_{iT}^{iT+1}  \langle f^+(\tau), \Theta_{N^-}(\tau) \otimes \pre{\Theta}_{P}^{-}(\tau,h) \rangle d\tau
      - A_0 \log(T) \right) \label{int2}\\
    &\quad + \lim_{T \to \infty}
      \int_{iT}^{iT+1} \langle f^-(\tau), \Theta_{N^-}(\tau) \otimes \pre{\Theta}_{P}(\tau,h) \rangle  d\tau \label{int3}\\
    &\quad + \lim_{T \to \infty} \int_{\calF_T}
      \langle \delbar f(\tau), \Theta_{N^-}(\tau) \otimes \pre{\Theta}_{P}(\tau,h) \rangle d\tau \label{int4}.
  \end{align}
  Each of the limits above exist.
  
  The limit in \eqref{int3} is equal to zero due to the exponential decay of $f^-(\tau)$.
  We write the Fourier expansion of the integrand as
  \[
    \langle f^{-}(\tau), \Theta_{N^-}(\tau) \otimes \pre{\Theta}_{P}(\tau,h) \rangle  = \sum_{n \in \Z} a(n,v) e(n\tau).
  \]
  and insert this to obtain
  \begin{align*}
    &\int_{0}^{1} \langle f^-(u+iT), \Theta_{N^-}(u+iT) \otimes \pre{\Theta}_{P}(u + iT,h) \rangle du \\
    & = \sum_{n \in \Z} a(n,iT) e(inT) \int_{0}^{1} e^{2\pi i n u} du.
  \end{align*}
  The integral above is equal to $0$ for all $n \in \Z \setminus \{0\}$ and is equal to $1$ for $n = 0$.
  Consequently,
  \begin{align*}
    &\lim_{T \to \infty} \int_{iT}^{iT+1} \langle f^-(\tau), \Theta_{N^-}(\tau) \otimes \pre{\Theta}_{P}(\tau,h) \rangle d\tau \\
    &= \lim_{T \rightarrow \infty} a(0,iT) = \lim_{T \to \infty}\sum_{\mu \in L'/L} \sum_{m \in \Q_{>0}} c_f^{-}(-m,\mu)W_k(-2\pi m T) c_{g}(m,\mu,T),
  \end{align*}
  where $g(\tau) = \Theta_{N^-}(\tau) \otimes \pre{\Theta}_{P}(\tau,h)$.
  Using the standard growth estimates for the Whittaker function and
  the Fourier coefficients of $f$ and $g$, we obtain that there is an $N \in \Z_{>0}$
  and a constant $C>0$, such that for all $m \geq N$, we have
  \[
    c_f^{-}(-m,\mu)W_k(-2\pi m T) c_{g}(m,\mu,T) = O(e^{-mCT}).
  \]
  Thus, for every $T>0$, the constant term in the Fourier expansion of the function
  \[
    \langle f^-(u+iT), \Theta_{N^-}(u +iT) \otimes \pre{\Theta}_{P}(u+iT,h) \rangle
  \]
  can be bounded by
  \[
    \abs{a(0,iT)} \leq c \frac{r(T)}{1-r(T)} \text{ with } r(T) = e^{-CT},
  \]
  where $c,C>0$ are constants.
  Therefore, in the limit $T \rightarrow \infty$, we have
  \[
    \lim_{T \rightarrow \infty} \abs{a(0,iT)} = 0,
  \]
  which finally shows that \eqref{int3} vanishes.

 To show that the limit in \eqref{int2} is equal to zero is analogous.

 The contribution from \eqref{int1} is given by the constant term in the Fourier expansion of
  \[
    \langle f^+_{P \oplus N}(\tau), \Theta_{N^{-}}(\tau) \otimes \prep{\Theta}_{P}(\tau,h) \rangle.
  \]
  
  Finally, by Lemma \ref{lem:xidifform}, we see that \eqref{int4} is equal to
  \begin{align*}
    &\lim_{T \to \infty} \int_{\calF_T}
      \langle \delbar f(\tau), \Theta_{N^-}(\tau) \otimes \pre{\Theta}_{P}(\tau,h) \rangle d\tau\\
    &= - \lim_{T \to \infty} \int_{\calF_T}
        \langle L_{1-n/2}f(\tau), \Theta_{N^-}(\tau) \otimes \pre{\Theta}_{P}(\tau,h) \rangle d\mu(\tau).
  \end{align*}
  This is exactly the definition of the regularized integral in the statement of the theorem.
  We still have to justify that this limit exists.
  However, this now follows from the vanishing of \eqref{int2} and \eqref{int3} and the fact that
  $\Phi(z,h,f)$ is defined at $(z,h)$. That is, we have shown that
  \begin{gather*}
        \lim_{T \to \infty} \int_{\calF_T}
        \langle \overline{\xi_k(f)}, \Theta_{N^-}(\tau) \otimes \pre{\Theta}_{P}(\tau,h) \rangle v^{1+n/2} d\mu(\tau) \\
    = -\Phi(z,h,f) + \CT \left( \langle f^+_{P \oplus N}(\tau), \Theta_{N^{-}}(\tau) \otimes \prep{\Theta}_{P}(\tau,h) \rangle \right)
  \end{gather*}
  and therefore, the limit exists.
\end{proof}

\begin{remark}
  Note that the formula holds for \emph{any} preimage of $\Theta_{P}(\tau,h)$ under $\xi_1$.
\end{remark}

\section{The holomorphic part of $\pre{\Theta}_{P}$}
\label{sec:coeff-holom-part}
An important ingredient for Theorem \ref{thm:value-phiz}
was the existence of a harmonic weak Maa\ss{} form $\prep{\Theta}_{P}(\tau,h)$,
such that $\xi(\prep{\Theta}_{P}(\tau,h)) = \Theta_P(\tau,h)$
for a two-dimensional positive definite lattice $P$.
\nomenclature[P]{$P$}{Usually a two-dimensional positive definite even lattice}
In this section, we will prove our main result, Theorem \ref{thm:pre-pvals}, which
gives detailed information about the Fourier coefficients of the holomorphic
part of appropriately normalized $\pre{\Theta}_P(\tau,h)$ that appear on the right-hand side
of the formula for the CM value in Theorem \ref{thm:value-phiz}.
Theorem \ref{thm:intro-pre-pvals} in the introduction is a special case of this result.

The proof exploits a certain seesaw identity that will allow us to
express the coefficients of the holomorphic part of $\prep{\Theta}_{P}(\tau,h)$
essentially as special values of Borcherds products on modular curves.
We will relate each of these coefficients to a CM value of a meromorphic modular form of weight zero given by a Borcherds product.
Then, we apply the results of \cite{ehlen-intersection} to determine their prime ideal factorization.

\subsection{Embedding into a modular curve}
\label{sec:embedding}
In this section, we basically use the same setup as in Section 7.1 of \cite{bryfaltings},
except that Bruinier and Yang work in signature $(1,2)$.

Let $N$ be a positive integer and consider the congruence
subgroup $\Gamma_0(N) \subset \SL_2(\Z)$, defined by
  \[
  \Gamma_0(N) = \left\lbrace \abcd \in \SL_2(\Z)\ \mid\ c \equiv 0 \bmod{N} \right\rbrace.
  \]
\nomenclature[Ga0N]{$\Gamma_0(N)$}{The congruence subgroup $\Gamma_0(N) \subset \SL_2(\Z)$}
The modular curve $Y_0(N) := \Gamma_0(N) \backslash \uhp$ can be
obtained as a Shimura variety as follows.
\nomenclature[X0N]{$X_0(N)$}{The compactification of $Y_0(N) = \Gamma_0(N) \backslash \uhp$}

  Consider the vector space
  $V: = \{x \in M_2(\Q) \, \mid \, \tr(x)=0\}$ and define the quadratic form
  by $Q(x) = -N\det(x)$. The corresponding bilinear form is $(x,y) = N\tr(xy)$.
  The space $(V,Q)$ has signature $(2,1)$.

  The even part of the Clifford algebra is $C^0(V) = M_2(\Q)$
  and $H = \GSpin_{V} \cong \GL_2$.  The action of
  $\gamma \in H$ on $x \in V$ is given by
  \[
  \gamma.x = \gamma x \gamma\inverse.
  \]
  We have an isomorphism
  $\uhp \cup \overline \uhp \to \domain$ via
  \begin{equation*}
    z = x+iy \mapsto
    \R \Re \zmatrix
    \oplus \R \Im \zmatrix.
  \end{equation*}
  The action of $\gamma \in \GL_2 = \GSpin_V$ is explicitly given by
  \begin{equation*}
    \gamma. \zmatrix =
    \frac{(cz+d)^2}{\det(\gamma)} \begin{pmatrix}
      \gamma z & -(\gamma z)^2 \\ 1 & -\gamma z
    \end{pmatrix},
  \end{equation*}
  where $\gamma z$ is the action via linear fractional transformations on $\uhp \cup \bar\uhp$.
  
  For a prime $p$, let
  \[
  K_p = \{\smallabcd \in \GL_2(\Z_p) \ \big | \  c \in N\Z_p\}
  \]
  and
  \begin{equation*}
    K = \prod_p K_p.
  \end{equation*}
  Then $K$ is a compact open subgroup of the adelic group $H(\adeles_f)$ and
  by strong approximation \cite[Theorem 3.3.1]{bumpautreps}, we have
  $H(\adeles_f) = H(\Q)K$ and $H(\adeles) = H(\Q)H(\R)^+ K$.
This implies that $ X_K \cong \Gamma \backslash \uhp$ where $\Gamma$ is given by
  $\Gamma = H(\Q) \cap H(\R)^+ K  \cong \Gamma_0(N)$.
  The isomorphism is explicitly given by
  \begin{equation}
    \label{Y0shiso}
    Y_0(N) \rightarrow X_K,\ \Gamma_0(N)z \mapsto H(\Q)(z,1)K.
  \end{equation}

In $V$ we have the even lattice
\[
  L = \left\lbrace
    \begin{pmatrix}
      b & -\frac{a}{N} \\ c & -b
    \end{pmatrix} \ \mid\ a,b,c \in \Z
      \right\rbrace.
\]
The dual lattice of $L$ is given by
\[
  L' = \left\lbrace
    \begin{pmatrix}
      \frac{b}{2N} & -\frac{a}{N} \\ c & -\frac{b}{2N}
    \end{pmatrix} \ \mid\ a,b,c \in \Z
    \right\rbrace.
\]
The discriminant group $L'/L$ is cyclic of order $2N$
and we can identify the corresponding finite quadratic module with
the group $\Z/2N\Z$ together with the quadratic form $x^{2}/4N$,
valued in $\frac{1}{4N}\Z/\Z \subset \Q/\Z$. The isomorphism
of finite quadratic modules is explicitly given by
\[
  \Z/2N\Z \rightarrow L'/L,\ \quad r \mapsto \mu_{r} =
    \begin{pmatrix}
      \frac{r}{2N} & 0 \\ 0 & -\frac{r}{2N}
    \end{pmatrix} =: \diag(r/2N,-r/2N).
\]
\nomenclature[diag]{$\diag(a_1,\ldots,a_n)$}{The $n \times n$ diagonal matrix with diagonal $(a_1,\ldots,a_n)$}
\nomenclature[mur]{$\mu_r$}{$= \diag(r/2N,-r/2N)$}

It is easy to check that the group $K$ preserves the lattice $L$ and acts trivially on $L'/L$.

Recall that for $m \in \Q$ and $\mu \in L'/L$, we use the notation
\[
  L_{m,\mu} := \Omega_{m}(\Q) \cap \supp(\phi_{\mu}) = \{x \in \mu + L\, \mid\, Q(x)=m \}.
\]
Let $m \in \Q_{<0}$ and $\mu \in L'/L$ such that $m - Q(\mu) \in \Z$
and let $r \in \Z$ with $\mu \equiv \mu_{r} \bmod{L}$.
Then $D = 4Nm \in \Z$ is a negative discriminant such that $D \equiv r^2 \bmod{4N}$.
Using this notation, we put
\begin{equation}
  \label{xdef}
  \lambda_{r} =
  \begin{pmatrix}
    \frac{r}{2N} & \frac{1}{N} \\
    \frac{D-r^2}{4N} & -\frac{r}{2N}
  \end{pmatrix} \in L_{m,\mu_{r}}.
\end{equation}
The subspace $U = \lambda_r^{\perp} \subset V(\Q)$ is two-dimensional and positive definite.
\nomenclature[lr]{$\lambda_r$}{A certain element in $L_{m,\mu_r}$}

As in section 7.1 of \cite{bryfaltings}, we obtain a positive definite, two-dimensional lattice
\begin{equation}
  \label{LP}
  \calP := L \cap U = \Z
  \begin{pmatrix}
    1 & 0 \\ -r & -1
  \end{pmatrix}
  \oplus \Z
  \begin{pmatrix}
    0 & -\frac{1}{N} \\ \frac{D-r^2}{4N} & 0,
  \end{pmatrix}
\end{equation}
and a negative definite, one-dimensional lattice
\begin{equation}
  \label{LN1}
  \calN := L \cap \Q \lambda_{r} = \Z \frac{2N}{t} \lambda_{r} \text{ with dual } \calN^{'} = \Z \frac{t}{D}\lambda_{r}.
\end{equation}
Here, $t = (r,2N)$.
\nomenclature[PL02]{$\calP$}{A two-dimensional positive definite lattice given by $\calP = L \cap U$}
\nomenclature[N]{$\calN$}{A one-dimensional negative definite lattice given by $\calN = L \cap U^{\perp}$}

From now on, assume that $D < 0$ is a \emph{fundamental} discriminant and let 
$\kb = \kb_{D} = \Q(\sqrt{D})$.
The following lemma is easy to prove and identifies the lattice $\calP$ as an integral ideal
(see Lemma 7.1 in \cite{bryfaltings}).
\begin{lemma}
  \label{lem:Pab-general}
  With the same notation as above, we have an isometry of lattices
  \[
    (\calP,Q) \cong \left(\frakn, \frac{\N(x)}{\N(\frakn)}\right) \text{ with } \frakn = \left(N, \frac{r+\sqrt{D}}{2}\right) \subset \OD.
  \]
It is explicitly given by
\[
\begin{pmatrix}
  1 & 0 \\ -r & 1
\end{pmatrix}
 \mapsto N
\quad \text{and} \quad 
\begin{pmatrix}
  0 & \frac{-1}{N} \\ \frac{D-r^2}{4N} & 0
\end{pmatrix}
\mapsto \frac{r+\sqrt{D}}{2}.
\]
\end{lemma}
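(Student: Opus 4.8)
The plan is to check directly that the assignment prescribed on the two generators of $\calP$ in \eqref{LP} extends to an isometry onto $\frakn$ equipped with its scaled norm form, exactly as in Lemma 7.1 of \cite{bryfaltings}. The statement reduces to two points: that $\frakn = \left(N, \tfrac{r+\sqrt{D}}{2}\right)$ really is an integral ideal of $\OD$ with $\N(\frakn) = N$, and that the resulting $\Z$-module isomorphism matches the Gram matrices on the two sides. I would settle the ideal-theoretic point first, since it is the only part using genuine structure rather than bookkeeping.

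\textbf{The ideal $\frakn$.} The construction forces $D \equiv r^2 \bmod 4N$, hence $4N \mid D - r^2$ and in particular $r \equiv D \bmod 2$; the latter gives $\tfrac{r+\sqrt{D}}{2} \in \OD$, so that $\frakn$ is a $\Z$-lattice with basis $\{N, \tfrac{r+\sqrt{D}}{2}\}$. To see that $\frakn$ is stable under $\OD$ I would multiply each basis element by a $\Z$-generator $\omega$ of $\OD$ (for instance $\omega = \tfrac{D+\sqrt{D}}{2}$) and re-expand in the basis of $\frakn$; the two resulting expansions have integer coefficients precisely because $4N \mid D - r^2$ and $2 \mid D - r$, so no further hypotheses enter. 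The norm is then the lattice index $[\OD : \frakn]$, which I would read off as the ratio of the change-of-basis determinants of $\{N, \tfrac{r+\sqrt{D}}{2}\}$ and $\{1,\omega\}$ against $\{1,\sqrt{D}\}$, giving $\N(\frakn) = N$.

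\textbf{The isometry.} Set $e_1 = \left(\begin{smallmatrix} 1 & 0 \\ -r & -1 \end{smallmatrix}\right)$ and $e_2 = \left(\begin{smallmatrix} 0 & -\frac{1}{N} \\ \frac{D-r^2}{4N} & 0 \end{smallmatrix}\right)$ for the basis in \eqref{LP}; the trace-zero condition is what fixes the lower-right entry of $e_1$ to be $-1$. The map $\phi\colon \calP \to \frakn$ with $\phi(e_1) = N$ and $\phi(e_2) = \tfrac{r+\sqrt{D}}{2}$ is a $\Z$-module isomorphism because it carries a basis to a basis, so only the compatibility of the bilinear forms remains. Using $Q(x) = -N\det(x)$ and $(x,y) = N\tr(xy)$ on the $\calP$-side, and the scaled form $(x,y) = \tr_{\kb/\Q}(xy')/\N(\frakn) = \tr_{\kb/\Q}(xy')/N$ on the $\frakn$-side, I would compute both Gram matrices and check that they coincide; one finds $\left(\begin{smallmatrix} 2N & r \\ r & (r^2-D)/(2N) \end{smallmatrix}\right)$ in each case, using $\N\!\left(\tfrac{r+\sqrt{D}}{2}\right) = (r^2-D)/4$ and $\tr_{\kb/\Q}\!\left(N\cdot\tfrac{r-\sqrt{D}}{2}\right) = Nr$.

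The main obstacle, such as it is, lies entirely in the ideal computation of the first step: one must invoke $4N \mid D - r^2$ in an essential way, both to verify $\OD$-stability of $\frakn$ and to pin down $\N(\frakn) = N$. The isometry check in the second step is a pair of $2 \times 2$ determinant and trace evaluations and presents no real difficulty once the two Gram matrices are written down.
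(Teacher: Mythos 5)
Your proposal is correct and follows essentially the same route as the paper, which gives no written argument but defers to Lemma 7.1 of \cite{bryfaltings} — a direct verification of exactly the kind you carry out: $\OD$-stability of $\frakn$ and $\N(\frakn)=N$ via the congruence $D \equiv r^2 \bmod{4N}$, followed by a comparison of Gram matrices, both of which come out to $\left(\begin{smallmatrix} 2N & r \\ r & (r^2-D)/(2N) \end{smallmatrix}\right)$ as you computed. You also correctly noted that the generator in the lemma's displayed map must have lower-right entry $-1$ (as in \eqref{LP}) for it to lie in the trace-zero space $V$.
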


We note that the two points $z_{U}^{\pm}$ corresponding to $\lambda_{r}$
satisfy the quadratic equation
\[
  \frac{r^{2}-D}{4}\tau + r\tau + 1 = 0
\]
in terms of coordinates of $\uhp \cup \bar{\uhp}$.
In the following, we will often simply write $z_U$ for any of the two points $z_U^\pm$.

Recall that we write $T = \GSpin_U$ as in Section \ref{sec:cmcycles}
and we consider $T$ as a subgroup of $H = \GSpin_{V}$, acting trivially on $U^{\perp}$.
An explicit calculation shows the following lemma.
\begin{lemma}
  \label{lem:Tkisom}
  We have that $T = \GSpin_U \cong \kb_D^{\times}$ via
  \[
  1 \mapsto \twomat{1}{0}{0}{1}, \quad \sqrt{D} \mapsto \twomat{r}{-2}{\frac{r^2-D}{2}}{-r}.
  \]
  and for $K$ as above, we have
  \[
    K_{T} := K \cap T(\adeles_f) \cong \ODhatt.
  \]
\end{lemma}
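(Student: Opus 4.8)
The plan is to treat the two assertions separately: first the algebraic identification $T\cong\kb^{\times}$ over $\Q$, then the determination of $K_{T}$ prime by prime. For the first part, recall that $T=\GSpin_{U}$ is realized inside $H=\GL_2$ as the subgroup acting trivially on $U^{\perp}=\Q\lambda_{r}$. Since $H$ acts on $V$ by conjugation, this subgroup is exactly the centralizer $Z_{\GL_2}(\lambda_{r})=\Q[\lambda_{r}]^{\times}$ of $\lambda_{r}$. Because $\lambda_{r}^{2}=-\det(\lambda_{r})I=\tfrac{D}{4N^{2}}I$ with $D<0$, the commutative subalgebra $\Q[\lambda_{r}]\subset M_2(\Q)$ is a field isomorphic to $\kb=\Q(\sqrt{D})$, and $Z_{\GL_2}(\lambda_{r})$ is the corresponding maximal torus, of the same rank as $\GSpin_{U}\cong\Res_{\kb/\Q}\G_m$. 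So first I would verify by a direct matrix computation that the matrix attached to $\sqrt{D}$ squares to $D\cdot I$ and fixes the CM point $z_{U}$ under fractional linear transformations (equivalently, commutes with $\lambda_{r}$, i.e.\ lies in $\Q+\Q\lambda_{r}$). This exhibits a $\Q$-algebra isomorphism $\kb\xrightarrow{\sim}\Q[\lambda_{r}]$ sending $1\mapsto I$ and $\sqrt{D}$ to the displayed matrix, whose restriction to units is the claimed isomorphism $\kb^{\times}\xrightarrow{\sim}T$; the orientation of $U$ selects which of the two square roots $\pm 2N\lambda_{r}$ is assigned to $\sqrt{D}$.

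For the second part I would pass to the idele description $T(\adeles_{f})\cong\adeles_{\kb,f}^{\times}=\rprod_{p}\kb_{p}^{\times}$ already established in Lemma \ref{lem:ZUCl}, so that $K_{T}=K\cap T(\adeles_{f})=\rprod_{p}\bigl(K_{p}\cap\kb_{p}^{\times}\bigr)$, and reduce to the purely local claim $K_{p}\cap\kb_{p}^{\times}=\calO_{\kb,p}^{\times}$ for every prime $p$. The bridge here is Lemma \ref{lem:Pab-general}, which identifies $(\calP,Q)$ with the ideal $\frakn=\bigl(N,\tfrac{r+\sqrt{D}}{2}\bigr)\subset\OD$ via the explicit basis in \eqref{LP}: under the embedding $\kb\hookrightarrow M_2(\Q)$ the ring $\calO_{\kb,p}$ acts on the $\Z_{p}$-lattice attached to $\frakn$, and $K_{p}=\{\smallabcd\in\GL_2(\Z_{p})\mid c\in N\Z_{p}\}$ is precisely the stabilizer of that lattice. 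I would then check at each $p$ that an element $z\in\kb_{p}^{\times}$ lands in $K_{p}$ exactly when $z\in\calO_{\kb,p}^{\times}$: the inclusion $\calO_{\kb,p}^{\times}\subseteq K_{p}\cap\kb_{p}^{\times}$ follows from integrality of $\omega=\tfrac{r+\sqrt{D}}{2}$ together with the lower-left congruence built into $\lambda_{r}$ and $\calP$, while the reverse inclusion holds because any $z$ outside $\calO_{\kb,p}^{\times}$ is either non-integral or fails to preserve the distinguished lattice. Taking the restricted product over all $p$ gives $K_{T}\cong\ODhatt$.

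The main obstacle will be the local computation at the primes dividing $N$ (and at the ramified prime), where the $\Gamma_{0}(N)$-type condition $c\equiv 0\bmod N$ interacts with the non-maximality of the embedding along $\frakn$; away from these primes the torus is split or unramified and the identity $K_{p}\cap\kb_{p}^{\times}=\calO_{\kb,p}^{\times}$ is routine. I expect the cleanest bookkeeping to come from working throughout with the explicit $\Z$-basis of $\calP$ in \eqref{LP} and its images $N$ and $\tfrac{r+\sqrt{D}}{2}$ under the isometry of Lemma \ref{lem:Pab-general}, so that the lattice stabilized by $\calO_{\kb,p}$ is visibly $\frakn\otimes_{\Z}\Z_{p}$ and the congruence defining $K_{p}$ becomes transparent.
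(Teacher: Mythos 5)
Your plan is the right one, and it is in fact the same argument the paper is pointing at: the paper offers no written proof beyond the sentence ``an explicit calculation shows the following lemma,'' and your two steps --- identifying $T$ with the centralizer $\Q[\lambda_r]^\times$ of $\lambda_r$ in $\GL_2$ (a field isomorphic to $\kb$ because $(2N\lambda_r)^2=D\cdot I$), then checking prime by prime that $K_p\cap\kb_p^\times=\calO_{\kb,p}^\times$ --- are exactly that calculation, organized sensibly.

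Two concrete corrections. First, the verification you announce in step one will \emph{fail} as stated: the matrix printed in the lemma does not commute with the $\lambda_r$ of \eqref{xdef}. It is obtained from $2N\lambda_r=\twomat{r}{2}{\frac{D-r^2}{2}}{-r}$ by negating both off-diagonal entries, i.e.\ it equals $\diag(1,-1)\,(2N\lambda_r)\,\diag(1,-1)^{-1}$, which lies in a conjugate torus, not in $\Q[\lambda_r]$. (Try $D=-7$, $N=2$, $r=1$: the printed matrix $\twomat{1}{-2}{4}{-1}$ and $4\lambda_r=\twomat{1}{2}{-4}{-1}$ do not commute.) Your own remark that the image of $\sqrt{D}$ must be one of the two square roots $\pm 2N\lambda_r$ of $D\cdot I$ inside $\Q[\lambda_r]$ already forces this conclusion, since the printed matrix is neither. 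So there is a sign slip in the lemma (the asserted isomorphisms $T\cong\kb_D^\times$ and $K_T\cong\ODhatt$ are unaffected); your write-up should exhibit $\sqrt{D}\mapsto 2N\lambda_r$ rather than claim to verify the printed identity.

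Second, your ``bridge'' in step two is imprecise: $K_p$ is \emph{not} the stabilizer of a single $\Z_p$-lattice (that would be a conjugate of $\GL_2(\Z_p)$, which strictly contains $K_p$ when $p\mid N$); it is the unit group of the Eichler order $R_p=\{\smallabcd\in M_2(\Z_p)\ \mid\ c\in N\Z_p\}$, equivalently the simultaneous stabilizer of the pair $\Z_p^2\supset\Z_p\oplus N\Z_p$. Fortunately you never need that claim: the elementwise check you plan is uniform in $p$, with no special difficulty at $p\mid N$ or at the ramified prime. Writing $z=x+y\sqrt{D}\mapsto\twomat{x+yr}{2y}{y\frac{D-r^2}{2}}{x-yr}$, the determinant is $\N(z)$; the congruence $r^2\equiv D\bmod 4N$ makes the lower-left condition $y\frac{D-r^2}{2}\in N\Z_p$ a consequence of $2y\in\Z_p$; and since $D$, hence $r$, is odd, integrality of the entries is equivalent to $2y\in\Z_p$ and $x-y\in\Z_p$, i.e.\ to $z\in\Z_p\bigl[\frac{1+\sqrt{D}}{2}\bigr]=\calO_{\kb,p}$. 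Imposing the unit determinant then gives $K_p\cap\kb_p^\times=\calO_{\kb,p}^\times$, and the restricted product over all $p$ yields $K_T\cong\ODhatt$.
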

Recall the setup from Section \ref{sec:cmvalue-single}.
Note that $K_T$ acts trivially on $\calP'/\calP$
and thus we can take $K_\calP = K_T$ and we have $\CPK = T(\adeles_f)/K_{T}$,
which is isomorphic to $I_\kb/\ODhatt \cong \Clk$ for $\kb = \kb_{D}$.

Now we specialize the setup for our application.
From now on and for the rest of this section we fix
an integral ideal $\fraka \subset \OD$ of the
imaginary quadratic field $\kb = \kb_{D}$ of discriminant $D$.
We assume throughout that $D$ is an odd fundamental discriminant.
Consider the positive definite lattice
$(P,Q) := \left( \fraka, \frac{\N(x)}{\N(\fraka)} \right)$ and
write $\fraka = \left(A, \frac{B + \sqrt{D}}{2} \right)$
with $A,B \in \Z$, $A>0$. That is, the ideal $\fraka$ is generated by
$A$ and $\frac{B + \sqrt{D}}{2}$.
\nomenclature[P1]{}{From Section \ref{sec:embedding} on, $P = \fraka \subset \calO_\kb$ with quadratic form $\N(x)/\N(\fraka)$, corresponding to $[A,B,C]$, s.t. $D=B^2-4AC$}
This is equivalent to saying that $P$ (or $\fraka$)
corresponds to the positive definite integral binary quadratic form
$[A,B,C]$ of discriminant $D = B^2 - 4AC$, with $C \in \Z$
determined by $A,B$ and $D$.
We will use the construction above
to embed the lattice $P$ into the lattice $L$ for $N = A \abs{D}$.
\nomenclature[N1]{$N$}{A positive integer, $N = A\abs{D}$ in Section \ref{sec:coeff-holom-part}}
\nomenclature[A]{$A$}{Starting from Section \ref{sec:embedding}: a prime which does not divide $D$, such that $[A,B,C]$ corresponds to $P = \fraka$ with $D = B^2-4AC$}
\nomenclature[B]{$B$}{Starting from Section \ref{sec:embedding}: a fixed integer, see $A$}
\nomenclature[C]{$C$}{Starting from Section \ref{sec:embedding}: a fixed integer, see $A$}
\nomenclature[D1]{}{From Section \ref{sec:embedding} on: $D=B^2-4AC$}

\begin{assumption}
  Without loss of generality, we will assume that $(A,D)=1$.
  If $A$ is not coprime to $D$, we can replace $[A,B,C]$
  with an equivalent form.
  An integral binary quadratic form of discriminant $D$ represents infinitely many primes
  (cf. Theorem 9.12 of \cite{cox-primes}).
  Thus, we may in fact choose $A$ to be a prime not dividing $D$.
\end{assumption}
Under these assumptions, there are $E,F \in \Z$ with $2AE + BF = 1$.
(Note that $(A,D) = 1$ implies $(2A,B) = 1$ because $D$ is odd.)
Using this, we have for $R := FD$ that
\begin{equation}
  \label{r2}
  R^2 \equiv D \bmod{4A\abs{D}}.
\end{equation}
Indeed, we have $R^2 \equiv 0 \bmod{D}$ and
$F^2D^2 = F^2D \cdot D = F^2(B^2-4AC)D \equiv D \bmod{4A}$.
\nomenclature[R]{$R$}{A fixed integer, $R=FD$ with $R^{2} \equiv D \bmod{4A\abs{D}}$, also see $A$}

\begin{warning}
  Note that the definition of $R$ depends of course on the ideal $\fraka$ we started with.
\end{warning}

With this setup, we put
$M := \frac{-1}{4A} = \frac{D}{4A\abs{D}}$ and let $\lambda_{R}$ as in \eqref{xdef}.
Note that we obtain in this special case
\begin{equation}
  \label{LN}
  \calN = L \cap \Q \lambda_{R} = \Z 2A \lambda_{R} \text{ with dual } \calN^{'} = \Z \lambda_{R}.
\end{equation}
In contrast to the general case, in our situation the lattice $L$
splits as $L = \calP \oplus \calN$.
This follows from the fact that the discriminant group $\calN'/\calN$
is isomorphic to $\Z/2A\Z$ and the following lemma.
\begin{lemma}
  \label{lem:Pab}
  With the same notation as above, we have an isometry of lattices
  \[
  (\calP,Q) \cong \left(\frakb, \frac{\N(x)}{\N(\frakb)}\right)
  \]
  with
  \[
  \frakb = \left(A\abs{D}, \frac{R+\sqrt{D}}{2}\right) = \different{\kb} \fraka.
  \]
  In particular, $[\frakb] = [\fraka] \in \Clk$.
  The isometry is given explicitly in Lemma \ref{lem:Pab-general}.
\end{lemma}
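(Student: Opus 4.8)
The plan is to read off the isometry directly from Lemma \ref{lem:Pab-general} and then to establish the two ideal-theoretic assertions by a short norm-and-inclusion argument. Specializing Lemma \ref{lem:Pab-general} to the case $N = A\abs{D}$ and $r = R$ -- which is legitimate precisely because \eqref{r2} guarantees $R^2 \equiv D \bmod{4A\abs{D}}$ -- immediately yields the isometry $(\calP,Q) \cong (\frakb, \N(x)/\N(\frakb))$ with $\frakb = (A\abs{D}, \tfrac{R+\sqrt{D}}{2})$, together with the explicit generators. So the only genuinely new content is the identity $\frakb = \different{\kb}\fraka$ and the resulting class equality $[\frakb] = [\fraka]$.

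First I would record that, since $D$ is an odd fundamental discriminant, $\OD = \Z[\tfrac{1+\sqrt{D}}{2}]$ and the different is the principal ideal $\different{\kb} = (\sqrt{D})$: the minimal polynomial $g(x) = x^2 - x + \tfrac{1-D}{4}$ of $\omega = \tfrac{1+\sqrt{D}}{2}$ satisfies $g'(\omega) = 2\omega - 1 = \sqrt{D}$. In particular $\different{\kb}$ is trivial in $\Clk$, so once the ideal identity $\frakb = \different{\kb}\fraka$ is established the class equality $[\frakb] = [\fraka]$ follows at once.

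To prove $\frakb = (\sqrt{D})\fraka$ I would compare norms and then verify one inclusion. By multiplicativity of the ideal norm, $\N(\different{\kb}\fraka) = \N(\different{\kb})\N(\fraka) = \abs{D}\cdot A$, while the form--ideal dictionary gives $\N(\frakb) = A\abs{D}$; hence both integral ideals have norm $A\abs{D}$, and it suffices to check $\frakb \subseteq (\sqrt{D})\fraka$. Working with the $\Z$-basis $\fraka = \Z A + \Z\tfrac{B+\sqrt{D}}{2}$, the generator $A\abs{D}$ lies in $(\sqrt{D})\fraka$ since $A\abs{D} = \sqrt{D}\,(-A\sqrt{D})$ with $-A\sqrt{D} \in \fraka$. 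For the second generator the key step is to produce $\alpha \in \fraka$ with $\sqrt{D}\,\alpha = \tfrac{R+\sqrt{D}}{2}$; using $R = FD$ together with the B\'ezout relation $2AE + BF = 1$ fixed in the setup, one finds $\alpha = EA + F\tfrac{B+\sqrt{D}}{2} = \tfrac{1+F\sqrt{D}}{2} \in \fraka$, and indeed $\sqrt{D}\,\alpha = \tfrac{FD+\sqrt{D}}{2} = \tfrac{R+\sqrt{D}}{2}$. Thus $\frakb \subseteq (\sqrt{D})\fraka$, and the coincidence of norms forces $\frakb = (\sqrt{D})\fraka = \different{\kb}\fraka$.

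The argument is essentially formal once Lemma \ref{lem:Pab-general} is available: everything reduces to multiplicativity of the ideal norm and to the principality of the different. I expect the only point requiring any care to be the inclusion step, where the correct element $\alpha$ must be extracted by matching rational and $\sqrt{D}$-parts against the $\Z$-basis of $\fraka$ and invoking the relations $R = FD$ and $2AE + BF = 1$; this is the main (albeit minor) obstacle, and the rest is routine.
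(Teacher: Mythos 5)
Your proof is correct and takes essentially the route the paper itself relies on: the paper states the lemma without a written proof, obtaining the isometry by specializing Lemma \ref{lem:Pab-general} to $N=A\abs{D}$, $r=R$, and immediately afterwards fixes the isometry $\fraka \to \frakb$, $x \mapsto \sqrt{D}x$, which presupposes exactly your identity $\frakb = (\sqrt{D})\fraka = \different{\kb}\fraka$. Your verification of that identity (principality of the different $\different{\kb}=(\sqrt{D})$ for odd fundamental $D$, the inclusion $\frakb \subseteq (\sqrt{D})\fraka$ via $\alpha = EA + F\tfrac{B+\sqrt{D}}{2} = \tfrac{1+F\sqrt{D}}{2}$ using $R=FD$ and $2AE+BF=1$, and equality of norms) correctly supplies the details the paper leaves to the reader.
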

Throughout, we fix the isometry $\fraka \to \frakb$ given by
$x \mapsto \sqrt{D}x$  and $P=\fraka \to \frakb \to \calP$
given by applying $x \mapsto \sqrt{D}x$ and then the isometry of Lemma \ref{lem:Pab}.
We also tacitly identify the theta functions $\Theta_P(\tau,h)$ with $\Theta_\calP(\tau,h)$
using this isometry and write $P$ instead of $\calP$ for simplicity whenever this does not make a difference.

We will now see that the special divisor $Z(M,\mu_{R})$ (Definition \ref{def:Zm})
is given by the CM cycle $Z(U)$. For $D$ coprime to $N$, this was stated as Proposition 7.2 in \cite{bryfaltings}
but it also holds in our special situation.
\begin{proposition}
  With the same notation as above we have
  \[
    Z(U) = Z(M,\mu_{R}).
  \]
\end{proposition}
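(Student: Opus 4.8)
The plan is to reduce the adelic identity to the classical theory of Heegner points on $Y_0(N)$ (with $N = A\abs{D}$) by passing to the connected complex model provided by strong approximation, and to show that both divisors have the same support --- the Heegner points of discriminant $D$ subject to the congruence condition fixed by $R$ --- with matching multiplicities. First I would unwind $Z(M,\mu_R)$: by Definition \ref{def:Zm}, writing $\supp(\phi_{\mu_R}) \cap \Omega_M(\adeles_f) = \bigsqcup_j K\xi_j^{-1}\lambda_R$, we have $Z(M,\mu_R) = \sum_j \phi_{\mu_R}(\xi_j^{-1}\lambda_R)\, Z(\lambda_R,\xi_j)$, and every coefficient equals $1$ since $\phi_{\mu_R}$ is the characteristic function of $\mu_R + \hat{L}$ and each $\xi_j^{-1}\lambda_R$ lies in its support by construction. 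As $Q(\lambda_R) = M < 0$ and $\lambda_R^\perp = U$, the stabilizer is $H_{\lambda_R} = \GSpin_U = T$ and $\domain_{\lambda_R} = \{z_U^\pm\}$. Hence each summand $Z(\lambda_R,\xi_j)$ is the image of $T(\Q) \bs \left( \{z_U^\pm\} \times T(\adeles_f)/(T(\adeles_f) \cap \xi_j K \xi_j^{-1}) \right)$ under $(z,h_1) \mapsto (z, h_1\xi_j)$, i.e. a CM cycle of exactly the same shape as $Z(U)$, merely translated by $\xi_j$ on the adelic factor.

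Next I would pass to complex points. By the strong approximation argument around \eqref{Y0shiso} we have $X_K(\C) \cong Y_0(N) = \Gamma_0(N)\bs\uhp$, and since $\Omega_M(\adeles_f) = K\,\Omega_M(\Q)$ (Lemma~5.1 of \cite{kudla-cycles-O}) each $K$-orbit $K\xi_j^{-1}\lambda_R$ contains a rational vector. The resulting bijection between the $K$-orbits in $\supp(\phi_{\mu_R}) \cap \Omega_M(\adeles_f)$ and the $\Gamma_0(N)$-orbits in $L_{M,\mu_R}$ identifies $\supp Z(M,\mu_R)$ with $\Gamma_0(N) \bs \{\, z_\lambda : \lambda \in L_{M,\mu_R}\,\}$, where $z_\lambda \in \domain_\lambda \cap \uhp$. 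Writing a rational vector as $\lambda = \left(\begin{smallmatrix}\frac{b}{2N} & -\frac{a}{N}\\ c & -\frac{b}{2N}\end{smallmatrix}\right) \in L'$, one computes $Q(\lambda) = (b^2 - 4Nac)/4N$ and $\lambda \equiv \mu_b \bmod L$, so $\lambda \in L_{M,\mu_R}$ exactly when $b \equiv R \bmod{2N}$ and $b^2 - 4Nac = D$; thus the $z_\lambda$ range precisely over the Heegner points of discriminant $D$ attached to the forms $[Na,b,c]$ with $b \equiv R \bmod 2N$.

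On the other side, $Z(U) = T(\Q) \bs (\{z_U^\pm\} \times T(\adeles_f)/K_T) \cong \Clk \times \{z_U^\pm\}$ by Lemma \ref{lem:ZUCl}. Here $z_U^+$ is the Heegner point attached to $\lambda_R$ itself, and writing $h \in T(\adeles_f) \cong \adeles_{\kb,f}^\times$ as $h = \gamma k$ with $\gamma \in H(\Q)$, $k \in K$ realizes $H(\Q)(z_U^+,h)K$ as $\Gamma_0(N)\gamma^{-1}z_U^+$; as $h$ runs over $T(\adeles_f)/K_T \cong \Clk$ (Lemma \ref{lem:Tkisom}) these sweep out the full $\Clk$-orbit of Heegner points of discriminant $D$ with the fixed congruence. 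By the classical parametrization of Heegner points, for fundamental $D$ the forms $[Na,b,c]$ with $b \equiv R \bmod 2N$ form a torsor under $\Clk$, so this orbit coincides with the support computed for $Z(M,\mu_R)$. Comparing multiplicities, each generic point occurs once on both sides, the normalizing weight $2/w_{K,T}$ in the definition of $Z(U)$ matching the automorphism weight of the corresponding Heegner point (relevant only for $D=-3$).

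I expect the genuinely delicate step to be the local orbit analysis underlying the $K$-orbit/$\Gamma_0(N)$-orbit matching: namely that the single condition $\xi_j^{-1}\lambda_R \in \mu_R + \hat{L}$ selects exactly the $K$-orbits corresponding to $T(\adeles_f)/K_T$ at each prime dividing $N = A\abs{D}$. This is precisely where \cite{bryfaltings} invoke $\gcd(D,N)=1$; since here the primes dividing $D$ ramify in $\kb$, I would instead verify the local statement at those primes (and at $A$) using the explicit splitting $L = \calP \oplus \calN$ with $\calN'/\calN \cong \Z/2A\Z$ from \eqref{LN} together with the ideal description of $\calP$ in Lemma \ref{lem:Pab}. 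Granting this local computation, the global bijection, and hence the equality of divisors, follows exactly as in \cite[Proposition~7.2]{bryfaltings}.
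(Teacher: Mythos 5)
Your proposal should first be measured against what the paper actually does: the paper gives \emph{no} proof of this proposition at all --- its entire justification is the preceding remark that the statement is Proposition 7.2 of \cite{bryfaltings} when $D$ is coprime to $N$ ``but it also holds in our special situation''. Your sketch is precisely the adaptation of the Bruinier--Yang argument that this remark implicitly invokes, and its skeleton is sound: the unwinding of $Z(M,\mu_R)$ via Definition \ref{def:Zm} and \eqref{eq:Zxh}, the identifications $H_{\lambda_R}=\GSpin_U=T$ and $\domain_{\lambda_R}=\{z_U^\pm\}$, the computation $Q(\lambda)=(b^2-4Nac)/4N$ with $\lambda\equiv\mu_b \bmod L$, and the identification of both supports with the Heegner points of discriminant $D$ and root $R\bmod 2N$, which for \emph{fundamental} $D$ form a $\Clk$-torsor with no coprimality hypothesis needed (all ideals of $\calO_\kb$ are invertible). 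So in substance you are supplying the proof the paper omits.

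Two caveats on the execution. First, your claimed ``bijection between the $K$-orbits in $\supp(\phi_{\mu_R})\cap\Omega_M(\adeles_f)$ and the $\Gamma_0(N)$-orbits in $L_{M,\mu_R}$'' is not a bijection: the natural map from $\Gamma_0(N)\bs L_{M,\mu_R}$ to the set of $K$-orbits is surjective, but its fibers are class-group orbits --- indeed a \emph{single} adelic cycle $Z(\lambda_R,\xi_j)$ already carries a whole $\Clk$-orbit of complex points. Your support identification survives (a point $H(\Q)(z_U^\pm,h)K$ lies in $\supp Z(M,\mu_R)$ if and only if $h^{-1}\lambda_R\in\mu_R+\hat{L}$, and writing $h=\gamma k$ with $\gamma\in H(\Q)$, $k\in K$ produces the rational vector $\gamma^{-1}\lambda_R\in L_{M,\mu_R}$), but the multiplicity comparison should not be routed through that ``bijection''. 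Second, the local orbit analysis you defer --- and flag as the delicate step --- can be avoided entirely. Since $\lambda_R\in\mu_R+L$ and $T(\adeles_f)$ fixes $\lambda_R$, the cycle $Z(\lambda_R,1)$ is defined by literally the same map as $Z(U)$ in \eqref{eq:ZU}, so $Z(U)$ occurs as the summand of $Z(M,\mu_R)$ belonging to the $K$-orbit of $\lambda_R$. The classical description of adelic special cycles identifies the points of $Z(M,\mu_R)$ with $\Gamma_0(N)\bs L_{M,\mu_R}$, and your torsor statement gives this set cardinality $2h_\kb$, which equals the number of points of $Z(U)$; hence the remaining $K$-orbits contribute nothing and the cycles are equal (equivalently, $\supp(\phi_{\mu_R})\cap\Omega_M(\adeles_f)$ is a single $K$-orbit). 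This counting route is the cleanest way to handle the failure of $(D,N)=1$ and closes your argument without any local computation.
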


It is important for us to understand the action of $T(\adeles_f) \cong \adeles_{k,f}^{\times}$
on modular functions precisely. It is given by the following Lemma which can be easily shown using Theorem 6.31 of \cite{shimauto}.
\begin{lemma}
  \label{lem:idelesact}
  Let $h \in \adeles_{k,f}^{\times}$ and let $f \in \Q(X_0(N))$ be a rational
  modular function. Let $t \in T(\adeles_{f})$ be the image of $h$
  and write $t = \gamma k$ for $\gamma \in H(\Q)$ and $k \in K$.
  Let $z \in Z(M,\mu_{R})$ be a CM point.
  Then we have
  \[
    f(z)^{\sigma(h)} = f(\gamma^{-1} z).
  \]
\end{lemma}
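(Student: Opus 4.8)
The plan is to deduce this directly from Shimura's reciprocity law, the main theorem of complex multiplication for modular functions \cite[Theorem~6.31]{shimauto}, exactly as the remark preceding the lemma suggests. The essential geometric input is that the embedding $T = \GSpin_U \hookrightarrow H = \GL_2$ furnished by Lemma \ref{lem:Tkisom} realizes $\kb^{\times}$ as the subgroup of $\GL_2(\Q)$ fixing the CM point $z = z_U$: by construction $T = \GSpin_U$ stabilizes $U$, hence fixes the point $z_U^{\pm} \in \domain$ it determines, and concretely the matrix attached to $\sqrt{D}$ in Lemma \ref{lem:Tkisom} has $\colvec{z_U}{1}$ as an eigenvector under the linear-fractional action. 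This is precisely the characterizing property of Shimura's CM embedding $q_z \colon \kb^{\times} \hookrightarrow \GL_2(\Q)$, so the two agree up to the transpose (equivalently, an inverse) coming from whether one lets $\kb$ act on the column vector $\colvec{z}{1}$ or on its transpose. First I would record this identification and extend it to finite ideles, so that the image $t \in T(\adeles_f)$ of $h$ is matched with $q_z(h)$.

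Next I would invoke Shimura's formula, which in our normalization reads
\[
  f(z)^{\sigma(h)} = f^{\,q_z(h)^{-1}}(z),
\]
where $f \mapsto f^{u}$ denotes the natural (right) action of $u \in \GL_2(\adeles_f)$ on the field of modular functions of all levels with Fourier coefficients in $\Q^{\mathrm{ab}}$, and $\sigma(h)$ is the image of $h$ under the Artin map. To evaluate the right-hand side I would use two facts about $f \in \Q(X_0(N))$. First, under the identification $X_K \cong Y_0(N)$ of \eqref{Y0shiso}, the subgroup $K = \prod_p K_p$ stabilizes $L$ and acts trivially on $L'/L$; hence $f$, being of level $\Gamma_0(N)$, is fixed by the adelic action of the $\SL_2$-part of $K$. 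Second, $f$ has rational Fourier coefficients, so the twist of the coefficient field by $\det$ (through the cyclotomic character) is trivial for elements of $K$, since $\det(K) = \hat{\Z}^{\times}$. Together these give $f^{k} = f$ for all $k \in K$.

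With these preparations in hand, strong approximation $H(\adeles_f) = H(\Q)K$ lets me write $t = \gamma k$ with $\gamma \in H(\Q) = \GL_2(\Q)$ and $k \in K$, exactly as in the statement. Substituting $q_z(h) = t = \gamma k$ into Shimura's formula and peeling off the $K$-part, which acts trivially on $f$, collapses the adelic action to the geometric action of $\gamma$ on $\uhp$ by linear fractional transformations; evaluating at the fixed point $z$ then yields $f(z)^{\sigma(h)} = f(\gamma^{-1}z)$, as claimed.

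The only genuine work is the bookkeeping of conventions, and this is where I expect the subtleties to lie. The precise placement of the inverse in Shimura's formula (tied to whether the reciprocity map is normalized by the arithmetic or the geometric Frobenius), the left-versus-right nature of the adelic action, the transpose/inverse discrepancy in matching $q_z$ with Lemma \ref{lem:Tkisom}, and the direction of the geometric action $f^{g} = f \circ g^{\pm 1}$ must all be pinned down so that the signs conspire to produce $\gamma^{-1}z$ rather than $\gamma z$. I would also need to ensure that $\det(\gamma) > 0$, equivalently that $\gamma^{-1}z$ lies in $\uhp$ and corresponds to the chosen orientation $z_U^{\pm}$; this can be arranged in the strong approximation decomposition since $\det(t) = \N_{\kb/\Q}(h)$ and $\det(K) = \hat{\Z}^{\times}$, and otherwise one simply passes to the other component of $\domain = \uhp \cup \overline{\uhp}$. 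None of these steps is deep, which is why the lemma can be asserted to follow easily from \cite[Theorem~6.31]{shimauto}.
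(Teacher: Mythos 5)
Your proposal is correct and takes exactly the route the paper intends: the paper offers no written proof beyond the remark that the lemma ``can be easily shown using Theorem~6.31 of \cite{shimauto}'', and your argument---identifying the embedding of Lemma~\ref{lem:Tkisom} with Shimura's $q_z$, showing $f^{k}=f$ for $k \in K$ (trivial $\SL_2$-action by $\Gamma_0(N)$-invariance, trivial determinant/cyclotomic action by rationality of the Fourier coefficients), and collapsing the adelic action to $f(\gamma^{-1}z)$ via strong approximation---is precisely the elaboration of that citation. The convention bookkeeping you flag (placement of the inverse, normalization of the Artin map, arranging $\det\gamma>0$ inside the decomposition $t=\gamma k$) is indeed the only genuine content, and your handling of it is sound.
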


\subsection{A seesaw identity}
\label{sec:seesaw-identity}
The motivation for this and the next section is the following.
Let us write the Fourier expansion of $\prep{\Theta}_{P}(\tau,h)$ as
\[
  \prep{\Theta}_{P}(\tau,h) = \sum_{\beta \in P'/P} \sum_{m \gg \infty} c_{P}^{+}(h,m,\beta) e(m\tau) \phi_{\beta}.
\]
\nomenclature[cP]{$c_{P}^+(h,m,\beta)$}{The coefficient of index $(m,\beta)$ of $\prep{\Theta}_{P}(\tau,h)$}

Suppose that there is a weakly holomorphic modular form $f \in M_{1,P}^{!}$,
with principal part
\[
  P_{f} = q^{-m}(\phi_{\beta} + \phi_{-\beta})
\]
for $m>0$.
(Note that such a form often does \emph{not} exist but we will deal
with this problem in the next section.)
On the one hand, we obtain by Theorem \ref{thm:value-phiz} that
\begin{align*}
  \Phi_{P}(h,f) &= \int_{\Gamma \bs \uhp} \langle f(\tau), \overline{\Theta_{P}(\tau,h)} \rangle v d\mu(\tau) \\
               &= \CT \langle f(\tau), \prep{\Theta}_{P}(\tau,h) \rangle
               = c_{P}^{+}(h, m,\beta) + c_{P}^{+}(h, m,-\beta) + \text{``error term''},
\end{align*}
where the ``error term'' is a contribution of the pairing of the principal part of $\pre{\Theta}_{P}$
with the coefficients of positive index of $f$. Let us ignore this term for the moment.

On the other hand, there is a different expression for the
theta lift $\Phi_{P}(h,f)$ in terms of a CM value of the theta lift for the lattice $L$.
The basic principle goes back to Kudla \cite{Kudla-seesaw} who
realized that many previously mysterious identities between theta lifts
can be understood in the context of ``seesaw dual reductive pairs''.

Explicitly, we will use the embedding defined above to obtain an expression
for the regularized theta lift in the case of signature $(2,0)$ as a CM value
of a modular function on $X_0(N)$ for $N = A \abs{D}$.
Consider the theta lift $\Phi_{L}$ corresponding to $L$ defined as
\begin{equation*}
  \Phi_{L}(z,h,g) = \int^{\reg}_{\Gamma \bs \uhp} \langle g ,
                    \overline{\Theta_{L}(\tau,z)} \rangle v^{\frac{1}{2}} d\mu(\tau),
\end{equation*}
for $(z,h) \in X_{K}$.
Since the lattice $L$ splits as $L = \calP \oplus \calN$, the Weil representation
$\rho_{L}$ is isomorphic to the tensor product $\rho_{\calP} \otimes \rho_{\calN}$.
In particular, we have that $M_{1,\calP}^{!} \otimes M_{-\frac{1}{2},\calN}^{!}$
is a subspace of $M_{\frac{1}{2},L}^{!}$.
At a point $(z_{U}^{\pm},h) \in Z(U)$ corresponding to the splitting $\calP \oplus \calN$
we obtain for an element $f \otimes_\C \varphi$ in
$M_{1,\calP}^{!} \otimes_\C M_{-\frac{1}{2},\calN}^{!}$ that
\begin{align*}
  \label{eq:tensor}
  \Phi_{L}(z_U,h,f \otimes \varphi) &=
  \int^{\reg}_{\Gamma \bs \uhp} \langle (f \otimes_\C \varphi) (\tau) ,
  (\overline{\Theta_{\calP} \otimes_\C \Theta_{\calN}})(\tau,h) \rangle v^{\frac{1}{2}} d\mu(\tau)\\
  &= \int^{\reg}_{\Gamma \bs \uhp} \langle f(\tau) , \overline{\Theta_{\calP}(\tau,h)} \rangle
  \langle \varphi(\tau), \Theta_{\calN^{-}}(\tau) \rangle v d\mu(\tau).
\end{align*}

Now we choose a specific function $\varphi$.
Since $\calN'/\calN = \Z/2A\Z$ with quadratic form $-x^{2}/4A$, the space
$M_{k,\calN}^{!}$ is isomorphic to the space of weakly holomorphic Jacobi forms $J_{k+\frac{1}{2},A}^{!}$
of weight $k+\frac{1}{2}$ and index $A$. See also Section \ref{sec:jacobi-forms}.
In particular, $M_{-\frac{1}{2},\calN}^{!}$ is isomorphic to $J_{0,A}^{!}$.
It follows from Proposition \ref{Jevgens},
that $J_{0,A}^{!}$ is generated as a $\C$-vector space by elements of
the form
\begin{equation}
  \label{eq:whjac0}
  \sum_{j=0}^{A} \psi_{j} \FJ^{j}\GJ^{A-j},
\end{equation}
where $\psi_{j} \in M_{2j}^{!}(\SL_{2}(\Z))$ and $\FJ,\GJ$ are the generators of the ring of weak Jacobi forms of
even weight as in Proposition \ref{Jevgens}.
We will frequently identify these forms as vector valued modular forms
in $M^{!}_{-5/2,\calN}$ and $M^{!}_{-1/2,\calN}$
via the theta development of Jacobi forms (see Section \ref{sec:jacobi-forms}).
Under this identification, the following relation is easy to obtain.
\begin{lemma}
  \label{lem:FGT}
  We have $\left\langle \FJ^{j}(\tau)\GJ^{A-j}(\tau),\Theta_{\calN^{-}}(\tau) \right\rangle = 
  \begin{cases}
    12^{A}, & \text{if } j = 0,\\
    0 & \text{otherwise.}
  \end{cases}
$
\end{lemma}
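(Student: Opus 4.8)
The plan is to reduce the pairing to the specialization of a weak Jacobi form at $z=0$ by means of Lemma \ref{lem:jac-theta-pairing}, and then to evaluate that specialization explicitly. First I would note that, by the elementary multiplicative properties of Jacobi forms recorded in Section \ref{sec:jacobi-forms}, the product $\FJ^{j}\GJ^{A-j}$ is a weak Jacobi form of weight $-2j$ and index $A$, i.e. an element of $J_{-2j,A}^{!}$; under the theta-development correspondence of Proposition \ref{prop:jac-theta-exp} it is exactly the vector-valued form denoted $\FJ^{j}(\tau)\GJ^{A-j}(\tau)$ appearing on the left-hand side of the assertion. The bookkeeping step is to check that the index $A$ is the one attached to $\calN$: since $\calN'/\calN \cong \Z/2A\Z$ carries the quadratic form $-x^{2}/4A$, the lattice $\calN$ is isometric to $\Z$ with $Q(x) = -Ax^{2}$, which is precisely the $m=A$ instance of Proposition \ref{prop:jac-theta-exp}, and $\Theta_{\calN^{-}}(\tau)$ is precisely the theta function entering Lemma \ref{lem:jac-theta-pairing}.

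With this matching in place, Lemma \ref{lem:jac-theta-pairing} applied to $\varphi = \FJ^{j}\GJ^{A-j}$ yields
\[
  \left\langle \FJ^{j}(\tau)\GJ^{A-j}(\tau),\, \Theta_{\calN^{-}}(\tau) \right\rangle = \varphi(\tau,0) = \FJ(\tau,0)^{j}\,\GJ(\tau,0)^{A-j},
\]
so the problem collapses to computing the two scalars $\FJ(\tau,0)$ and $\GJ(\tau,0)$.

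To determine these I would use that specialization at $z=0$ sends $J_{k,m}^{!}$ into $M_{k}^{!}(\SL_{2}(\Z))$. Since the $q$-expansions of $\FJ$ and $\GJ$ in Proposition \ref{Jevgens} begin at $q^{0}$, the specializations $\FJ(\tau,0)$ and $\GJ(\tau,0)$ have no principal part and are therefore genuine holomorphic modular forms, lying in $M_{-2}(\SL_{2}(\Z)) = \{0\}$ and $M_{0}(\SL_{2}(\Z)) = \C$ respectively. Setting $\zeta = 1$ in the expansions of Proposition \ref{Jevgens} reads off the constant terms $1-2+1 = 0$ and $1+10+1 = 12$, whence $\FJ(\tau,0) = 0$ and $\GJ(\tau,0) = 12$.

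Substituting gives $\FJ(\tau,0)^{j}\GJ(\tau,0)^{A-j} = 0^{j}\cdot 12^{A-j}$, which equals $12^{A}$ for $j=0$ and vanishes for $j \geq 1$, exactly the claimed formula. I expect no serious obstacle: Lemma \ref{lem:jac-theta-pairing} carries all of the analytic weight, and the only point deserving genuine care is the vanishing $\FJ(\tau,0)=0$, which must be deduced from holomorphy at the cusp together with the triviality of $M_{-2}(\SL_{2}(\Z))$, rather than merely from the first few Fourier coefficients.
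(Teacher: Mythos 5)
Your proof is correct and follows exactly the route the paper intends: the lemma is stated there without proof (``easy to obtain''), but Lemma \ref{lem:jac-theta-pairing} and Proposition \ref{prop:jac-theta-exp} are set up precisely so that the pairing collapses to $\varphi(\tau,0)=\FJ(\tau,0)^{j}\GJ(\tau,0)^{A-j}$, evaluated via $\FJ(\tau,0)=0$ and $\GJ(\tau,0)=12$. Your care in deducing $\FJ(\tau,0)=0$ from holomorphy at the cusp together with $M_{-2}(\SL_2(\Z))=\{0\}$, rather than from inspecting finitely many coefficients, is sound and matches the same underlying computation.
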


Using the lemma, we obtain that
\begin{equation}
  \label{eq:tensor1}
  \sum_{j=0}^A\langle \psi_{j} \FJ^{j}\GJ^{A-j}(\tau), \Theta_{\calN^{-}}(\tau) \rangle = 12^{A} \psi_{0}(\tau).
\end{equation}
Consequently,
\begin{align}
  \label{eq:seesaw}
  \Phi_{L}(z_U,h,f \otimes \sum_{j=0}^{A} \psi_{j} \FJ^{j}\GJ^{A-j} )
  &= 12^{A} \int^{\reg}_{\Gamma \bs \uhp}
    \psi_0(\tau) \langle f(\tau) , \overline{\Theta_{P}(\tau,h)} \rangle v d\mu(\tau)\\
  &= 12^{A} \Phi_{P}(h,f \cdot \psi_{0}).\notag
\end{align}
This allows us to relate $c_{P}^{+}(h, m,\beta)$
to the special value $\Phi_{L}(z_U,h, g)$ for a weakly holomorphic modular form $g$, which is equal to $\log \abs{\Psi((z_U, h, g)}$ by Borcherds' theorem (Theorem \ref{thm:borcherds}) if the constant term of the input function $g$ vanishes.
In this case, $\Psi(z,g)$ is a meromorphic modular form of weight zero.
By choosing $g$ carefully, we can assure that those special values of $\Psi(z,g)$ lie in the Hilbert class field $\Hkb$ and determine their prime ideal factorization using the results of \cite{ehlen-intersection}.

\begin{remark}
For $D$ a prime discriminant, a similar setup is used in \cite{MarynaInnerProd}.
The main result of \cite{MarynaInnerProd} expresses $\Phi_{P}(1,f)$ essentially as the logarithm of
an algebraic integer given by a special value of a Borcherds product on a modular curve of level $\abs{D}$ and gives 
a formula for the prime ideal decomposition using work of Gross.
We make use of Viazovska's idea but our setup is slightly different and combines some of the ideas of \cite{MarynaInnerProd} 
with those in Section 7 of \cite{bryfaltings}. This allows for square-free discriminants and 
by working adelically, we can keep track of all theta functions $\Theta_P(\tau,h)$ at once 
and determine the action of $\Gal(\Hkb/\kb)$ on the preimages $\pre{\Theta}_P(\tau,h)$.
Moreover, harmonic Maa\ss{} forms are not considered in \cite{MarynaInnerProd}
and to obtain a result that applies to the coefficients $c_P^+(h,m,\beta)$ requires extra work
as we shall see below.
Finally, to establish the relation to the special cycles (without the use of explicit formulas),
and to apply our results to arithmetic geometry,
we use the results of \cite{ehlen-intersection} which build on the arithmetic pullback formula
in \cite{bryfaltings} (and for the explicit formulas, of course on Gross' result, as well).
\end{remark}

\subsection{Weakly holomorphic modular forms of weight $1$}
\label{sec:special-basis}
We will now define a set of weakly holomorphic modular forms $f_{m,\beta}$ in $M_{1,P}^{!}$
serving as a replacement for a form with principal part $q^{-m}(\phi_\beta + \phi_{-\beta})$ in case
such a form does not exist.

Recall that, according to the exact sequence
\[
  \xymatrix@1{ 0 \ar[r] & M_{1,P}^! \ar[r] & H_{1,P} \ar[r]^-{\xi_{1}} & S_{1,P^-} \ar[r] & 0},
\]
the space $S_{1,P^-}$ is the space of obstructions for the existence of a weakly holomorphic modular form with prescribed principal part.
Moreover, note that if $M_{1,P} \neq \{ 0 \}$ only fixing the principal part is not enough to uniquely determine $f_{m,\beta}$,
which is why we have to take this space into account as well.

We write $S_{k,L}(\Q)$ for the space of cusp forms of weight $k$, representation $\rho_L$ 
with only rational Fourier coefficients. By \cite{McGrawBasis}, we have $S_{k,L}(\Q) \otimes_\Q \C = S_{k,L}$.
The following lemma gives a basis for $S_{k,L}(\Q)$ (for any $L$ and $k$) that has a ``simple'' structure,
similar to a $q$-expansion basis starting with increasing powers of $q$ for
scalar valued forms. 
The proof is straightforward.
\begin{lemma}
\label{lem:specialB}
Let $L$ be an even lattice.
Then there is a basis $\{g_{1}(L), \ldots, g_{d(L)}(L)\}$ of $S_{k,L}(\Q)$,
where $d(L) = \dim S_{k,L}$ with the following property.
There are rational numbers $n_{1}(L) \leq \ldots \leq n_{d(L)}(L)$
and elements $\mu_{1}(L),\ldots,\mu_{d(L)}(L) \in L'/L$,
such that we have for their Fourier coefficients
      \[
        c_{g_{j}}(n_{l},\mu_{l}) = \delta_{j,l}.
      \]
\end{lemma}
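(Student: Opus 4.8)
The plan is to obtain the basis by putting the finite-dimensional space $S_{k,L}(\Q)$ into reduced row echelon form with respect to a suitable ordering of the Fourier indices. This is pure linear algebra once one knows that a cusp form is determined by its Fourier coefficients, so I expect no genuine obstacle; the only non-formal inputs are the $q$-expansion principle and the rational structure theorem of McGraw.

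First I would fix a total order on the admissible indices. Any $g \in S_{k,L}$ has a Fourier expansion $g(\tau) = \sum_{\mu \in L'/L} \sum_{n > 0} c_g(n,\mu) q^n \phi_\mu$, where for each $\mu$ the exponent $n$ runs over a fixed coset of $\Z$ determined by $Q(\mu)$ and the range $n>0$ reflects that $g$ is cuspidal (the constant and negative-index terms vanish). Let $I$ be the countable set of pairs $(n,\mu)$ occurring this way, and order it lexicographically: $(n,\mu) < (n',\mu')$ if $n < n'$, or if $n = n'$ and $\mu$ precedes $\mu'$ in some fixed enumeration of the finite group $L'/L$. Ordering by $n$ first guarantees that the pivot indices produced below will have non-decreasing first coordinate, which is exactly the required condition $n_1(L) \leq \cdots \leq n_{d(L)}(L)$.

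Next I would record injectivity of the coefficient map. The assignment $g \mapsto \bigl(c_g(n,\mu)\bigr)_{(n,\mu) \in I}$ is a $\Q$-linear map from $S_{k,L}(\Q)$ into the space of $\Q$-valued sequences indexed by $I$, and it is injective by the $q$-expansion principle: a holomorphic modular form that vanishes at the cusp and has all positive-index Fourier coefficients equal to zero is identically $0$. Thus $S_{k,L}(\Q)$ is realized as a $\Q$-subspace $W$ of the sequence space, of dimension $d(L) = \dim_\C S_{k,L}$, using $S_{k,L}(\Q) \otimes_\Q \C = S_{k,L}$ from \cite{McGrawBasis}.

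Finally I would apply the standard reduced row echelon form construction over the field $\Q$ to the subspace $W$ relative to the order on $I$. Starting from any $\Q$-rational basis and performing Gaussian elimination yields a basis $g_1(L), \dots, g_{d(L)}(L)$ of $W$ together with pivot positions $(n_1(L),\mu_1(L)) < \cdots < (n_{d(L)}(L),\mu_{d(L)}(L))$ in $I$, characterized by the property that $g_j$ has leading coefficient $1$ at its own pivot and coefficient $0$ at every other pivot position. Translated back into Fourier coefficients this is precisely $c_{g_j}(n_l,\mu_l) = \delta_{j,l}$, which is the assertion. Rationality of the $g_j$ is automatic since every operation is carried out over $\Q$, and the monotonicity $n_1(L) \leq \cdots \leq n_{d(L)}(L)$ follows from the choice of a lexicographic order weighted by $n$ first. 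The hard part, such as it is, amounts only to correctly booking the $q$-expansion principle and the rational-structure input; everything else is routine echelon-form linear algebra.
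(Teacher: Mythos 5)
Your proof is correct, and it is precisely the routine echelon-form argument the paper has in mind: the paper gives no proof at all, stating only that ``The proof is straightforward.'' Ordering the (well-ordered, since the exponents lie in finitely many cosets of $\Z$ bounded below) index set $(n,\mu)$ with $n$ first, using injectivity of the coefficient map, and performing Gaussian elimination over $\Q$ on a rational basis supplied by McGraw's theorem is exactly the intended argument, and each step you give holds up.
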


\begin{definition}
  \label{def:special_basis}
  From now on, we let $p_j:=n_j(P)$ and $\pi_j = \mu_j(P)$ be a set of
  indexes as in Lemma \ref{lem:specialB} for the space $S_{1, P}(\Q)$.
  We also write $d^+ = d(P)$.
  \nomenclature[pj]{$p_j$}{$=n_j(P)$}
  \nomenclature[pij]{$\pi_j$}{$=\mu_j(P)$}
  \nomenclature[dp]{$d^+$}{$= d(P) = \dim(S_{1,P})$}

  Moreover, we fix $n_{j}:=n_j(P^-)$,
  $\beta_{j} := \mu_j(P^-)$, and   $g_j := g_j(P^-)$
  as in Lemma \ref{lem:specialB} for $S_{1,P^{-}}(\Q)$.
  We write
  \[
    g_{j} (\tau) = \sum_{\beta \in P'/P} \sum_{m \in \Q_{>0}} a_j(m,\beta) e(m\tau)\phi_\beta
  \]
  for the Fourier expansion of $g_{j}$ and we write $d^- = d(P^-)$.    
  \nomenclature[nj]{$n_j$}{$=n_j(P^-)$}
  \nomenclature[betaj]{$\beta_j$}{$=\mu_j(P^-)$}
  \nomenclature[dm]{$d^-$}{$= d(P^-) = \dim(S_{1,P^-})$}
\end{definition}

\begin{proposition}
  \label{prop:fbm}
  For $m \in \Q_{>0}$ and $\beta \in P'/P$ with $m + Q(\beta) \in \Z$
  and $(m,\beta) \neq (n_{j}, \beta_{j})$ for all $j \in \{1, \ldots, d^-\}$,
  there is a weakly holomorphic modular form $f_{m,\beta} \in M_{1,P}^{!}$ 
 with only rational Fourier coefficients, having a Fourier expansion of the form
  \begin{equation}
    f_{m,\beta}(\tau) = q^{-m}(\phi_{\beta}+\phi_{-\beta}) - \sum_{j=1}^{d^-} a_{j}(m,\beta) q^{-n_{j}}(\phi_{\beta_{j}} + \phi_{-\beta_{j}}) +O(1)
  \end{equation}
  and with $c_{f_{m,\beta}}(0,0) = 0$ and $c_{f_{m,\beta}}(p_j, \pi_j) = 0$ for all $j \in \{1, \ldots, d^+\}$.
\end{proposition}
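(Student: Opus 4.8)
The plan is to apply the standard obstruction theory for weakly holomorphic forms: by Lemma \ref{lem:pairing} together with the exact sequence \eqref{ex-sequS}, a prescribed principal part (a finite Fourier polynomial supported on negative indices) is the principal part of some $f \in M_{1,P}^{!}$ if and only if it pairs to zero against every cusp form in $S_{1,P^-}$, the pairing being that of \eqref{eq:pairing} which, by Lemma \ref{lem:pairing}, depends only on the principal part. I would therefore start from the naive candidate $q^{-m}(\phi_\beta+\phi_{-\beta})$, whose pairing against the basis element $g_l \in S_{1,P^-}$ of Definition \ref{def:special_basis} equals $a_l(m,\beta)+a_l(m,-\beta)$; in general this does not vanish, and the correction $\sum_j a_j(m,\beta)\,q^{-n_j}(\phi_{\beta_j}+\phi_{-\beta_j})$ is introduced precisely to cancel it.

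The crux is to verify that the corrected principal part is orthogonal to all of $S_{1,P^-}$. Here I would use two inputs. First, every $g \in S_{1,P^-}$ is symmetric, $c(m,\mu)=c(m,-\mu)$, which follows from invariance under $Z=-I \in \tilde\Gamma$ in weight one together with $\rho_{P^-}(Z)\phi_\mu=-\phi_{-\mu}$ (note $\sgn(P^-)=-2$). Second, the dual-basis property $a_j(n_l,\beta_l)=\delta_{jl}$ of Lemma \ref{lem:specialB} applied to $S_{1,P^-}(\Q)$. Combining these, the pairing of $g_l$ with the corrected principal part becomes
\[
\bigl(a_l(m,\beta)+a_l(m,-\beta)\bigr)-\sum_{j=1}^{d^-}a_j(m,\beta)\bigl(a_l(n_j,\beta_j)+a_l(n_j,-\beta_j)\bigr)=2a_l(m,\beta)-2a_l(m,\beta)=0.
\]
Since the $g_l$ span $S_{1,P^-}$, Lemma \ref{lem:pairing} produces $f \in M_{1,P}^{!}$ with exactly this principal part. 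The hypothesis $(m,\beta)\neq(n_j,\beta_j)$ is what guarantees that the leading term is genuinely $q^{-m}(\phi_\beta+\phi_{-\beta})$: if one had $(m,\beta)=(n_{j_0},\beta_{j_0})$, then the $j_0$ correction term, carrying $a_{j_0}(n_{j_0},\beta_{j_0})=1$, would cancel the leading term.

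It remains to secure rationality and the two normalizations. Rationality of $f$ follows from $S_{1,P^-}(\Q)\otimes_\Q\C=S_{1,P^-}$ \cite{McGrawBasis} and the rational structure on $M_{1,P}^{!}$, since the corrected principal part is rational. The form $f$ is unique up to an element of $M_{1,P}$, which does not affect the principal part, so I would subtract a suitable $h \in M_{1,P}$ to kill $c_f(0,0)$ and the coefficients $c_f(p_j,\pi_j)$. This is possible because the cusp forms $g_1(P),\dots,g_{d^+}(P)$ of Lemma \ref{lem:specialB} have images spanning $\{0\}\times\C^{d^+}$ under $h\mapsto(c_h(0,0),c_h(p_1,\pi_1),\dots,c_h(p_{d^+},\pi_{d^+}))$, while the Eisenstein series $E_P$ of \eqref{eq:EPFourier} has $c_{E_P}(0,0)=1$; hence this map from $M_{1,P}$ onto $\C^{1+d^+}$ is surjective and a rational $h$ matching the values of $f$ can be chosen. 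Setting $f_{m,\beta}=f-h$ completes the construction. The main obstacle is the orthogonality computation of the second paragraph, where the symmetry relation and the normalization of the dual basis must be tracked so that the factors of two cancel exactly; the only other point requiring care is the availability in weight one of a holomorphic Eisenstein series with constant term $\phi_0$, which \eqref{eq:EPFourier} already provides.
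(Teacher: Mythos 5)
Your proof is correct and follows essentially the same route as the paper: the Bruinier--Funke pairing criterion (Lemma \ref{lem:pairing} plus the exact sequence) with the correction term chosen precisely to kill the obstruction in $S_{1,P^-}$, rationality via McGraw's rational structures, and a final adjustment by $E_P$ and the special basis of $S_{1,P}$ to enforce $c_{f_{m,\beta}}(0,0)=0$ and $c_{f_{m,\beta}}(p_j,\pi_j)=0$. You in fact make explicit the cancellation that the paper compresses into the phrase ``by construction,'' namely the weight-one symmetry $c(n,\mu)=c(n,-\mu)$ forced by $\rho_{P^-}(Z)\phi_\mu=-\phi_{-\mu}$ together with the dual-basis property $a_l(n_j,\beta_j)=\delta_{lj}$.
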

\nomenclature[fmb]{$f_{m,\beta}$}{An element of $M_{1,P}^{"!}$}
\begin{proof}
  The existence of $\tilde{f}_{m,\beta} \in H_{1,P}$ with the principal part as above is clear by Proposition 3.11 of \cite{brfugeom}.
  However, we have
  \[
  \{ \tilde{f}_{m,\beta},  g \} = 0
  \]
  for all $g \in S_{1,P^-}$ by construction and Lemma \ref{lem:pairing}, where $\{ \cdot, \cdot \}$ is the pairing in \eqref{eq:pairing}.
  Thus, $\xi_1(\tilde{f}_{m,\beta}) = 0$ and by \eqref{ex-sequ} we have $\tilde{f}_{m,\beta} \in M_{1,P}^!$.
  
  If we multiply any such form $\tilde{f}_{m,\beta}$ 
  with $\Delta(\tau)^{m_{0}}$, where $m_{0}=\max\{m, n_1,\ldots,n_{d^-} \}$, to obtain an element of $M_{1+12m_0,P}$.
  Since this space has a basis of forms with integral Fourier coefficients \cite{McGrawBasis},
  there is an element $g \in M_{1+12m_0,P}(\Q)$ with only rational Fourier coefficients,
  such that $\tilde{f}_{m,\beta} \Delta^{m_0} - g = O(q^{m_0})$.
  Consequently, we have $\tilde{g} = g/\Delta^{m_0} \in M_{1,P}^{!}$ and $\tilde{g}$
  has the correct principal part and only rational Fourier coefficients.

  Finally, we can subtract suitable multiple of $E_P$ to obtain the vanishing of the Fourier coefficient of index $(0,0)$
  and multiples of the special basis elements of $S_{1,P}$ to ensure that $c_{f_{m,\beta}}(p_j, \pi_j) = 0$ for all $j \in \{1, \ldots, d^+\}$.
\end{proof}

\begin{lemma}
\label{lem:Fmb}
  Let $f \in M_{1,P}^!$.
  There exists a weakly holomorphic modular form $\bm{f} \in M_{1/2,L}^!$, 
  such that
  \begin{equation}
    \label{eq:Fmb}
        \langle \bm{f}(\tau), \Theta_{\calN^{-}}(\tau) \rangle = 12^{A} f(\tau)
  \end{equation}
and the constant term $c_{\bm{f}}(0,0)$ of $\bm{f}$ vanishes.
If $f$ has rational Fourier coefficients, the same is true for $\bm{f}$
and if $f$ has only integral Fourier coefficients, then
the Fourier coefficients of $\bm{f}$ are rational with denominators bounded by the
maximum of
\[
  h_\kb \cdot \binom{2A}{A} \text{ and } 12h_\kb + 2w_\kb.
\]
\end{lemma}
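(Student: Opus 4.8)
The plan is to manufacture $\bm{f}$ from $f$ by tensoring with a fixed form on the $\calN$-side whose pairing against $\Theta_{\calN^{-}}$ is the constant $12^{A}$, and then to correct the constant term by something lying in the kernel of that pairing. First I would exploit the splitting $L=\calP\oplus\calN$, which gives $\rho_L\cong\rho_\calP\otimes\rho_\calN$ and the inclusion $M^{!}_{1,\calP}\otimes_\C M^{!}_{-1/2,\calN}\subset M^{!}_{1/2,L}$. Under the isomorphism $M^{!}_{-1/2,\calN}\cong J^{!}_{0,A}$ of Proposition \ref{prop:jac-theta-exp}, let $\Phi\in M^{!}_{-1/2,\calN}$ be the vector-valued form attached to $\GJ^{A}\in J^{!}_{0,A}$ (Proposition \ref{Jevgens}). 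Since $\FJ$ and $\GJ$ have integral Fourier coefficients, so does $\Phi$, and Lemma \ref{lem:FGT} together with \eqref{eq:tensor1} gives $\langle\Phi(\tau),\Theta_{\calN^{-}}(\tau)\rangle=12^{A}$. Setting $\bm{f}_0:=f\otimes_\C\Phi$ and contracting only the $S_\calN$-index yields
\[
  \langle \bm{f}_0(\tau),\Theta_{\calN^{-}}(\tau)\rangle
  = f(\tau)\,\langle\Phi(\tau),\Theta_{\calN^{-}}(\tau)\rangle
  = 12^{A} f(\tau),
\]
so $\bm{f}_0$ already satisfies \eqref{eq:Fmb} and inherits rationality (resp.\ integrality) of its coefficients from $f$.

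It remains to force the constant term to vanish without disturbing \eqref{eq:Fmb}, so any admissible modification must lie in the kernel of $\langle\,\cdot\,,\Theta_{\calN^{-}}\rangle$. The next step is to exhibit a correction $\bm{c}\in M^{!}_{1/2,L}$ with $\langle\bm{c},\Theta_{\calN^{-}}\rangle=0$ and nonzero, controlled constant term $d:=c_{\bm{c}}(0,0)$. By Lemma \ref{lem:FGT} the Jacobi forms $\FJ^{j}\GJ^{A-j}$ with $j\geq1$ pair trivially against $\Theta_{\calN^{-}}$; tensoring the vector-valued form $\Psi$ attached to $\FJ\GJ^{A-1}$ with a holomorphic weight-one form $g\in M_{1,\calP}$ of constant term $1$ (a theta series $\Theta_\calP$, or the normalized Eisenstein series $E_\calP$) produces such a $\bm{c}=g\otimes\Psi$. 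Because $g$ is holomorphic and the zero-component of $\Psi$ is holomorphic, the convolution defining the constant term collapses to $d=c_g(0,0)\cdot[\zeta^{0}q^{0}](\FJ\GJ^{A-1})$, a nonzero integer. Then $\bm{f}:=\bm{f}_0-(c_{\bm{f}_0}(0,0)/d)\,\bm{c}$ lies in $M^{!}_{1/2,L}$, still satisfies \eqref{eq:Fmb}, and has vanishing constant term.

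Tracking the denominators is where the real work lies. If $f$ is integral, then $c_{\bm{f}_0}(0,0)$ and $[\zeta^{0}q^{0}](\FJ\GJ^{A-1})$ are integers, so the only denominators in $\bm{f}$ enter through the scalar $1/d$ and through the coefficients of the correction form $g$. Taking $g$ to be a genus theta series keeps $g$ integral but produces a binomial-type constant term, contributing the bound $h_\kb\binom{2A}{A}$; taking $g=E_\calP$ instead introduces the denominators of its Fourier coefficients, which are governed by $h_\kb$ and $w_\kb$ and give the bound $12h_\kb+2w_\kb$. Since at least one of the two choices is always available, the denominator of $\bm{f}$ is bounded by the maximum of the two quantities. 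The main obstacle is precisely this bookkeeping: verifying that the chosen correction has nonzero constant term (so the division by $d$ is legitimate), and checking — using integral bases as in \cite{McGrawBasis} and the $\Delta$-clearing argument already used in Proposition \ref{prop:fbm} — that no further denominators creep in when $g$ and $\Psi$ are expressed integrally. These are the estimates I would carry out in full in the body of the proof.
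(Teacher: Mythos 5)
Your overall architecture is the same as the paper's: set $\bm{f}_0 = f\otimes\GJ^{A}$ (this step, and the rationality/integrality it inherits, are correct and match the paper exactly), then repair the constant term by adding something in the kernel of $\langle\,\cdot\,,\Theta_{\calN^{-}}\rangle$. The gap is in your correction form: $\bm{c}=g\otimes\Psi$ with $\Psi$ attached to $\FJ\GJ^{A-1}$ does not lie in $M^{!}_{1/2,L}$. For the tensor product to have weight $1/2$, the $\calN$-factor must have weight $-1/2$, i.e.\ come from a Jacobi form of weight $0$ and index $A$ (Proposition \ref{prop:jac-theta-exp}). But $\FJ\GJ^{A-1}$ has weight $-2$, so $\Psi\in M^{!}_{-5/2,\calN}$ and $g\otimes\Psi$ has weight $1-5/2=-3/2$; it is not a candidate correction at all. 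To fix the weight you must multiply $\FJ^{j}\GJ^{A-j}$ (with $j\geq 1$, to stay in the kernel of the pairing) by a level-one modular form of weight $2j$. Since $M_{2}(\SL_2(\Z))=\{0\}$, when $A=1$ (where $\FJ$ is the only option) there is no holomorphic choice, and one is forced to take a weakly holomorphic $g_0\in M_{2}^{!}(\SL_2(\Z))$ with $g_0=q^{-1}+O(q)$.

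This forced pole is exactly where your second claim breaks down: once the scalar factor has a $q^{-1}$ term, the $0$-component of the $\calN$-side is no longer holomorphic, and the constant-term convolution does not collapse to $c_g(0,0)\cdot[\zeta^{0}q^{0}](\FJ\GJ^{A-1})$; it acquires the cross term pairing the pole against the index-one coefficient of the weight-one factor. In the paper this term is $c_{E_P}(1,0)\,c_{\FJ}(0,0)=-2w_\kb/h_\kb$, and it is precisely the source of the bound $12h_\kb+2w_\kb$. Consequently the two denominators in the statement do not arise from choosing $\Theta_{\calP}$ versus $E_{\calP}$ on the $\calP$-side, as you suggest; they arise from the dichotomy $A>1$ (use $E_P\otimes\FJ^{A}g$ with $g\in M_{2A}(\SL_2(\Z))$ holomorphic, $g=1+O(q)$, constant term $(-1)^{A}\binom{2A}{A}$, giving $h_\kb\binom{2A}{A}$) versus $A=1$ (the forced pole, giving $12h_\kb+2w_\kb$). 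Your proposal glosses over this case distinction, which is the actual content of the denominator bookkeeping in the lemma.
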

\begin{proof}
  For the proof, recall the definition of $\FJ$ and $\GJ$ from Proposition~\ref{Jevgens}.
  Let us take $\bm{f} = f \otimes \GJ^{A}$, the simplest choice.
  Note that $\GJ^A$ has integral Fourier coefficients.
  Then we consider two cases.

  The first case is $A=1$.
  There is a $g \in M_{2}^{!}(\SL_{2}(\Z))$ with only integral Fourier coefficients
  such that we have $g(\tau) = q^{-1} + O(q)$.
  The weakly holomorphic modular form $E_{P} \otimes \FJ g \in M_{1/2,L}^{!}$
  has a non-vanishing constant term
  \[
    c_{E_{P}}(1,0)c_{\FJ}(0,0) + c_\FJ(1,0).
  \]
  In fact, $c_{E_P}(1,0) = w_\kb/h_\kb$ and we have $c_\FJ(0,0) = -2$ and $c_{\FJ}(1,0) = -12$.
  
  Therefore, replacing $\bm{f}$ by
  \[
    \bm{f} + \frac{c_{\bm{f}}(0,0)h_\kb}{12 h_\kb + 2w_\kb} E_{P} \otimes \FJ g.
  \]
  eliminates the constant term and does not change \eqref{eq:Fmb}.

  In the other case, when $A > 1$,
  we can take $g \in M_{2A}(\SL_{2}(\Z))$ with $g(\tau) = 1 + O(q)$ and subtract a multiple of $E_P \otimes \FJ^{A}g$.
  Note that the constant term of $\FJ^A$ is given by the constant term of $(\zeta - 2 +\zeta^{-1})^A$, which
  can be easily seen to be equal to
  \[
    (-1)^A\binom{2A}{A}
  \]
  and is in particular nonzero.
  Thus, we can replace $\bm{f}$ by
  \[
    \bm{f} - \frac{c_{\bm{f}}(0,0)}{(-1)^A\binom{2A}{A}} E_P \otimes \FJ^{A}g
  \]
  to obtain a vanishing constant term without changing \eqref{eq:Fmb}.
\end{proof}

We denote by $F_{m,\beta} := \bm{f_{m,\beta}}$ the weakly holomorphic modular form corresponding to 
$f_{m,\beta}$ via Lemma~\ref{lem:Fmb}.
\nomenclature[Fmb]{$F_{m,\beta}$}{The element in $\bm{f_{m,\beta}} \in M_{1/2,L}^{"!}$ corresponding to $f_{m,\beta}$ via Lemma \ref{lem:Fmb}}

It is easy to see that $f_{m,\beta}$ has Fourier coefficients with bounded denominators
using the result of McGraw \cite{McGrawBasis} that the space of vector valued
modular forms has a basis of modular forms with only integral Fourier coefficients.
\begin{definition}\label{def:cmb}
For each $\beta \in P'/P$ and $m \in \Q_{>0}$ with $m + Q(\beta) \in \Z$ we let
$c_{m,\beta} \in \Z$, such that $c_{m,\beta} F_{m,\beta}$ has only integral Fourier coefficients.
\end{definition}
\nomenclature[cmb]{$c_{m,\beta}$}{An integer defined in Definition \ref{def:cmb}}

We have that
\begin{equation}
  \label{eq:coeff1}
  12^{A} c_{m,\beta}\, \int^{\reg}_{\Gamma \bs \uhp} \langle f_{m,\beta}(\tau) , \overline{\Theta_{P}(\tau,h)} \rangle v\, d\mu(\tau)
    =  \Phi_{L}(z_U, h, c_{m,\beta} F_{m,\beta}).
\end{equation}
By the theorem of Borcherds (Theorem \ref{thm:borcherds}),
the value on the right hand side is essentially the logarithm of a special value of
a rational function on $Y_{0}(N)$ (or rather on its compactification $X_0(N)$) which is defined over $\Q$, as long as the coefficients of the input function (in our case given by $c_{m,\beta} f_{m,\beta}$) are all integers.
Moreover,
\begin{equation}
  \label{eq:coeff2}
  \int^{\reg}_{\Gamma \bs \uhp} \langle f_{m,\beta}(\tau) , \overline{\Theta_{P}(\tau,h)} \rangle v\, d\mu(\tau)
  =  - \frac{2}{12^{A}\,c_{m,\beta}} \log \abs{\Psi_{L}(z_U, h, c_{m,\beta} F_{m,\beta})}^{2}.
\end{equation}
By CM theory (Lemma \ref{lem:Psirat-idelesact}),
an integral power of the value $\Psi_{L}(z_U, h, c_{m,\beta} F_{m,\beta})$
on the right is contained in the Hilbert class field $\Hkb$ of $\kb$.
On the left hand side, we essentially obtain the coefficient $c_{P}^{+}(h,m,\beta)$ we are interested in
and some ``error terms''.

\subsection{The arithmetic pullback and the coefficients of the holomorphic part}
\label{sec:arith-pullback}
In this section we will determine the prime ideal factorization of the algebraic number
given by the special value $\Psi_{L}(z_U, h, c_{m,\beta} F_{m,\beta})$.

Our basic setup is the following.
Given $P \cong \fraka$, we let $\frakb = \different{\kb} \fraka$
as in Lemma \ref{lem:Pab} and put $N=A\abs{D}$,
where $(P,Q)$ corresponds to the integral binary quadratic form $[A,B,C]$.
As before, we assume that $(A,D)=1$.
Moreover, we let $M,R$ be as defined on page~\pageref{r2}.

\begin{definition}
  For a harmonic weak Maa\ss{} form $f \in H_{\frac{1}{2},L}(\Z)$, we write
  \[
    \calZ(f) = \sum_{\mu \in L'/L} \sum_{m<0} c_f^{+}(m,\mu) \calZ(m,\mu)
  \]
  for the divisor associated with $f$ on $\calY_{0}(N)$.
  Here, $\calZ(m, \mu)$ is the extension of the Heegner divisor
  $Z(m,\mu)$ to the stack $\calX_0(N)$ as in \cite{bryfaltings} and \cite{ehlen-intersection}.
  \nomenclature[ZmmuN]{$\calZ(m,\mu)$}{A special divisor on $\calX_{0}(N)$}
  \nomenclature[ZfN]{$\calZ(f)$}{A special divisor on $\calX_{0}(N)$}
  \nomenclature[X0N1]{$\calX_0(N)$}{A certain integral model of $X_0(N)$}
  It is equal to the flat closure of $Z(f)$, defined analogously.
\end{definition}
For $f \in H_{\frac{1}{2},L}$ with integral principal part 
the pair $\widehat{\calZ}^{c}(f)=(\calZ^{c}(f), \Phi_{L}(\cdot,g))$
defines an arithmetic divisor on $\calX_{0}(N)$.
Here, $\calZ^{c}(f) = \calZ(f) + C(f)$ is a suitable extension of $\calZ(f)$ to $\calX_{0}(N)$
where $C(f)$ is supported at the cusps.

\begin{lemma}
  Let $f \in M_{1/2,L}^{!}$ with constant coefficient $c_f(0,0) = 0$ and 
   $c_f(m,\mu) \in \Q$ for all $m \in \Q$ and $\mu \in L'/L$.
  Then there exists an integer $M_{f}$, such that the Borcherds product
  $\Psi_{L}(z,h,M_{f} \cdot f)$ defines a meromorphic modular function contained in $\Q(j,j_{N})$.
\end{lemma}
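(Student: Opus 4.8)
The plan is to combine Borcherds' theorem (Theorem \ref{thm:borcherds}) with the rationality of product expansions and the $q$-expansion principle on $X_0(N)$. First I would clear denominators: since $f$ has rational coefficients and only finitely many are of negative index, there is a positive integer $M_0$ with $M_0 c_f(m,\mu)\in\Z$ for all $m<0$. Applying Theorem \ref{thm:borcherds} to $M_0 f$ (whose constant term $M_0 c_f(0,0)$ still vanishes) produces a meromorphic modular form $\Psi_0 := \Psi_L(z,h,M_0 f)$ of weight $0$ for $H(\Q)\cong\GL_2(\Q)$ carrying a \emph{finite-order} unitary multiplier system, say of order $N_0$. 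Because the regularized lift $\Phi_L(z,h,\cdot)$ is linear in its argument and $\log\lVert\Psi_L\rVert$ is, up to the vanishing $c_f(0,0)$-term, proportional to $\Phi_L$, Borcherds products are multiplicative in the input form; hence setting $M_f := M_0 N_0$ gives
\[
  \Psi_L(z,h,M_f f) = \Psi_0^{N_0}
\]
up to a constant of absolute value one. This is a weight-$0$ meromorphic modular form with \emph{trivial} multiplier, hence a genuine $\Gamma_0(N)$-invariant meromorphic function, i.e. an element of $\C(X_0(N)) = \C(j,j_N)$, under the isomorphism $X_K\cong\Gamma_0(N)\backslash\uhp$ of \eqref{Y0shiso} (here $j_N(\tau)=j(N\tau)$).

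It remains to prove rationality, namely $\Psi_0^{N_0}\in\Q(j,j_N)$, and for this I would appeal to the infinite product expansion in Theorem 13.3 of \cite{boautgra}. At the cusp $\infty$, the product $\Psi_L(z,h,M_f f)$ equals, up to a leading power $q^{\rho}$ with $\rho\in\Q$, a convergent infinite product of factors of the shape $(1-q^a\zeta^b)^{M_f c_f(m,\mu)}$ whose exponents $M_f c_f(m,\mu)$ are \emph{integers}. In the signature $(2,1)$ situation the variable specializes so that this becomes a one-variable expansion in $q$ (a generalized eta-type product); each factor contributes integral Laurent coefficients, so after dividing by the leading monomial the Fourier expansion of $\Psi_L(z,h,M_f f)$ at $\infty$ has coefficients in $\Z$. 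Since a $\Gamma_0(N)$-invariant meromorphic function whose $q$-expansion at $\infty$ is a Laurent series with rational coefficients lies in the function field of $X_0(N)$ over $\Q$, which is exactly $\Q(j,j_N)$ (the $q$-expansion principle for the modular curve $X_0(N)$), this yields the claim.

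The step I expect to be the main obstacle is pinning down the leading normalization in the product expansion. Theorem \ref{thm:borcherds} supplies $\Psi$ only up to a multiplier of finite order, so a priori the normalized constant term is a root of unity rather than $1$; the role of passing to the $N_0$-th power is precisely to kill both the multiplier and the accompanying root-of-unity constant, so that the normalized expansion genuinely begins with $1$ and the integrality of the remaining coefficients (which follows formally from integrality of the exponents $M_f c_f(m,\mu)$) suffices for rationality. Verifying that $\rho$ is rational and that $q^{\rho}$ matches a rational local parameter at the cusp $\infty$ is the remaining bookkeeping; it is harmless here because the lattice $L$ and its cusp are completely explicit, but it is the point where the argument must be carried out carefully rather than merely cited.
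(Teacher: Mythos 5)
The paper gives no internal argument for this lemma -- its ``proof'' is a citation to Lemma 8.1 of \cite{ehlen-intersection} -- so your proposal should be judged as the self-contained standard argument that such a lemma rests on, and in outline it is exactly that: clear denominators, apply Theorem \ref{thm:borcherds}, raise to a power to trivialize the finite-order multiplier system, and deduce rationality from the $q$-expansion principle for $X_0(N)$ (whose function field over $\Q$ is $\Q(j,j_N)$). The multiplicativity step, the remark that trivial multiplier plus weight zero gives an element of $\C(X_0(N))$, and the final appeal to the $q$-expansion principle are all sound.

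There is, however, one step that is wrong as written. In the signature $(2,1)$ situation the exponents in Borcherds' product expansion are \emph{not} the principal-part coefficients: at the cusp $\infty$ the expansion has the shape
\[
  \Psi_{L}(z,1,f) \;=\; q^{\rho}\prod_{n\geq 1}\left(1-q^{n}\right)^{c_f\left(n^{2}/4N,\,\mu_{n}\right)},
\]
(compare $\Delta(z)=q\prod_{n\geq1}(1-q^{n})^{24}$ as the lift of $12\theta$), so the exponents run over the \emph{infinitely many} coefficients of non-negative index $m=n^{2}/4N$, while the negative-index coefficients enter only through the divisor and the Weyl vector $\rho$. Your $M_{0}$ was chosen to clear denominators of the finitely many negative-index coefficients only, so the assertion that the exponents $M_{f}c_{f}(m,\mu)$ are integers -- and hence that the expansion has coefficients in $\Z$ -- is unjustified, and in general false for that choice of $M_{f}$. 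The gap can be closed in either of two standard ways: (a) a weakly holomorphic vector-valued form with rational coefficients has \emph{bounded} denominators, by McGraw's integral-basis theorem \cite{McGrawBasis} (a fact this paper invokes elsewhere for precisely this purpose), so $M_{0}$ may be chosen to clear \emph{all} denominators at once; or (b) integrality is not actually needed for your final step, since for $c\in\Q$ the factor $(1-q^{n})^{c}=\exp\left(c\log(1-q^{n})\right)$ is a power series with rational coefficients, so the normalized expansion of $\Psi_{0}^{N_{0}}$ lies in $\Q((q))$ in any case, which is all the $q$-expansion principle requires. With either repair, the remaining bookkeeping you flagged -- finite order of the multiplier from Theorem \ref{thm:borcherds}, and integrality of the leading exponent $N_{0}\rho$, which is forced by invariance of $\Psi_{0}^{N_{0}}$ under $z\mapsto z+1$ -- goes through, and your argument becomes a correct proof.
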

\begin{proof}
  See Lemma 8.1 of \cite{ehlen-intersection}.
\end{proof}
The lemma implies that
the arithmetic divisor $(\divisor(\Psi_{L}(\cdot,M_{f}f)), -\log\abs{\Psi(\cdot,M_{f}f)}^{2})$
associated with the Borcherds lift of $f$ is principal.
\begin{lemma}
  \label{lem:Psirat-idelesact}
  The CM value $\Psi_{L}(z_{U}^{\pm}, h, M_{f} \cdot f)$ 
  is contained in the Hilbert class field $\Hkb$ of $\kb$ for every $(z_{U}^{\pm},h) \in Z(U)$
  and we have
  \[
    \Psi_{L}(z_{U}^{\pm},h, M_f \cdot f) = \Psi_L^{\sigma(h)}(z_{U}^{\pm},1, M_f \cdot f).
  \]
\end{lemma}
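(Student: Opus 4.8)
The plan is to transport the problem onto the modular curve $X_0(N)$ and combine the rationality statement of the preceding lemma with the explicit Galois action recorded in Lemma~\ref{lem:idelesact}. By the preceding lemma the function $\psi := \Psi_L(\cdot,\cdot,M_f f)$ descends to a rational modular function $\psi \in \Q(X_0(N)) = \Q(j,j_N)$ on $X_K \cong Y_0(N)$. Since $D$ is a fundamental discriminant, $P \cong \fraka$ has complex multiplication by the \emph{maximal} order $\calO_\kb$; by Lemma~\ref{lem:Pab} the level structure is governed by the ideal $\frakb = \different{\kb}\fraka$ of norm $N = A\abs{D}$, which is primitive because $(A,D)=1$, so that $\calO_\kb/\frakb$ is cyclic of order $N$. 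Thus the CM point $z_U^\pm \in Z(M,\mu_R) = Z(U)$ corresponds to a point of $X_0(N)$ classifying a cyclic $N$-isogeny of elliptic curves with CM by $\calO_\kb$. By the theory of complex multiplication \cite{shimauto}, evaluating the $\Q$-rational function $\psi$ at such a point yields a value in the Hilbert class field, so (assuming $z_U^\pm$ is neither a zero nor a pole of $\psi$, which is the case of interest)
\[
  \Psi_L(z_U^\pm,1,M_f f) = \psi(z_U^\pm) \in \Hkb .
\]

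For general $h$, the next step is to turn the adelic point $(z_U^\pm,h)$ into a classical point on $Y_0(N)$. Viewing $h$ through $T(\adeles_f) \cong \adeles_{\kb,f}^\times \hookrightarrow H(\adeles_f)$ and applying strong approximation $H(\adeles_f) = H(\Q)K$, I would factor $h = \gamma k$ with $\gamma \in H(\Q)$ and $k \in K$. Since $k$ is absorbed by the quotient by $K$ and $\gamma$ by the quotient by $H(\Q)$, the class of $(z_U^\pm, h)$ in $X_K$ equals that of $(\gamma^{-1} z_U^\pm, 1)$, which under $X_K \cong Y_0(N)$ is the point $\Gamma_0(N)\gamma^{-1} z_U^\pm$. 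Hence
\[
  \Psi_L(z_U^\pm, h, M_f f) = \psi(\gamma^{-1} z_U^\pm) .
\]

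Finally, I would apply Lemma~\ref{lem:idelesact} to the rational function $\psi$ and the CM point $z_U^\pm \in Z(M,\mu_R)$, which gives $\psi(\gamma^{-1} z_U^\pm) = \psi(z_U^\pm)^{\sigma(h)}$. Chaining the two displays yields
\[
  \Psi_L(z_U^\pm, h, M_f f) = \psi(z_U^\pm)^{\sigma(h)} = \Psi_L(z_U^\pm, 1, M_f f)^{\sigma(h)} = \Psi_L^{\sigma(h)}(z_U^\pm, 1, M_f f),
\]
which is the asserted equivariance. Because $\sigma(h) \in \Gal(\Hkb/\kb)$ maps $\Hkb$ to itself and $\Psi_L(z_U^\pm,1,M_f f) \in \Hkb$ by the first step, this simultaneously shows $\Psi_L(z_U^\pm, h, M_f f) \in \Hkb$ for every $h$, completing both assertions.

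I expect the main obstacle to lie in the bookkeeping of the second step: keeping track of the orientation $z_U^\pm$ against the two components $\uhp \cup \bar\uhp$ of $\domain$, so that $\gamma^{-1} z_U^\pm$ is interpreted in the correct half-plane and $\psi$ is evaluated unambiguously, together with verifying that the level-$N$ structure genuinely descends the CM value to $\Hkb$ rather than to a larger ring class field. The latter is exactly where the standing hypotheses enter, namely that $D$ is fundamental (so the CM order is maximal) and that $(A,D)=1$ (so $\frakb$ is primitive and the required Heegner datum exists on $X_0(N)$).
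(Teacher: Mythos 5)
Your proof is correct and follows essentially the same route as the paper's own (very terse) argument: the paper likewise writes $h=\gamma k\in H(\Q)K$ by strong approximation, identifies $(z_U^\pm,h)$ with $(\gamma^{-1}z_U^\pm,1)$ in $X_K$, and invokes Lemma~\ref{lem:idelesact} to get $\Psi_{L}(z_{U}^{\pm},h,f)=\Psi_{L}(\gamma^{-1}z_{U}^{\pm},1,f)=\Psi_{L}^{\sigma(h)}(z_{U}^{\pm},1,f)$, with the containment in $\Hkb$ resting on the same CM-theoretic input (Shimura's Theorem 6.31 behind Lemma~\ref{lem:idelesact}) that you spell out via the Heegner-point description.
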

\begin{proof}
  If we write $h = \gamma k \in H(\Q)K$, then we have according to Lemma \ref{lem:idelesact} that
  \[
    \Psi_{L}(z_{U}^{\pm},h,f) = \Psi_{L}(\gamma^{-1}z_{U}^{\pm},1,f) = \Psi_{L}^{\sigma(h)}(z_{U}^{\pm},1,f). \qedhere
  \]
\end{proof}

The following Theorem shows that $\divisor(\Psi_L(z,h,f))$ is a horizontal divisor.
\begin{theorem}
  \label{thm:bopintegral}
  Let $f \in M_{1/2,L}^{!}$ be a weakly holomorphic modular form
  with only integral Fourier coefficients and assume that $N$ is square-free.
  Suppose that the multiplier system of $\Psi_L(z,h,f)$ is trivial.
  Then the divisor of the rational function defined by $\Psi_L(z,h,f)$ on $\calY_{0}(N)$ is equal to $\calZ(f)$,
  the flat closure of $Z(f)$.
\end{theorem}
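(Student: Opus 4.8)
The plan is to realize $\Psi := \Psi_L(z,h,f)$ as a rational function on a good integral model and to show that its divisor has no vertical (fibral) part. First I would fix a regular integral model $\calX_0(N)$ of $X_0(N)$ over $\Z$; since $N$ is square-free, the Deligne--Rapoport model is semistable, its fibre over a prime $p \nmid N$ is smooth and irreducible, and its fibre over $p \mid N$ has two irreducible components, each a copy of $X_0(N/p)_{\F_{p}}$ meeting transversally at the supersingular points. By the lemma preceding the theorem a suitable power of $\Psi$ lies in $\Q(j,j_N)$, and since $\Psi$ is assumed to define a rational function and its multiplier is trivial, $\Psi$ is a weight-zero modular function on $X_0(N)$ (so in particular $c_f(0,0)=0$) whose divisor on the generic fibre is $Z(f)$ by Theorem \ref{thm:borcherds}. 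Writing $\divisor(\Psi) = \calZ(f) + V$ on $\calX_0(N)$, where $\calZ(f)$ is the flat (horizontal) closure of $Z(f)$ and $V = \sum_{p}\sum_{i} m_{p,i}\, C_{p,i}$ is supported on the irreducible components $C_{p,i}$ of the special fibres, the whole content of the theorem is the vanishing of $V$ on $\calY_0(N)$.

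The key step is to compute $\ord_{C_{p,i}}(\Psi)$ by means of the product expansion. Borcherds' Theorem 13.3 (our Theorem \ref{thm:borcherds}), applied to the integral input $f$, gives $\Psi$ a product expansion at the cusp $\infty$ of the shape $\Psi = C\, q^{\rho}\prod_{\lambda}(1-q^{\lambda})^{c^{+}(\lambda)}$ with integral exponents and leading coefficient $C$ a root of unity, hence a $p$-adic unit for every $p$. Thus the $q$-expansion of $\Psi$ at $\infty$ has coefficients that are algebraic integers and an invertible leading term; the same applies to $\Psi^{-1} = \Psi_L(z,h,-f)$, whose input $-f$ is again integral. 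Since both $\Psi$ and $\Psi^{-1}$ are then $p$-integral with nonzero reduction at $\infty$, the function $\Psi$ reduces to a nonzero rational function along the component $C$ through the reduction of $\infty$, so that $\ord_{C}(\Psi) = 0$.

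To reach every component I would propagate this integrality to all cusps. For square-free $N$ the Atkin--Lehner involutions $w_Q$, $Q \mid N$, act on $\calX_0(N)$ and permute the cusps transitively, and they carry an integral Borcherds product to another one (pulling $f$ back to a further integral input); alternatively one invokes Borcherds' integrality at each cusp directly. Either way $\Psi$ and $\Psi^{-1}$ have integral $q$-expansions with unit leading coefficient at \emph{every} cusp. Because for square-free $N$ every irreducible component $C_{p,i}$ of every special fibre contains the reduction of at least one cusp, the previous paragraph yields $m_{p,i} = \ord_{C_{p,i}}(\Psi) = 0$ for all $p$ and $i$. Hence $V = 0$ and $\divisor(\Psi) = \calZ(f)$ on $\calY_0(N)$.

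The main obstacle is exactly this last step: controlling $\Psi$ along the fibral components that do \emph{not} meet the cusp $\infty$, which is where the square-freeness of $N$ is essential. It guarantees both that the special fibres are semistable with each component visibly containing a cusp and that the Atkin--Lehner involutions are available over $\Z$ to transport the integral product expansion from $\infty$ to the remaining cusps. For non-square-free $N$ the model is more singular, a component of a special fibre can fail to meet any cusp, and one would instead need a genuinely local argument---for instance, bounding the valuation of $\Psi$ at the supersingular points via the special-endomorphism description of the cycles $\calZ(m,\mu)$---to exclude vertical contributions there.
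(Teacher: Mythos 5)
The paper itself contains no argument for this statement: its ``proof'' is the single line ``See Theorem 8.2 of \cite{ehlen-intersection}'', so your attempt has to be measured against what such a proof must actually contain. Your skeleton is the right one, and almost certainly the skeleton of the cited proof as well: reduce to showing the vertical part of $\divisor(\Psi)$ vanishes, compute orders along fibral components by the $q$-expansion principle, and use that for square-free $N$ every irreducible component of every special fibre of the Deligne--Rapoport model contains a cusp. The gap is in the step where you claim that $\Psi$ has an integral expansion with unit leading coefficient at \emph{every} cusp.

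That claim does not follow from Theorem \ref{thm:borcherds} plus Atkin--Lehner functoriality as you invoke them. Borcherds' Theorem 13.3 gives a product expansion at each cusp (each choice of primitive isotropic vector) only up to a multiplicative constant; once $\Psi$ is normalized at $\infty$, the constants at the other cusps are not free, and Borcherds' theorem pins down only their \emph{absolute values}, which are products of factors of the form $(1-\zeta)^{c_f(\gamma,0)/2}$ with $\zeta$ a root of unity whose order divides the level of the cusp. Such cyclotomic numbers $1-\zeta$ are typically \emph{not} $p$-adic units for $p \mid N$ (e.g.\ $1-\zeta_p$ has norm $p$). Likewise, your Atkin--Lehner step only yields $\Psi_L(z,h,f)\mid w_Q = \kappa\cdot \Psi_L(z,h,f^{w_Q})$ for a constant $\kappa$ with $\abs{\kappa}=1$ (because the regularized theta lift, hence $\lVert\Psi\rVert$, is $w_Q$-equivariant), and a complex number of absolute value one need be neither a root of unity nor a $p$-adic unit, so this gives no control on $\ord_{C}(\Psi)$ for the component $C$ through the other cusp. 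That this is not a pedantic worry is shown already for $N=p$ prime by the modular unit $u(\tau)=\Delta(p\tau)/\Delta(\tau)$: it has integral $q$-expansion with leading coefficient $1$ at $\infty$, but its expansion at the cusp $0$ is $p^{-12}$ times an integral series, and correspondingly its divisor on the Deligne--Rapoport model contains the vertical component $-12\,C_0$ of the fibre at $p$. So ``integral with unit leading coefficient at $\infty$'' does \emph{not} propagate to the other cusps for general modular functions on $X_0(N)$; the fact that it does for Borcherds products satisfying the hypotheses of the theorem (this $u$ is not such a lift, which is why the theorem survives) is precisely the content of the theorem, and your proof assumes it at the decisive moment rather than proving it. Closing the gap requires genuinely controlling the constants in the expansions at the cusps beyond $\infty$ --- for instance via the explicit absolute-value formula in part (5) of Borcherds' Theorem 13.3 combined with the hypotheses on $f$, or via a moduli-theoretic computation of $\ord_{C}(\Psi)$ along the supersingular locus --- and this is the work the cited Theorem 8.2 of \cite{ehlen-intersection} has to do.
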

\begin{proof}
  See Theorem 8.2 of \cite{ehlen-intersection}.
\end{proof}

The following Proposition is crucial for the proof of the main
theorem in this section. It shows that we do not have to deal with bad intersection
in order to obtain arithmetic information about the coefficients of $\prep{\Theta}_{P}(\tau,h)$.
\begin{proposition}
  \label{prop:properint}
  Let $f \in M_{1,P}^{!}$ with $c_f(0,0) = 0$
  and $H \in M_{1/2,L}^{!}$, both with integral principal parts, such that
  \[
    \langle H(\tau), \phi_{0+P} \otimes \Theta_{\calN^{-}}(\tau) \rangle = \langle f(\tau), \phi_{0 + P} \rangle.
  \]
  Then $\calZ(H)$ and $\calZ(M,\mu_{R})$ (see page \pageref{r2}) intersect properly.
\end{proposition}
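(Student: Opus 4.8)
The plan is to reduce proper intersection to a disjointness statement on the generic fibre and then to read off the relevant multiplicity directly from the hypothesis. Both $\calZ(H)$ and $\calZ(M,\mu_R)$ are flat closures of $0$-cycles on $Y_0(N)$, hence purely horizontal divisors on $\calX_0(N)$; two such divisors meet properly exactly when they have no common irreducible component, i.e. when no CM point in the support of $Z(U) = Z(M,\mu_R)$ lies in the support of $Z(H) = \sum_{\mu,\, m<0} c_H(m,\mu)\, Z(m,\mu)$. Thus it suffices to show that the multiplicity of each CM point of $Z(U)$ in the divisor $Z(H)$ vanishes.

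First I would identify the Heegner divisors through the base point $z_U = z_{\lambda_R}$. A point lies on $Z(m,\mu)$ precisely when there is some $x \in \mu + L$ with $Q(x) = m$ and $z \in \domain_x$; for $z = z_U$ this forces $x \in U^\perp \cap L' = \Q\lambda_R \cap L' = \calN' = \Z\lambda_R$, so that $z_U \in Z(m,\mu)$ exactly for $(m,\mu) = (k^2 M,\, k\mu_R)$ with $k \in \Z$, the contributing vector being $k\lambda_R$. The decisive structural point is that, since $L = \calP \oplus \calN$ splits orthogonally and $U^\perp = \Q\lambda_R$ sits entirely in the $\calN$-part, every class $k\mu_R$ has trivial $\calP$-component. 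Consequently the coefficient of $z_U$ in $Z(H)$ equals $\sum_{k \neq 0} c_H(k^2 M,\, k\mu_R)$, which is a sum over precisely those Fourier coefficients of the $\phi_{0+P}$-component of $H$ that the hypothesis controls.

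Next I would read these coefficients off the identity $\langle H(\tau), \phi_{0+P} \otimes \Theta_{\calN^-}(\tau)\rangle = \langle f(\tau), \phi_{0+P}\rangle = f_0(\tau)$. Expanding the left-hand side as $\sum_{n \in \Z} H_{0,[n]}(\tau)\, q^{n^2/4A}$ and comparing the $q^0$-coefficients, using $M = -1/(4A)$, yields $\sum_{n \in \Z} c_H(n^2 M,\, n\mu_R) = c_f(0,0) = 0$. Separating the term $n = 0$ and using that the constant term $c_H(0,0)$ vanishes — which holds for every form produced by Lemma \ref{lem:Fmb}, and is in any case forced by the requirement that $\Psi_L(\cdot,H)$ be a weight-zero modular function — this gives $\sum_{k \neq 0} c_H(k^2 M,\, k\mu_R) = 0$. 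Hence $z_U$ occurs in $Z(H)$ with multiplicity zero, so $z_U \notin \supp Z(H)$.

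Finally, since $Z(H)$ is defined over $\Q$ and its coefficients are Galois-invariant, the vanishing of the multiplicity propagates from $z_U$ to all its conjugates, and the same computation (with $\calN$ replaced by the lattice attached to the relevant idele $h$) disposes of the remaining $\Q$-rational orbits of $\supp Z(U)$. This shows $\supp Z(U) \cap \supp Z(H) = \emptyset$ on the generic fibre, whence $\calZ(M,\mu_R)$ and $\calZ(H)$ share no horizontal component and intersect properly. I expect the main obstacle to be the bookkeeping linking the second and third paragraphs: verifying that the local multiplicity of $z_U$ in each $Z(k^2 M, k\mu_R)$ is accounted for correctly (up to the harmless automorphism factor $w_\kb$ at the CM point), so that the constant $q^0$-coefficient of the hypothesis computes exactly the total multiplicity of $z_U$ in $Z(H)$, and in confirming that the $\phi_{0+P}$-component genuinely captures \emph{all} vectors through $z_U$, which is exactly what the orthogonal splitting $L = \calP \oplus \calN$ guarantees.
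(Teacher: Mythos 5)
Your argument is, in substance, the paper's own proof: the key steps --- identifying the vectors relevant at $z_U$ as $U^{\perp}\cap L' = \calN' = \Z\lambda_R$, so that the only Heegner divisors meeting the CM cycle are the $Z(k^2M,\,k\mu_R)$, and converting the resulting total multiplicity into the constant term of $\langle H(\tau),\phi_{0+P}\otimes\Theta_{\calN^-}(\tau)\rangle$, which the hypothesis identifies with $c_f(0,0)=0$ --- are exactly the steps in the paper. (Your closing worry about Galois conjugates is superfluous: $T=\GSpin_U$ acts trivially on $U^{\perp}$, so the same vectors $k\lambda_R$ witness every point $(z_U^{\pm},h)$ of $Z(U)$ simultaneously, and the computation at one point covers the whole cycle; this is why the paper argues with the $\Q$-rational cycles $Z(m,\mu)$ and never mentions conjugates.)

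The one step that does not follow from the stated hypotheses is your treatment of the $k=0$ term. Since $Z(H)$ involves only coefficients of negative index by \eqref{eq:Zf}, the multiplicity you must kill is $\sum_{k\neq 0} c_H\bigl(-k^2/4A,\,k\mu_R\bigr)$, while the constant-term identity only gives $\sum_{k\in\Z} c_H\bigl(-k^2/4A,\,k\mu_R\bigr) = c_f(0,0) = 0$; hence the multiplicity equals $-c_H(0,0)$, and you need $c_H(0,0)=0$. Your two justifications are not available here: the proposition nowhere assumes that $H$ is produced by Lemma \ref{lem:Fmb}, and it nowhere requires $\Psi_L(\cdot,H)$ to have weight zero --- indeed that weight is $c_H(0,0)/2$, so the second justification is circular. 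As a blind proof of the literal statement, this step is therefore a gap. You are in good company: the paper's proof elides exactly the same point, by writing the intersection multiplicity as a sum over all $n\in\Z$ (including $n=0$) and identifying that full sum with the constant term, so it too implicitly uses $c_H(0,0)=0$ --- a condition that holds for every $H$ to which the proposition is actually applied, precisely because Lemma \ref{lem:Fmb} arranges a vanishing constant term. The clean repair, both for your write-up and for the statement itself, is to add $c_H(0,0)=0$ to the hypotheses; it costs nothing in the applications.
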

\begin{proof}
  The divisors $\calZ(M,\mu_R)$ and $\calZ(n,\nu)$ do not intersect properly if 
  they have complex points in common, which can only occur of
  $Dd$ is a perfect square for $D = 4NM$ and $d = 4Nn$.

  This means that improper intersection might occur here if there is a coefficient
  $c_{H}(m,\mu)$ with $4Nm = Dn^{2}$ for some $n \in \Z$ and $\mu = n\cdot\mu_R$
  and $\calZ(m,\mu)(\C) \cap \calZ(M,\mu_R)(\C)$
  is given by points coming from non-primitive elements $n\lambda \in L_{m,\mu}$ with $\lambda \in L_{M,\mu_R}$.
  Thus, we obtain
  \[
     \calZ(H)(\C) \cap \calZ(M,\mu_R)(\C) 
     = \sum_{n \in \Z} c_H\left(-\frac{n^2}{4A}, n \cdot \mu_R \right) \calZ(M,\mu_R)(\C)
  \]
  and we need to show that the sum on the right-hand side is zero.  
  On the one hand, we have by assumption 
  $\CT(\langle H(\tau), \phi_{0} \otimes \Theta_{\calN^{-}}(\tau) \rangle) = c_f(0,0) = 0$.
  On the other hand, recalling that $\calN'$ is spanned by $\lambda_R \equiv \mu_R \bmod{L}$, we obtain
  \begin{equation}
    \label{eq:10}
    \CT(\langle H(\tau), \phi_{0+P} \otimes \Theta_{\calN^{-}}(\tau) \rangle)
    = \sum_{n \in \Z} c_{H}\left( \frac{-n^{2}}{4A}, n\cdot\mu_R \right),
  \end{equation}
  by the definition of the theta function.

  Therefore, the total multiplicity of improper intersection is zero.
\end{proof}

\begin{remark}
  An alternative approach to avoid improper intersection would
  be to show a moving lemma, i.e.
  that it is always possible to subtract a weakly holomorphic modular
  form $H \in M_{1/2,L}^{!}$, such that $\tilde{F}_{m,\beta} = F_{m,\beta} - H$
  satisfies
  \[
    c_{\tilde{F}_{m,\beta}}\left(\frac{-n^2}{4A}, n \cdot \mu_R \right) = 0
  \]
  for all $n \in \Z$. This is in fact possible but tedious
  and, as the proposition shows, completely unnecessary for our applications.
\end{remark}

The following propositon expresses the prime ideal factorization of CM values of Borcherds products
that appear in the seesaw identity \eqref{eq:seesaw} in terms of the special cycles $\Spec \calO_{\Hkb}$ defined in Section \ref{sec:spec-endom}.
\begin{proposition} \label{prop:pullback-ord}
  Let $f \in M_{1,P}^{!}$ and $g \in M_{-\frac{1}{2},\calN}$ with
  \[
    g = \sum_{j=0}^{A}\psi_{j}\FJ^{j}\GJ^{A-j},
  \]
  where $\psi_{j} \in M_{2j}^{!}(\SL_2(\Z))$ for every $j$.
  Suppose that $f \otimes g$ has only integral Fourier coefficients,
  $c_{f\otimes g}(0,0) = 0$,
  and the multiplier system
  of $\Psi_{L}((z,h), f \otimes g)$ is trivial. Then we have
  \begin{equation}
  \label{eq:pord1}
  \ord_{\mathfrak{P}}(\Psi_{L}(z_U, h, f \otimes g))
  = 12^{A}\frac{w_\kb}{2}\sum_{\beta \in P'/P} \sum_{m > 0} c_{f\psi_{0}}(-m,\beta) \calZ(m,h.\fraka,h.\beta)_{\mathfrak{P}}
\end{equation}
for every prime ideal $\frakP$ of the Hilbert class field $\Hkb$.
\end{proposition}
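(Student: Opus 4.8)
The plan is to read $\ord_{\frakP}$ of the CM value as a finite local arithmetic intersection number and to evaluate it through the orthogonal splitting $L = \calP \oplus \calN$. First I would apply Theorem \ref{thm:bopintegral}: because $f \otimes g$ has integral Fourier coefficients, $N = A\abs{D}$ is square-free, and the multiplier system is assumed trivial, the divisor of the rational function $\Psi_{L}(\cdot, f \otimes g)$ on $\calY_{0}(N)$ is $\calZ(f \otimes g)$, the flat closure of $Z(f \otimes g)$. Since the CM cycle $Z(U)$ equals $Z(M,\mu_{R})$ (see page~\pageref{r2}), the quantity $\ord_{\frakP}(\Psi_{L}(z_{U},h,f \otimes g))$ is the local multiplicity at $\frakP$ of the intersection of $\calZ(f \otimes g)$ with the section of the arithmetic CM cycle cut out by $(z_{U},h)$. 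The dependence on the idele $h$ is controlled by Lemma \ref{lem:Psirat-idelesact} and Lemma \ref{lem:idelesact}: translating the CM point by $h$ acts by $\sigma(h) \in \Gal(\Hkb/\kb)$, which by the Galois-equivariance of the special cycles is exactly what turns $(\fraka,\beta)$ into $(h.\fraka, h.\beta)$ on the right-hand side. Hence it suffices to argue at the base point and carry the $T(\adeles_{f})$-action through at the end.

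Next I would secure that the intersection is proper, so that the CM value is a well-defined nonzero element of $\Hkb$ and $\ord_{\frakP}$ is a genuine finite local number. Pairing the $S_{\calN}$-factor of $f \otimes g$ against $\Theta_{\calN^{-}}$ and using \eqref{eq:tensor1} yields $\langle f \otimes g, \phi_{0+P} \otimes \Theta_{\calN^{-}} \rangle = 12^{A} \langle f\psi_{0}, \phi_{0+P}\rangle$, so Proposition \ref{prop:properint} applies with $H = f \otimes g$ and auxiliary weight-one form $12^{A} f\psi_{0} \in M_{1,P}^{!}$, once its constant term is checked to vanish. The proposition then shows that $\calZ(f \otimes g)$ meets $Z(M,\mu_{R})$ properly; in particular $z_{U} \notin Z(f \otimes g)(\C)$, so the CM value is a well-defined nonzero element of $\Hkb$ whose $\frakP$-valuation is detected entirely at the supersingular fibre over $\frakP$.

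The heart of the matter is the arithmetic pullback of $\calZ(f \otimes g)$ to the CM cycle. Here I would invoke the pullback machinery of \cite{ehlen-intersection} (built on \cite{bryfaltings} and \cite{kry-tiny}): at a supersingular prime $\frakP$ the local multiplicity of a special divisor $\calZ(n,\nu)$ is computed by decomposing each $x \in L_{n,\nu}$ as $x = x_{\calP} + x_{\calN}$ along $L = \calP \oplus \calN$, with $Q(x_{\calP}) = m > 0$ and $Q(x_{\calN}) \leq 0$; the $\calN$-component is absorbed by $\Theta_{\calN^{-}}$ while $x_{\calP}$ records a special endomorphism counted by the cycle $\calZ(m,\fraka,\mu)$ of Section \ref{sec:spec-endom}. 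Summing over the Fourier coefficients of $f \otimes g$ realizes, in the $\calN$-variable, exactly the pairing against $\Theta_{\calN^{-}}$; by Lemma \ref{lem:FGT} only the $\FJ^{0}\GJ^{A}$-part survives, replacing the data of $f \otimes g$ by $12^{A} f\psi_{0}$ and thus the coefficients by $12^{A} c_{f\psi_{0}}(-m,\beta)$ with $m > 0$. The remaining numerical factor is fixed by the normalization $\calZ(m,\fraka,\mu)_{\frakP} = \tfrac{\nu_{p}(m)}{w_{\kb}}\lvert L(E_{\frakP},\iota_{\frakP},m,\fraka,\mu)\rvert$: the naive count of special endomorphisms, weighted by the Gross--Keating deformation length $\nu_{p}(m)$, equals $\tfrac{w_{\kb}}{2}\,\calZ(m,\fraka,\mu)_{\frakP}$, where the $2$ reflects the two orientations $z_{U}^{\pm}$. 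Assembling these contributions gives the asserted identity.

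The principal obstacle is precisely this pullback formula: one must prove that the vertical intersection of a special divisor with the CM cycle at a supersingular point is governed by the special-endomorphism lattices $L(E_{\frakP},\iota_{\frakP},m,\fraka,\mu)$ with the correct length $\nu_{p}(m)$, which is the deformation-theoretic input imported from \cite{ehlen-intersection}. The other genuinely delicate points are bookkeeping rather than conceptual: verifying the constant-term hypothesis of Proposition \ref{prop:properint}, tracking the constant $12^{A}$ and the orientation factor $\tfrac{w_{\kb}}{2}$, and matching the Galois twist $(h.\fraka, h.\beta)$ with the prime $\frakP$ under the Artin map.
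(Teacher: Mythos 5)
Your proposal follows, in essence, the same route as the paper: the identity rests on the seesaw reduction of Lemma \ref{lem:FGT} (so that only the $\FJ^{0}\GJ^{A}$-component survives and the data of $f \otimes g$ collapses to $12^{A} f\psi_{0}$), on the arithmetic pullback formula of \cite{ehlen-intersection} as the key imported input (the paper cites Theorem 7.4 op.\ cit.\ directly), and on Galois equivariance -- Lemma \ref{lem:idelesact} for the Borcherds product together with the Galois action on the cycles -- to pass from $h=1$ to general $h$.

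Two concrete points, however. First, your properness step is both gapped and misplaced. Proposition \ref{prop:properint} requires the \emph{weight-one} form, here $12^{A}f\psi_{0}$, to have vanishing constant term, i.e.\ $\CT\left(\langle f \otimes g, \phi_{0+P}\otimes\Theta_{\calN^{-}}\rangle\right) = \sum_{n\in\Z} c_{f\otimes g}\left(-\tfrac{n^{2}}{4A}, n\cdot\mu_{R}\right) = 0$; the hypothesis $c_{f\otimes g}(0,0)=0$ controls only the $n=0$ term of this sum, so the condition you flag as ``to be checked'' cannot in fact be deduced from the hypotheses of the proposition. Fortunately it is not needed here: the paper proves \eqref{eq:pord1} by applying Theorem 7.4 of \cite{ehlen-intersection} directly (values of the lift at points of $Z(f\otimes g)$ being covered by the extension described in Lemma \ref{lem:philim}), and invokes Proposition \ref{prop:properint} only later, in the proof of Theorem \ref{thm:pre-pvals}, where the relevant input $f_{m,\beta}$ does have vanishing constant term by construction (Proposition \ref{prop:fbm}). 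Second, what you set aside as bookkeeping is where the paper's proof does its actual work: Theorem 7.4 of \cite{ehlen-intersection} produces a double sum over $\beta+\nu \in L'/L$ and integers $n$ in congruence classes modulo $2A$, of cycles indexed by $r_{1}\left(\frac{1+F\sqrt{D}}{2\sqrt{D}}\right)$, and one must verify -- using the isometry of Lemma \ref{lem:Pab} and the identification of $\mu_{\tilde{n}}$ with $n \bmod 2A$ -- that this collapses to $12^{A}\sum_{m>0}\sum_{\beta} c_{f\psi_{0}}(-m,\beta)\,\calZ(m,\fraka,\beta)_{\frakP}$. The overall constant, including the factor $\frac{w_\kb}{2}$, is fixed by this computation and by the normalization of the cycles in the cited formula; your attribution of the $2$ to the two orientations $z_{U}^{\pm}$ is a guess and is not where it comes from.
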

\begin{proof}
  By Lemma \ref{lem:FGT}, we have
  \[
    \sum_{\beta \in P'/P} \langle f \otimes g, \phi_{\beta}\otimes \Theta_{\calN^{-}}\rangle \phi_{\beta}
      = 12^{A} f \psi_0.
  \]
  Therefore,
  \[
    12^{A}c_{f\psi_{0}}(-m,\beta) = \sum_{n \in \Z} c_{f \otimes g}\left( -m - \frac{n^{2}}{4A}, \beta + \mu_{\tilde{n}} \right).
  \]
  Here $\tilde{n} = -Dn$ so that $\mu_{\tilde{n}}$ corresponds to $n \bmod{2A}$.

  By Theorem 7.4 of \cite{ehlen-intersection}, we have
  \begin{align*}
    \ord_{\mathfrak{P}}(\Psi_{L}(z_U, 1, f \otimes g))
      &= \sum_{\beta \in P'/P}\sum_{\nu \in N'/N} \sum_{m_{1} <0} c_{f \otimes g}(m_{1}, \beta + \nu) \\
          &\phantom{=}\times \sum_{\substack{n \equiv F r_{1} \bmod{2A} \\ n^{2} \leq \abs{D_{1}/D}}}
          \calZ\left( \abs{m_{1}} - \frac{n^{2}}{4A}, \fraka, r_{1} \left(\frac{1+F\sqrt{D}}{2\sqrt{D}}\right) \right)_\frakP.
  \end{align*}
  Here, $r_1$ corresponds to $\mu_{r_{1}} = \beta + \nu$.
  Note that 
  \[
     r_{1} \left(\frac{1+F\sqrt{D}}{2\sqrt{D}}\right) \in \diffinv{k}\fraka/\fraka
  \]
  does not depend on $\nu$. Moreover, 
  the generator $A/\sqrt{D}$ of $\different{\kb}^{-1}\fraka$ maps to
  $2A \bmod{2N}$ using the isometry in Lemma \ref{lem:Pab} and our identification of $r \bmod{2N}$ with $\mu_{r}$.
  Consequently,
  \[
    2A \frac{1+F\sqrt{D}}{2 \sqrt{D}} = \frac{A}{\sqrt{D}} + AF \equiv \frac{A}{\sqrt{D}} \bmod{\fraka}.
  \]
  Thus, we obtain 
  \begin{align*}
    \ord_{\mathfrak{P}}(\Psi_{L}(z_U, 1, f \otimes g)) &=
      \sum_{\beta \in P'/P}
        \sum_{m>0}
         \sum_{n \in \Z}
            c_{f \otimes g}\left(-m - \frac{n^{2}}{4A},\beta+\mu_{\tilde{n}}\right)
            \calZ\left(m, \fraka, \beta\right) \\
          &= 12^{A} \sum_{m > 0} \sum_{\beta \in P'/P}c_{f\psi_{0}}(-m,\beta) \calZ\left( m, \fraka, \beta \right)_\frakP.
  \end{align*}
The result for $h \neq 1$ follows from the action of the Galois group $\Gal(\Hkb/\kb)$
described in Lemma \ref{lem:idelesact} above for the left-hand side and Proposition 3.5 of \cite{ehlen-intersection} for the cycles (note that the numbers in the published version have been accidentally shifted by the publisher and Proposition 3.5 is contained in Section 5 op. cit.)
\end{proof}

We are now in a position to state our main result.
\begin{theorem}
  \label{thm:pre-pvals}
  Assume that the lattice $P$ is given by an integral ideal $\fraka \subset \calO_\kb$
  with quadratic form $Q(x) = \N(x)/\N(\fraka)$.
  
  For every $h \in \CPK \cong \Clk$ there is a harmonic weak Maa\ss{} form
  $\pre{\Theta}_P(\tau,h) \in \calH_{1,P^{-}}$ with holomorphic part
  \[
    \prep{\Theta}_P(\tau,h) = \sum_{\beta \in P'/P} \sum_{m \gg -\infty} c_{P}^{+}(h,m,\beta) e(m \tau)  \phi_{\beta}
  \]
  satisfying the following properties:
  \begin{enumerate}
  \item We have $\xi(\pre{\Theta}_P(\tau,h)) = \Theta_P(\tau,h)$
        and
    \begin{equation}
      \label{eq:11}
       \pre{E}_P(\tau) := \frac{1}{h_{k}}\sum_{h \in \CPK}\pre{\Theta}_P(\tau,h)
    \end{equation}
    satisfies $\xi_1(\pre{E}_{P}(\tau)) = E_P(\tau)$ and the principal part of
    $\pre{E}_{P}(\tau)$ vanishes.
\nomenclature[EP1]{$\pre{E}_{P}(\tau)$}{A preimage of $E_{P}(\tau)$ under the $\xi_1$-operator}
  \item For all $\beta \in P'/P$ and all $m \in \Q$, $m \neq 0$
    with $m \equiv -Q(\beta) \bmod{\Z}$, we have
    \begin{equation}
      \label{eq:cprma}
      c_{P}^{+}(h,m,\beta) = - \frac{2}{r} \log\abs{\alpha(h,m,\beta)},
    \end{equation}
    where $\alpha(h,m,\beta) \in \calO_{\Hkb}$ and $r \in \Z_{>0}$ only depends on $D$.
  \item Moreover, if $m > 0$, then
    \[
      \ord_{\mathfrak{P}}(\alpha(h,m,\beta)) = r \cdot w_{\kb}\cdot \calZ(m,h.\fraka,h.\beta)_{\mathfrak{P}}
    \]
    for all prime ideals $\frakP \subset \calO_{\Hkb}$.
  \item For $m < 0$, we have
    $\alpha(h,m,\beta) \in \calO_{\Hkb}^{\times}$ and $c_P^+(h,m,\beta)=0$ if $m < -p_{d^+}$ \\ (see Definition \ref{def:special_basis}).
  \end{enumerate}
\end{theorem}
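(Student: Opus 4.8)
The plan is to build each $\pre{\Theta}_P(\tau,h)$ out of the analytic formula of Theorem~\ref{thm:value-phiz} in the weakly holomorphic case, fed by the seesaw identity \eqref{eq:seesaw} and Borcherds' theorem. First I would fix, for every $h \in \CPK$, some preimage $\pre{\Theta}_P(\tau,h) \in \calH_{1,P^{-}}$ of $\Theta_P(\tau,h)$ under $\xi_1$, which exists by the surjectivity in \eqref{ex-sequ}. Since $\Theta_P(\tau,h)$ is invariant under $\beta \mapsto -\beta$, I may average a preimage with its reflection and assume $c_P^{+}(h,m,\beta) = c_P^{+}(h,m,-\beta)$. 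The remaining freedom of adding a form in $M^{!}_{1,P^{-}}$ is then used to normalize: adding multiples of the cusp forms $g_j$ from Lemma~\ref{lem:specialB} kills the coefficients at the distinguished indices $(n_j,\beta_j)$, and subtracting principal parts of weakly holomorphic forms arranges that the principal part of $\pre{\Theta}_P(\tau,h)$ is supported in degrees $\geq -p_{d^{+}}$ (see Definition~\ref{def:special_basis}). This renders $\pre{\Theta}_P(\tau,h)$ essentially canonical and already produces the vanishing asserted in \emph{(iv)}.

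The arithmetic content enters by evaluating $\Phi_P(h,f_{m,\beta})$ for the special forms $f_{m,\beta} \in M^{!}_{1,P}$ of Proposition~\ref{prop:fbm} in two ways. On one side, Theorem~\ref{thm:value-phiz} applies with the regularized integral dropping out (as $\xi_1 f_{m,\beta}=0$), giving $\Phi_P(h,f_{m,\beta}) = \CT\langle f_{m,\beta}, \prep{\Theta}_P(\tau,h)\rangle$; by the shape of the principal part of $f_{m,\beta}$ this is a $\Z$-linear combination of $c_P^{+}(h,m,\beta)$ and the distinguished coefficients, plus a pairing of the positive-index part of $f_{m,\beta}$ against the principal part of $\prep{\Theta}_P$. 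On the other side, the seesaw \eqref{eq:seesaw} with $g=\GJ^{A}$ (so $\psi_0=1$ by Lemma~\ref{lem:FGT}), together with Lemma~\ref{lem:Fmb} and Borcherds' theorem (Theorem~\ref{thm:borcherds}, using $c_{F_{m,\beta}}(0,0)=0$), rewrite the very same quantity as in \eqref{eq:coeff2}, namely $-\tfrac{2}{12^{A}c_{m,\beta}}\log\lvert\Psi_L(z_U,h,c_{m,\beta}F_{m,\beta})\rvert^{2}$. By Lemma~\ref{lem:Psirat-idelesact} a fixed power of this Borcherds value lies in $\Hkb$, and bounding the denominators via Lemma~\ref{lem:Fmb} and Definition~\ref{def:cmb} yields a single integer $r$ depending only on $D$. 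Solving the resulting triangular system, ordered by $m$ and using the normalization above, then isolates each $c_P^{+}(h,m,\beta) = -\tfrac{2}{r}\log\lvert\alpha(h,m,\beta)\rvert$ with $\alpha(h,m,\beta)\in\calO_{\Hkb}$, which is \emph{(ii)}.

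For \emph{(iii)} I would feed the same forms into Proposition~\ref{prop:pullback-ord}: with $\psi_0=1$ it computes $\ord_{\frakP}(\Psi_L(z_U,h,F_{m,\beta}))$ as $12^{A}\tfrac{w_\kb}{2}$ times the multiplicities $\calZ(m,h.\fraka,h.\beta)_{\frakP}$ (plus the corrections at the distinguished indices), and the \emph{same} triangular inversion that gave \emph{(ii)} converts this into $\ord_{\frakP}(\alpha(h,m,\beta)) = r\,w_\kb\,\calZ(m,h.\fraka,h.\beta)_{\frakP}$, the constants being absorbed into $r$. For the negative indices in \emph{(iv)}, Proposition~\ref{prop:properint} ensures that $\calZ(F_{m,\beta})$ meets the CM cycle $\calZ(M,\mu_{R})$ properly, so the relevant Borcherds values neither vanish nor blow up along the CM point and $\alpha(h,m,\beta)$ is a unit, while the normalization bounding the principal part below $-p_{d^{+}}$ gives the stated vanishing. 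Finally \emph{(i)} follows from the $\R$-linearity of $\xi_1$ together with the classical fact that the class-group average $\tfrac{1}{h_\kb}\sum_{h}\Theta_P(\tau,h)$ is the genus Eisenstein series $E_P$; since $E_P\in M_{1,P}$ is holomorphic, its averaged preimage $\pre{E}_P$ has vanishing principal part, consistently with the normalization.

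The main obstacle is the disentangling step: the theta lift delivers only the entangled constant-term pairings $\CT\langle f_{m,\beta}, \prep{\Theta}_P\rangle$, so the real work is inverting this system to extract each individual coefficient while (a) controlling the contribution of the distinguished indices $(n_j,\beta_j)$ and of the principal part of $\prep{\Theta}_P$, and (b) keeping the denominators uniformly bounded so that a single $r=r(D)$ suffices. The proper-intersection input of Proposition~\ref{prop:properint} is exactly what guarantees that the negative-index (principal-part) contributions behave like units and do not spoil the triangular solution.
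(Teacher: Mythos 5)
Your overall architecture (evaluate $\Phi_P(h,f_{m,\beta})$ two ways via Theorem \ref{thm:value-phiz} and the seesaw \eqref{eq:seesaw}, then read off prime factorizations from Proposition \ref{prop:pullback-ord}) is the paper's strategy, but your normalization step contains a genuine error that breaks part \emph{(iii)}. You propose to use the freedom of adding cusp forms $g_j$ to \emph{kill} the coefficients $c_P^+(h,n_j,\beta_j)$ at the distinguished indices of Lemma \ref{lem:specialB}. This is fatal. Since $S_{1,P^-}\neq\{0\}$ in general, the form $f_{m,\beta}$ of Proposition \ref{prop:fbm} cannot have principal part $q^{-m}(\phi_\beta+\phi_{-\beta})$ alone; it is forced to carry the correction $-\sum_j a_j(m,\beta)\,q^{-n_j}(\phi_{\beta_j}+\phi_{-\beta_j})$. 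Consequently Proposition \ref{prop:pullback-ord} gives
\[
  \ord_{\frakP}\bigl(\tilde\alpha(1,m,\beta)\bigr)
  = r(m,\beta)\,w_{\kb}\Bigl(\calZ(m,\fraka,\beta)_{\frakP}
    - \sum_{j=1}^{d^-} a_j(m,\beta)\,\calZ(n_j,\fraka,\beta_j)_{\frakP}\Bigr),
\]
and with your normalization ($c_P^+(h,n_j,\beta_j)=0$) the identity from Theorem \ref{thm:value-phiz} collapses to $2c_P^+(h,m,\beta)=\Phi_P(h,f_{m,\beta})$, so $\alpha(h,m,\beta)$ \emph{is} $\tilde\alpha$ and the unwanted sum over $j$ survives: there is no ``triangular inversion'' left to remove it. Moreover \emph{(iii)} must also hold at $(m,\beta)=(n_j,\beta_j)$ itself, where the cycles $\calZ(n_j,\fraka,\beta_j)_{\frakP}$ are in general nonzero, which is incompatible with a vanishing coefficient. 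The paper does the opposite of what you propose: in Lemma \ref{lem:pos-index-1} it \emph{prescribes} the distinguished coefficients to be $-\tfrac{2}{r_0}\log\abs{\alpha(h,n_j,\beta_j)}$, where $\alpha(h,n_j,\beta_j)$ is chosen (possible because a power $r_0\mid h_{\Hkb}$ of the cycle divisor is principal) to satisfy $\ord_{\frakP}=r_0\,w_{\kb}\,\calZ(n_j,\fraka,\beta_j)_{\frakP}$; exactly then the correction terms in the two computations cancel and \emph{(iii)} follows for all $m>0$.

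Two further points. First, your claim that a single $r=r(D)$ comes from ``bounding denominators via Lemma \ref{lem:Fmb} and Definition \ref{def:cmb}'' skips the hardest step: those lemmas control the passage $f\mapsto\bm{f}$, but not the integer $M_{m,\beta}$ needed to trivialize the multiplier system and make the Borcherds product a rational function over $\Q$; the paper's Proposition \ref{prop:independence} needs integrality of Weyl vectors and the Mordell--Weil theorem (torsion of $J(\Q)$ for $X_0(N)$), plus an induction multiplying by powers of $j$, to get uniformity. Second, your justification of \emph{(iv)} misattributes the unit property to Proposition \ref{prop:properint}: proper intersection only guarantees that the pullback formula \eqref{eq:pord1} applies; the principal-part coefficients are units because they are Petersson products $(g,\Theta_P(\tau,h))$ with \emph{cusp} forms $g$, so the corresponding Borcherds input has no negative Fourier coefficients and \eqref{eq:pord1} gives vanishing order at every prime (this is Lemma \ref{lem:principal-part}).
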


\subsection{Proof of Theorem \ref{thm:pre-pvals}}
\label{sec:proof-main}
The proof is structured as follows.
In the next subsection, we will prove \emph{(i)}, \emph{(ii)} for $m<0$ and \emph{(iv)}.
After that, in Section \ref{sec:coeff-posit-index} we will turn to the coefficients of index $m>0$ and show \emph{(iii)}.
We will first show that \emph{(iii)} holds with integers $r(m,\beta)$ in place of $r$ and finally
show in Section \ref{sec:independence}, that we can choose $r(m,\beta)$ to not depend on $m$ and $\beta$.

\subsubsection{The principal part and compatibility with the Siegel-Weil formula}
\label{sec:princ-parts-preim}
Since $P$ is positive definite and $P'/P$ is anisotropic, we have the decomposition
\begin{equation}
  \label{eq:thPdec}
  \Theta_P(\tau,h) = E_P(\tau,0) + g_{P}(\tau,h),
\end{equation}
where for each $h \in \CPK \cong \Clk$ the form $g_{P}(\tau,h) \in S_{1,P}$ is a cusp form of weight $1$
which has rational Fourier coefficients.

Let $h_1, \ldots, h_{d^+} \in S_{1,P}$ be a basis of $S_{1,P}$ as constructed in Section \ref{sec:special-basis}.
There are harmonic weak Maa\ss{} forms
$H_1, \ldots, H_{d^+} \in H_{1,P^-}$ with $\xi_1(H_{i}) = h_{i}$ and such that the principal part of $H_i$ is of the form
\[
    P_{H_i}(\tau) = \frac{1}{2}\sum_{j=1}^{d^+} q^{-p_j} (\frake_{\pi_j} + \frake_{-\pi_j})(h_i, h_j).
\]
Indeed, if we define $H_i$ to have this principal part, then the pairing defined in Lemma \ref{lem:pairing} yields 
$(\xi_1(H_i), h_j) = \{H_i, h_j\} = (h_i, h_j)$ for all $j$ and thus $\xi_1(H_i) = h_i$ by the non-degeneracy of
the Petersson inner product on cusp forms.
We define coefficients $a_{i}(h) \in \Q$ by $\sum_{i=1}^{d^+} a_i(h) h_{i} = g_{P}(\tau,h)$.
Using the same set of coefficients, we put
\begin{equation}
  \label{eq:pregdef}
  \pre{g}_{P}(\tau,h) := \sum_{i = 1}^{d^+}a_i(h) H_{i}
\end{equation}
\begin{remark}
After our normalization and fixing a basis of $S_{1,P}$ as in Section \ref{sec:special-basis}, 
the forms $\pre{g}_P(\tau,h)$ are uniquely determined up to holomorphic modular forms in $M_{1,P^-} = S_{1,P^-}$.

Moreover, it is possible to show that the orthogonal complement of the space of
theta functions in $M_{1,P}$ has a basis with rational Fourier coefficients.
This implies that the Petersson inner products in the principal part will in fact
only involve inner products of cusp forms coming from theta functions.
In \cite{ehlen-binary}, we give an explicit formula
for these Petersson inner products in terms of CM values of $\log\abs{v^{1/2}\eta^2(\tau)}$.
\end{remark}

  In order to ensure \eqref{eq:11}, we will first just pick a preimage $\pre{E}_P(\tau)$ of $E_P(\tau)$ to \emph{define} a preimage of $\Theta_P(\tau,h)$ by 
  $\pre{\Theta}_P(\tau,h) := \pre{E}_P(\tau) + \pre{g}_P(\tau,h)$.
  First note that such a preimage exists by the surjectivity of $\xi_1$, see \eqref{ex-sequ}. 
  It is convenient to pick a preimage that has a vanishing principal part.
  To see that this is possible, note that $\{\pre{E}_{P}, g\} = (E_P, g) = 0$ for all $g \in S_{1,P}$. 
  Thus, using the fact that there is a harmonic Maa\ss{} form in $f \in H_{1,P^-}$, such that
  $\pre{E}_P - f$ has vanishing principal part
  (Proposition 3.11 of \cite{brfugeom}), we see that $0 = \{\pre{E}_{P}, g\} = \{f,g\}$.
  By the exactness of the sequence \eqref{ex-sequS} we obtain 
  $f \in M_{1,P}^!$, and thus $\xi_1(\pre{E}_P - f) = E_P$. Later, in Lemma \ref{lem:pos-index-1}, we will pick a specific $\pre{E}_P$.
  The following proposition summarizes our normalization.
\begin{proposition} \label{SumG=E}
  Let $\pre{E}_{P} \in H_{1,P^-}$ with $\xi_1(\pre{E}_{P}) = E_{P}$ and such that the principal part of $\pre{E}_P(\tau)$
  vanishes.
  For $h \in \CPK$ define
  \[
    \pre{\Theta}_{P}(\tau,h) := \pre{E}_P(\tau) + \pre{g}_P(\tau,h),
  \]
  where $\pre{g}_P(\tau,h)$ has been defined in \eqref{eq:pregdef}.
  Then $\xi_1 (\pre{\Theta}_{P})(\tau,h) = \Theta_P(\tau,h)$ and we have
  \[
    \frac{1}{h_\kb}\sum_{h \in \CPK} \pre{\Theta}_{P}(\tau,h)  = \pre{E}_P(\tau).
  \]
\end{proposition}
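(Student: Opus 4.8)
The plan is to verify the two claims in turn; the $\xi_1$-computation is immediate from the definitions, and the averaging identity reduces to a single application of the Siegel--Weil formula. First I would note that by construction $\pre{\Theta}_P(\tau,h) = \pre{E}_P(\tau) + \pre{g}_P(\tau,h)$ and that $\xi_1$ is additive, so it suffices to evaluate it on each summand. By hypothesis $\xi_1(\pre{E}_P) = E_P$. For the cuspidal preimage, recall from \eqref{eq:pregdef} that $\pre{g}_P(\tau,h) = \sum_{i=1}^{d^+} a_i(h)\,H_i$ with $\xi_1(H_i) = h_i$, and that the coefficients $a_i(h) \in \Q$ are defined by $\sum_{i=1}^{d^+} a_i(h)\,h_i = g_P(\tau,h)$. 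Since $\xi_1$ is antilinear and each $a_i(h)$ is real, this gives $\xi_1(\pre{g}_P(\tau,h)) = \sum_{i=1}^{d^+} a_i(h)\,h_i = g_P(\tau,h)$. Combining with \eqref{eq:thPdec} yields $\xi_1(\pre{\Theta}_P(\tau,h)) = E_P + g_P(\tau,h) = \Theta_P(\tau,h)$, which is the first claim.

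For the averaging identity I would expand, using $\abs{\CPK} = h_\kb$,
\[
  \frac{1}{h_\kb}\sum_{h \in \CPK} \pre{\Theta}_P(\tau,h) = \pre{E}_P(\tau) + \frac{1}{h_\kb}\sum_{h \in \CPK} \pre{g}_P(\tau,h),
\]
so it remains to show that the second term vanishes. Expanding the inner forms via \eqref{eq:pregdef} gives $\frac{1}{h_\kb}\sum_{i=1}^{d^+}\left(\sum_{h \in \CPK} a_i(h)\right) H_i$, and since the $a_i(h)$ are the coordinates of $g_P(\tau,h)$ in the fixed basis $\{h_i\}$ of $S_{1,P}$, the bracketed sums $\sum_h a_i(h)$ are precisely the coordinates of $\sum_h g_P(\tau,h)$. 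Hence it suffices to prove $\sum_{h \in \CPK} g_P(\tau,h) = 0$, i.e.\ $\frac{1}{h_\kb}\sum_{h} \Theta_P(\tau,h) = E_P(\tau)$ by \eqref{eq:thPdec}.

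This last identity is the crux, and it is precisely the Siegel--Weil formula for the positive definite lattice $P$: the average over the class group $\CPK \cong \Clk$ of the vector-valued theta functions $\Theta_P(\tau,h)$ equals the normalized genus Eisenstein series $E_P(\tau)$. Equivalently, the class-group average of $\Theta_P$ carries no cuspidal component, so in the decomposition \eqref{eq:thPdec} the cusp forms $g_P(\tau,h)$ must sum to zero; here the normalization of $E_P(\tau,s)$ to have constant term $\phi_0$ matches the constant term $\phi_0$ of each $\Theta_P(\tau,h)$, forcing the average to be exactly $E_P$. This is the same input that underlies Theorem \ref{thmBrYCM}; see \cite{bryfaltings} and \cite{schofer}. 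I expect the only real subtlety to be invoking the correct normalization of Siegel--Weil in the adelic vector-valued setting (the measure conventions recorded before Theorem \ref{thmBrYCM}); granting that, the remaining steps are the routine bookkeeping carried out above.
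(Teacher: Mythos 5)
Your proof is correct and takes essentially the same approach as the paper: both reduce the averaging identity to showing $\sum_{h \in \CPK} g_{P}(\tau,h) = 0$, which is deduced from the Siegel--Weil formula, and both handle the $\xi_1$-computation by additivity and the rationality of the coefficients $a_i(h)$. The normalization subtlety you flagged and deferred is exactly what the paper supplies via Lemma \ref{lem:schoferSumInt} (Schofer's lemma), which converts the finite class-group sum into the adelic integral appearing in Theorem 2.1 of \cite{bryfaltings} and also identifies the factor $\abs{\CPK} = h_\kb$.
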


 For the proof, we quote the following Lemma\footnote{Note that the
factor $2/w_{P}$ is missing in \cite{schofer}.} of Schofer \cite[Lemma 2.13]{schofer}.
\begin{lemma}
  \label{lem:schoferSumInt}
  Let $B(h)$ be a function on $T(\adeles_f)$ depending only on the image
  of $h$ in $\SO_{U}(\adeles_f)$. 
Assume that $B$ is invariant under $K_T$ and $T(\Q)$. 
Then
  \[
  2 \frac{\vol(K_P)}{w_{T}} \sum_{h \in \CPK} B(h)
  = \int_{\SO_{U}(\Q) \bs \SO_{U}(\adeles_f)} B(h) dh,
  \]
where $w_T = \abs{T(\Q) \cap K_T}$.
\end{lemma}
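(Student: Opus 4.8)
The plan is to deduce the identity from the short exact sequence $1 \to \G_m \to T \to \SO_U \to 1$ (the spin sequence for $U$, with $T = \GSpin_U$) by integrating in stages. By Lemma \ref{lem:Tkisom} we may identify $T(\adeles_f) \cong \adeles_{\kb,f}^\times$, $K_T \cong \ODhatt$, and $\SO_U$ with the norm-one torus, so that $\SO_U(\adeles_f) \cong \adeles_{\kb,f}^\times/\adeles_f^\times$. The map $T \to \SO_U$ is surjective on $\Q$-points and on $\adeles_f$-points (Hilbert 90, i.e.\ triviality of $H^1(\Q_v,\G_m)$ at every place, with good reduction almost everywhere), its kernel is $\G_m$, and $T(\Q) \cap \G_m(\adeles_f) = \G_m(\Q)$. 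Hence the natural map $T(\Q)\backslash T(\adeles_f) \to \SO_U(\Q)\backslash\SO_U(\adeles_f)$ is a principal bundle whose fiber is $\G_m(\Q)\backslash\G_m(\adeles_f) = \Q^\times\backslash\adeles_f^\times$, and the right action of $\adeles_f^\times$ is free because $T$ is abelian. Both quotients are compact: the target has volume $2$ by our normalization, while $T(\Q)\backslash T(\adeles_f)$ is covered by the finitely many compact sets $T(\Q)\,h\,K_T$ since $\CPK \cong \Clk$ is finite and $K_T$ is compact, so all integrals below converge.

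First I would push the right-hand integral down from $T$. Fixing the measure on $T(\adeles_f)$ to be the one compatible with the chosen measures on $\G_m(\adeles_f) = \adeles_f^\times$ and on $\SO_U(\adeles_f)$ along the spin sequence, the quotient-integration formula together with the fact that $B$ is constant on the fibers (it depends only on the image of $h$ in $\SO_U(\adeles_f)$) gives
\[
  \int_{T(\Q)\backslash T(\adeles_f)} B(h)\, dh
  = \vol\!\left(\Q^\times\backslash\adeles_f^\times\right)\int_{\SO_U(\Q)\backslash\SO_U(\adeles_f)} B(h)\, dh
  = \tfrac{1}{2}\int_{\SO_U(\Q)\backslash\SO_U(\adeles_f)} B(h)\, dh ,
\]
using $\vol(\Q^\times\backslash\adeles_f^\times) = 1/2$.

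Next I would evaluate the left-hand integral as a finite sum over $\CPK$. Decomposing $T(\adeles_f) = \bigsqcup_h T(\Q)\,h\,K_T$ over representatives of $\CPK = T(\Q)\backslash T(\adeles_f)/K_T$, and using that $T$ is abelian so $hK_Th^{-1} = K_T$, each piece is $T(\Q)\backslash T(\Q)hK_T \cong (T(\Q)\cap K_T)\backslash K_T$, of volume $\vol(K_T)/w_T$ since $w_T = \abs{T(\Q)\cap K_T}$. As $B$ is left $T(\Q)$- and right $K_T$-invariant it equals $B(h)$ on each piece, whence
\[
  \int_{T(\Q)\backslash T(\adeles_f)} B(h)\, dh = \frac{\vol(K_T)}{w_T}\sum_{h\in\CPK} B(h).
\]
Combining the two displays and recalling $K_P = K_T$ yields $\int_{\SO_U(\Q)\backslash\SO_U(\adeles_f)} B\, dh = 2\,\vol(K_P)\,w_T^{-1}\sum_{h\in\CPK} B(h)$, as claimed.

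The hard part is purely the bookkeeping of the Haar measures: the factor $2$ must arise exactly as $\vol(\Q^\times\backslash\adeles_f^\times)^{-1}$, which is consistent only once the measure on $T(\adeles_f)$ is taken to be the product measure along the spin sequence, and it is with respect to this normalization that $\vol(K_P)$ is computed. A useful sanity check is $B\equiv 1$, which forces $\vol(K_P) = w_T/h_\kb$ and must agree with the direct volume computation of $K_T = \ODhatt$ under this normalization. The remaining inputs, both standard, are the adelic surjectivity of $T \to \SO_U$ and the freeness of the fiber action noted above.
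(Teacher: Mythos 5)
Your proposal is correct, but note that the paper itself contains no proof of this statement: Lemma \ref{lem:schoferSumInt} is quoted from Schofer \cite[Lemma 2.13]{schofer}, with a footnote correcting the missing constant, so there is no internal argument to compare against. Your route --- integration in stages along the spin sequence $1 \to \G_m \to T \to \SO_U \to 1$ (surjectivity on $\Q$-points and $\adeles_f$-points via Hilbert 90, with the integral statement at almost all places), followed by unfolding $T(\Q)\backslash T(\adeles_f)$ into the finitely many pieces $T(\Q)hK_T$ of volume $\vol(K_T)/w_T$ --- is the standard proof of identities of this kind, and your bookkeeping of the paper's normalizations ($\vol(\Q^\times\backslash\adeles_f^\times)=1/2$ and $\vol(\SO_U(\Q)\backslash\SO_U(\adeles_f))=2$) is right. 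Your sanity check $\vol(K_P)=w_T/h_\kb$ does agree with a direct computation of $\vol(\ODhatt)$ in the measure compatible with the sequence, which confirms that the factor $2$ enters exactly as $\vol(\Q^\times\backslash\adeles_f^\times)^{-1}$.

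Two small points, neither of which invalidates the argument. First, you silently set $K_P=K_T$ (``recalling $K_P=K_T$''). This is legitimate here, since in the section where the lemma is stated and applied the paper takes $K_\calP = K_T$ (see the discussion after Lemma \ref{lem:Tkisom}), but it is also genuinely needed: running your double-coset decomposition with a general $K_P \subsetneq K_T$ produces the constant $\abs{T(\Q)\cap K_P}$ rather than $w_T = \abs{T(\Q)\cap K_T}$, so the lemma as literally stated holds only when $T(\Q)\cap K_T \subseteq K_P$ --- consistent with the paper's footnote, which refers to the factor $2/w_P$. It would be worth making this hypothesis explicit. Second, the claim that ``the right action of $\adeles_f^\times$ is free'' is not quite accurate: every point of $T(\Q)\backslash T(\adeles_f)$ has stabilizer $\Q^\times = T(\Q)\cap \adeles_f^\times$. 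What is true, and what your fiber computation actually uses, is that the induced action of $\Q^\times\backslash\adeles_f^\times$ is free, so each fiber of $T(\Q)\backslash T(\adeles_f) \to \SO_U(\Q)\backslash\SO_U(\adeles_f)$ is a principal homogeneous space under $\Q^\times\backslash\adeles_f^\times$.
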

\begin{proof}[Proof of Proposition \ref{SumG=E}]
  Setting $B(h) = \Theta_P(\tau,(z_U,h))$ in Lemma \ref{lem:schoferSumInt}
  we get
  \[
  \sum_{h \in \CPK} \Theta_P(\tau,(z_U,h))
  = \frac{w_{P}}{2\vol(K_P)}
  \int_{\SO_{U}(\Q) \bs \SO_{U}(\adeles_f)} \Theta_P(\tau,(z_U,h)) dh.
  \]
  The latter integral is equal to $2E_P(\tau,0)$ by the Siegel-Weil formula
  (Theorem 2.1 of \cite{bryfaltings}).
  Therefore, since $\Theta_P(\tau,z_{U}^{\pm},h) = E_P(\tau,0) + g_{P}(\tau,h)$,
  we have indeed
  \begin{equation*}
    \sum_{h \in \CPK} g_{P}(\tau,h) = 0.
  \end{equation*}
  Consequently,
  \begin{equation*}
    \label{eq:2}
    \sum_{h \in \CPK} a_{i}(h) = 0
  \end{equation*}
  for all $i$, since the $h_{i}$ are linearly independent.
  It follows that
  \begin{align*}
    \label{eq:3}
    \xi_1 (\pre{g}_P(\tau,h)) &=  g_{P}(\tau,h)\ \text{from the definition of }  \pre{g}_P(\tau,h) \text{ and}\\
    \sum_{h \in \CPK} \pre{g}_P(\tau,h) &= 0.
  \end{align*}
  Thus, we obtain
  \[
    \sum_{h \in \CPK} \pre{\Theta}_{P}(\tau,h) = \abs{C_{P,K}} \cdot \pre{E}_P(\tau).
  \]
  Using Lemma \ref{lem:schoferSumInt} with $B(h)=1$
  shows that the factor $\frac{\vol(K_T)}{w_{T}} = \abs{\CPK} = h_\kb$ in our case.
\end{proof}

\begin{lemma}
\label{lem:principal-part}
  For $\pre{\Theta}_{P}(\tau,h)$ as in Proposition \ref{SumG=E} and $m<0$, we have
\[
  c_P^+(h,m,\beta) = -2 r(m,\beta)^{-1} \log\abs{\alpha(h,m,\beta)}
\]
with $\alpha(m,\beta) \in \calO_{\Hkb}^\times$ and $r(m,\beta) \in \Z_{>0}$.
\end{lemma}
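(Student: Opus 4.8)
The plan is to reduce the claim to the algebraicity and unit property of Petersson inner products of weight one theta series, and then to invoke the (first) Kronecker limit formula. First I would observe that, by construction in Proposition \ref{SumG=E}, the preimage $\pre{E}_P$ has vanishing principal part, so the principal part of $\pre{\Theta}_P(\tau,h)$ agrees with that of $\pre{g}_P(\tau,h) = \sum_{i=1}^{d^+} a_i(h) H_i$. Inserting the prescribed principal parts $P_{H_i}(\tau) = \tfrac12\sum_{j=1}^{d^+} q^{-p_j}(\frake_{\pi_j}+\frake_{-\pi_j})(h_i,h_j)$ and using $\sum_i a_i(h) h_i = g_P(\cdot,h)$, I read off
\[
  c_P^+(h,-p_j,\pi_j) = c_P^+(h,-p_j,-\pi_j) = \tfrac12\, (g_P(\cdot,h),\, h_j), \qquad 1 \le j \le d^+,
\]
while $c_P^+(h,m,\beta)=0$ for every remaining $m<0$. (This simultaneously yields the vanishing statement $c_P^+(h,m,\beta)=0$ for $m<-p_{d^+}$ in part (iv).) Thus every principal-part coefficient is, up to the explicit rational factor $\tfrac12$, a Petersson inner product of two weight one cusp forms lying in $S_{1,P}$.

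Next I would identify these inner products with CM values of a Green function. The space $S_{1,P}$ is the cuspidal part of the span of the weight one theta series $\Theta_P(\tau,h)$ attached to the binary quadratic forms (equivalently, the ideal classes) of $\kb=\kb_D$, and $g_P(\cdot,h)=\Theta_P(\cdot,h)-E_P$ is precisely the cuspidal projection of such a theta series. Applying the Rankin--Selberg method and unfolding against the weight one Eisenstein series, the regularized inner product of two such theta series is computed by the first Kronecker limit formula; carrying this out (exactly as in \cite{ehlen-binary}) expresses $(g_P(\cdot,h),h_j)$ as a finite $\Q$-linear combination of CM values $\log\abs{v^{1/2}\eta^2(z)}$ at CM points $z$ of discriminant $D$.

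Finally I would invoke the classical theory of elliptic (Siegel) units. Each value $\log\abs{v^{1/2}\eta^2(z)}$ at a CM point of discriminant $D$ is, up to an elementary rational multiple and the $\tfrac12\log v$-term, the logarithm of the absolute value of a value of $\Delta$, hence the logarithm of the absolute value of an algebraic number in $\Hkb$. Because we have subtracted the Eisenstein series $E_P$, the form $g_P(\cdot,h)$ is cuspidal, so the combination of CM points occurring carries total weight zero at the cusp; this trace-zero condition is exactly what forces the product of the corresponding $\Delta$-quotients to be a \emph{unit} in $\calO_{\Hkb}^\times$ rather than merely algebraic. Clearing the denominators of the rational coefficients then produces an integer $r(m,\beta)\in\Z_{>0}$ and a unit $\alpha(h,m,\beta)\in\calO_{\Hkb}^\times$ with $c_P^+(h,m,\beta)=-2\,r(m,\beta)^{-1}\log\abs{\alpha(h,m,\beta)}$, as claimed. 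The main obstacle is this last step: an individual CM value of $\Delta$ is not a unit, and the unit property hinges delicately on the cuspidality (trace-zero) condition. The cleanest route is to quote the explicit Kronecker-limit-formula computation of \cite{ehlen-binary} and to verify that the combination of CM values produced by our normalization satisfies the hypotheses of the elliptic-unit theorem.
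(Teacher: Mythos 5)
Your opening reduction is exactly the paper's: by the normalization of Proposition \ref{SumG=E}, every principal-part coefficient of $\pre{\Theta}_P(\tau,h)$ is, up to the factor $\tfrac12$, a Petersson inner product of cusp forms in $S_{1,P}(\Q)$ with $g_P(\cdot,h)$, and all other negative-index coefficients vanish. After that the two arguments diverge. The paper feeds the cusp form $g$ itself into the seesaw identity \eqref{eq:seesaw}: $(g,\Theta_P(\tau,h))=\Phi_P(h,g)$ and $12^{A}\Phi_P(h,g)=\Phi_L(z_U,h,\bm{g})=-2\log\abs{\Psi_L(z_U,h,\bm{g})}^2$; since $g$ has no principal part, the arithmetic pullback formula \eqref{eq:pord1} from \cite{ehlen-intersection} gives $\ord_{\frakP}\bigl(\Psi_L(z_U,h,s\cdot\bm{g})\bigr)=0$ for \emph{every} prime $\frakP$, and Lemma \ref{lem:Psirat-idelesact} places a power of this CM value in $\Hkb$, so it is a unit -- no elliptic-unit theory is needed. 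Your route (Rankin--Selberg/Kronecker limit formula as in \cite{ehlen-binary}, then elliptic units) is the alternative the paper itself gestures at in the remark following \eqref{eq:pregdef}, and it can in principle be completed; but as written it has a genuine gap at the decisive step, plus one smaller inaccuracy.

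The gap: cuspidality (trace zero) is \emph{not} "exactly what forces" the unit property. The classical CM fact is that $\Delta(\fraka)/\Delta(\calO_\kb)$ lies in $\Hkb$ but generates the ideal $\fraka^{-12}\calO_{\Hkb}$, a twelfth power of an ideal rather than the unit ideal; and the Kronecker limit formula produces the $\SL_2(\Z)$-invariant values $v_{\mathcal{C}}^{6}\abs{\Delta(z_{\mathcal{C}})}$, which equal $\N(\fraka_{\mathcal{C}})^{6}\abs{\Delta(\fraka_{\mathcal{C}})}$ up to a constant depending only on $D$. Hence a trace-zero integer combination $\sum_{\mathcal{C}} n_{\mathcal{C}}\log\bigl(\N(\fraka_{\mathcal{C}})^{6}\abs{\Delta(\fraka_{\mathcal{C}})}\bigr)$ equals $\log\abs{\beta}$ where $\beta$ generates $\prod_{\mathcal{C}}\bigl(\bar\fraka_{\mathcal{C}}\fraka_{\mathcal{C}}^{-1}\bigr)^{6n_{\mathcal{C}}}\calO_{\Hkb}$, and this ideal is nontrivial for general trace-zero coefficients: for $h_\kb=3$ and $(n_{\mathcal{C}})=(1,-1,0)$ on the classes of $\calO_\kb,\fraka,\bar\fraka$ it is the nontrivial ideal $(\fraka\bar\fraka^{-1})^{6}\calO_{\Hkb}$. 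What rescues the argument is an \emph{additional} symmetry satisfied by the combinations that actually occur, namely $n_{\mathcal{C}}=n_{\bar{\mathcal{C}}}$ (the inner products are real and $\theta_{\mathcal{C}}=\theta_{\mathcal{C}^{-1}}$), together with choosing ambiguous representatives for the $2$-torsion classes; then the ideals cancel in conjugate pairs. Equivalently one may invoke the Ramachandra--Robert/Stark theorem (the elliptic-unit case of the rank one abelian Stark conjecture, cf. \cite{StarkLsII}) for $L'(0,\chi)$ with $\chi$ a nontrivial class group character. Either way, this is precisely the step your proposal defers ("verify the hypotheses of the elliptic-unit theorem"), so the proof is incomplete at its crucial point. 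The smaller inaccuracy: $S_{1,P}$ is in general strictly larger than the cuspidal span of the theta series (exotic weight one forms exist), so your identification of the two is false; what you actually need is only that $g_P(\cdot,h)$ lies in the theta span and that the orthogonal complement of that span admits a rational basis, so that $(g_P(\cdot,h),h_j)$ reduces to theta-span inner products -- this is the content of the paper's remark after \eqref{eq:pregdef}.
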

\begin{proof}
  By construction every coefficient in the principal part
  of $\pre{\Theta}_{P}(\tau,h)$ is of the form
  \[
    (g,g_{P}(\tau,h)) = (g,\Theta_{P}(\tau,h)),
  \]
  for some $g \in S_{1,P}(\Q)$, a cusp form with rational Fourier coefficients.
  Therefore, fixing $h \in \CPK$, it is enough to show that
  \begin{equation}
    \label{eq:rla1}
    (g, \Theta_P(\tau,h)) = -\frac{2}{c} \log\abs{\alpha},
  \end{equation}
  with $c \in \Z_{>0}$ and $\alpha \in \calO_{\Hkb}^{\times}$.
  This follows easily from the fact that
  \[
    (g, \Theta_P(\tau,h)) = \Phi_{P}(h,g)
  \]
 and that this quantity is equal to
  \[
     12^{A} \Phi_{P}(h,g) = - 2 \log \abs{\Psi_{L}(z_U, h, \bm{g})}^{2}
  \]
  by \eqref{eq:seesaw}.
  Thus, replacing $g$ by $s \cdot g$ for an appropriate positive integer $s$ if necessary, we have by \eqref{eq:pord1} that
  \[
    \alpha := \Psi_{L}(z_U, h, s \cdot \bm{g})^{2} \in \calO_{\Hkb}^{\times}.
  \]
  Thus, we obtain \eqref{eq:rla1} with $c = 12^{A}s$ and this finishes the proof.
\end{proof}

\subsubsection{The coefficients of positive index}
\label{sec:coeff-posit-index}

\begin{lemma}
  \label{lem:pos-index-1}
  We can choose the functions $\pre{\Theta}_P(\tau,h)$, such that:
  \begin{enumerate}
  \item Proposition \ref{SumG=E} holds,
  \item for $j \in \{1,\ldots,d^-\}$, we have
  \[
    c_{P}^{+}(h,n_j,\beta_j) = -\frac{2}{r_0} \log\abs{\alpha(h,n_j,\beta_j)}
  \]
  with
  \[
    \ord_{\mathfrak{P}}(\alpha(h,n_j, \beta_j))
        = r_0 \cdot w_{\kb} \cdot \calZ(n_{j},h.\fraka,h.\beta_j)_{\mathfrak{P}}.
  \]
  \end{enumerate}
  Here, the indices $n_j, \beta_j$ for $j=1,\ldots,d^-$ belong to our special basis
  of $S_{1,P^-}$ as in Lemma \ref{lem:specialB} and $r_0$ does not depend on $j$.
\end{lemma}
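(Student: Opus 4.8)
The plan is to exploit the fact that the indices $(n_j,\beta_j)$ are precisely the obstruction indices for $S_{1,P^-}$: by Lemma~\ref{lem:specialB} the special basis $g_1,\dots,g_{d^-}$ of $S_{1,P^-}(\Q)$ satisfies $c_{g_l}(n_j,\beta_j)=\delta_{lj}$, and since $\xi_1$ annihilates $M_{1,P^-}=S_{1,P^-}$, adding a cusp form $\sum_j t_j g_j$ to any preimage of $\Theta_P(\tau,h)$ changes neither its shadow, nor its principal part, nor its constant term. Consequently, starting from the family $\pre{\Theta}_P(\tau,h)=\pre{E}_P(\tau)+\pre{g}_P(\tau,h)$ built in Proposition~\ref{SumG=E}, I may modify each member independently by a real combination $\sum_j t_j(h)\,g_j$ and simultaneously adjust $\pre{E}_P$ by $\frac{1}{h_\kb}\sum_h\sum_j t_j(h)\,g_j$. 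This preserves every requirement in Proposition~\ref{SumG=E} (vanishing principal part of $\pre{E}_P$, $\xi_1(\pre{E}_P)=E_P$, and the averaging identity), while shifting $c_P^+(h,n_j,\beta_j)$ by exactly $t_j(h)$ and leaving $c_P^+(h,n_l,\beta_l)$ fixed for $l\neq j$. Thus I gain complete, $h$-by-$h$ independent control over the finitely many special coefficients, with no effect on the data constrained by part~(1).

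It therefore suffices to produce, for each $h$ and each $j$, an algebraic integer $\alpha(h,n_j,\beta_j)\in\calO_{\Hkb}$ with the prescribed factorization and a single $r_0$, and then to \emph{define} $\pre{\Theta}_P(\tau,h)$ by choosing $t_j(h)$ so that $c_P^+(h,n_j,\beta_j)=-\tfrac{2}{r_0}\log\lvert\alpha(h,n_j,\beta_j)\rvert$. I would first treat $h=1$: since the cycle multiplicities $\calZ(n_j,\fraka,\beta_j)_\frakP$ are nonnegative and supported at finitely many primes, the divisor $\sum_\frakP r_0 w_\kb\,\calZ(n_j,\fraka,\beta_j)_\frakP\,\frakP$ on $\Spec\calO_{\Hkb}$ is effective and, as soon as $r_0$ is divisible by the class number $h_{\Hkb}$ (and by a fixed integer clearing the rational factors $\nu_p$ and $1/w_\kb$), principal. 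Any generator is an $\alpha(1,n_j,\beta_j)\in\calO_{\Hkb}$ with $\ord_\frakP\alpha(1,n_j,\beta_j)=r_0 w_\kb\,\calZ(n_j,\fraka,\beta_j)_\frakP$, and since there are only finitely many $j$, a common such $r_0$ works for all of them.

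For general $h$ I would set $\alpha(h,n_j,\beta_j):=\alpha(1,n_j,\beta_j)^{\sigma}$ with $\sigma=\sigma(h)\in\Gal(\Hkb/\kb)$ the Artin symbol attached to $h$ (the precise direction being fixed to match the conventions of Lemma~\ref{lem:idelesact}). Then $\ord_\frakP\alpha(h,n_j,\beta_j)=\ord_{\frakP^{\sigma^{-1}}}\alpha(1,n_j,\beta_j)=r_0 w_\kb\,\calZ(n_j,\fraka,\beta_j)_{\frakP^{\sigma^{-1}}}$, and the Galois-equivariance of the special cycles (Proposition~\ref{prop:pullback-ord}, resting on Proposition~3.5 of \cite{ehlen-intersection}) identifies the right-hand side with $r_0 w_\kb\,\calZ(n_j,h.\fraka,h.\beta_j)_\frakP$, which is exactly the factorization demanded in part~(2). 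This forces $c_P^+(h,n_j,\beta_j)=-\tfrac{2}{r_0}\log\lvert\alpha(1,n_j,\beta_j)^{\sigma}\rvert$ to depend genuinely on $h$, as it must, through the distinct archimedean absolute values of the Galois conjugates.

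The step I expect to be the main obstacle is the uniformity and consistency bookkeeping rather than any single hard estimate. One must verify that a single $r_0$ serves simultaneously for all $j$ and all $h$ (handled by taking $r_0$ divisible by $h_{\Hkb}$ together with the bounded denominators supplied by Lemma~\ref{lem:Fmb} and \cite{McGrawBasis}), and, crucially, that imposing these values does not disturb Proposition~\ref{SumG=E}. The latter is automatic from the first paragraph, because cusp-form modifications never touch principal parts, shadows, or constant terms; as a bonus, averaging over $h$ yields $c_{\pre{E}_P}^+(n_j,\beta_j)=-\tfrac{2}{h_\kb r_0}\log\lvert\N_{\Hkb/\kb}\,\alpha(1,n_j,\beta_j)\rvert$, whose finite valuations recover the arithmetic degree of the full cycle $\calZ(n_j)$. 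It is precisely this averaged identity that will later feed Theorem~\ref{thm:arithgen} and reprove the modularity of the degree generating series without invoking the explicit formulas of \cite{kry-tiny}.
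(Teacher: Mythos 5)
Your proposal is correct and follows essentially the same route as the paper's proof: fix $\alpha(1,n_j,\beta_j)$ by making the cycle divisor principal (whence $r_0$ divides $h_{\Hkb}$ up to denominator-clearing), set $\alpha(h,n_j,\beta_j)=\alpha(1,n_j,\beta_j)^{\sigma(h)}$, and then force the coefficients $c_P^+(h,n_j,\beta_j)$ to the desired values by subtracting multiples of the special basis cusp forms $g_j$, adjusting $\pre{E}_P$ compatibly so that Proposition \ref{SumG=E} survives. Your write-up in fact makes explicit two points the paper leaves implicit (the principality argument behind the existence of $\alpha(1,n_j,\beta_j)$, and the Galois equivariance $\calZ(n_j,\fraka,\beta_j)_{\frakP^{\sigma^{-1}}}=\calZ(n_j,h.\fraka,h.\beta_j)_{\frakP}$ needed for general $h$), so no gap remains.
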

\begin{proof}
  We choose $r_0 \in \Z_{>0}$ minimal such that for all primes $\frakP$ of $\calO_{\Hkb}$
  with $\frakP \mid p$ for any $p \in \bigcup_{j=1}^{d} \Diff(n_{j})$
  there is an element $\alpha(1,n_j,\beta_j) \in \calO_{\Hkb}$ with
  \[
    \ord_{\frakP}(\alpha(1,n_j,\beta_j)) = r_0 \cdot w_{\kb}\cdot \calZ(n_j,\fraka,\beta_j)_{\frakP}
  \]
  for all $\frakP$. 
  Note that $r_0$ is a divisor of $h_{\Hkb}$, the class number of $\Hkb$.
  Using this, we let
  \[
    \alpha(h,n_j,\beta_j) =  \alpha(1,n_j,\beta_j)^{\sigma(h)},
  \]
  where $\sigma(h)$ corresponds to the class of the idele $h$ under the Artin map.
  
  Replacing $\pre{\Theta}_P(\tau,h)$ by
  \[
    \pre{\Theta}_P(\tau,h) -
    \sum_{j=1}^{d^-}\left(\frac{2}{r_0}\log\abs{\alpha(h,n_{j},\beta_{j})}
    +c_P^+(h,n_j,\beta_j)\right)g_{j}(\tau)
  \]
  and accordingly $\pre{E}_P(\tau)$ by
  \[
    \pre{E}_P(\tau) - \sum_{h \in \CPK} \sum_{j=1}^{d^-}\left(\frac{2}{r_0}\log\abs{\alpha(h,n_{j},\beta_{j})}
    +c_P^+(h,n_j,\beta_j)\right)g_{j}(\tau),
  \]
  we obtain that the coefficients $c_{P}^{+}(h,n_{j},\beta_{j})$ satisfy the assertion for all $j$ and all $h$,
 Proposition \ref{SumG=E} is still satisfied and the principal part is unaltered.
 \end{proof}
  
  We can now finish the proof of Theorem \ref{thm:pre-pvals}.
  So far, we have shown \emph{(ii)} for $m<0$, \emph{(ii)} and \emph{(iii)} for $(m,\beta) = (n_j,\beta_j)$
  and also \emph{(iv)}.
  We now show \emph{(ii)} and \emph{(iii)} for all $m>0$ and $h=1$.
  The case $h \neq 1$ then follows by the reciprocity laws in Lemma \ref{lem:idelesact}
  and the action of $h$ on the special cycles as described in Sections 4 and 5 of \cite{ehlen-intersection}.

  Let $F_{m,\beta} \in M_{-1/2,L}^{!}$ as in Lemma \ref{lem:Fmb} and recall Equation \eqref{eq:coeff2}:
  \begin{equation}
    \label{eq:coeff2_2}
    \int^{\reg}_{\Gamma \bs \uhp} \langle f_{m,\beta}(\tau) , \overline{\Theta_{P}(\tau,h)} \rangle v\, d\mu(\tau)
      =  - \frac{4}{12^{A}\, c_{m,\beta}} \log \abs{\Psi_{L}(z_U, h, c_{m,\beta} F_{m,\beta})}.
  \end{equation}
  By Theorem \ref{thm:value-phiz}, the left hand side is equal to
  \begin{align}
    \label{eq:liftlhs}
     \{\pre{\Theta}_{P}(\tau,h),f_{m,\beta} \}
      &= 2 c_{P}^{+}(h,m,\beta) \\
      &\quad + \sum_{\gamma \in P'/P} \sum_{n > 0} c_{P}^{+}(h, -n, \gamma)c_{f_{m,\beta}}(n,\gamma)
      - 2\sum_{j=1}^{d^-} c_{P}^{+}(h, \beta_{j}, n_{j}) a_{j}(m, \beta). \notag
  \end{align}
Note that the first sum in the second line vanishes because $c_{P}^{+}(h, -n, \gamma) = 0$ for $(n,\gamma) \not\in \{(p_1,\pi_1), \ldots, (p_{d^+},\pi_{d^+})\}$ and 
$c_{f_{m,\beta}}(p_j, \pi_j) = 0$ for all $j$.

  We let $M_{m,\beta} \in \Z_{>0}$ with $c_{m,\beta} \mid M_{m,\beta}$,  
  such that  $\Psi_L(z,h, M_{m,\beta} \cdot r_0 \cdot F_{m,\beta}) \in \Q(j,j_N)$, where $r_0$ has been defined in Lemma \ref{lem:pos-index-1}.

  We replace $c_{m,\beta}$ by $r_0 \cdot M_{m,\beta}$ and
  the right-hand side of \eqref{eq:coeff2_2} now becomes
  \begin{equation}
    \label{eq:20}
    -4 r(m,\beta)^{-1} \log\abs{\tilde\alpha(1,m,\beta)},
  \end{equation}
  with $\tilde\alpha(1,m,\beta) := \Psi(z_U, 1, M_{m,\beta} \cdot r_0 \cdot F_{m,\beta}) \in \Hkb$
  and $r(m, \beta) := 12^A \cdot r_0 \cdot M_{m,\beta}$.
  
  Using \eqref{eq:coeff2_2}, \eqref{eq:liftlhs}, \eqref{eq:20}
  and Lemma \ref{lem:pos-index-1}, we obtain
  \begin{align*}
    c_{P}^{+}(1,m,\beta) = &-\frac{2}{r(m,\beta)} \log\abs{\tilde\alpha(1,m,\beta)} \\
      &- \frac{2 \cdot 12^{A}M_{m,\beta}}{r(m,\beta)} \sum_{j=1}^{d^-} a_{j}(m, \beta) \log\abs{\alpha(1,n_j,\beta_j)}.
  \end{align*}
  This implies that
  \[
    c_{P}^{+}(1,m,\beta) = - \frac{2}{r(m,\beta)}
       \log\abs{\tilde\alpha(1,m,\beta)
          \prod_{j=1}^{d^-}\alpha(1,n_{j},\beta_{j})^{12^{A}M_{m,\beta} a_{j}(m,\beta)}}.
  \]
  Note that the number in the absolute value is algebraic and contained in $\Hkb$.
  To see this, note that the denominator of $a_{j}(m,\beta)$ is bounded by $c_{m,\beta}$.
  This implies that
  \[
    \alpha(1,n_{j},\pm\beta_{j})^{12^{A}M_{m,\beta} a_{j}(m,\beta)} \in \calO_{\Hkb}
  \]
  for all $j$.
  Finally, let
  \[
    \alpha(1,m,\beta) = \tilde\alpha(1,m,\beta)
                        \prod_{j=1}^{d^-}\alpha(1,n_{j},\beta_{j})^{12^{A}M_{m,\beta} a_{j}(m,\beta)} \in \Hkb.
  \]
  This shows \eqref{eq:cprma} for $m>0$ and $\alpha(1,m,\beta)$ satisfies
  \begin{align}
    \label{eq:21a}
        \ord_{\frakP}(\alpha(1,m,\beta)) = &\ord_{\frakP}(\tilde\alpha(1,m,\beta)) \\
      &+ 12^{A}w_{\kb} M_{m,\beta} r_0 \sum_{j=1}^{d^-} a_{j}(m,\beta) \calZ(n_{j},\fraka,\beta_{j})_{\frakP} \notag
  \end{align}
by Lemma \ref{lem:pos-index-1}.
  
  By Proposition \ref{prop:properint}, we know that
  $\calZ(F_{m,\beta})$ and $\calZ(M,\mu_{R})$ intersect properly
  and we have by \eqref{eq:pord1} that
  \begin{equation*}
    \ord_{\frakP}(\tilde\alpha(1,m,\beta)) =
        r(m,\beta) \frac{w_\kb}{2}\, \sum_{\gamma \in P'/P} \sum_{n >0} c_{f_{m,\beta}}(-n,\gamma) \calZ(n,\fraka,\gamma)_{\frakP}
  \end{equation*}
  which we can further expand to
  \begin{align}
    \label{eq:21b}
        \ord_{\frakP}(\tilde\alpha(1,m,\beta))
        &= r(m,\beta) w_{\kb}\, \calZ(m,\fraka,\beta)_{\frakP} \\
        &\quad - r(m,\beta) w_{\kb}\, \sum_{j=1}^{d^-} a_{j}(m,\beta)\calZ(n_{j},\fraka,\beta_{j})_{\frakP}, \notag
  \end{align}
  where we used that $\calZ(n,\fraka,-\gamma)_{\frakP} = \calZ(n,\fraka,\gamma)_{\frakP}$ and $c_{f_{m,\beta}}(n,\gamma) = c_{f_{m,\beta}}(n,-\gamma)$.
  
  Plugging \eqref{eq:21b} into \eqref{eq:21a} and using and $r(m,\beta) = 12^{A} M_{m,\beta} r(n_j,\beta_j)$ we conclude
  \[
    \ord_{\frakP}(\alpha(1,m,\beta)) = r(m,\beta) w_{\kb}\, \calZ(m,\fraka,\beta)_{\frakP},
  \]
  which proves \emph{(iii)}, but with $r(m,\beta)$ possibly depending on $m$ and $\beta$.

\subsubsection{A bound for $r(m,\beta)$}
\label{sec:independence}
We will now finish the proof by showing that the integers $r(m, \beta)$ in the theorem can be bounded
so that we can choose an integer $r$ only depending on the isomorphism class of $P$ (and thus can be chosen to only depend on $D$).

\begin{proposition}
  \label{prop:independence}
  There is an $r \in \Z_{>0}$, such that we can take $r(m,\beta) = r$ for all $m \in \Q$, $\beta \in P'/P$ and $h \in \CPK$.
\end{proposition}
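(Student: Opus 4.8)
The plan is to reduce everything to a uniform bound on the integers $M_{m,\beta}$ appearing in the construction, and then to pass from the individual $r(m,\beta)$ to a single $r$ by taking a common multiple. Recall that $r(m,\beta) = 12^{A}\,r_{0}\,M_{m,\beta}$, where $12^{A}$ and $r_{0}$ (from Lemma \ref{lem:pos-index-1}) are already independent of $m$ and $\beta$. First I would observe that once the $r(m,\beta)$ are known to be bounded as $(m,\beta)$ ranges over all admissible indices, the proposition follows formally: let $r$ be any common multiple of the finitely many occurring values and replace each $\alpha(h,m,\beta)$ by $\alpha(h,m,\beta)^{r/r(m,\beta)} \in \calO_{\Hkb}$. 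Since $-\tfrac{2}{r(m,\beta)}\log\abs{\alpha(h,m,\beta)} = -\tfrac{2}{r}\log\abs{\alpha(h,m,\beta)^{r/r(m,\beta)}}$, equation \eqref{eq:cprma} is preserved, the integral orders $\ord_{\frakP}(\alpha(h,m,\beta)) = r(m,\beta)\,w_{\kb}\,\calZ(m,h.\fraka,h.\beta)_{\frakP}$ get multiplied by the integer $r/r(m,\beta)$ so that item \emph{(iii)} holds with the uniform $r$, and for $m<0$ a power of a unit is again a unit. Thus the whole content is the uniform boundedness of $M_{m,\beta}$.

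The integer $M_{m,\beta}$ was subject to two demands: divisibility by $c_{m,\beta}$ (so that $M_{m,\beta}\,r_{0}\,F_{m,\beta}$ has integral Fourier coefficients) and largeness enough that $\Psi_{L}(z,h,M_{m,\beta}\,r_{0}\,F_{m,\beta}) \in \Q(j,j_{N})$. I would bound each contribution separately. For the rationality requirement, note that by Theorem \ref{thm:borcherds} the Borcherds product attached to any integral input $G \in M_{1/2,L}^{!}$ with $c_{G}(0,0)=0$ is a meromorphic modular form of weight $0$ whose multiplier system is a character of $\Gamma_{0}(N)$; Borcherds' explicit formula for this multiplier involves only the integers $c_{G}(0,\mu)$ and the Weyl vector, so it takes values in roots of unity of order dividing a fixed integer $e_{0}$ that depends only on the finite quadratic module $L'/L$ and on $N$, and its leading Fourier coefficient is a root of unity of bounded order as well. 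Hence $\Psi_{L}(\cdot,G)^{e_{0}}$ already lies in $\Q(j,j_{N})$, so the passage to a rational function costs at most the \emph{uniform} factor $e_{0}$.

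It then remains to bound $c_{m,\beta}$, i.e.\ the denominators of $F_{m,\beta} = \bm{f_{m,\beta}}$. By Lemma \ref{lem:Fmb} these are bounded by a fixed constant times the denominators of $f_{m,\beta}$, so it suffices to bound the denominators of $f_{m,\beta} \in M_{1,P}^{!}$ uniformly. Here I would invoke the coefficient duality for the Weil representation: for $f \in M_{1,P}^{!}$ and any $g \in M_{1,P^{-}}^{!}$ the pairing $\langle f, g \rangle$ is a weakly holomorphic modular form of weight $2$ on $\SL_{2}(\Z)$, whose constant term vanishes, yielding a relation of the form $\sum_{n,\gamma} c_{f}(n,\gamma)\,c_{g}(-n,\gamma) = 0$. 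Choosing for $g$ the members of a Miller-type basis of $M_{1,P^{-}}^{!}$ with integral Fourier coefficients (which exists by \cite{McGrawBasis} combined with division by a suitable power of $\Delta$), this relation expresses each Fourier coefficient of $f_{m,\beta}$ as an integral linear combination of the coefficients of its principal part. The latter are the single coefficient $1$ in front of $q^{-m}(\phi_{\beta}+\phi_{-\beta})$ together with the numbers $a_{j}(m,\beta)$, which are Fourier coefficients of the \emph{fixed} cusp forms $g_{j} \in S_{1,P^{-}}$ and hence have denominators dividing one fixed integer $D_{0}$. Consequently the denominators of $f_{m,\beta}$, and therefore $c_{m,\beta}$, divide $D_{0}$ uniformly in $(m,\beta)$.

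Putting the two bounds together shows that $M_{m,\beta}$, and hence $r(m,\beta)=12^{A}\,r_{0}\,M_{m,\beta}$, takes only finitely many values, and the formal reduction of the first paragraph finishes the argument. I expect the uniform denominator bound of the third paragraph to be the main obstacle: it is the only place where an infinite family (the $f_{m,\beta}$ with $m \to \infty$) must be controlled all at once, and the duality argument is what converts the growing principal parts into a bound that does not see $m$. The multiplier/rationality bound and the final lcm-and-power manipulation are, by contrast, routine once the uniform constants $e_{0}$ and $D_{0}$ are in hand.
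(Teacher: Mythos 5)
Your opening reduction (take a common multiple $r$ of the $r(m,\beta)$ and replace $\alpha(h,m,\beta)$ by $\alpha(h,m,\beta)^{r/r(m,\beta)}$) is fine, and the paper uses it implicitly; but both uniform constants on which your argument then rests are asserted rather than proved, and each assertion hides the actual difficulty. The bound $e_0$ on the order of the multiplier system does not follow from any ``explicit formula'': the multiplier of $\Psi_L(z,h,G)$ is a finite-order character of $\Gamma_0(N)$, and $\Gamma_0(N)$ has finite-order characters of arbitrarily large order (its abelianization has a free part as soon as $Y_0(N)$ has positive genus or more than one cusp), so no uniform bound can be read off from the shape of the multiplier, the constant terms $c_G(0,\mu)$, or the Weyl vector --- whose denominator is itself not controlled a priori. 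What the paper's proof actually does is: (a) make the Weyl vectors $\rho_{G,c}$ integral after multiplication by a constant $\kappa$ depending only on $N$, which requires the auxiliary weight $3/2$ preimages $\pre{\Theta}_{K_c}$ of the cuspidal theta functions constructed in \cite{BS-pretheta12}; (b) observe that then $Z(G)+C(G)$ is a degree-zero divisor rational over $\Q$, hence defines a torsion point of $J(\Q)$, where $J$ is the Jacobian of $X_0(N)$; and (c) invoke the Mordell--Weil theorem (and, for prime level, Mazur's theorem \cite{mazur-modcurves-eisenstein}) so that the order of this point --- which controls the order of the multiplier --- divides $\abs{J(\Q)_{\mathrm{tors}}}$. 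Your $e_0$ is exactly this global arithmetic bound; it cannot be obtained locally from the product expansion.

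The uniform denominator bound $D_0$ is also not established, and this is the second genuine gap. Your duality argument needs, for every positive index $(n_0,\gamma_0)$, a dual form $g_{n_0,\gamma_0}\in M^{!}_{1,P^{-}}$ with \emph{integral} coefficients whose principal part is supported at $(n_0,\gamma_0)$ together with the finitely many special indices. McGraw's theorem plus division by powers of $\Delta$ yields integral forms spanning $M^{!}_{1,P^{-}}$, but extracting such an echelon (Miller-type) family requires row reduction over $\Q$, and the leading entries of the resulting Hermite-type basis need not be $1$; the denominators of the dual family at the indices $(m,\beta)$ and $(n_j,\beta_j)$ are then exactly as uncontrolled as the denominators of $f_{m,\beta}$ themselves, so the argument is circular (the pairing is symmetric, and bounding one family uniformly is the transposed form of bounding the other). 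The paper sidesteps this issue entirely: it never bounds the denominators of $f_{m,\beta}$ uniformly. Instead it fixes the finitely many forms $f_{x,\beta}$ with $x$ in a window $S = \{n+Q(\beta)\}$ of length one, chooses $r$ to handle those, and for $m = x+n'$ uses the input $f = j^{n'}f_{x,\beta}$ --- which stays integral after multiplication by $r$ because the $j$-invariant and $\GJ$ have integral Fourier coefficients --- and then proves the coefficient identity for $c_{P}^{+}(h,m,\beta)$ by induction on $n'$: the principal part of $j^{n'}f_{x,\beta}$ brings in only lower-index coefficients $c_P^+(h,m',\gamma)$, $m'<m$, already known to satisfy the identity with the uniform $r$. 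To repair your proof you would either have to supply an actual proof of uniformly bounded denominators for the family $\{f_{m,\beta}\}$ (which is essentially of the same depth as the proposition itself) or restructure the argument along the paper's inductive lines.
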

\begin{proof}
  Recall that for square-free $N$ each cusp of $\Gamma_0(N)$ can be represented a fraction $1/c$ with $c \mid N$.
    
  For $f \in M_{1/2,L}^{!}(\Z)$, the
  Weyl vector associated with $f$ at the cusp $1/c$, is defined by
  \[
    \rho_{f,c} = \frac{\sqrt{N}}{8 \pi}\int_{\calF}^{\reg} \langle f(\tau), \overline{\Theta_{K_{c}}(\tau)} \rangle v^{1/2} d\mu(\tau),
  \]
  where $K_{c}$ is a certain one-dimensional positive definite lattice attached to the cusp $1/c$
  satisfying $K_{c}'/K_{c} \cong L'/L$.

  The significance of the Weyl vectors is that the divisor of $\Psi_{L}(z,f)$
  on $X_0(N)$ is given by $Z(f) + C(f)$, where $C(f) = \sum_{c \mid N} \rho_{f,c} \cdot (1/c)$.
  
  We assume that the constant term of $f$ vanishes,
  $c_f(m,\mu) \in \Q$ for all $m \in \Q$ and $\mu \in L'/L$ and $c_f(m,\mu) \in \Z$ for $m \leq 0$.
  Since the multiplier system $\sigma$ of $\Psi_{L}(z,f)$ is of finite order $M$,
  we have that $M \cdot (Z(f) + C(f))$ is the divisor of a rational function on $X_0(N)$.
  Note that this also implies that $\deg(Z(f) + C(f))=0$.
  Conversely, as in the proof of Theorem 6.2 in \cite{bronolderiv},
  we have that if $M \cdot (Z(f) + C(f))$ is the divisor of a rational function on $X_0(N)$
  then $\sigma^{M}$ is trivial.
  If $\rho_{f, c} \in \Z$ for all cusps, then $Z(f) + C(f)$ defines a rational point in the Jacobian $J$ of $X_0(N)$
  because the Heegner divisors are defined over $\Q$ and so are the cusps of $\Gamma_0(N)$ for $N$ square-free.
  The group of rational points $J(\Q)$ is a finitely generated abelian group by the Mordell-Weil theorem.
  Therefore, $M  \mid M(N)$, where $M(N)$ is the order of the torsion subgroup of $J(\Q)$.

  The idea is now, similar to the general theme of this paper,
  to construct a harmonic Maa\ss{} form $\pre{\Theta}_{K}(\tau) \in \calH_{3/2,K^-}$, such that
  $\xi_{3/2}(\pre{\Theta}_{K_{c}})(\tau) = \Theta_{K_{c}}(\tau)$ to compute the Weyl vectors
  and ensure that they are integral so that the above reasoning applies.

  In the case of a prime discriminant, the theta function $\Theta_{K_{c}}(\tau)$ is in fact an Eisenstein series.
  A preimage is given by a generalization of Zagier's Eisenstein series, constructed as the 
  Kudla-Millson lift of the constant functions $1$ as in
  Theorem 4.5 of \cite{brfu06} (see also Theorem 5.5 of \cite{alfesehlentraces}
  for the specialization to $\Gamma_0(N)$).

  For the general case, the construction is a bit more complicated and involves
  showing the existence of weakly holomorphic modular forms of weight $0$
  for $\Gamma_0(N)$ with certain prescribed principal parts. The details can be
  found in \cite{BS-pretheta12}; in particular, by Theorem 4.5 together
  with Theorem 5.1 (1) op. cit. we obtain that there is a constant $\kappa$ that only depends on $A\abs{D}=N$, such that $\kappa \cdot \rho_{f,c} \in \Z$ for all $c \mid N$ and all $f \in M_{1/2, L}^!$ with only integral Fourier coefficients. 

  Without loss of generality we can assume that $A$ is a prime and does not divide $D$.
  Recall the numbers $n_1, \ldots, n_{d^-}$ that belong to our special basis of $S_{1,P^{-}}$
  constructed in Section \ref{sec:special-basis}.
  Let $n > \max \{n_{1}, \ldots, n_{d^-}\}$ be a fixed integer and $S = \{ n + Q(\beta)\ \mid\ \beta \in P'/P \}$.
  For $x \in S$ and $\beta \in P'/P$, such that $x \equiv Q(\beta) \bmod{\Z}$, we let
  $F_{x,\beta} := \bm{f}_{x,\beta}$ as in Lemma \ref{lem:Fmb}.
  From our discussion above, it follows that there is an $r \in \Z_{>0}$, 
  such that the following properties are satisfied:
  \begin{enumerate}
  \item The functions $r \cdot f_{x,\beta}$ have integral Fourier coefficients for all $x \in S$;
  \item if $r \cdot f \in M_{1,P}^!$ has integral Fourier coefficients, then $r' \cdot \bm{f} \in M_{1/2,L}^!$ constructed as in Lemma \ref{lem:Fmb}
  also has integral Fourier coefficients with $r' = r/12^A$;
  \item and we have that $\rho_{r' \cdot \bm{f}, c} \in \Z$ for all $c \mid N$ and $\Psi_{L}((z,h), r' \cdot \bm{f}) \in \Q(j,j_{N})$.
  \item Moreover, for all $m \leq n$, we have
    \[
      c_{P}^{+}(h,m,\beta) = -\frac{2}{r} \log\abs{\alpha(h,m,\beta)}
    \]
    with $\alpha(h,m,\beta) \in \calO_{\Hkb}$ and
    $\ord_{\frakP}(\alpha(h,m,\beta)) = w_\kb \cdot r \cdot \calZ(m,h.\fraka,h.\beta)_{\frakP}$.
  \end{enumerate}
  
  Now let $m \in \Q$ and $\beta \in P'/P$, such that $m>n$ and $m \equiv Q(\beta) \bmod{\Z}$.
  Then $m = x + n'$ for some $x \in S$ and $n' \in \Z_{>0}$.
  We let $f(\tau) = j^{n'}(\tau)f_{x,\beta}(\tau)$, where $j(\tau)$ is the $j$-invariant.
  Moreover, we let $\bm{f}(\tau) \in M_{1/2,L}^{!}$ given by
  \[
    \bm{f} = f \otimes \GJ^A - C  E_{P} \otimes \FJ g, 
  \]
  for a suitable constant $C\in \Q$ and $g \in M_{2A}^!(\SL_2(Z))$ as in the proof of Lemma~\ref{lem:Fmb}, such that
  $c_{\bm{f}}(0,0)=0$.
  Note that $\GJ$ and $j^{n'}$ have integral Fourier coefficients.
  By (i), Lemma~\ref{lem:Fmb} and (ii) we have that $r' \cdot \bm{f}$ has integral Fourier coefficients and
  by (iii) the Weyl vectors of $r' \cdot \bm{f}$ are integral and $\Psi_{L}((z,h), r' \cdot \bm{f})$ is contained in $\Q(j,j_N)$.
  
  As before, we have on the one hand
  \[
    \Phi_{P}(h, f) = 2 c_{P}^{+}(h,m,\beta) + \sum_{\gamma \in P'/P} \sum_{m' < m} c_{P}^{+}(h,m',\gamma)c_{f}(-m',\gamma)
  \]
  and on the other hand
  \[
    \Phi_{P}(h,f) = \frac{-2}{r} \log\abs{\Psi_{L}(z_U, h,r' \cdot \bm{f})}^2.
  \]
  By our assumptions we have that $\Psi_{L}(z_U,h,r' \cdot \bm{f}) \in \calO_{\Hkb}$.
  The statement of the proposition now easily follows by induction on $n' \in \Z_{>0}$.
\end{proof}

\begin{remark}
  Let $l \geq 5$ be a prime, let $n$ be the numerator of $(l-1)/12$ and consider the Jacobian $J$ of $X_0(l)$.
  It follows from a Theorem of Mazur \cite{mazur-modcurves-eisenstein} that
  the torsion subgroup $J(\Q)_{tors}$ is cyclic of order $n$.
  Together with this explicit result, Proposition \ref{prop:independence}
  describes an algorithm to determine the number $r$ for a given prime discriminant $D=-l$
  explicitly.
\end{remark}

\subsection{Consequences}
\label{sec:consequences}
The following theorem shows the modularity of the generating series of the degrees of the special cycles, as mentioned in the introduction.
This result was proven by Kudla, Rapoport and Yang using an explicit comparison of Fourier coefficients.
\begin{theorem}
  \label{thm:arithgen}
  There exists a harmonic weak Maa\ss{} form $\pre{E}_{P}(\tau) \in M_{1,P^{-}}$
  with vanishing principal part, such that
  \[
    -\frac{h_{k}}{w_{\kb}} \prep{E}_{P}(\tau) = \sum_{\mu \in P'/P}\sum_{m > 0} \widehat\deg\,\calZ(m,\fraka,\mu) e(m\tau) \phi_{\mu} + c\phi_0,
  \]
  where $c \in \C$ is a constant.
\end{theorem}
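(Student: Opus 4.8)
The plan is to take $\pre{E}_{P}$ to be precisely the harmonic weak Maa\ss{} form produced in part \emph{(i)} of Theorem \ref{thm:pre-pvals}, which already satisfies $\xi_1(\pre{E}_P)=E_P$ and has vanishing principal part, and then to read off its holomorphic Fourier coefficients from parts \emph{(ii)}--\emph{(iv)}. By part \emph{(i)} the holomorphic part is
\[
  \prep{E}_P(\tau) = \frac{1}{h_\kb}\sum_{h \in \CPK}\prep{\Theta}_P(\tau,h),
\]
so the coefficient of $e(m\tau)\phi_\mu$ equals $\frac{1}{h_\kb}\sum_{h \in \CPK} c_P^+(h,m,\mu)$. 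For $m<0$ these vanish because the principal part of $\pre{E}_P$ vanishes by \emph{(i)} (consistently, \emph{(iv)} says each $\alpha(h,m,\mu)$ is a unit, whose archimedean absolute values sum to zero by the product formula), and for $m=0$ only the index $\mu=0$ contributes since $P'/P$ is anisotropic, which accounts for the term $c\phi_0$. Everything therefore reduces to the coefficients of positive index.

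First I would fix $m>0$ and $\mu\in P'/P$ and use \emph{(ii)} to write $c_P^+(h,m,\mu)=-\tfrac{2}{r}\log\abs{\alpha(h,m,\mu)}$. The construction underlying Theorem \ref{thm:pre-pvals} (via Lemma \ref{lem:Psirat-idelesact}, where $\Psi_L(z_U,h,\cdot)=\Psi_L^{\sigma(h)}(z_U,1,\cdot)$) realizes the $\alpha(h,m,\mu)$ as the Galois conjugates $\alpha(1,m,\mu)^{\sigma(h)}$, so as $h$ runs over $\CPK\cong\Clk$ the automorphism $\sigma(h)$ runs over all of $\Gal(\Hkb/\kb)$; hence
\[
  \sum_{h \in \CPK}\log\abs{\alpha(h,m,\mu)} = \sum_{\sigma \in \Gal(\Hkb/\kb)}\log\abs{\sigma(\alpha(1,m,\mu))}.
\]
The key step is then to convert this sum of archimedean contributions into the finite arithmetic degree by the product formula. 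Since $\Hkb$ is a CM field its archimedean places are all complex and are paired by complex conjugation; as $\kb$ is imaginary quadratic, exactly one embedding in each conjugate pair restricts to the fixed embedding $\kb\hookrightarrow\C$, so the $h_\kb$ such embeddings, indexed by $\Gal(\Hkb/\kb)$, give one representative per archimedean place. For $\alpha:=\alpha(1,m,\mu)\in\Hkb^\times$ the product formula therefore reads
\[
  2\sum_{\sigma\in\Gal(\Hkb/\kb)}\log\abs{\sigma(\alpha)} = \sum_{v\mid\infty}\log\lVert\alpha\rVert_v = \sum_{\frakP}\ord_{\frakP}(\alpha)\log\N_{\Hkb/\Q}(\frakP),
\]
the factor $2$ arising from $\lVert\cdot\rVert_v=\abs{\cdot}^2$ at the complex places. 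Inserting \emph{(iii)}, namely $\ord_\frakP(\alpha(1,m,\mu))=r\,w_\kb\,\calZ(m,\fraka,\mu)_\frakP$, the right-hand side becomes $r\,w_\kb\,\widehat{\deg}\,\calZ(m,\fraka,\mu)$, so that
\[
  \sum_{h\in\CPK}\log\abs{\alpha(h,m,\mu)} = \frac{r\,w_\kb}{2}\,\widehat{\deg}\,\calZ(m,\fraka,\mu).
\]

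Combining this with the two earlier displays gives $\frac{1}{h_\kb}\sum_h c_P^+(h,m,\mu) = -\frac{w_\kb}{h_\kb}\,\widehat{\deg}\,\calZ(m,\fraka,\mu)$, where the factor $1/r$ from \emph{(ii)} cancels the factor $r$ from \emph{(iii)}; multiplying through by $-h_\kb/w_\kb$ yields the asserted identity for every $m>0$. The main obstacle is precisely this archimedean-to-finite bookkeeping: one must simultaneously (a) justify that summing over $h\in\CPK$ amounts to summing the single algebraic number $\alpha(1,m,\mu)$ over one embedding per archimedean place of $\Hkb$, and (b) match the normalizations, namely the factor $w_\kb$ from \emph{(iii)}, the factor $2$ coming from the complex places of the totally imaginary field $\Hkb$, the denominator $r$ from \emph{(ii)}, and the overall constant $-h_\kb/w_\kb$. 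Once these line up, existence of $\pre{E}_P$, vanishing of its principal part, and the localization of the constant term at $\mu=0$ are all immediate from Theorem \ref{thm:pre-pvals}\emph{(i)} and the anisotropy of $P'/P$.
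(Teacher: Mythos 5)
Your proposal is correct and follows essentially the same route as the paper: take $\pre{E}_P$ from Theorem \ref{thm:pre-pvals}\emph{(i)}, write each coefficient via \emph{(ii)} as $-\tfrac{2}{r}\log\abs{\alpha(h,m,\mu)}$, identify the $\alpha(h,m,\mu)$ as the Galois conjugates $\alpha(1,m,\mu)^{\sigma(h)}$, and convert the class-group sum of archimedean logarithms into $r\,w_\kb\,\widehat\deg\,\calZ(m,\fraka,\mu)$ using \emph{(iii)}. The only difference is cosmetic bookkeeping in that last conversion: you invoke the product formula for the totally imaginary field $\Hkb$ together with \emph{(iii)} at $h=1$ and the definition of $\widehat\deg$, while the paper sums \emph{(iii)} over $h$, rewrites $\prod_h\alpha(h,m,\mu)$ as $\N_{\Hkb/\kb}(\alpha(1,m,\mu))$ so that $\sum_h\log\abs{\alpha(h,m,\mu)}^2=\log\N_{\Hkb/\Q}(\alpha(1,m,\mu))$, and uses the Galois action on the cycles (Proposition 5.6 of \cite{ehlen-intersection}) plus the fact that $\calZ(m,\fraka,\mu)$ is supported over a single non-split prime --- the same fact in different clothing.
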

\begin{proof}
  Consider the sum
  \[
    \pre{E}_{P}(\tau) = \frac{1}{h_{k}}\sum_{h \in C_{k}/K} \pre{\Theta}_P(\tau,h).
  \]
  We obtain by Theorem \ref{thm:pre-pvals} that
  the holomorphic part of $\pre{E}_{P}(\tau)$ has a vanishing principal part and we have
  \begin{align*}
    \frac{h_{k}}{w_{\kb}}\pre{E}_{P}^{+}(\tau) &=
    \frac{1}{w_{\kb}} \sum_{\beta \in P'/P} \sum_{m \geq 0} \sum_{h \in \CPK} c_{P}^{+}(h,m,\beta)e(m \tau) \phi_{\beta}\\
      &= -\frac{1}{w_{\kb}} \sum_{\beta \in P'/P}\sum_{m \geq 0} r^{-1} \sum_{h \in \CPK}
          \log{\abs{\alpha(h,m,\beta)}^2} e(m \tau) \phi_{\beta},
  \end{align*}
  where $\alpha(h,m,\beta) \in \calO_{\Hkb}$.
  
  Moreover, for $m>0$ and for every prime $\frakP$ of the Hilbert class field $\Hkb$
  of $\kb$, we have
  \[
    \ord_{\frakP}\left(\prod_{h} \alpha(h,m,\beta)\right) = r w_{\kb} \sum_{h} \calZ(m,h.\fraka,h.\beta)_\frakP.
  \]
  By Proposition 5.6 in \cite{ehlen-intersection}, we have for $\sigma = \sigma(h)$ that
  \[
    \calZ(m,\fraka,\beta)_{\frakP^{\sigma}} = \calZ(m,h^{-1}.\fraka,h^{-1}.\beta)
  \]
  and thus
  \[
     \sum_{h \in C_{k}/K} \calZ(m,h.\fraka,h.\beta)_\frakP = \sum_{\sigma \in \Gal(\Hkb/\kb)} \calZ(m,\fraka,\beta)_{\frakP^{\sigma}}.
  \]
  This shows in fact that
  \[
    \sum_{h} \log\abs{\alpha(h,m,\beta)}^2 = \log\N_{\Hkb/\Q}(\alpha(1,m,\beta)).
  \]
  But since $\calZ(m,\fraka,\beta)$ is supported at a unique prime $p$, we also have that
  \[
    \deghat\, \calZ(m,\fraka,\beta) = \sum_{\sigma \in \Gal(\Hkb/\kb)}\calZ(m,\fraka,\beta)_{\frakP^{\sigma}} \log \N_{\Hkb/\Q}(\frakP).
  \]
  Here, we used that $p$ is non-split in $\kb$.
  Then we find
  \begin{align*}
    \sum_{h \in C_{k}/K} \log{\abs{\alpha(h,m,\beta)}^{2}}
    &= w_\kb \cdot r \cdot \sum_{\sigma \in \Gal(\Hkb/\kb)} \calZ(m,\fraka,\beta)_{\frakP^{\sigma}} \log \N_{\Hkb/\Q}(\frakP)\\
    &= w_\kb \cdot r \cdot \deghat\, \calZ(m,\fraka,\beta),
  \end{align*}
  which implies the statement of the proposition.
\end{proof}

We collect some consequences of Theorem \ref{thm:pre-pvals}
in connection with the explicit formulas given in \cite{ehlen-intersection}.

For $m \in \Q_{>0}$, we define a set of rational primes by
\begin{equation*}
  \Diff(m) = \{ p < \infty\ \mid\ (-m\N(\fraka),D)_p = -1 \}.
\end{equation*}
A starting point is the following Corollary.
\begin{corollary}
The algebraic numbers in Theorem \ref{thm:pre-pvals} satisfy the following properties.
\begin{enumerate}
\item  If $\abs{\Diff(m)} \neq 1$, then $c_{P}^{+}(h,m,\beta) = -\frac{2}{r} \log\abs{\epsilon}$
    for $\epsilon \in \calO_{\Hkb}^{\times}$.
  \item If $\Diff(m)=\{p\}$, we have $\ord_{\frakP}(\alpha(h,m,\beta))=0$ for all primes $\frakP \nmid p$.
  \item The Shimura reciprocity
          \[
            \alpha(h,m,\beta) = \alpha(1,m,\beta)^{\sigma(h)}
          \]
          holds, where $\sigma(h)$ corresponds to $h$ under the Artin map.
\end{enumerate}
\end{corollary}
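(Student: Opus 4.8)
The plan is to deduce all three assertions directly from Theorem \ref{thm:pre-pvals}, combined with the structural description of the arithmetic cycles $\calZ(m,\fraka,\mu)$ recalled in Section \ref{sec:spec-endom} (whose explicit shape for prime discriminant is Proposition \ref{prop:Zprime-intro}). No new analytic input is needed: items (i) and (ii) are a mechanical translation of the cycle-theoretic identity \emph{(iii)} of Theorem \ref{thm:pre-pvals} into statements about the ideal $(\alpha(h,m,\beta))$, while item (iii) simply records the Shimura reciprocity already built into the construction of $\alpha(h,m,\beta)$.

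For item (i) I would fix $m \in \Q_{>0}$ with $\abs{\Diff(m)} \neq 1$ and use that the twisted cycle vanishes identically in this range: by the $\fraka$-twisted analogue of Proposition \ref{prop:Zprime-intro}(i) from \cite{ehlen-intersection}, taken with respect to the scaled norm form $\N(x)/\N(\fraka)$, i.e. with $\Diff(m)$ as redefined just above, one has $\calZ(m,h.\fraka,h.\beta)_{\frakP}=0$ for every prime $\frakP \subset \calO_{\Hkb}$. Substituting into Theorem \ref{thm:pre-pvals}(iii) gives $\ord_{\frakP}(\alpha(h,m,\beta)) = 0$ for all $\frakP$, so $\alpha(h,m,\beta) \in \calO_{\Hkb}^{\times}$; setting $\epsilon := \alpha(h,m,\beta)$ and invoking Theorem \ref{thm:pre-pvals}(ii) then yields $c_{P}^{+}(h,m,\beta) = -\tfrac{2}{r}\log\abs{\epsilon}$ with $\epsilon$ a unit. (For $m<0$ the corresponding statement is already part \emph{(iv)} of the theorem.) Item (ii) is the same argument localized: when $\Diff(m)=\{p\}$ the cycle $\calZ(m,h.\fraka,h.\beta)$ is supported entirely in the fiber over the single non-split prime $p$ (Section \ref{sec:spec-endom}, Proposition \ref{prop:Zprime-intro}(ii)), so $\calZ(m,h.\fraka,h.\beta)_{\frakP}=0$ for $\frakP \nmid p$, and Theorem \ref{thm:pre-pvals}(iii) forces $\ord_{\frakP}(\alpha(h,m,\beta)) = 0$ for all such $\frakP$.

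For item (iii) I would read off the reciprocity from the construction of $\alpha(h,m,\beta)$ in the proof of Theorem \ref{thm:pre-pvals}. Recall that, for $m>0$, the number $\alpha(1,m,\beta)$ was assembled from the CM value $\tilde\alpha(1,m,\beta) = \Psi_L(z_U,1,\,\cdot\,)$ and from integer powers of the auxiliary units $\alpha(1,n_j,\beta_j)$. By Lemma \ref{lem:Psirat-idelesact} we have $\Psi_L(z_U,h,\,\cdot\,) = \Psi_L^{\sigma(h)}(z_U,1,\,\cdot\,)$, and by Lemma \ref{lem:pos-index-1} the auxiliary quantities were defined precisely by $\alpha(h,n_j,\beta_j) = \alpha(1,n_j,\beta_j)^{\sigma(h)}$. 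Since $\sigma(h) \in \Gal(\Hkb/\kb)$ is a field automorphism it commutes with products and with raising to integer powers, so applying $\sigma(h)$ to the defining expression for $\alpha(1,m,\beta)$ reproduces exactly the expression for $\alpha(h,m,\beta)$; the same comparison applies verbatim to the unit coefficients with $m<0$, using Lemma \ref{lem:Psirat-idelesact} alone.

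The only genuinely delicate point is the bookkeeping underlying item (i): one must verify that the set $\Diff(m)$ controlling the vanishing of the \emph{twisted} cycle is exactly the one redefined before the Corollary, namely computed from the Hilbert symbol $(-m\N(\fraka),D)_p$ rather than $(-m,D)_p$. This amounts to tracking the local quadratic invariants through the isometry $P \cong \fraka$ of Lemma \ref{lem:Pab} and the rescaling of the norm form by $\N(\fraka)$, and is supplied by the corresponding local computation in \cite{ehlen-intersection}. Everything else is a formal consequence of Theorem \ref{thm:pre-pvals}.
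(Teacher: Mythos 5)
Your proposal is correct and follows exactly the route the paper intends: the paper states this corollary without proof as an immediate consequence of Theorem \ref{thm:pre-pvals} combined with the vanishing and support properties of the twisted cycles $\calZ(m,h.\fraka,h.\beta)$ from \cite{ehlen-intersection}, and with the reciprocity $\alpha(h,\cdot,\cdot)=\alpha(1,\cdot,\cdot)^{\sigma(h)}$ that is built into the construction via Lemma \ref{lem:Psirat-idelesact} and Lemma \ref{lem:pos-index-1}. Your write-up simply makes that implicit derivation explicit, including the correct observation that the only nontrivial checking concerns the local invariant $(-m\N(\fraka),D)_p$ for the twisted cycles, which is indeed supplied by \cite{ehlen-intersection}.
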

Therefore, we may assume from now on that $\Diff(m) = \{p\}$. In particular, $p$ is nonsplit in $\kb$.
We recall some notation needed to describe the results 
-- for details we refer the reader to \cite{ehlen-intersection}.
We let $p_{0} \in \Z$ be a prime with $p_{0} \nmid 2pD$ such that if $p$ is inert in $\kb$, we have
  \[
   (D,-pp_{0})_{v} =
   \begin{cases}
     -1, & v=p,\infty,\\
     1, & \text{otherwise,}
   \end{cases}
  \]
and if $p$ is ramified in $\kb$, we have
  \[
    (D,-p_{0})_{v} =
    \begin{cases}
     -1, & v=p,\infty,\\
     1, & \text{otherwise.}
   \end{cases}
  \]
With this choice, we let $\frakp_{0}$ be any fixed prime ideal of $\OD$ lying above $p_{0}$.
Here, $( \cdot, \cdot)_{v}$ denotes the $v$-adic Hilbert symbol.
If $p$ is inert in $\kb$, let $\frakc_{0} = \frakp_{0}\different{\kb}$.
If $p$ is ramified and $\frakp \subset \OD$ is the prime above $p$,
let $\frakc_{0} = \frakp_{0}\frakp^{-1}\different{\kb}$.

Moreover, we define for $m \in \Q$:
\begin{equation}
  \label{eq:nudef}
  \nu_{p}(m) =
      \begin{cases}
            \frac{1}{2}(\ord_{p}(m)+1), & \text{if $p$ is inert in $\kb$},\\
            \ord_{p}(m \abs{D}), & \text{if $p$ is ramified in $\kb$}.
      \end{cases}
\end{equation}
We let $\frakP_{0}$ be the prime ideal
corresponding to the elliptic curve with complex multiplication 
$(E_0, \iota_0)$ over $\bar{\F}_p$ such that the class $[L(E_0,\iota_0)]$ of the rank one
$\OD$-module $L(E_0,\iota_0)$ in $\Pic(\OD)$ is equal to $[\frakp_0]^{-1}$.
Finally, for $m \in \Q$, 
we let $o(m)$ denote the number of primes $q \mid D$ such that $\ord_{q}(m\abs{D})>0$.
In the followig corollary, we restrict our attention to the simpler cases.
\begin{corollary}\label{cor:coeffs-explicit}
\ \\
  \begin{enumerate}
  \item Let $L$ be the subfield of $\Hkb$
        fixed by all elements of order less or equal than two in $\Gal(\Hkb/\kb)$
        and let $\frakf \subset \calO_{L}$ be the prime ideal below the fixed prime $\frakP_{0}$.
        We have
        \begin{align*}
          \frac{\ord_{\frakf}(\N_{\Hkb/L}(\alpha(h,m,\beta)))}{r \cdot w_{\kb}} &= \calZ(m,h.\fraka,h.\beta)_{\frakf}\\
                       &= 2^{o(m)-1} \nu_{p}(m) \rho(m\abs{D}/p, [h]^2[\frakc_{0}\fraka]).
        \end{align*}
  \item If $D=-l$ is prime, then there is a unique prime $\frakP \mid p$ that is fixed by complex conjugation.
        We have
        \[
          \frac{\ord_{\frakP}(\alpha(h,m,\beta))}{r \cdot w_{\kb}} = 2^{o(m)-1} \nu_{p}(m) \rho(m\abs{D}/p, [h]^{2}[\fraka]^{2}).
        \]
  \end{enumerate}
\end{corollary}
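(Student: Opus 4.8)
The plan is to deduce both statements from part \emph{(iii)} of Theorem~\ref{thm:pre-pvals}, which already gives $\ord_{\frakP}(\alpha(h,m,\beta)) = r\,w_{\kb}\,\calZ(m,h.\fraka,h.\beta)_{\frakP}$ for every prime $\frakP$ of $\Hkb$, and to feed in the explicit evaluation of the arithmetic multiplicities $\calZ(m,\fraka,\mu)_{\frakP}$ carried out in \cite{ehlen-intersection}. First I would reduce to the essential case: by the preceding corollary, $\alpha(h,m,\beta)$ is a unit whenever $\abs{\Diff(m)} \neq 1$, and both sides of the claimed formulas vanish in that case, so one may assume $\Diff(m) = \{p\}$ with $p$ nonsplit in $\kb$. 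For such $m$ the same corollary gives $\ord_{\frakP}(\alpha(h,m,\beta)) = 0$ for all $\frakP \nmid p$, so only the primes above $p$ contribute and it suffices to evaluate $\calZ(m,h.\fraka,h.\beta)_{\frakP}$ at those primes.

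I would treat the prime case \emph{(ii)} first, since it is cleanest. When $D=-l$ is prime the class number $h_{\kb}$ is odd, so $\Clk$ consists of a single genus, $\Clk[2]$ is trivial, and the primes of $\Hkb$ above the nonsplit $p$ form an odd-order $\Clk$-torsor on which complex conjugation acts by inversion; hence there is a unique fixed prime $\frakP$. Writing $\frakP = \frakP_{0}^{\sigma}$ and using the Shimura reciprocity of the preceding corollary together with the Galois-equivariance of the cycles (Proposition~5.6 of \cite{ehlen-intersection}) to transport the multiplicity, I would substitute the explicit representation-number formula of Proposition~\ref{prop:Zprime-intro}\emph{(ii)} for $\calZ(m,h.\fraka,h.\beta)_{\frakP}$. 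Since for prime $D$ one has $[\different{\kb}]=1$ (the ramified prime is $2$-torsion, hence trivial), the class appearing there collapses to $[h]^{2}[\fraka]^{2}$, using that $\rho(\cdot,\calC)=\rho(\cdot,\calC^{-1})$ because conjugate ideals share their norm. This yields the stated identity.

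For the general case \emph{(i)} the new ingredient is the relative norm $\N_{\Hkb/L}$. By multiplicativity of ideal norms, $\ord_{\frakf}(\N_{\Hkb/L}(\alpha(h,m,\beta))) = \sum_{\frakP \mid \frakf} f(\frakP/\frakf)\,\ord_{\frakP}(\alpha(h,m,\beta))$, the sum running over the primes of $\Hkb$ above the prime $\frakf$ of $L$ lying under $\frakP_{0}$. The point of choosing $L$ to be the fixed field of the elements of order $\leq 2$ is that $\Gal(\Hkb/L) = \Clk[2]$, and the class $[h]^{2}[\frakc_{0}\fraka]$ governing the representation number in \cite{ehlen-intersection} is invariant under the twist by $\sigma^{2}$ for $\sigma \in \Clk[2]$ (these twists being trivial); combined once more with Proposition~5.6 of \cite{ehlen-intersection}, every term in the orbit sum carries the same multiplicity, so the sum collapses to the single representation number $2^{o(m)-1}\nu_{p}(m)\rho(m\abs{D}/p,[h]^{2}[\frakc_{0}\fraka])$ after accounting for the residue degrees. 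The auxiliary data $\frakp_{0}$ and $\frakc_{0} = \frakp_{0}\different{\kb}$ (respectively $\frakp_{0}\frakp^{-1}\different{\kb}$ in the ramified case) are chosen precisely so that this class matches the one produced by the multiplicity formula of \cite{ehlen-intersection}.

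The main obstacle is the bookkeeping in \emph{(i)}: correctly identifying the primes above $\frakf$ together with their residue degrees, verifying that the norm-down really does collapse the Galois orbit to a single clean representation number, and matching the resulting ideal class --- including the precise power of $2$, the factor $w_{\kb}$, and the offset encoded by $\frakc_{0}$ --- against the conventions of \cite{ehlen-intersection}. Arranging $L$ to be exactly the fixed field of the $2$-torsion is what keeps the representation-number class constant along the relevant orbit, and this is the structural input that makes the clean closed form possible.
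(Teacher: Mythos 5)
Your overall frame is the right one, and it is the paper's own (implicit) route: the corollary is stated in the paper without proof, as a direct combination of Theorem \ref{thm:pre-pvals}\emph{(iii)}, the reduction to $\Diff(m)=\{p\}$ from the preceding corollary, and the explicit multiplicity formulas of \cite{ehlen-intersection} transported by the Galois equivariance of the cycles. Your treatment of part \emph{(ii)} is essentially sound: for prime $D$ the group $\Clk[2]$ is trivial, $L=\Hkb$, and one only has to match the class produced by the formulas of \cite{ehlen-intersection} (the identification of the conjugation-fixed prime relative to $\frakP_0$, the triviality of $[\different{\kb}]$, and the symmetry $\rho(\cdot,\calC)=\rho(\cdot,\calC^{-1})$), which is bookkeeping of exactly the kind you describe.

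Part \emph{(i)}, however, contains a genuine gap, and the error is structural rather than a matter of conventions. You argue that because the class $[h]^2[\frakc_0\fraka]$ is invariant under twisting by $\sigma\in\Gal(\Hkb/L)=\Clk[2]$, \emph{every} prime $\frakP\mid\frakf$ carries the same multiplicity, so the sum $\sum_{\frakP\mid\frakf} f(\frakP/\frakf)\,\ord_{\frakP}(\alpha(h,m,\beta))$ ``collapses to the single representation number.'' This presupposes that the individual multiplicity $\calZ(m,h.\fraka,h.\beta)_{\frakP}$ is itself given by the clean representation-number formula --- but that is precisely what fails for composite $D$, and it is the entire reason the corollary is formulated with $\N_{\Hkb/L}$ at all. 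If all $[\Hkb:L]=\abs{\Clk[2]}$ terms were equal to $2^{o(m)-1}\nu_p(m)\rho(m\abs{D}/p,[h]^2[\frakc_0\fraka])$, your computation would give $\abs{\Clk[2]}$ times the stated right-hand side, an inconsistency whenever $D$ is composite. The underlying arithmetic reason is that the count $\abs{L(E_\frakP,\iota_\frakP,m,\fraka,\mu)}$ depends not only on the class $[L(E_\frakP,\iota_\frakP)]\in\Pic(\OD)$ --- which is indeed constant on $\Clk[2]$-orbits of primes, since the Galois action twists it by squares --- but also on the congruence condition $x+\mu\in\calO_{E_\frakP}\fraka$, which involves the full quaternionic endomorphism ring and genuinely varies within the orbit. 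Only the \emph{sum} over the orbit is a single representation number; this is the content of the propositions of \cite{ehlen-intersection} being invoked, and it is stated explicitly in the paper's remark following Corollary \ref{cor:scalar-pval}: the simple formula for the vector-valued case ``is only possible after passing to the fixed field of elements of order two in $\Gal(\Hkb/\kb)$,'' because the scalar case corresponds to forgetting exactly these congruence conditions. A correct proof of \emph{(i)} must therefore take the orbit-sum formula from \cite{ehlen-intersection} as the input, rather than derive it from a (false) per-prime formula. Relatedly, your residue degrees are not always $1$: for composite $D$ and $p$ ramified in $\kb$, the class $[\frakp]$ can be a nontrivial element of $\Clk[2]$, so $\frakp$ need not split completely in $\Hkb$ and the factors $f(\frakP/\frakf)$ require genuine attention rather than the passing mention you give them.
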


Note that we did not use the known explicit formulas for $\calE_P(\tau)$ so far.
Employing these formulas \cite{KudlaYangEisenstein}, however, we obtain another corollary.
\begin{corollary}
  \label{cor:calE=preE}
  With the same notation as in Theorem \ref{thm:pre-pvals}, we have
  \[
    \pre{E}_P(\tau) = \calE_{P}(\tau),
  \]
  where $\calE_{P}(\tau) = \frac{\partial}{\partial s}\hat E_P(\tau,s)\mid_{s=0}$
  is defined in \eqref{eq:calE}.
\end{corollary}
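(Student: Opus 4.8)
The plan is to show $\calE_P = \pre{E}_P$ by proving that their difference is a holomorphic weight one modular form which must vanish. First I would observe that the two forms have the same image under $\xi_1$: by construction $\xi_1(\pre{E}_P) = E_P$ (Theorem \ref{thm:pre-pvals}~\emph{(i)}), while $\xi_1(\calE_P) = E_P$ by the defining property of the incoherent Eisenstein series recalled after \eqref{eq:calE}. Hence $g := \calE_P - \pre{E}_P$ lies in $\ker\xi_1 = M_{1,P^-}^!$ by the exact sequence \eqref{ex-sequ}; in particular $g$ is weakly holomorphic, so $g^- = 0$ and, as a byproduct, the non-holomorphic parts of $\calE_P$ and $\pre{E}_P$ coincide automatically. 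Next I would check that both forms have vanishing principal part: for $\pre{E}_P$ this is part of Theorem \ref{thm:pre-pvals}~\emph{(i)} (and Theorem \ref{thm:arithgen}), and for $\calE_P$ it follows from \eqref{eq:eisholdeg}, where the holomorphic part of $-h_\kb\calE_P$ is supported on indices $m \ge 0$. Consequently $g$ has no principal part, so $g = g^+ \in M_{1,P^-}$ is a \emph{holomorphic} weight one form.

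It then remains to show $g = 0$. Here I would invoke the explicit Kudla--Yang formulas \cite{KudlaYangEisenstein} for the Fourier coefficients of $\calE_P$, equivalently the identification \eqref{eq:eisholdeg} of its holomorphic part with the degree generating series of the cycles $\calZ(m)$ established in \cite{kry-tiny}, together with Theorem \ref{thm:arithgen}, which computes the holomorphic part $\pre{E}_P^{+}$ as the \emph{same} degree generating series. Comparing these two expansions shows that every Fourier coefficient of $g$ of positive index vanishes. Since $P'/P$ is anisotropic, the constant term of any element of $\calH_{1,P^-}$ is supported on $\phi_0$, so we are left with $g = c_0\phi_0$ for some constant $c_0 \in \C$.

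Finally, $g$ is a holomorphic modular form of weight one transforming under $\rho_{P^-}$, and a nonzero constant cannot be such a form: applying the transformation law at the generator $S$ gives the identity $c_0\phi_0 = \tau\,\rho_{P^-}(S)(c_0\phi_0)$, whose right-hand side is $c_0\tau$ times a fixed nonzero vector and hence non-constant in $\tau$ unless $c_0 = 0$. Therefore $g = 0$ and $\calE_P = \pre{E}_P$.

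The step I expect to be the main obstacle is the coefficient comparison: one must reconcile the normalizations between \eqref{eq:eisholdeg}, phrased in terms of the pushed-forward cycles $\calZ(m)$ and carrying the factor $\Lambda'(\chi_D,0)$, and Theorem \ref{thm:arithgen}, phrased in terms of the cycles $\calZ(m,\fraka,\mu)$ and an unspecified constant, tracking the factors of $w_\kb$ and the averaging over $\Clk$ (equivalently $\Gal(\Hkb/\kb)$) carefully enough to see that the positive-index coefficients genuinely coincide. A pleasant feature of this approach is that the modularity input in the last step forces the two constant terms to agree \emph{a posteriori}, so one never needs to evaluate the constant $c$ of Theorem \ref{thm:arithgen} directly.
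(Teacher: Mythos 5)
Your proof is correct and follows essentially the same route as the paper: the paper gives no written argument beyond the remark that the corollary follows by ``employing'' the explicit Kudla--Yang formulas, i.e.\ by comparing the holomorphic part of $\calE_P$ (identified in \cite{kry-tiny} and \cite{bryfaltings} with the degree generating series of the cycles) with the coefficients of $\pre{E}_P$ supplied by Theorem \ref{thm:pre-pvals} and Theorem \ref{thm:arithgen}, which is exactly your comparison. Your structural wrapper --- the difference lies in $M_{1,P^-}$ since both forms have $\xi_1$-image $E_P$ and vanishing principal part, the positive-index coefficients cancel by the two generating-series identifications, and the remaining constant $c_0\phi_0$ is forced to vanish by the weight-one transformation law --- is a correct and slightly more careful write-up of that same comparison.
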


Using the pairing \eqref{eq:pairing}, the following Corollary is immediate.
\begin{corollary}
  \label{cor:ct}
   The constant term of $\pre{\Theta}_{P}(\tau,h)$ is given by
    \[
      c_{P}^{+}(h,0,0) = -\sum_{\beta \in L'/L} \sum_{m > 0} c_{P}^{+}(h,-m,\beta)\tilde\rho(m,\beta)
                        - 2 \frac{\Lambda'(\chi_{D},0)}{\Lambda(\chi_{D},0)},
    \]
    where $\tilde\rho(m,\beta)$ is the coefficient of index $(m,\beta)$ of $E_P(\tau)$.
\end{corollary}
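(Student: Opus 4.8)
The plan is to read the identity straight off the pairing \eqref{eq:pairing}, applied to the holomorphic Eisenstein series $E_P\in M_{1,P}$ and the harmonic weak Maa\ss{} form $f=\pre{\Theta}_P(\tau,h)\in\calH_{1,P^-}$. The one subtlety is that $\pre{\Theta}_P(\tau,h)$ does \emph{not} lie in $H_{1,P^-}$, since its shadow $\xi_1(\pre{\Theta}_P(\tau,h))=\Theta_P(\tau,h)$ is not a cusp form; hence Lemma \ref{lem:pairing} does not apply verbatim and the integral defining $\{E_P,\pre{\Theta}_P(\cdot,h)\}$ must be regularized. Nonetheless, the Stokes-theorem computation carried out in the proof of Theorem \ref{thm:value-phiz} applies in the present signature $(2,0)$ situation (so that no $\Theta_{N^-}$ factor occurs), with the input form $f$ there taken to be $E_P$: since $E_P$ is holomorphic the analogue of the term \eqref{int4} drops out, the boundary integral reduces via Lemma \ref{lem:preimdifform} to the horizontal segment at height $T$, and the non-holomorphic part $\pre{\Theta}_P^-(\tau,h)$ contributes nothing as $T\to\infty$ by exactly the estimates used there for \eqref{int2} and \eqref{int3}.

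First I would thereby establish the regularized coefficient identity
\[
  \{E_P,\pre{\Theta}_P(\cdot,h)\}
   = \CT\bigl(\langle E_P(\tau),\prep{\Theta}_P(\tau,h)\rangle\bigr)
   = \sum_{\beta\in P'/P}\sum_{j\ge 0}\tilde\rho(j,\beta)\,c_P^+(h,-j,\beta).
\]
By \eqref{eq:EPFourier} the constant term of $E_P$ is $\phi_0$, so $\tilde\rho(0,0)=1$ and $\tilde\rho(0,\beta)=0$ for $\beta\neq0$; isolating the $j=0$ summand yields
\[
  \{E_P,\pre{\Theta}_P(\cdot,h)\}
  = c_P^+(h,0,0)+\sum_{\beta\in P'/P}\sum_{m>0}c_P^+(h,-m,\beta)\,\tilde\rho(m,\beta).
\]
This is exactly the (negated) right-hand side of the assertion apart from the term $-2\Lambda'(\chi_D,0)/\Lambda(\chi_D,0)$, so it remains to identify the left-hand side with that quantity.

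Next I would evaluate the left-hand side analytically. By \eqref{eq:pairing}, $\{E_P,\pre{\Theta}_P(\cdot,h)\}=(E_P,\xi_1(\pre{\Theta}_P(\cdot,h)))^{\reg}=(E_P,\Theta_P(\cdot,h))^{\reg}$, and the decomposition \eqref{eq:thPdec} gives $\Theta_P(\tau,h)=E_P(\tau)+g_P(\tau,h)$ with $g_P(\cdot,h)\in S_{1,P}$. The cuspidal contribution $(E_P,g_P(\cdot,h))$ is absolutely convergent and vanishes: unfolding the Eisenstein series \eqref{eq:holeis} against $g_P$ extracts the constant Fourier coefficient of $g_P$, which is zero. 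Hence $\{E_P,\pre{\Theta}_P(\cdot,h)\}=(E_P,E_P)^{\reg}$, the regularized Petersson norm of the weight-one Eisenstein series.

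The main obstacle is then the evaluation $(E_P,E_P)^{\reg}=-2\,\Lambda'(\chi_D,0)/\Lambda(\chi_D,0)$. This is a Kronecker-limit-type computation: pairing $E_P(\cdot,0)$ with $E_P(\cdot,s)$, unfolding, and invoking the functional equation of $E_P(\tau,s)$ expresses the regularized value at $s=0$ through the logarithmic derivative of the completed $L$-function $\Lambda(\chi_D,s)$. Equivalently, it is precisely the constant term of the derivative of the incoherent Eisenstein series and can be quoted from the computations of Schofer \cite{schofer} and Kudla--Yang \cite{KudlaYangEisenstein}; it is moreover consistent with Corollary \ref{cor:calE=preE} upon averaging over $h\in\CPK$ and using that $\pre{E}_P$ has vanishing principal part. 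Combining this value with the coefficient identity above and rearranging gives the stated formula for $c_P^+(h,0,0)$.
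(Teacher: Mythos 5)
Your proposal is correct and is essentially the paper's own argument: the corollary is precisely the regularized pairing \eqref{eq:pairing} of $E_P$ against $\pre{\Theta}_{P}(\tau,h)$, read once as the constant term $\CT\bigl(\langle E_P(\tau),\prep{\Theta}_{P}(\tau,h)\rangle\bigr)$ via the Stokes computation of Theorem \ref{thm:value-phiz} (with input $f=E_P$, signature $(2,0)$), and once as the regularized Petersson product, where orthogonality kills the cuspidal piece $g_P(\tau,h)$ and the remaining Eisenstein contribution is $-2\Lambda'(\chi_D,0)/\Lambda(\chi_D,0)$. The only cosmetic difference is that the paper can treat this as ``immediate'' by splitting $\pre{\Theta}_{P}(\tau,h)=\pre{E}_P(\tau)+\pre{g}_P(\tau,h)$, applying Lemma \ref{lem:pairing} to $\pre{g}_P(\tau,h)\in H_{1,P^-}$ (where no regularization is needed) and quoting the constant term of $\pre{E}_P=\calE_P$ from Corollary \ref{cor:calE=preE} and the explicit formulas, whereas you rerun the regularized Stokes argument and evaluate $(E_P,E_P)^{\reg}$ directly --- the same two inputs, assembled in a different order.
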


\section{The scalar valued case}
\label{sec:proof-thm2}
To complete the picture and give a slightly more classical description of our results,
we briefly describe the scalar valued case in this section.
Throughout, we let $D<0$ with $D \equiv 1 \bmod 4$ be a fundamental discriminant
  and let $\frakA \in \Clk/\Clks$ be a genus.
For every class $[\fraka] \in \frakA$, there is a theta function $\theta_\fraka(\tau)$ contained in $M_1(\abs{D}, \chi_D)$.
A special case of the Siegel-Weil formula states that
\[
   \sum_{\frakb \in \Clk} \theta_{\fraka\frakb^2}(\tau) = h_\kb E_\frakA(\tau),
\]
where $E_{\frakA}(\tau)$ is the normalized genus Eisenstein series attached to $\frakA$.
It is well known that
\[
E_\frakA(\tau) = 1 + \frac{w_\kb}{h_\kb}\sum_{n=1}^\infty \rho_\frakA(n) e(n \tau),
\]
where $\rho_\frakA(n)$ is equal to the number of integral ideals of norm $n$ in the genus $\frakA$.
Note that $E_\frakA(\tau)$ is the $\phi_0$-component of $E_P(\tau)$.

\begin{corollary}
  \label{cor:scalar-pre}
  For every $[\fraka] \in \frakA$, there is a
  $\pre{\theta}_\fraka \in \calH_{1}(\abs{D},\chi_D)$, where $\chi_{D}$ is given by the Kronecker symbol, with the following properties. We write $c_\fraka^+(n)$ for the coefficients of the holomorphic part of $\pre{\theta}_\fraka$.
  \begin{enumerate}
  \item We have $\xi(\pre{\theta}_{\fraka})=\theta_{\fraka}$ and
        \[
          \sum_{[\frakb] \in \Clk} \pre{\theta}_{{\frakb}^2\fraka} = h_\kb\, \pre{E}_{\frakA},
        \]
        where $\xi(\pre{E}_{\frakA}) = E_{\frakA}$ and the principal part of
        $\pre{E}_{\frakA}$ vanishes.
  \item For all $n > 0$ we have
    \[
      c_{\fraka}^{+}(n) = - \frac{2}{r} \log\abs{\alpha(\fraka,n)},
    \]
    where $\alpha(\fraka,n) \in \calO_{\Hkb}$ and $r \in \Z_{>0}$ is independent of $n$.
  \item Finally, for all $n < 0$, we have that
    \[
      c_{\fraka}^{+}(n) = - \frac{2}{r} \log\abs{\alpha(\fraka,n)},
    \]
    where $\alpha(\fraka,n) \in \calO_{\Hkb}^{\times}$.
  \end{enumerate}
\end{corollary}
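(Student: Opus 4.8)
The plan is to deduce Corollary \ref{cor:scalar-pre} from Theorem \ref{thm:pre-pvals} by transcribing the vector-valued statements into classical, scalar-valued language. The essential device is the standard dictionary between $S_P$-valued modular forms for the Weil representation $\rho_P$ attached to $P=\calO_\kb$ and scalar-valued forms of weight one, level $\abs{D}$ and nebentypus $\chi_D$, realized by passing to the $\phi_0$-component; this is precisely the projection under which, as recalled in this section, the genus Eisenstein series $E_\frakA$ is the $\phi_0$-component of $E_P$. For prime $D$ this is the isomorphism of Bruinier and Bundschuh, and for composite odd fundamental $D$ one combines it with genus theory. Under this correspondence the $\phi_0$-component of $\Theta_P(\tau,h)$ is the scalar theta function $\theta_\fraka$ of the ideal class determined by $h$, and the $\phi_0$-component map sends harmonic weak Maa\ss{} forms for $\rho_{P^-}$ to harmonic weak Maa\ss{} forms in $\calH_1(\abs{D},\chi_D)$. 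The plan is therefore to define $\pre{\theta}_\fraka$ as the scalar form attached in this way to the form $\pre{\Theta}_P(\tau,h)$ produced by Theorem \ref{thm:pre-pvals}.

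First I would record that this projection intertwines the two $\xi_1$-operators, is compatible with the splitting $f=f^++f^-$, and reads off the scalar Fourier coefficients directly from the $\beta=0$ components, so that $c_\fraka^+(n)=c_P^+(h,n,0)$ for integer $n$. From this, $\xi(\pre{\theta}_\fraka)=\theta_\fraka$ is immediate from $\xi(\pre{\Theta}_P(\tau,h))=\Theta_P(\tau,h)$ in Theorem \ref{thm:pre-pvals}(i), and statements (ii) and (iii) follow verbatim: for $n>0$, parts (ii) and (iii) of Theorem \ref{thm:pre-pvals} give $c_\fraka^+(n)=-\tfrac{2}{r}\log\abs{\alpha(\fraka,n)}$ with $\alpha(\fraka,n)\in\calO_{\Hkb}$ an algebraic integer whose prime factorization is governed by the cycles $\calZ(n,\fraka,0)$, while for $n<0$ part (iv) yields a unit $\alpha(\fraka,n)\in\calO_{\Hkb}^\times$. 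The integrality (resp.\ unit) assertion is preserved because each scalar coefficient is a finite product of the $\alpha(h,m,\beta)$ raised to integer powers, and the integer $r$ is the one furnished by Proposition \ref{prop:independence}, hence independent of $n$.

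Next I would establish the averaging relation (i). Applying the $\phi_0$-component map to the vector-valued Siegel--Weil identity of Proposition \ref{SumG=E}, namely $\tfrac{1}{h_\kb}\sum_{h\in\CPK}\pre{\Theta}_P(\tau,h)=\pre{E}_P$, one obtains the scalar identity $\sum_{[\frakb]\in\Clk}\pre{\theta}_{\frakb^2\fraka}=h_\kb\,\pre{E}_\frakA$, where $\pre{E}_\frakA$ is the scalar form attached to $\pre{E}_P$ and $\frakA$ is the genus of $\fraka$; here I would invoke the scalar Siegel--Weil formula $\sum_\frakb\theta_{\fraka\frakb^2}=h_\kb E_\frakA$ recalled at the start of the section to identify the shadow. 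The assertions $\xi(\pre{E}_\frakA)=E_\frakA$ and the vanishing of the principal part of $\pre{E}_\frakA$ are then inherited directly from the corresponding statements for $\pre{E}_P$ in Theorem \ref{thm:pre-pvals}(i).

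The hard part will be making the dictionary precise when $D$ is composite rather than prime. In that case there are several genera, and one must verify that the $\phi_0$-component projection matches the scalar theta functions $\theta_{\frakb^2\fraka}$ and the genus Eisenstein series $E_\frakA$ with the correct pieces of the vector-valued objects; concretely, that summing over a genus on the scalar side corresponds to the $\phi_0$-projection of the sum over $\CPK\cong\Clk$ on the vector-valued side, which amounts to the genus-theoretic fact that the $\phi_0$-component of $\Theta_P(\tau,h)$ is supported on the genus $\frakA$. Once this bookkeeping is in place---for prime $D$ it is the classical Bruinier--Bundschuh isomorphism---the remaining verifications are the routine compatibilities listed above, and no arithmetic input beyond Theorem \ref{thm:pre-pvals} is required.
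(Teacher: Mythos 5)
Your proposal is correct and follows essentially the same route as the paper: the paper's proof simply sets $\pre{\theta}_{\fraka\frakb^{2}}(\tau) = \pre{\Theta}_{P,0}(\tau,h)$ with $P=\fraka$ and $(h)=\frakb$, i.e.\ takes the $\phi_0$-component of the vector-valued form from Theorem \ref{thm:pre-pvals}, and observes that since $\theta_\fraka = \Theta_{P,0}$ all assertions (including the averaging identity via Proposition \ref{SumG=E} and the coefficient statements via parts (ii)--(iv) of the theorem) carry over componentwise. The genus bookkeeping you flag as the ``hard part'' is exactly the observation that the zeroth components of $\Theta_P(\tau,h)$, $h\in\CPK\cong\Clk$, run over the theta functions $\theta_{\fraka\frakb^2}$ of the genus $\frakA$, which the paper treats as immediate.
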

\begin{proof}
  All statements follow from Theorem \ref{thm:pre-pvals} by considering
  \[
    \pre{\theta}_{\fraka\frakb^{2}}(\tau) = \pre{\Theta}_{P,0}(\tau,h) \in \calH_{1}(\abs{D},\chi_{D})
  \]
  where $P = \fraka$, $(h) = \frakb$ and
  \[
    \pre{\Theta}_{P}(\tau,h) = \sum_{\beta \in P'/P} \pre{\Theta}_{P,\beta}(\tau,h) \phi_{\beta}.
  \]
  Since we have $\theta_{\fraka}(\tau) = \Theta_{P,0}(\tau)$,
  we conclude $\xi(\pre{\theta}_{\fraka}) = \theta_{\fraka}(\tau)$.
\end{proof}

\begin{corollary}
  \label{cor:scalar-pval}
  We use the same notation as in Corollary \ref{cor:scalar-pre}.
  \begin{enumerate}
    \item We have $\ord_{\frakP}(\alpha(\fraka,n)) = 0$, unless $\abs{\Diff(n)} = 1$.
    \item
      If $\Diff(n) = \{p\}$, then
      \[
        \ord_{\frakP_0}(\alpha(\fraka,n))
          = r \cdot w_\kb \cdot \nu_p(n) \rho(n \abs{D} / p, [\fraka\frakc_0]),
      \]
      where $\frakP_{0} \mid p$ is the distinguished prime as above.
    \item The Shimura reciprocity
          \[
            \ord_{\frakP_0^{\sigma}}(\alpha(\fraka,n)) = \alpha(\frakb^{-2}\fraka,n),
          \]
          holds, where $\sigma = \sigma(\frakb)$.
    \item For the coefficients in the principal part, we have that $\alpha(\fraka,n) \in \calO_{\Hkb}^{\times}$.
  \end{enumerate}
\end{corollary}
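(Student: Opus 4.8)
The plan is to deduce the whole corollary from the vector-valued main result, Theorem~\ref{thm:pre-pvals}, through the dictionary already set up in the proof of Corollary~\ref{cor:scalar-pre}. There the scalar form $\pre{\theta}_{\fraka}$ was identified with the $\phi_0$-component $\pre{\Theta}_{P,0}(\tau,h)$ of a vector-valued preimage, where $P$ is a fixed integral ideal representing the genus $\frakA$ and $(h)=\frakb$ records the class of $\fraka$ inside $\frakA$ via $[\fraka]=[P][\frakb]^2$. Under this identification the algebraic number $\alpha(\fraka,n)$ is literally the number $\alpha(h,n,0)$ furnished by Theorem~\ref{thm:pre-pvals}, so every claim about $\ord_{\frakP}(\alpha(\fraka,n))$ becomes a claim about the cycle multiplicity $\calZ(n,h.\fraka,0)_{\frakP}$.

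First I would settle \emph{(i)} and \emph{(iv)}. For $n<0$, part \emph{(iv)} of Theorem~\ref{thm:pre-pvals} gives $\alpha(h,n,0)\in\calO_{\Hkb}^{\times}$, which is exactly \emph{(iv)} and forces all orders to vanish. For $n>0$, part \emph{(iii)} of that theorem writes $\ord_{\frakP}(\alpha(h,n,0))=r\,w_{\kb}\,\calZ(n,h.\fraka,0)_{\frakP}$, and the vanishing statement of Proposition~\ref{prop:Zprime-intro} (equivalently the first part of Corollary~\ref{cor:coeffs-explicit}) shows this multiplicity is zero unless $\abs{\Diff(n)}=1$; matching the normalization $Q(x)=\N(x)/\N(\fraka)$ on $P=\fraka$ to the convention $\Diff(n)=\{p : (-n\N(\fraka),D)_p=-1\}$ then yields \emph{(i)}.

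Next, for \emph{(ii)} I would feed the explicit cycle computation of \cite{ehlen-intersection} into this multiplicity. Specializing Corollary~\ref{cor:coeffs-explicit} to $\beta=0$ and evaluating at the distinguished self-conjugate prime $\frakP_0\mid p$ (attached to $(E_0,\iota_0)$ with $[L(E_0,\iota_0)]=[\frakp_0]^{-1}$) expresses $\calZ(n,h.\fraka,0)_{\frakP_0}$ through $\nu_p(n)$ and a representation number $\rho(n\abs{D}/p,\,[h]^2[\frakc_0 P])$. The class in the representation number then collapses to $[\fraka\frakc_0]$, since $[h]^2[\frakc_0 P]=[\frakc_0\fraka]$ by the genus decomposition $[\fraka]=[P][\frakb]^2$ and $[h]=[\frakb]$; reading this off and multiplying by $r\,w_{\kb}$ gives the asserted formula.

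Finally, \emph{(iii)} is the Shimura reciprocity. I would obtain it by propagating the Galois equivariance of the CM values, Lemma~\ref{lem:idelesact} and Lemma~\ref{lem:Psirat-idelesact}, through the construction, which yields $\alpha(\fraka,n)=\alpha(\frakb^{-2}\fraka,n)^{\sigma(\frakb)}$ (the displayed line being a transcription of this, with an evident missing $\ord_{\frakP_0}$ on the right-hand side), together with the compatible action on the cycles from Proposition~5.6 of \cite{ehlen-intersection}. The main difficulty throughout is bookkeeping rather than analysis: one must keep the genus-versus-squares splitting, the auxiliary twist $\frakc_0$, the $\N(\fraka)$-normalization of $\Diff$, the power-of-two factors $2^{o(n)}$ that appear in the general multiplicity formula, and the choice of the self-conjugate prime $\frakP_0$ all mutually consistent, so that the vector-valued identities specialize cleanly to their scalar counterparts.
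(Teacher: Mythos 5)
Your reduction to Theorem \ref{thm:pre-pvals} through the dictionary $\pre{\theta}_{\fraka\frakb^2}(\tau) = \pre{\Theta}_{P,0}(\tau,h)$ with $P=\fraka$, $(h)=\frakb$ is exactly the paper's route, and parts \emph{(i)}, \emph{(iii)} and \emph{(iv)} go through essentially as you describe. The gap is in part \emph{(ii)}, which is the heart of the corollary. You propose to obtain it by ``specializing Corollary \ref{cor:coeffs-explicit} to $\beta=0$ and evaluating at the distinguished prime $\frakP_0$'', but for a composite odd fundamental discriminant that corollary says nothing about valuations at $\frakP_0$: its part \emph{(i)} only computes $\ord_{\frakf}\bigl(\N_{\Hkb/L}(\alpha(h,m,\beta))\bigr)$, that is, the \emph{sum} of the valuations over all primes of $\Hkb$ lying above the prime $\frakf$ of the subfield $L$ fixed by the $2$-torsion of $\Gal(\Hkb/\kb)$, and its part \emph{(ii)} is restricted to prime discriminants. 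Knowing this sum does not determine the individual valuation $\ord_{\frakP_0}(\alpha(\fraka,n))$. The powers of two that you set aside as bookkeeping are precisely the obstruction: for $n \in \Z_{>0}$ and $\beta = 0$ one has $\ord_q(n\abs{D})>0$ for \emph{every} prime $q \mid D$, so $o(n)$ equals the number of prime divisors of $D$, and the factor $2^{o(n)-1}$ in Corollary \ref{cor:coeffs-explicit} is nontrivial whenever $D$ is composite, whereas the formula to be proved carries no such factor. Your argument does suffice when $D=-l$ is prime (there part \emph{(ii)} of Corollary \ref{cor:coeffs-explicit} applies and $o(n)=1$), but the whole point of this corollary is its validity for composite odd fundamental discriminants.

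What the paper actually does for \emph{(ii)} is to go back to the \emph{proofs} (not the statements) of Propositions 3.8 and 3.9 of \cite{ehlen-intersection}, set $\mu = 0$ there, and observe that for integral $m$ the condition $\lambda_\fraka x \in \fraka$ in the notation of loc.\ cit.\ (the congruence condition $x + \mu \in \calO_{E}\fraka$ in the definition of $L(E,\iota,m,\fraka,\mu)$) is automatically satisfied and may be dropped. It is exactly this congruence condition that distinguishes the various primes of $\Hkb$ above a fixed prime of $L$ and is responsible for the factor $2^{o(m)-1}$ and for the passage to the subfield; once it is superfluous, the count at each individual prime $\frakP_0^{\sigma}$ collapses to a single representation number $\nu_p(n)\,\rho(n\abs{D}/p,[\,\cdot\,])$, yielding the clean formula at $\frakP_0$ inside $\Hkb$ itself. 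This moduli-theoretic simplification is the missing idea in your proposal; the remark immediately following the corollary in the paper makes this exact point, namely that the scalar-valued case ``forgets'' the congruence conditions which in the vector-valued case force one down to the subfield $L$.
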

\begin{proof}
  This follows from Theorem \ref{thm:pre-pvals} and the proof of Propositions 3.8 and 3.9 in \cite{ehlen-intersection}
  by setting $\mu = 0$ and noting that the condition $\lambda_\fraka x \in \fraka$ is superfluous if $m \in \Z$ (using the notation of loc. cit.).
\end{proof}

\begin{remark}
  Note that we are able to give a relatively simple formula for the $\frakP$-valuations
  of the coefficients in comparison to the vector valued case, where this is only possible after passing to the fixed field of elements of order two in the Galois group $\Gal(\Hkb/\kb)$. In fact, considering scalar valued theta series corresponds to ``forgetting'' the additional congruence conditions that the vector valued forms keep track of.
\end{remark}

\subsection{The conjecture of Duke and Li}
\label{sec:conjecture-duke-li}

In \cite{DukeLiMock}, W. Duke and Y. Li also study the scalar valued case.
Motivated by numerical experiments, the authors were led to formulate a conjecture, which we state now, adapted to our notation.

Let $\kb = \Q(\sqrt{-l})$ for a prime $l \equiv 3 \bmod{4}$, $l>3$. We consider the lattice $P = \calO_\kb$ given by the ring of integers in $\kb$,
which is sufficient because there is only one genus in this case. 
We note that the preimages are normalized differently in op. cit. than in the previous section, leading to a slightly different result
if we compare the conjecture with Corollary \ref{cor:scalar-pval}.

Now let $p$ a prime that is non-split in $\kb$, let $\frakP_{0} \mid p$ be the unique prime above $p$
fixed by complex conjugation, and $\frakP^{\sigma_{\frakb}} = \frakP_{0}$ for a fractional ideal $\frakb$.
\begin{conjecture}[\cite{DukeLiMock}] \label{con:dukelipval}
The functions $\pre{\theta}_\fraka(\tau)$ can be chosen such that for $n \in \Z_{>0}$ with $\chi_{p}(n) \neq 1$, we have
\[
  c^{+}_{\fraka^2}(n) = - \frac{2}{r} \log\abs{u(\fraka^2,n)}
\]
with $u(\fraka^2,n) \in \calO_{\Hkb}$ and, if $\Diff\left(\tfrac{n}{\abs{D}}\right) = \{p\}$, then
\begin{equation}
  \label{eq:dukelipval}
  \ord_{\frakP}(u(\fraka^2,n)) = 2r \sum_{m \geq 1} \rho\left( \frac{n}{p^{m}},[\fraka\frakb]^{2}\right),
\end{equation}
with $r \in \Z$ independent of $n$ and divides $24h_{\kb}h_{\Hkb}$.
\end{conjecture}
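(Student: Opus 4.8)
The plan is to read off Conjecture~\ref{con:dukelipval} from the scalar-valued specialization of our main result, namely Corollaries~\ref{cor:scalar-pre} and~\ref{cor:scalar-pval}, and then to insert the explicit multiplicities of Proposition~\ref{prop:Zprime-intro}; this is the same route taken for Theorem~\ref{thm:dukeliconj}. Since $\kb=\Q(\sqrt{-l})$ with $l\equiv3\bmod4$, $l>3$, has odd class number and only one genus, every ideal class is a square, and I would take $P=\calO_\kb$ so that, via Corollary~\ref{cor:scalar-pre}, the Duke--Li preimage $\pre{\theta}_{\fraka^2}$ is the $\phi_0$-component of the vector-valued form $\pre{\Theta}_P(\tau,h)$ with $(h)=\fraka$. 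Corollary~\ref{cor:scalar-pre}(ii) then already yields the first assertion $c^+_{\fraka^2}(n)=-\tfrac{2}{r}\log\abs{u(\fraka^2,n)}$ with $u(\fraka^2,n)\in\calO_{\Hkb}$, while Corollary~\ref{cor:scalar-pval}(i) shows that $u(\fraka^2,n)$ is a unit unless the associated $\Diff$-set is a single prime; translating the Duke--Li hypothesis $\chi_p(n)\neq1$ into the representability condition of Theorem~\ref{thm:pre-pvals}(ii) is routine.

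The core computation then uses Theorem~\ref{thm:pre-pvals}(iii), which for the $\phi_0$-coefficient reads
\[
  \ord_{\frakP}\bigl(u(\fraka^2,n)\bigr)=r\,w_\kb\,\calZ(m,h.\fraka,0)_{\frakP},
\]
where $m$ denotes our index. Substituting the prime-discriminant formula $\calZ(m)_{\frakP}=2^{o(m)-1}\nu_{p}(m)\,\rho(m\abs{D}/p,[\fraka]^{-2})$ of Proposition~\ref{prop:Zprime-intro} and using $w_\kb=2$ turns this into $2^{o(m)}\,r\,\nu_{p}(m)\,\rho(m\abs{D}/p,\cdot)$, and the telescoping identity $\nu_{p}(m)\,\rho(m\abs{D}/p,\cdot)=\sum_{m'\geq1}\rho(m\abs{D}/p^{m'},\cdot)$ recorded in Theorem~\ref{thm:dukeliconj} reproduces the summation in \eqref{eq:dukelipval}. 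Two normalization matters remain. First, Duke and Li index by an integer $n$ corresponding to $m\abs{D}$ in our notation---this is exactly why their hypothesis is phrased as $\Diff(n/\abs{D})=\{p\}$---so I would match $\Diff(n/\abs{D})=\Diff(m)$ and account for the differing conventions for the preimages flagged in Section~\ref{sec:conjecture-duke-li}. Second, the class $[\fraka]^{-2}$ must be turned into $[\fraka\frakb]^2$; this follows from the Shimura reciprocity of Corollary~\ref{cor:scalar-pval}(iii) together with the opposite convention relating $\frakP$ to $\frakP_0$ (Duke--Li take $\frakP^{\sigma_\frakb}=\frakP_0$, whereas we write $\frakP=\frakP_0^{\sigma(\frakb)}$) and the Galois action on the cycles, exactly as in Corollary~\ref{cor:coeffs-explicit}(ii).

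I expect the real obstacle to be the sharp divisibility $r\mid24h_\kb h_{\Hkb}$, which this approach does \emph{not} deliver. Our only control over $r$ comes from its construction in Proposition~\ref{prop:independence}, where $r$ is forced by the integrality of the Weyl vectors, the factor $12^{A}$, a divisor $r_0\mid h_{\Hkb}$ arising from principalizing the relevant ideals, and the order of the torsion subgroup of $J_0(N)(\Q)$ (controlled by Mazur's theorem in prime level). This yields a perfectly effective bound on $r$, but in general a coarser one than $24h_\kb h_{\Hkb}$; matching the conjectured divisor would require a finer analysis of the cusp contributions and of the multiplier systems than the theta-lift machinery provides here. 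Accordingly I would establish the factorization formula~\eqref{eq:dukelipval} in full and isolate the precise divisor of $r$ as the single piece of the conjecture left open by this method.
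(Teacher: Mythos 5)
Your overall strategy (main theorem plus explicit cycle multiplicities plus the telescoping Lemma~\ref{lem:duklivsours}) is the right family of ideas, but your proposal founders on exactly the point that the paper flags in Section~\ref{sec:conjecture-duke-li} as a genuine difference, not routine bookkeeping: the choice of preimage. You take $\pre{\theta}_{\fraka^2}$ to be the $\phi_0$-component $\pre{\Theta}_{P,0}(\tau,h)$, as in Corollary~\ref{cor:scalar-pre}, and then evaluate Theorem~\ref{thm:pre-pvals}(iii) at the index $m=n/\abs{D}$ with $\beta=0$. This is inconsistent: the $\phi_0$-component has only \emph{integral} Fourier indices (the condition $m+Q(0)\in\Z$ forces $m\in\Z$), so for $\abs{D}\nmid n$ the coefficient $c_P^{+}(h,n/\abs{D},0)$ and the cycle $\calZ(n/\abs{D},h.\fraka,0)$ are empty, and your core identity $\ord_{\frakP}(u(\fraka^2,n))=r\,w_\kb\,\calZ(m,h.\fraka,0)_{\frakP}$ is vacuous in the generic case. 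If instead you match integer indices consistently ($m=n$), you land on Corollary~\ref{cor:scalar-pval}(ii), whose formula $\nu_p(n)\,\rho(n\abs{D}/p,[\fraka\frakc_0])$ genuinely differs from the conjectured $\nu_p(n/\abs{D})\,\rho(n/p,[\fraka\frakb]^2)$ when the ramified prime $l$ lies in the $\Diff$-set: for example with $D=-23$ and $n=2$ one has $\Diff(n/\abs{D})=\{23\}$, the conjecture predicts that $u(\fraka^2,2)$ is a unit, while the $\phi_0$-normalization produces a coefficient with nonzero valuation $2r\,\rho(2,\cdot)$ at the primes above $23$. So with your choice of preimage the conjectured factorization is simply false, which is why the paper warns that comparing the conjecture with Corollary~\ref{cor:scalar-pval} leads to a ``slightly different result.''

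The missing idea is the one the paper's proof of Theorem~\ref{thm:dukeli} is built on: one must take
\[
  \pre{\theta}_{\fraka^2}(\tau):=\sum_{\beta\in P'/P}\pre{\Theta}_{P,\beta}(\abs{D}\tau,h),\qquad (h)=\fraka,
\]
which lies in $\calH_1(\abs{D},\chi_D)$ by Bruinier--Bundschuh and maps to $\theta_{\fraka^2}$ under $\xi_1$ because multiplication by $\sqrt{D}$ matches norm-$m$ elements of $\diffinv{\kb}\fraka^2$ with norm-$m\abs{D}$ elements of $\fraka^2$. Its $n$-th coefficient is then a \emph{sum} $\sum_{\beta}c_P^{+}(h,n/\abs{D},\beta)$ over the components with $n/\abs{D}+Q(\beta)\in\Z$: there are two such $\beta$ when $l\nmid n$ (where $o(m)=0$) and one when $l\mid n$ (where $o(m)=1$), so the component count $2^{1-o(m)}$ combines with the factor $2^{o(m)}$ from Corollary~\ref{cor:coeffs-explicit}(ii) to give the uniform factor $2$ in \eqref{eq:dukelipval}. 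Your computation, staying on a single component, produces $2^{o(m)}r\sum_{m'\geq1}\rho(n/p^{m'},\cdot)$ and is therefore off by a factor of $2$ precisely when $l\nmid n$. Your final paragraph is fine: the paper likewise proves only the valuation statement and concedes that the divisibility $r\mid 24h_\kb h_{\Hkb}$ does not follow from this method.
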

\begin{remark}
  1) Compare with the conjecture as stated in op. cit., 
   note that the normalization of the theta functions
  differs by a factor of $1/2$ from our normalization; this leads to the factor $2$ on the right-hand side above.
  2) We added the condition $\Diff\left(\tfrac{n}{\abs{D}}\right) = \{p\}$ because it is needed for the conjecture to hold
  and be compatible with the Siegel-Weil formula (which essentially corresponds to (iv) in Theorem 1.1 of op. cit.).
\end{remark}

We will see below that our results imply the conjecture regarding the valuations,
but we were not able to show the explicit bound $24h_{k}h_{\Hkb}$ for $r$.

The next lemma gives a simpler expression for the right-hand side of \eqref{eq:dukelipval},
relating it to our formula.
\begin{lemma} \label{lem:duklivsours}
  If $n \in \Z_{>0}$ with $\Diff\left(\tfrac{n}{\abs{D}}\right) = \{p\}$, then
  \[
     \sum_{m \geq 1} \rho\left( \frac{n}{p^{m}}, [\frakc] \right) = \nu_{p}\left(\tfrac{n}{\abs{D}}\right)\, \rho(n/p, [\frakc])
  \]
for any class $[\frakc] \in \Clk$.
\end{lemma}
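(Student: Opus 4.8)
The plan is to prove the identity by a direct count of integral ideals, exploiting that $p$ is non-split in $\kb$. Write $e = \ord_p(n)$ and factor $n = p^e n'$ with $p \nmid n'$, so that $n/p^m = p^{e-m} n'$. Since $p$ is non-split there is a unique prime $\frakp \subset \OD$ above $p$, and by unique factorization of ideals every integral ideal $\frakb$ of norm $p^{e-m}n'$ splits uniquely as $\frakb = \frakp^{a}\,\frakb'$ with $\frakb'$ coprime to $p$; comparing norms forces $\N(\frakp^a) = p^{e-m}$ and $\N(\frakb') = n'$. In particular the $\frakp$-part is determined by the norm, and it is principal: in the inert case $\frakp = (p)$, while in the ramified case $D = -l$ is prime, the class number $h_\kb$ is odd, so $\Clk$ has no $2$-torsion and the ramified prime $\frakl$ (with $\frakl^2 = (l)$) is principal.

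Consequently $[\frakb] = [\frakb']$, and I would conclude that $\rho(n/p^m, [\frakc])$ equals $\rho(n', [\frakc])$ whenever a power $\frakp^a$ of norm $p^{e-m}$ exists, and vanishes otherwise. This reduces the identity to counting the admissible $m$ in the range $1 \le m \le e$ and checking that this count equals $\nu_p(n/\abs{D})$, together with the observation that $\rho(n/p, [\frakc]) = \rho(n', [\frakc])$.

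In the ramified case $p = l$ every $a = e - m \ge 0$ is admissible (as $\N(\frakl^a) = l^a$), so the admissible $m$ are $1 \le m \le e$, giving a count of $e = \nu_p(n/\abs{D})$; since $\ord_l(\abs{D}) = 1$ one also has $\rho(n/l, [\frakc]) = \rho(l^{e-1}n', [\frakc]) = \rho(n', [\frakc])$, and both sides equal $e\,\rho(n', [\frakc])$. In the inert case $\N(\frakp^a) = p^{2a}$, so admissibility requires $e - m$ to be even and non-negative. Here I use the hypothesis $\Diff(n/\abs{D}) = \{p\}$: since $\kb_p/\Q_p$ is the unramified quadratic extension, the Hilbert symbol satisfies $(-n/\abs{D}, D)_p = (-1)^{\ord_p(n/\abs{D})} = (-1)^e$ (using $p \nmid \abs{D}$), so $p \in \Diff(n/\abs{D})$ forces $e$ to be odd. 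The admissible $m$ are then the odd integers $1, 3, \dots, e$, of which there are $(e+1)/2 = \nu_p(n/\abs{D})$, and $\rho(n/p, [\frakc]) = \rho(p^{e-1}n', [\frakc]) = \rho(n', [\frakc])$ since $e-1$ is even; both sides again agree.

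The only genuinely delicate points are the two inputs that make the class shift by the $\frakp$-part disappear: the parity $e \equiv 1 \bmod 2$ in the inert case, which I extract from $\Diff(n/\abs{D}) = \{p\}$ via the unramified norm symbol, and the principality of $\frakl$ in the ramified case, which relies on $D = -l$ being a prime discriminant (equivalently, $h_\kb$ odd). I expect the bookkeeping for $p = 2$ (inert) to need the dyadic norm symbol, but the same formula $(-n/\abs{D}, D)_2 = (-1)^{\ord_2(n/\abs{D})}$ holds because $\kb_2/\Q_2$ is again unramified, so no new difficulty arises.
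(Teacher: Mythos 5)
Your proof is correct and takes essentially the same route as the paper's: split into the inert and ramified cases, factor out the (principal) $\frakp$-part of each ideal so that every nonvanishing term equals $\rho(n/p,[\frakc])$, and count the admissible $m$ using the parity of $\ord_{p}(n)$ forced by $\Diff(n/\abs{D})=\{p\}$. Your write-up is merely more explicit on two points the paper leaves implicit (the unramified Hilbert-symbol computation giving the parity, and the principality of the ramified prime, i.e.\ $\frakl=(\sqrt{-l})$), but the underlying argument is identical.
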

\begin{proof}
Suppose that $p \neq l$. Then $\chi_{l}(-n) = -1$, $l \nmid n$ and
we only have a non-zero contribution if $\ord_{p}(n) > 0$ and $\ord_{p}(n)$ is necessarily odd.
More precisely, we have
\[
  \rho\left(\frac{n}{p^{m}}, [\frakc] \right) =
  \begin{cases}
    0, &\text{ if } m \text{ is even}, \\
    \rho\left( n/p, [\frakc] \right), &\text{ if } m \text{ is odd.}
  \end{cases}
\]
Since $p$ is inert, there are no ideals of norm $p$.
Therefore, the map $\frakc \mapsto p\frakc$ establishes a bijection between
ideals of norm $\N(\frakc)$ and ideals of norm $p^{2}\N(\frakc)$,
leaving the class invariant. Thus, we obtain the contribution
$\rho(n/p, [\frakc])$ times $\frac{1}{2}(\ord_{p}(n)+1)$, as in our formula.

The case $p = l$ is similar, but this time we obtain $\ord_{p}(n) \cdot \rho(n / p, [\frakc])$
because there is a unique ideal of norm $p$.
\end{proof}

\begin{theorem} \label{thm:dukeli}
  Conjecture \ref{con:dukelipval} is true.
\end{theorem}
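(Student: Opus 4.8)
The plan is to deduce Conjecture \ref{con:dukelipval} directly from the scalar-valued specialization of the main theorem, namely Corollaries \ref{cor:scalar-pre} and \ref{cor:scalar-pval}, which already package all of the arithmetic content; the remaining work is a dictionary translation between the two normalizations and index conventions. First I would invoke Corollary \ref{cor:scalar-pre} to produce the forms $\pre{\theta}_\fraka \in \calH_{1}(\abs{D},\chi_{D})$ together with algebraic numbers $\alpha(\fraka,n) \in \calO_{\Hkb}$ satisfying $c_\fraka^+(n) = -\tfrac{2}{r}\log\abs{\alpha(\fraka,n)}$ with $r$ independent of $n$; these are the candidates for the $u(\fraka^2,n)$ of op.\ cit. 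Since $l \equiv 3 \bmod 4$ and $l>3$, the class number $h_\kb$ is odd and squaring is a bijection on $\Clk$, so the relabeling $\fraka \rightsquigarrow \fraka^2$ used in op.\ cit.\ is harmless. Corollary \ref{cor:scalar-pval}(i) gives $\ord_{\frakP}(\alpha(\fraka,n))=0$ unless $\abs{\Diff(n)}=1$, matching the assertion that $u(\fraka^2,n)$ is a unit outside the single prime $p$.

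Next I would match the valuation formulas themselves. Corollary \ref{cor:scalar-pval}(ii) gives $\ord_{\frakP_0}(\alpha(\fraka,n)) = r\,w_\kb\,\nu_p(n)\,\rho(n\abs{D}/p,[\fraka\frakc_0])$ at the distinguished prime $\frakP_0 \mid p$ fixed by complex conjugation, and part (iii) records the Shimura reciprocity $\ord_{\frakP_0^{\sigma(\frakb)}}(\alpha(\fraka,n)) = \ord_{\frakP_0}(\alpha(\frakb^{-2}\fraka,n))$ governing a general prime $\frakP = \frakP_0^{\sigma(\frakb)}$. To reach the precise shape of \eqref{eq:dukelipval} I would (a) observe that the numerical prefactors, involving $w_\kb=2$ and the factor $1/2$ by which Duke and Li normalize the theta series differently, conspire to produce the constant $2r$ in \eqref{eq:dukelipval}; (b) reconcile the index conventions, which differ by a scaling by $\abs{D}$, so that the support condition $\abs{\Diff(n)}=1$ becomes $\Diff(n/\abs{D})=\{p\}$; and (c) apply Lemma \ref{lem:duklivsours}, which is tailored precisely to rewrite $\nu_p\bigl(\tfrac{n}{\abs{D}}\bigr)\rho(n/p,[\frakc]) = \sum_{m\ge 1}\rho(n/p^{m},[\frakc])$, turning our closed form into the sum in \eqref{eq:dukelipval}. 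Unwinding the reciprocity of part (iii) together with the odd-class-number identification of squares then matches the class $[\fraka\frakc_0]$ attached to $\frakP_0$ with $[\fraka\frakb]^2$ attached to $\frakP$, completing the comparison of both sides.

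The existence of a single $r \in \Z_{>0}$ valid for all $n$ (and all $h$) is exactly Proposition \ref{prop:independence}, whose proof bounds the order of the multiplier system of the relevant Borcherds products through the torsion of the Jacobian $J(\Q)$ of $X_0(N)$ and the denominators of the Weyl vectors. The hard part, and the one point where this method falls short of the literal conjecture, is the \emph{explicit} divisibility $r \mid 24\,h_\kb\,h_{\Hkb}$. The theta-lift argument only controls $r$ through the torsion order of $J(\Q)$ and the Weyl-vector denominators, which a priori may exceed $24\,h_\kb\,h_{\Hkb}$; in the prime-discriminant case one can sharpen this numerically using Mazur's determination of $J(\Q)_{\mathrm{tors}}$ for $X_0(l)$ as cyclic of order the numerator of $(l-1)/12$ (cf.\ the remark following Proposition \ref{prop:independence}), but I do not see how to force the clean bound in general. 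I would therefore present the theorem as establishing the valuation statement \eqref{eq:dukelipval} with some $n$-independent $r$, and explicitly flag the sharp bound on $r$ as the sole part of the conjecture not recovered by this approach.
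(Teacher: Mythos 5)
There is a genuine gap, and it sits exactly where the paper itself places a warning sign. You propose to obtain the conjecture from Corollaries \ref{cor:scalar-pre} and \ref{cor:scalar-pval} by a ``dictionary translation,'' but the paper states at the start of Section \ref{sec:conjecture-duke-li} that the preimages in \cite{DukeLiMock} are normalized differently from those of the previous section, ``leading to a slightly different result if we compare the conjecture with Corollary \ref{cor:scalar-pval}.'' The point is that the form of Corollary \ref{cor:scalar-pre} is the \emph{zeroth component} $\pre{\Theta}_{P,0}(\tau,h)$: its $n$-th coefficient is the vector-valued coefficient at \emph{integer} index $n$ and $\beta=0$, whose valuation is $r\,w_\kb\,\nu_p(n)\,\rho(n\abs{D}/p,[\fraka\frakc_0])$. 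The conjectured formula $2r\sum_{m\geq 1}\rho(n/p^{m},[\fraka\frakb]^{2})=2r\,\nu_p\bigl(\tfrac{n}{\abs{D}}\bigr)\rho(n/p,[\fraka\frakb]^{2})$ instead governs vector-valued data at the \emph{fractional} index $n/\abs{D}$, summed over all components $\beta$. These are different coefficients of different harmonic Maass forms (two preimages of the same theta series differing by a weakly holomorphic form), and no rescaling of indices or prefactors identifies the two formulas. Concretely, at the ramified prime $p=l$ the Corollary produces the factor $\nu_l(n)=\ord_l(n\abs{D})=\ord_l(n)+1$ and the count $\rho(n\abs{D}/l,\cdot)=\rho(n,\cdot)$, while the conjecture requires $\ord_l(n)$ and $\rho(n/l,\cdot)$; for instance, if $l\nmid n$ and $\Diff(n)=\{l\}$, the zeroth-component coefficient is generically a non-unit ($2r\,\rho(n,[\fraka\frakc_0])\neq 0$) whereas the conjecture demands valuation zero. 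So the choice of preimage you start from simply does not satisfy the conjectured formula, and steps (a)--(c) of your plan cannot be carried out.

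What the paper does instead is construct a \emph{different} scalar preimage, namely
\[
  \pre{\theta}_{\fraka^2}(\tau) := \sum_{\beta \in P'/P} \pre{\Theta}_{P,\beta}(\abs{D}\tau,h), \qquad (h)=\fraka,
\]
i.e.\ the Bruinier--Bundschuh plus-space correspondence (rescale $\tau\mapsto\abs{D}\tau$ and sum over \emph{all} components), verifies via the multiplication-by-$\sqrt{D}$ bijection that $\xi_1$ sends it to $\theta_{\fraka^2}$, and then applies the \emph{vector-valued} valuation formula (Theorem \ref{thm:pre-pvals} together with Corollary \ref{cor:coeffs-explicit}\,(ii), not Corollary \ref{cor:scalar-pval}) to each component at index $m=n/\abs{D}$. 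The crux is a counting dichotomy that your proposal has no substitute for: if $l\nmid n$ there are exactly two components $\pm\beta$ contributing, each carrying $2^{o(m)}=1$; if $l\mid n$ only $\beta=0$ contributes, but with $2^{o(m)}=2$; in either case the sum over $\beta$ yields the uniform factor $2r\,\nu_p(n/\abs{D})\,\rho(n/p,[\fraka]^{2})$, which matches \eqref{eq:dukelipval} by Lemma \ref{lem:duklivsours}. Your remarks about Proposition \ref{prop:independence} (a single $r$ for all $n$) and about the bound $r\mid 24h_\kb h_{\Hkb}$ being out of reach are correct and agree with the paper, but the core of the proof --- choosing the right preimage and the component-counting argument --- is missing from your approach and cannot be replaced by bookkeeping.
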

\begin{proof}
  To prove the theorem, we take $\pre\theta_{\fraka^2}(\tau) := \sum_{\beta \in P'/P} \pre{\Theta}_{P,\beta}(\abs{D}\tau,h)$, where $(h) = \fraka$ instead of the zeroth component as in the proof of Corollary \ref{cor:scalar-pre}. First, note that by Equation (2) in \cite{BruinierBundschuh}
$\pre\theta_{\fraka^2}(\tau) \in \calH_{1}(\abs{D},\chi_D)$.
Moreover, it is easy to see that in fact
\[ 
  \sum_{\beta \in P'/P} \Theta_{P,\beta}(\abs{D}\tau,h) = \theta_{\fraka^2}(\tau).
\]
Indeed, multiplication by $\sqrt{D}$ gives a bijection between elements of norm $m$ in $\diffinv{\kb}\fraka^2$ and elements of norm $m\abs{D}$ in $\fraka^2$.
Hence, $\xi_1(\pre\theta_{\fraka^2}(\tau)) = \theta_{\fraka^2}(\tau)$.

By Theorem \ref{thm:pre-pvals} and Corollary \ref{cor:coeffs-explicit}, we have that
the $n$-th Fourier coefficient of $\pre\theta_{\fraka^2}(\tau)$ is equal to 
\[
  \frac{-2}{r}\sum_{\beta \in P'/P}\log\abs{\alpha(h,\tfrac{n}{\abs{D}},\beta)}
\]
with (setting $m:= n/\abs{D}$)
\[
  \ord_{\frakP_0}(\alpha(h,m,\beta)) = 2^{o(m)}r \cdot \nu_p\left(m\right)\rho(n/p, [\fraka]^2)
\]
if $m +\Q(\beta) \in \Z$.
Now, if $l \nmid n$, then $o(m)=0$ and if $\rho(n/p,[\fraka]^2) \neq 0$, then there are exactly two elements in $\beta \in P'/P$
with $m+Q(\beta) \in \Z$. If $l \mid n$, then $\beta = 0$ is the only element in $P'/P$ satisfying $m+Q(\beta) \in \Z$ and also $o(m)=1$.
In any case
\[
 \sum_{\beta \in P'/P}\ord_{\frakP_0}(\alpha(h,\tfrac{n}{\abs{D}},\beta)) = 2 r \cdot \nu_p\left(\tfrac{n}{\abs{D}}\right)\rho(n/p, [\fraka]^2),
\]
which agrees with the conjecture by Lemma \ref{lem:duklivsours}.
\end{proof}
Note that our theorem also covers the case $l=3$ and we provide a generalization to composite discriminants.

\begin{proof}[Proof of Theorem \ref{thm:dukeliconj}]
  Theorem \ref{thm:dukeliconj} is a reformulation of the conjecture in
the vector-valued case and follows directly from Theorem \ref{thm:pre-pvals}, and Corollary \ref{cor:coeffs-explicit} \emph{(ii)} together with Lemma \ref{lem:duklivsours}.
\end{proof}

\section[A comprehensive example]{A comprehensive example for $D=-23$}
\label{sec:compr-example}
In this section, we will give an example where we can give an explicit, finite formula
for all the coefficients of the forms $\pre{\Theta}_P(\tau,h)$
and follow the proof of Theorem \ref{thm:pre-pvals}.

The method of this section can be used to obtain such explicit formulas for all primes $p \equiv 3 \bmod{4}$
such that the modular curve $X_0^+(l)$ has genus zero. Here, $X_0^+(l)$ 
is the compactification of $\Gamma_0(l)^* \bs \uhp$, where $\Gamma_0(l)^*$ is
the extension of $\Gamma_0(p)$ by the Fricke involution.  It is known that there are
only finitely many such primes.
In order to keep the example as explicit as possible, we specialize to $l=23$.
It is the first prime congruent to $3$ modulo $4$ such that $\kb=\Q(\sqrt{-l})$ has class number $h_{\kb}>1$.
In this case, we have $h_{\kb}=3$.
The three ideal classes can be represented by the ideals
$\calO = (1), \fraka = \left(2,(2+\sqrt{-23})/2 \right)$ and $\bar{\fraka}$.

Note that $M_{1,P}$, for both $P=\calO$ and $P=\fraka$,
is isomorphic \cite{BruinierBundschuh} to $M_{1}^+(\Gamma_{0}(23),\chi_{-23})$, the subspace of
modular forms whose Fourier coefficients of index $n$ vanish whenever $\kronecker{-23}{n}=-1$.
Thus, for simplicity we will work with scalar valued modular forms in this example.

The two corresponding scalar valued theta functions have Fourier expansions starting with
\begin{align*}
  \theta_{\calO}(\tau) &= 1 + 2q + 2q^{4} + 4q^{6} + 4q^{8} + 2q^{9} + 4q^{12} + O(q^{15}) \\
  \theta_{\fraka}(\tau)=\theta_{\bar{\fraka}}(\tau) &= 1 + 2q^{2} + 2q^{3} + 2q^{4} + 2q^{6} + 2q^{8} + 2q^{9} + 4q^{12} + 2q^{13} + O(q^{15}).
\end{align*}
The difference
\begin{equation*}
  g(\tau) = \frac{1}{2} ( \theta_{\calO}(\tau) - \theta_{\fraka}(\tau) ) = q - q^{2} - q^{3} + q^{6} + q^{8} - q^{13} + O(q^{15})
\end{equation*}
can easily be identified as $\eta(\tau) \eta(23\tau)$.
We will write $c_g(n)$ for the coefficient of index $n$ of $g$
and $c_{\calO}(n)$ and $c_\fraka(n)$ for the coefficients of
the two theta series above. The cusp form $g$ is a normalized newform and does not have any zeroes on $\uhp$.

In fact, we have in our case $M_{1}^+(\Gamma_{0}(23),\chi_{-23}) = M_{1}(\Gamma_{0}(23),\chi_{-23})$
and this space is spanned by $g$ and the genus Eisenstein series
\begin{align*}
  E(\tau) &= \frac{1}{3} (\theta_{\calO}(\tau) +  2\theta_{\fraka}(\tau))\\
          &= 1 + \frac{2}{3}q + \frac{4}{3}q^{2} + \frac{4}{3}q^{3} + 2q^{4}
             + \frac{8}{3}q^{6} + \frac{8}{3}q^{8} + 2q^{9} + 4q^{12} + \frac{4}{3}q^{13} + O(q^{15}).
\end{align*}
We will write $c_E(n)$ for the coefficient of index $n$ of $E(\tau)$.
The vanishing of $M_{1}^-(\Gamma_{0}(23),\chi_{-23})$ corresponds to the fact that
$M_{1,P^-}=\{0\}$ for either choices of $P$.

We would like to determine the coefficients of $\pre{\theta}_\calO(\tau)$ and $\pre{\theta}_\fraka(\tau)$.
Part of our normalization is that $\pre{\theta}_\calO(\tau) = \pre{E}(\tau) + \frac{4}{3}\pre{g}(\tau)$ and
$\pre{\theta}_\fraka(\tau) = \pre{E}(\tau) - \frac{2}{3}\pre{g}(\tau)$, where $\xi_1(\pre{g}(\tau)) = g(\tau)$
and $\xi_1(\pre{E}(\tau))=E(\tau)$ with vanishing principal part.

An important consequence of the vanishing of $M_{1,P^-}$ is that there are no obstructions to
finding a weakly holomorphic modular form in $M_{1,P}^! \cong M_{1}^{!,+}(\Gamma_{0}(23),\chi_{-23})$.
Therefore, the technical part concerning the cusp forms in $S_{1,P^-}$ does not concern us in this simple example.
We can choose $\pre{g}(\tau)$ to have principal part $(g,g)_{\Gamma_0(23)}q^{-1}$,
where $(g, g)_{\Gamma_0(23)} = \int_{\Gamma_0(23)\bs \uhp}g(\tau)\bar{g}(\tau)\frac{dudv}{v}$ is the
Petersson norm of $g$.
This can be identified as the theta lift of $3g/4$ using the technique of \cite{ehlen-binary}. 
But it is also the example that Stark gives on page 91 of \cite{StarkLsII} and it turns out that
$(g,g)_{\Gamma_{0}(23)} = 3\log \abs{\alpha}$, where $\alpha \in H$ is the unit
which is the unique real root of the polynomial $X^{3}-X-1$ with complex embedding equal to $1.324717 \ldots$.

To determine the coefficients $c_{\pre{g}}^+(m)$ of $\pre{g}$ with positive index we first show that for every $m>0$ 
with $\kronecker{-l}{m} \neq 1$ (so that $\kronecker{-l}{-m}\neq -1$), 
there is a weakly holomorphic modular form with all integral Fourier coefficients
\[
f_m(\tau) = q^{-m} + O(q^2) \in M_1^{!,+}(\Gamma_0(23),\chi_{-23}).
\]
We can show this by constructing such a form for $1 \leq m \leq 23$ with $\kronecker{-l}{m} \neq -1$ 
and then use the same strategy as in the proof of Proposition \ref{prop:independence} to obtain all such forms.
Using \sage{}, we see that the space $M_{13}^{+}(\Gamma_0(23),\chi_{-23})$ has dimension $13$ and computed an integral basis.
Dividing each form in the integral basis by $\Delta(23\tau)$ gives us an element of $M_1^{!,+}(\Gamma_0(23),\chi_{-23})$.
This way, we obtain the $12$ weakly holomorphic modular forms $f_m$ for $m \in \{5, 7, 10, 11, 14, 15, 17, 19, 20, 21, 22, 23\}$.
As an example, the Fourier expansions of the first three forms start with
\begin{align*}
  f_5(\tau) &= q^{-5} - 6q^{2} + q^{3} - 7q^{4} - 8q^{6} + 19q^{8} + 20q^{9} + O(q^{11}),\\
  f_{7}(\tau) &= q^{-7} - 4q^{2} - 10q^{3} - 5q^{4} + 8q^{6} - 31q^{8} + 35q^{9} + O(q^{11}),\\
  f_{10}(\tau) &= q^{-10} - 13q^{2} - 14q^{3} + 13q^{4} + 13q^{8} - 78q^{9} + O(q^{11}).
\end{align*}
Now we obtain $f_m$ for $m>23$ as follows.
Write $m = 23a + b$ with $0 < b \leq 23$. We construct $f_m$ for all $b$ by induction on $a$.
Then $j(23\tau)^af_b$ lies in the plus-space and its Fourier expansion has only integral Fourier coefficients and
starts with $q^{-m}+O(q^{-23a+2})$.
Therefore, we can subtract off integral multiples of $f_n$ with $n<23a$ to obtain the desired principal part.
We will write $c_m(n)$ for the Fourier coefficient of index $n$ of $f_m$.

According to Theorem \ref{thm:value-phiz}, we have
\[
    c_{\pre{g}}^+(m) = 3(\Phi_\calO(1, f_m) - \Phi_\calO(h_{\bar{\fraka}},f_m)) = 3(\Phi_\calO(1,f_m) - \Phi_\fraka(1,f_m)),
\]
where $h_\fraka$ is the idele corresponding to $\fraka$.
To determine the theta lifts on the right hand side, we use the see-saw identity \eqref{eq:seesaw}.
We will need to know the principal part of $F_m = f_m \otimes \GJ$ for every $m$,
where we identify $f_m$ with a vector valued modular form in $M_{1,\calO}^!$.
We will simply write $C_m(n)=C_m(n,\mu)$ with $\mu=x+L_{23}$ and $x^2 \equiv n \bmod (46)$ for the coefficients of $F_m$.
This is independent of the choice of $x$ as every square modulo $92$ has at most two roots modulo $46$. 
The nonzero coefficients in the principal part are then given in Table \ref{tab:Fm}. Note that
the constant term of $F_m$ vanishes for every $m$.
\begin{table}[h]
  \centering
\begin{tabular}[h]{l ccccc}
\toprule
  $n$         & $\frac{-4m-23}{92}$ & $\frac{-4m}{92}$ & $\frac{-15}{92}$ & $\frac{-11}{92}$ & $\frac{-7}{92}$ \\
\midrule
  $C_m(n)$ & $1$                            & $10$                   & $c_m(2)$              & $c_m(3)$             & $c_m(4)$  \\
\bottomrule
\end{tabular}
  \caption{The principal part of $F_m$}
  \label{tab:Fm}
\end{table}

In order to determine the coefficients
of $f_m$ of index $2,3$ and $4$ for any $m$,
we will use the forms $f_2, f_3$ and $f_4$ which are contained in  $M_1^{!,-}(\Gamma_0(23),\chi_{-23})$ 
and are constructed in a similar fashion as $f_m$ above.
We have
\begin{align*}
  f_2 &= q^{-2} + q^{-1} - 1 + 6q^{5} + 4q^{7} + 13q^{10} + O(q^{11}),\\
  f_3 &= q^{-3} + q^{-1} - 1 - q^{5} + 10q^{7} + 14q^{10} + O(q^{11}),\\
  f_4 &= q^{-4} - 1 + 7q^{5} + 5q^{7} - 13q^{10} + O(q^{11}).
\end{align*}
We can easily determine the Fourier expansion of these $3$ forms to an arbitrary precision.
We note that these are also the first few in a sequence of weakly holomorphic modular forms
spanning the space $M_1^{!,-}(\Gamma_0(23),\chi_{-23})$, having all integral Fourier coefficients.
\begin{lemma}
  We have $c_m(2) = -c_2(m)$, $c_m(3)=c_3(m)$ and $c_m(4)=-c_4(m)$.
\end{lemma}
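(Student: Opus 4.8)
The plan is to read these identities as instances of \emph{Zagier duality} between the weakly holomorphic plus space $M_1^{!,+}(\Gamma_0(23),\chi_{-23})$, which contains the $f_m$, and the minus space $M_1^{!,-}(\Gamma_0(23),\chi_{-23})$, which contains $f_2,f_3,f_4$. The engine is the vanishing of the constant term of a weight-$2$ form. First I would pass to the vector-valued picture via \cite{BruinierBundschuh}: the plus-space form $f_m$ corresponds to a form $F_m\in M_{1,P}^!$ and each minus-space form $f_{m'}$ (with $m'\in\{2,3,4\}$) corresponds to a form $G_{m'}\in M_{1,P^-}$, in such a way that the scalar coefficient $c_m(n)$ (resp.\ $c_{m'}(n)$) is read off from the components of $F_m$ (resp.\ $G_{m'}$) of index $n/\abs{D}$, summed over the two classes $\pm\mu_r\in P'/P$ that represent $n$.

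The key step is that the $\C$-bilinear pairing $\langle F_m(\tau),G_{m'}(\tau)\rangle$ is a scalar-valued weakly holomorphic modular form of weight $2$ for $\SL_2(\Z)$ with trivial multiplier, because $\rho_P\otimes\rho_{P^-}$ pairs to the trivial representation and the weights add to $1+1=2$. The associated meromorphic differential $\langle F_m,G_{m'}\rangle\,d\tau$ lives on the modular curve $X(1)\cong\Ps^1$ and is holomorphic away from the cusp; by the residue theorem its residue at the cusp, which is a multiple of the constant term, must vanish. Hence $\CT\langle F_m,G_{m'}\rangle=0$, which unwinds to $\sum_{\mu\in P'/P}\sum_{n}c(F_m;n,\mu)\,c(G_{m'};-n,\mu)=0$. (Working on $X(1)$ rather than $X_0(23)$ is the point of passing to the vector-valued side: there is a single cusp, so no second-cusp contribution intervenes.)

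I would then evaluate this convolution. Since $f_m=q^{-m}+O(q^2)$, all intermediate coefficients of $F_m$ vanish, so the only surviving contributions come from pairing the principal part of one factor against a holomorphic coefficient of the other: the principal index $-m$ of $F_m$ produces a term equal (up to the dictionary) to $c_{m'}(m)$, and the principal index $-m'$ of $G_{m'}$ produces a term equal to $c_m(m')$. Here one uses that $f_{m'}$ has no principal term of index strictly between $-m'$ and $0$ other than the $q^{-1}$ term of $f_2,f_3$, which is harmless because it meets the vanishing coefficient $c_m(1)=0$. Translating back through the scalar--vector dictionary and recording how the two components $\pm\mu_r$ combine yields $c_m(m')=\pm\,c_{m'}(m)$ with exactly the signs asserted in the statement; I would pin these down and fix the overall normalization by checking the three cases directly against the explicit expansion of $f_5$ recorded above.

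The main obstacle is precisely the determination of these signs. In the vector-valued formulation the sign is governed by how the scalar coefficient decomposes over the classes $\mu_r$ and $-\mu_r$ together with the interchange of $P$ and $P^-$; equivalently, if one prefers to argue entirely with scalar forms on $X_0(23)$, one must instead analyze the residue of $f_m f_{m'}\,d\tau$ at the second cusp $0$ through the Fricke involution $W_{23}$, whose eigenvalue on the relevant $\pm$-subspaces supplies the sign. Either way the modular input (vanishing constant term of a weight-$2$ form) is immediate, and the entire content of the lemma is this sign bookkeeping, which I would carry out case by case for $m'=2,3,4$.
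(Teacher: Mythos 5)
The paper states this lemma with no proof at all, so there is nothing on the paper's side to compare against; your mechanism is certainly the intended one. Passing to vector-valued forms $F_m\in M^!_{1,P}$ and $G_n\in M^!_{1,P^-}$, noting that the bilinear pairing intertwines $\rho_P\otimes\rho_{P^-}$ with the trivial representation so that $\langle F_m,G_n\rangle$ lies in $M_2^!(\SL_2(\Z))$, and killing its constant term on the one-cusped curve $X(1)$ is the right engine. Your localization of the resulting convolution to the two cross terms is also correct: the $q^{-1}$ and constant terms of $f_2,f_3,f_4$ pair against $c_m(1)=0$ and $c_m(0)=0$, so only the principal part of each factor against the holomorphic part of the other survives.

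The genuine gap is exactly the step you defer, and it cannot be completed in the way you assert. In weight one and signature $(2,0)$ or $(0,2)$ every form satisfies $c(r,\mu)=c(r,-\mu)$, the pairing is diagonal in $\mu$, and the two components $\pm\mu$ enter identically; since $f_2,f_3,f_4$ all lie in the same minus space and are treated symmetrically by the argument, no choice of dictionary normalization can produce a sign that depends on $n$. For $23\nmid m$ the argument yields the uniform relation $c_m(n)=-c_n(m)$ for all $n\in\{2,3,4\}$, and the Fricke-involution variant on $X_0(23)$ can do no better, for the same symmetry reason. This contradicts the middle identity of the statement, and your own proposed consistency check against $f_5$ exposes the problem: the paper's expansions give $c_5(3)=+1$ but $c_3(5)=-1$ (likewise $c_7(3)=-10$ versus $c_3(7)=+10$, and $c_{10}(3)=-14$ versus $c_3(10)=+14$). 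In other words, what your method actually proves is the uniform-minus statement, which is the one consistent with the paper's own displayed Fourier expansions; the printed lemma appears to carry a sign typo in its middle identity (note that the surrounding passage is not internally consistent either, since the divisor formula just below substitutes $c_n(m)$ for $c_m(n)$ with no signs at all). As a proof of the statement \emph{as written}, your proposal therefore fails at precisely the point you flagged as its entire content, and the claim that the signs "come out as asserted" is false. Two smaller points: the $\SL_2(\Z)$-invariance of $\langle F_m,G_n\rangle$ should be proved, not just asserted (unitarity of $\rho_P$ together with $\rho_{P^-}=\overline{\rho_P}$ gives it in one line); and the case $23\mid m$ (the paper's list includes $m=23$) needs separate treatment, since there the principal part sits on the single component $\mu=0$, the count of components is asymmetric, and the duality acquires a factor $2$.
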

Using the Lemma and Theorem \ref{thm:borcherds}, we see that $\Phi_{L_{23}}(F_m,(z,1))$
for $L_{23}$ as in Section \ref{sec:embedding} for $N=23$ is equal to
\[
  \Phi_{L_{23}}((z,1),F_m) = -2\log|\Psi_{L_{23}}((z,1), F_m)|^2
\]
and $\Psi_{L_{23}}((z,1), F_m)^2$ is a meromorphic modular form for $\Gamma_0(23)$, invariant under the Fricke involution
and with divisor
\begin{align*}
    &Z\left(\frac{4m+23}{92}\right)
    + 10Z\left(\frac{4m}{92}\right)\\
    &+c_2(m) Z\left(\frac{15}{92}\right)
  +c_3(m) Z\left(\frac{11}{92}\right)
  +c_4(m) Z\left(\frac{7}{92}\right).
\end{align*}
Here, we briefly wrote $Z(n/92)$ for the divisor $Z(n/92,\mu) + Z(n/92,-\mu)$ with $\mu=x+L_{23}$ and $x^2 \equiv -n \bmod (46)$.
The modular curve $X=X_{0}^+(23)$ has genus $0$.
A generator $H_{23}$ of the function field of $X$ can be obtained as the Borcherds product $H_{23} = \Psi_{L_{23}}((z,1),F)$,
where $F(\tau) \in M^!_{1/2,L_{23}}$ is the unique form with principal part equal to $q^{-7/92}(\phi_{19} + \phi_{-19})$ 
and constant term $0$. It turns out that $H_{23}(\tau) = \frac{\theta_{\calO}(\tau)}{g(\tau)} - 2$ and we have
\begin{equation*}
   H_{23}(\tau) = q^{-1}+ 4q + 7q^{2} + 13q^{3} + 19q^{4} + 33q^{5} + 47q^{6} + 74q^{7} + 106q^{8} + 154q^{9} + O(q^{10}).
\end{equation*}
Using this information it is clear that
\[
  \Psi_{L_{23}}((z,1), F_m )^2 = R_m(H_{23}(z)),
\]
where $R$ is the rational function given by
\[
  R_m(x) = \prod_{z \in \divisor(\Psi(F_m))^2} (x-H_{23}(z))^{2\ord_z(\Psi(F_m))} = \prod_{d > 0} P_d(x)^{C_m(-d)},
\]
where
\[
  P_d(x) = \prod_{z \in Z(d/92)} (x-H_{23}(z)) \in \Z[x].
\]
We see that the coefficients of the holomorphic part of $\pre{g}$ can all be obtained
by rational functions in $H_{23}(z_{23})$, where $z_{23} = \frac{-23 + \sqrt{-23}}{64}$.
Explicitly, we obtain
\begin{align*}
c_\calO^+(m) &= \frac{-1}{12}\log\abs{R_m(H_{23}(z_{23}))}, \quad c_\fraka^+(m) = c_{\bar{\fraka}}^+(m) = \frac{-1}{12}\log\abs{R_m(H_{23}(z_{23})^{\sigma(\bar\fraka)})}, \\
c_{\pre{g}}^+(m) &= \frac{-1}{24}\log\abs{\frac{R_m(H_{23}(z_{23}))}{R_m(H_{23}(z_{23}))^{\sigma(\bar\fraka)}}}.
\end{align*}
We remark that the algebraic numbers defining $c_\fraka^+(m)$ and $c_{\bar{\fraka}}^+(m)$ in Corollary \ref{cor:scalar-pre}
are different. However, $\abs{R_m(H_{23}(z_{23})^{\sigma(\bar\fraka)})} = \abs{R_m(H_{23}(z_{23})^{\sigma(\fraka)})}$
because $H_{23}(z_{23})^{\sigma(\bar\fraka)} = \overline{H_{23}(z_{23})^{\sigma(\fraka)}}$ and the coefficients of $R_m$
are rational integers, which implies the equality $c_\fraka^+(m) = c_{\bar{\fraka}}^+(m)$.

A table with the polynomials $P_d$ for some small values of $d$ can be found
in \cite{MarynaInnerProd}. Note that we need to substitute $x-1$ for $x$ in the table
because our normalization of the Hauptmodul $H_{23}$ is different to match the Borcherds product.
Using these explicit formulas it is also easy to verify the factorization formula we gave 
and the numerical values obtained in \cite{DukeLiMock}.

We conclude this section with an interesting observation: the values $R_m(H_{23}(z_{23}))$ can also be interpreted as
follows. Consider the modular function $\tilde{H}_{23}(\tau) = H_{23}(\tau) - H_{23}(z_{23})$
and let $\tilde{F}_m$ be the unique weakly holomorphic modular form in $M_{1/2,L_{23}}^!$ with a principal part
equal to $q^{-m/92}(\phi_{\mu} + \phi_{-\mu})$ and $\mu = x + L_{23}$ with $x^2\equiv -m \bmod (92)$, as above.
Then we obtain
\[
\Phi((z_{23},1),\tilde{F}_m) = -2\log\abs{\prod_{z \in Z(m)} (H_{23}(z_{23})-H_{23}(z))} = -2 \log\abs{\tilde{H}_{23}(Z(m))}
\]
and by Theorem \ref{thm:value-phiz}, this implies that
\begin{align*}
\Theta(\tau) \otimes \pre{\Theta}_\calO^+(\tau) &= \frac{3}{2}\log\abs{\alpha}q^{-1/23}(\phi_{2/92} + \phi_{-2/92}) +c_{\calO}^+(0)\\
&\phantom{=}
-2 \sum_{\mu \bmod (46)}\!\!\!\!\!\sum_{\substack{m>0\\ -m\equiv \mu^2 \bmod (92)}}\!\!\!\!\!\log\abs{\tilde{H}_{23}(Z(m))}q^{\frac{m}{92}}(\phi_{\mu} + \phi_{-\mu}),
\end{align*}
where 
\[
  \Theta(\tau) = \sum_{n \in \Z}e\left(\frac{n^2}{4}\tau\right)\phi_{n/2 + \Z}
\]
is the vector valued theta function for the lattice $\Z$ with quadratic form $x^2$.
Moreover, note that $\Theta(\tau) \otimes \pre{\Theta}_\calO(\tau)$ is modular of weight $3/2$
and transforms with representation $\rho_{L_{23}^-}$. We can obtain the coefficients $c_\calO^+(m)$
as linear combinations of coefficients of $\Theta(\tau) \otimes \pre{\Theta}_\calO^+(\tau)$.

This suggests that the harmonic weak Maa\ss{} forms of weight one that we described in this paper
appear as building blocks for generating series as above -- note the similarity to the result
on the modularity of the generating series of \emph{traces of singular moduli} \cite{zagiertraces, ,brfu06}!
It should be possible to obtain these generating series explicitly using the Kudla-Millson theta lift
as in \cite{brfu06, funke-cm32, alfesehlentraces}. This would also yield an explicit construction of the
harmonic Maa\ss{} forms we considered using a theta lift. However, note that the fact that the
obstruction space $S_{3/2,L_{23}^-}$ is zero plays a cruical role in our simple description.
The situation is more complicated in general.
We will come back to this description in a sequel to this paper.

\section{An arithmetic theta function of weight one}
\label{sec:br-mod}
In the spirit of the Kudla program, we are led to form another generating series related to the cycles $\calZ(m,\fraka,\beta)$.
The non-holomorphic Eisenstein series of weight one should be seen as the generating series of the arithmetic cycles
equipped with Kudla's Green functions. 
We will complete them instead with the automorphic Green functions
obtained by the regularized theta lift and form the generating series of the completed cycles $\hat\calZ(m,\fraka,\beta)$
with values in $\widehat\CH^1(C_D) \otimes_\Z \Z[P'/P]$:
\begin{equation}
  \label{eq:phihat}
  \hat\Phi(\tau) = \sum_{\beta \in P'/P} \sum_{m > 0} \hat\calZ(m,\fraka,\beta) e(m\tau) \phi_{\beta}.
\end{equation}

Let us first define the completed cycles.
To keep the setup as simple as possible, we define them as Arakelov divisors \cite[Kapitel III]{neukirchalgzt}
on $\Spec \calO_{\Hkb}$.
We let
\[
  \hat\calZ(m,\fraka,\beta) = \sum_{\frakP \subset \calO_{\Hkb}} \calZ(m,\fraka,\beta)_{\frakP} \frakP
                           + \sum_{\sigma} \lambda(m,\beta,\sigma) \sigma.
\]
Here, $\sigma$ runs over all complex embeddings of $H$  and we define $\lambda(m,\beta,\sigma) \in \R$ as follows.
For $m \in \Z_{>0}$ and $\beta \in P'/P$, we let $g_{m,\beta} \in H_{1,P}$ with principal part
\[
  P_{g}(\tau) = \frac{1}{2}q^{-m}(\phi_{\beta} + \phi_{-\beta})
\]
and $c_{g_{m,\beta}}^{+}(0,0) = 0$. 
We start from a fixed embedding $\sigma_0$ of $\calO_{\Hkb}$ into $\C$ and define
\[
  \lambda(m,\beta,\sigma_0) := \frac{1}{w_{k}} \Phi_{P}(g_{m,\beta}(\tau), 1)
\]
Then, for any other embedding $\sigma$, if $\sigma = \sigma_0 \circ \sigma(h)$ or $\bar\sigma = \sigma_0 \circ \sigma(h)$, 
we let $h(\sigma) := h(\bar\sigma) := h$ and define
\[
  \lambda(m,\beta,\sigma) := \frac{1}{w_{k}} \Phi_{P}(g_{m,\beta}(\tau), h(\sigma)).
\]
However, all of this only makes sense if $\Phi_{P}(g_{m,\beta}(\tau), h) \in \R$, 
which we did not prove here. It should be possible to show this using e.g.
the embedding used in Section \ref{sec:embedding} and by working out the Fourier expansion
as in Section 7 of \cite{boautgra}. To be safe, we let 
\[
  \tilde{\Phi}_{P}(g_{m,\beta}(\tau), h) := \tfrac{1}{2}(\Phi_{P}(g_{m,\beta}(\tau), h) + \overline{\Phi_{P}(g_{m,\beta}(\tau), h)}) \in \R
\]
and let $\lambda(m,\beta,\sigma) := \frac{1}{w_\kb}\tilde{\Phi}_{P}(g_{m,\beta}(\tau), h(\sigma))$.
Then we define the formal power series $\hat\Phi_P(\tau)$ with coefficients in $\widehat\CH^1(\calO_{\Hkb}) \otimes_\Z \Z[P'/P]$ via \eqref{eq:phihat}.
To make this precise, we have to impose further conditions on $g_{m,\beta}$:
we demand that $c^+_{g_{m,\beta}}(p_j,\pi_j) = 0$ for all $j$ and where $p_j$ and $\pi_j$ have
been defined in Section \ref{sec:special-basis}.

Note that it is a consequence of Theorems \ref{thm:value-phiz} 
and \ref{thm:pre-pvals} that 
$\deghat\, \hat\calZ(m,\fraka,\mu)$ vanishes. 
(However, this also follows directly from Theorem 6.5 of \cite{bryfaltings}.)
Consequently, the coefficients 
of $\hat\Phi(\tau)$ vanish when viewed as a generating series valued in $\widehat\CH^1_\R(\calO_{\Hkb})$
and the degree generating series is thus ``trivially'' modular.

However, a stronger statement is true. In fact, what we have shown amounts to: 
\begin{corollary}
  If $f \in M_{1,P}^!$ is a weakly holomorphic modular form
  with only integral Fourier coefficients in its principal part, then
  \[
    \sum_{\beta \in P'/P} \sum_{n>0}c_f(-n,\beta)\widehat\calZ(n,\fraka,\beta) = 0 \in \widehat\CH^1(\calO_{\Hkb})\otimes_\Z \Q.
  \]
\end{corollary}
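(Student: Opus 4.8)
The goal is to exhibit the stated combination as a rational multiple of a sum of principal arithmetic divisors, hence as zero in $\widehat\CH^1(\calO_{\Hkb})\otimes_\Z\Q$. Since the cycles $\hat\calZ(n,\fraka,\beta)$ enter only through the negative-index coefficients of $f$, I would first subtract $c_f(0,0)E_P\in M_{1,P}$ and thereby assume $c_f(0,0)=0$, changing neither the left-hand side nor the integrality of the principal part. The plan is then to attach to $f$ the Borcherds input $\bm f\in M_{1/2,L}^!$ of Lemma \ref{lem:Fmb} (so that $c_{\bm f}(0,0)=0$ and the defining relation \eqref{eq:Fmb} holds), to choose $M_f\in\Z_{>0}$ so that $\Psi_L(z,h,M_f\bm f)$ has trivial multiplier system and lies in $\Q(j,j_N)$, and to set $\alpha:=\Psi_L(z_U,1,M_f\bm f)\in\Hkb^\times$ by Lemma \ref{lem:Psirat-idelesact}. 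I will match the finite and archimedean parts of the $f$-combination against those of $\widehat\divisor(\alpha)$.

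For the horizontal part I apply Proposition \ref{prop:pullback-ord}, noting that its right-hand side depends on the input only through the weight-one form singled out by \eqref{eq:Fmb}, namely $f$ itself: the $\FJ^j$-contributions with $j>0$ (in particular the correction term of Lemma \ref{lem:Fmb}) are annihilated by the pairing with $\Theta_{\calN^-}$ by Lemma \ref{lem:FGT}. Hence
\[
  \ord_\frakP(\alpha) = M_f\,12^A\,\frac{w_\kb}{2}\sum_{\beta\in P'/P}\sum_{n>0}c_f(-n,\beta)\,\calZ(n,\fraka,\beta)_\frakP
\]
for every prime $\frakP$ of $\Hkb$, which is exactly $\tfrac{M_f 12^A w_\kb}{2}$ times the finite part of the left-hand side. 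For the archimedean part, the seesaw identity \eqref{eq:seesaw} (with $\psi_0=1$) together with Theorem \ref{thm:borcherds} gives, using Lemma \ref{lem:Psirat-idelesact},
\[
  -\log|\alpha^{\sigma(h)}|^2 = -\log|\Psi_L(z_U,h,M_f\bm f)|^2 = \tfrac{M_f}{2}\,\Phi_L(z_U,h,\bm f) = \tfrac{M_f 12^A}{2}\,\Phi_P(f,h).
\]
On the other hand, by the definition of $\lambda$ and linearity, the archimedean part of the left-hand side at $\sigma(h)$ equals $\tfrac{1}{w_\kb}\Re\,\Phi_P(G,h)$ with $G:=\sum_{\beta,n}c_f(-n,\beta)g_{n,\beta}\in H_{1,P}$, so the comparison of archimedean parts reduces to relating $\Re\,\Phi_P(G,h)$ to $\Phi_P(f,h)$.

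This last comparison is the main obstacle. By construction $G$ and $f$ share the same principal part (using $c_f(-n,\beta)=c_f(-n,-\beta)$ and the symmetric principal parts $\tfrac12 q^{-n}(\phi_\beta+\phi_{-\beta})$ of the $g_{n,\beta}$) and, after the reduction, the same vanishing constant term; thus $\phi:=G-f\in H_{1,P}$ has vanishing principal part, $c_\phi^+(0,0)=0$, and cuspidal shadow $s=\xi_1(G)$. Applying Theorem \ref{thm:value-phiz} to $\phi$, the holomorphic contribution $\CT\langle\phi^+,\prep\Theta_P(\cdot,h)\rangle$ pairs the rational positive-index coefficients of $\phi^+$ with the principal-part coefficients of $\prep\Theta_P(\cdot,h)$, which are logarithms of units of $\calO_{\Hkb}^\times$ by Theorem \ref{thm:pre-pvals} and Lemma \ref{lem:principal-part}; the remaining regularized integral against $s$ I would rewrite, via Stokes and $\xi_1\pre\Theta_P=\Theta_P$, as the regularized Petersson product $(s,\Theta_P(\cdot,h))=\Phi_P(s,h)$, which Lemma \ref{lem:principal-part} again identifies with the logarithm of a unit. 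This yields $\Re\,\Phi_P(\phi,h)=-\log|\epsilon^{\sigma(h)}|^2$ for a single $\epsilon\in\calO_{\Hkb}^\times$, the dependence on $h$ being governed by Shimura reciprocity (Lemma \ref{lem:Psirat-idelesact}). The residual term is therefore $\tfrac{1}{w_\kb}\widehat\divisor(\epsilon)$, whose finite part vanishes and which is principal. Combining the three displays, the $f$-combination equals
\[
  \frac{2}{M_f\,12^A\,w_\kb}\,\widehat\divisor(\alpha) + \frac{1}{w_\kb}\,\widehat\divisor(\epsilon),
\]
a rational combination of principal arithmetic divisors, and hence vanishes in $\widehat\CH^1(\calO_{\Hkb})\otimes_\Z\Q$. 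The delicate point requiring care is the Stokes argument identifying the shadow integral with $\Phi_P(s,h)$, ensuring that the archimedean discrepancy is genuinely unit-valued rather than merely of arithmetic degree zero.
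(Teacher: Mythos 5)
Your overall architecture --- exhibiting the combination as a rational multiple of principal Arakelov divisors, with the finite part from Proposition \ref{prop:pullback-ord} and the archimedean part from the seesaw identity \eqref{eq:seesaw} --- is sound and close in spirit to the paper (which gets both parts at once by quoting Theorem \ref{thm:value-phiz} together with Theorem \ref{thm:pre-pvals}). But the step you yourself flag as delicate is a genuine gap, and it fails as written. The shadow $s=\xi_1(G)$ lies in $S_{1,P^-}$, i.e.\ it transforms with $\rho_{P^-}$, so the expression ``$\Phi_P(s,h)$'' is not defined (inputs to $\Phi_P$ transform with $\rho_P$), and Lemma \ref{lem:principal-part} applies to cusp forms in $S_{1,P}(\Q)$ with \emph{rational} coefficients, not to $s$. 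The missing observation --- which is exactly the paper's first step --- is that there is no shadow integral at all: since $\phi=G-f$ has vanishing principal part and vanishing constant term ($P'/P$ is anisotropic), Lemma \ref{lem:pairing} gives
\[
  (\xi_1(\phi),\xi_1(\phi)) \;=\; \{\xi_1(\phi),\phi\} \;=\; 0,
\]
so $\xi_1(G)=\xi_1(f)=0$ and $\phi$ is a holomorphic cusp form. The paper pushes this one step further: because the $g_{n,\beta}$ and the $f_{n,\beta}$ of Proposition \ref{prop:fbm} are both normalized to have vanishing coefficients at the special indices $(p_j,\pi_j)$, the residual cusp form vanishes identically, i.e.\ $G=\tilde f:=\tfrac12\sum_{\beta,n}c_f(-n,\beta)f_{n,\beta}$ exactly (the identity $\{f,g_j\}=0$ also kills the correction terms in the principal parts). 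This identification eliminates your step 6 entirely and is what makes the archimedean part directly computable by Theorem \ref{thm:value-phiz}.

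The second gap is rationality. The hypothesis only makes the \emph{principal part} of $f$ integral; the positive-index coefficients may be irrational (one may add an arbitrary cusp form to $f$). Yet Lemma \ref{lem:Fmb}, the rationality statement $\Psi_L(\cdot,M_f\bm f)\in\Q(j,j_N)$, and Proposition \ref{prop:pullback-ord} all require rational (after scaling, integral) coefficients of the input, so your steps constructing $\alpha$ do not apply to $f$ as given. This is not merely a loss of rigor: after the correct simplification above, your residual archimedean term equals $-\sum_j c_f(p_j,\pi_j)$ times logarithms of units coming from Lemma \ref{lem:principal-part}; if the $c_f(p_j,\pi_j)$ are irrational, this is an $\R$-linear rather than $\Q$-linear combination of principal divisors, which need not vanish in $\widehat\CH^1(\calO_{\Hkb})\otimes_\Z\Q$, so the conclusion would not follow. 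The cure is the paper's: never use $f$ beyond its principal part, replacing it by the rational, normalized representative $\tilde f$ (whose existence is Proposition \ref{prop:fbm}); then the pairing of $\tilde f$ with the constant term and the principal part of $\prep{\Theta}_P(\cdot,h)$ vanishes by construction, and Theorems \ref{thm:value-phiz} and \ref{thm:pre-pvals} identify the whole Arakelov divisor with $\tfrac{1}{r w_\kb}$ times the divisor of $\prod_{\beta,n}\alpha(1,n,\beta)^{c_f(-n,\beta)}$, which is principal.
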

\begin{proof}
  First, we claim that
  \begin{equation}
    \label{eq:gmfm}
    \sum_{\beta \in P'/P} \sum_{n>0}c_f(-n,\beta)g_{m,\beta}(\tau) = \frac{1}{2}\sum_{\beta \in P'/P} \sum_{n>0}c_f(-n,\beta)f_{m,\beta}(\tau) := \tilde{f},
  \end{equation}
where $f_{m,\beta}$ has beed defined in Proposition \ref{prop:fbm}.
 In fact, the difference of the left-hand side and the right-hand side is a weakly holomorphic modular form with principal part equal to
\[
  \frac{1}{2}\sum_{\beta \in P'/P} \sum_{n>0}c_f(-n,\beta) \sum_{j=1}^{d^-} a_j(n,\beta) q^{-n_j} (\phi_{\beta_j}+\phi_{-\beta_j}).
\]
Since $f$ is weakly holomorphic, the double sum
\[
  \sum_{\beta \in P'/P} \sum_{n>0}c_f(-n,\beta) a_j(n,\beta) = \{f, g_j\}
\]
vanishes for all $j$.
Thus, the difference of the left-hand side and right-hand side in \eqref{eq:gmfm} is a holomorphic modular form with vanishing constant term
in $M_{1,P}$ and thus a cusp form since we assume that $P'/P$ has squarefree order $\abs{D}$.
However, since the normalization of $g_{m,\beta}$ for the positive index Fourier coefficients
agrees with the normalization of $f_{m,\beta}$, \eqref{eq:gmfm} follows.
Write
\[
  \sum_{\beta \in P'/P} \sum_{n>0}c_f(-n,\beta)\widehat\calZ(n,\fraka,\beta) = \sum_{\beta \in P'/P} \sum_{n>0}c_f(-n,\beta)\calZ(n,\fraka,\beta) + \frac{1}{w_\kb}\sum_\sigma\tilde{\Phi}_P(\tilde{f},h(\sigma))\sigma.
\]
Then Theorem \ref{thm:value-phiz} shows that
\[
  \Phi_P(\tilde{f},h(\sigma)) = \frac{1}{2}\sum_{\beta \in P'/P} \sum_{n>0}c_{f}(-n,\beta) c_P^+(h(\sigma),n,\beta)
\]
and by Theorem \ref{thm:pre-pvals}, we have that $c_P^+(1,n,\beta) = -\tfrac{2}{r} \log\abs{\alpha(1, n, \beta)} \in \R$ with 
\[
  \ord_\frakP(\alpha(1, n, \beta)) = r \cdot w_\kb \cdot \calZ(n,\fraka, \beta)_\frakP
\]
and $\alpha(h(\sigma),n,\beta) = \alpha(1,n,\beta)^{\sigma}$, which shows that the class defined by
\[
  \sum_{\beta \in P'/P} \sum_{n>0}c_f(-n,\beta)\widehat\calZ(n,\fraka,\beta)
\]
is torsion and thus vanishes in $\widehat\CH^1(\calO_{\Hkb})\otimes_\Z \Q$.
\end{proof}

\begin{lemma}
  The classes $\widehat\calZ(m,\fraka,\beta)$ lie in a finite dimensional subspace of $\widehat\CH^1(\calO_{\Hkb})\otimes_\Z \Q$.
\end{lemma}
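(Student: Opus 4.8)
The plan is to realize the span of the classes $\widehat\calZ(m,\fraka,\beta)$ as the image of a single $\Q$-linear map out of a space of formal principal parts, and then to bound the dimension of that image by the finite number $\dim S_{1,P^-}$. Let $V$ be the $\Q$-vector space with basis the symbols $[m,\beta]$ indexed by $m \in \Q_{>0}$ and $\beta \in P'/P$ with $m + Q(\beta) \in \Z$, and define the $\Q$-linear map
\[
  \Psi \colon V \longrightarrow \widehat\CH^1(\calO_{\Hkb}) \otimes_\Z \Q, \qquad [m,\beta] \longmapsto \widehat\calZ(m,\fraka,\beta).
\]
The subspace spanned by all the classes $\widehat\calZ(m,\fraka,\beta)$ is exactly the image of $\Psi$, so it suffices to prove that this image is finite-dimensional.

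First I would record the relations supplied by the preceding corollary. For any $f \in M_{1,P}^!$ with rational Fourier coefficients (clearing denominators reduces to the integral case treated there), the corollary gives $\sum_{\beta}\sum_{n>0} c_f(-n,\beta)\, \widehat\calZ(n,\fraka,\beta) = 0$ in $\widehat\CH^1(\calO_{\Hkb})\otimes_\Z \Q$; equivalently, the vector $v_f := \sum_{\beta}\sum_{n>0} c_f(-n,\beta)\,[n,\beta]$ lies in $\ker(\Psi)$. As $f$ ranges over $M_{1,P}^!$, the $v_f$ span precisely the subspace $W \subseteq V$ of principal parts of weakly holomorphic forms, so that $W \subseteq \ker(\Psi)$.

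The decisive input is the duality between weakly holomorphic forms and cusp forms. Consider the pairing map $V \to S_{1,P^-}^{\vee}$ sending $v = \sum c(-n,\beta)[n,\beta]$ to the functional $g = \sum a_g(n,\beta)q^n\phi_\beta \mapsto \sum c(-n,\beta)a_g(n,\beta)$, which is the pairing $\{\cdot,\cdot\}$ of Lemma \ref{lem:pairing} restricted to cusp forms. By the exactness of \eqref{ex-sequS} together with the non-degeneracy statement in Lemma \ref{lem:pairing} (equivalently, Borcherds' obstruction criterion: a harmonic preimage of a given principal part is weakly holomorphic iff its principal part is annihilated by every cusp form in $S_{1,P^-}$), the subspace $W$ is exactly the kernel of this pairing map. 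Hence $\Psi$ factors as $V \twoheadrightarrow V/W \hookrightarrow S_{1,P^-}^{\vee}$, and therefore
\[
  \dim_\Q\bigl(\text{image}(\Psi)\bigr) \;\le\; \dim_\Q(V/W) \;\le\; \dim_\C S_{1,P^-} \;=\; d^-,
\]
where $d^-$ is as in Definition \ref{def:special_basis}. This bounds the span of the classes by $d^-$ and proves the lemma.

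The step requiring the most care is the identification $W = \ker\bigl(V \to S_{1,P^-}^{\vee}\bigr)$, i.e. that the obstruction-theoretic description of \emph{realizable} principal parts coincides with the annihilator of the cusp forms; this is where \eqref{ex-sequS} and Lemma \ref{lem:pairing} must be combined, using that $\{g,f\}$ depends only on the principal part of $f$ and that the Petersson form is non-degenerate on $S_{1,P^-}$. One should also check that the extra normalizations imposed on the $g_{m,\beta}$ (namely $c^+(0,0)=0$ and $c^+(p_j,\pi_j)=0$) only pin down representatives and do not alter the classes $\widehat\calZ(n,\fraka,\beta)$ themselves, so that the relations of the corollary apply verbatim. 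Everything else is formal linear algebra over $\Q$ together with the already-established vanishing.
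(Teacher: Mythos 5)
Your overall strategy -- factor $[m,\beta]\mapsto\widehat\calZ(m,\fraka,\beta)$ through the space of formal principal parts modulo realizable ones and bound the image by $\dim S_{1,P^-}$ -- is sound, but the step you yourself single out as decisive is false as stated: $W$ is \emph{not} the full kernel of the pairing map $V\to S_{1,P^-}^{\vee}$. The reason is the symmetry under $\beta\mapsto-\beta$. Every $f\in\calH_{1,P}$ (in particular every weakly holomorphic form) satisfies $c_f(n,\beta)=c_f(n,-\beta)$, because invariance under $Z=S^2\in\Mp_2(\Z)$ forces it: in signature $(2,0)$ one has $\rho_P(Z)\phi_\beta=e(-\sgn(P)/4)\phi_{-\beta}=-\phi_{-\beta}$, and in weight one this gives $f_\beta=f_{-\beta}$. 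So $W$ is contained in the symmetric subspace $V^{+}\subset V$. On the other hand, cusp forms $g\in S_{1,P^-}$ obey the same symmetry $a_g(n,\beta)=a_g(n,-\beta)$, so every antisymmetric vector $[n,\beta]-[n,-\beta]$ is annihilated by your pairing map while not lying in $W$. Since $\abs{P'/P}=\abs{D}\geq 3$, classes with $\beta\neq-\beta$ exist, hence $\ker(V\to S_{1,P^-}^{\vee})\supsetneq W$ and in fact $V/W$ is \emph{infinite}-dimensional: the inequality $\dim_\Q(V/W)\leq d^-$, and with it your bound on the image of $\Psi$, collapses.

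The gap is repairable, but it needs one input you never invoke: the classes themselves are symmetric, $\widehat\calZ(m,\fraka,\beta)=\widehat\calZ(m,\fraka,-\beta)$. For the finite parts this is the identity $\calZ(m,\fraka,-\gamma)_{\frakP}=\calZ(m,\fraka,\gamma)_{\frakP}$ used in the proof of Theorem \ref{thm:pre-pvals}, and for the archimedean parts it holds because $g_{m,\beta}$ and $g_{m,-\beta}$ are determined by the same symmetric principal part $\tfrac12 q^{-m}(\phi_\beta+\phi_{-\beta})$ and the same normalization. Granting this, $V^{-}\subseteq\ker\Psi$; and restricted to $V^{+}$ your identification is correct, $W=V^{+}\cap\ker(V\to S_{1,P^-}^{\vee})$, by Proposition 3.11 of \cite{brfugeom} (a harmonic form exists with any prescribed \emph{symmetric} principal part) combined with \eqref{ex-sequS} and Lemma \ref{lem:pairing}. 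Then $\ker(V\to S_{1,P^-}^{\vee})=W\oplus V^{-}\subseteq\ker\Psi$, the factorization goes through, and the image of $\Psi$ has dimension at most $d^-$. With this patch your argument becomes a correct abstract repackaging of the paper's proof, which is more direct and explicit: it simply takes $f=f_{m,\beta}$ from Proposition \ref{prop:fbm} in the preceding corollary, which immediately writes $\widehat\calZ(m,\fraka,\beta)$ as a combination of the finitely many classes $\widehat\calZ(n_j,\fraka,\beta_j)$, $j=1,\dots,d^-$; there the $\pm\beta$ symmetry is built into the shape of the principal parts of the $f_{m,\beta}$, which is exactly the point your formal setup loses.
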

\begin{proof}
  Clearly, taking $f = f_{m,\beta}$ in the corollary gives
  \[
    \frac{1}{2}\sum_{j=1}^{d^-} a_j(m,\beta)\widehat\calZ(n_j,\fraka,\beta_j) = \widehat\calZ(m,\fraka,\beta) \in \widehat\CH^1(\calO_{\Hkb})\otimes_\Z \Q,
  \]
  i.e. the special divisors lie in the subspace spanned by the $\widehat\calZ(n_j,\fraka,\beta_j)$, $j=1, \ldots, d^-$.
\end{proof}

\begin{corollary}
  The formal power series $\widehat\Phi(\tau)$ is the $q$-expansion of a modular form of weight one and representation $\rho_{P^-}$.
\end{corollary}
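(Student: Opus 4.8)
The plan is to deduce the corollary from the general modularity criterion for vector-valued modular forms, whose hypotheses are verified precisely by the two results immediately preceding it. First I would record that, by the lemma above, every Fourier coefficient $\widehat\calZ(m,\fraka,\beta)$ of $\widehat\Phi(\tau)$ lies in a single finite-dimensional $\Q$-subspace $W\subseteq\widehat\CH^1(\calO_{\Hkb})\otimes_\Z\Q$, namely the span of the classes $\widehat\calZ(n_j,\fraka,\beta_j)$ for $j=1,\dots,d^{-}$. Fixing a $\Q$-basis $w_1,\dots,w_M$ of $W$ and writing $\widehat\Phi(\tau)=\sum_{i=1}^{M}\widehat\Phi_i(\tau)\,w_i$ with scalar (that is, $\C[P'/P]$-valued) formal $q$-series $\widehat\Phi_i$, it suffices to prove that each $\widehat\Phi_i$ is the $q$-expansion of a modular form of weight one and representation $\rho_{P^{-}}$; modularity of $\widehat\Phi$ then follows coordinatewise. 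This reduction to scalar coefficients is legitimate only because $W$ is finite-dimensional, which is exactly what the lemma provides.

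The key tool is the modularity criterion of Borcherds (closely tied to the non-degenerate pairing of Lemma \ref{lem:pairing} and to McGraw's rationality results \cite{McGrawBasis}): a formal $q$-series $\sum_{\beta}\sum_{m\ge 0}c(m,\beta)\,e(m\tau)\phi_\beta$ with complex coefficients is the Fourier expansion of a modular form of weight one and representation $\rho_{P^{-}}$ if and only if $\sum_{m\ge 0,\beta}c(m,\beta)\,c_f(-m,\beta)=0$ for every weakly holomorphic form $f=\sum_\beta\sum_m c_f(m,\beta)e(m\tau)\phi_\beta$ in the dual space $M_{1,P}^{!}$. I would apply this to each $\widehat\Phi_i$: since $\widehat\Phi$ has no constant term the $m=0$ term drops out, so the required identity reads $\sum_{m>0,\beta}\ell_i\!\big(\widehat\calZ(m,\fraka,\beta)\big)\,c_f(-m,\beta)=0$, where $\ell_i$ denotes the $i$-th coordinate functional on $W$.

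This identity is precisely the content of the preceding Corollary, which asserts $\sum_{m>0,\beta}c_f(-m,\beta)\widehat\calZ(m,\fraka,\beta)=0$ in $\widehat\CH^1(\calO_{\Hkb})\otimes_\Z\Q$ for every $f\in M_{1,P}^{!}$ with integral principal part. Applying $\ell_i$ yields the scalar vanishing, and extending from integral to arbitrary rational principal parts by $\Q$-linearity (using that $M_{1,P}^{!}$ admits a $\Q$-basis with integral Fourier coefficients, again \cite{McGrawBasis}) gives the hypothesis of the criterion for all $f$. Hence each $\widehat\Phi_i$, and therefore $\widehat\Phi$, is a modular form of weight one and representation $\rho_{P^{-}}$; as it has vanishing constant term it is in fact a cusp form.

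The point requiring the most care is the bookkeeping in the criterion rather than any new analytic input, since the substantive work already resides in the preceding Corollary. One must correctly match the dual pair, so that the obstructions to modularity of a weight-one, type-$\rho_{P^{-}}$ expansion are governed by forms of weight $2-1=1$ and representation $\rho_{(P^{-})^{-}}=\rho_{P}$, that is by $M_{1,P}^{!}$; and one must confirm that the absence of a constant term in $\widehat\Phi$ causes no obstruction, which it does not, because the $m=0$ contribution to the pairing vanishes identically. Once these are in place the proof is the straightforward assembly of the lemma, the preceding Corollary, and the criterion.
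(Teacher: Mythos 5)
Your proof is correct and follows essentially the same route as the paper: the paper's (one-line) proof invokes exactly the Borcherds-style modularity criterion together with McGraw's integral-basis result, with the preceding Corollary supplying the required vanishing of the pairing and the finite-dimensionality Lemma making the coordinatewise reduction meaningful. Your write-up simply makes explicit the bookkeeping (dual weight $2-1=1$ and dual type $\rho_{P}$, the harmless absence of a constant term, and the passage from integral to arbitrary principal parts by linearity) that the paper leaves to the reader.
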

\begin{proof}
  This follows by applying the modularity criterion as in \cite[Lemma 4.3]{bogkz} and the existence of
  a basis of $M_{1,P^-}$ with integral Fourier coefficients \cite{McGrawBasis}.
\end{proof}

\begin{remark}
  It is a bit unsatisfactory that, in contrast to the non-holomorphic
  arithmetic generating series $\calE_P(\tau)$ we do not know that $\widehat\Phi(\tau) \neq 0$ and this seems to be a subtle question.
\end{remark}

\printnomenclature[2.8cm]
\printbibliography

\end{document}